\author{Victor \textsc{Marx} \thanks{
Université Côte d’Azur, CNRS, Laboratoire J.A. Dieudonné UMR 7351, France.       \newline
E-mail: victor.marx@unice.fr }}
\title{A new approach for the construction of a Wasserstein diffusion}
\date{December 2018}
\newtheorem{théo}{Theorem}[section]
\theoremstyle{plain}
\newtheorem{prop}[théo]{Proposition}
\newtheorem{coro}[théo]{Corollary}
\newtheorem{lemme}[théo]{Lemma}
\newtheorem*{lemme*}{Lemma}
\theoremstyle{remark}
\newtheorem{rem}[théo]{Remark}
\theoremstyle{definition}
\newtheorem{déf}[théo]{Definition}
\newcommand{\eps}{\varepsilon}
\renewcommand{\P}[1]{\ensuremath{\mathbb P \left[ #1 \right]}}
\newcommand{\E}[1]{\ensuremath{\mathbb E \left[ #1 \right]}}
\renewcommand{\leq}{\leqslant}
\renewcommand{\geq}{\geqslant}
\newcommand{\Leb}{\operatorname{Leb}}
\newcommand{\Q}{\mathds Q}
\newcommand{\N}{\mathds N}
\newcommand{\R}{\mathds R}
\newcommand{\A}{\mathcal A}
\newcommand{\B}{\mathcal B}
\newcommand{\umax}{u_{\max}}
\newcommand{\umin}{u_{\min}}
\newcommand{\umed}{u_{\operatorname{med}}}
\newcommand{\vmax}{V_{\max}}
\newcommand{\vmin}{V_{\min}}
\newcommand{\vmed}{V_{\operatorname{med}}}
\begin{document}
\maketitle
\begin{abstract}

We propose in this paper a construction of a diffusion process on the space $\mathcal P_2(\R)$ of probability measures with a second-order moment. This process was introduced in several papers by Konarovskyi (see e.g.~\cite{konarovskyi17system}) and consists of the limit as $N$ tends to $+\infty$ of a system of $N$ coalescing and mass-carrying particles. It has properties analogous to those of a standard Euclidean Brownian motion, in a sense that we will precise in this paper. We also compare it to the Wasserstein diffusion on $\mathcal P_2(\R)$ constructed by von Renesse and Sturm in~\cite{vrenessesturm09}. We obtain that process by the construction of a system of particles having short-range interactions and by letting the range of interactions tend to zero. This construction can be seen as an approximation of the singular process of Konarovskyi by a sequence of smoother processes. 
\medskip

\textbf{Keywords:} 
Wasserstein diffusion, interacting particle system, coalescing particles, modified Arratia flow, Brownian sheet, differential calculus on Wasserstein space, Itô formula for measure-valued processes. 
 
\textbf{AMS MSC 2010:} Primary 60K35, 60J60, 60B12, Secondary 60G44, 82B21. 
\end{abstract}

\paragraph*{Acknowledgment.} The author is grateful to an anonymous referee for the important remarks and the very detailed report.

\section{Introduction}

This paper introduces a new approach to construct the stochastic diffusion process studied by Konarovskyi (\cite{konarovskii11}, \cite{konarovskyi17behavior}, \cite{konarovskyi17system}, \cite{konarovskyivonrenesse18}). It is a close relative to the Wasserstein diffusion, introduced by von Renesse and Sturm~\cite{vrenessesturm09}. 
Our interest is to construct an analogous process to the Euclidean Brownian motion taking values on the Wasserstein space $\mathcal P_2(\R)$, defined as the set of probability measures on $\R$ having a second-order moment. 

In~\cite{vrenessesturm09}, von Renesse and Sturm construct a strong Markov process called \emph{Wasserstein diffusion} on $\mathcal P_2(M)$, for $M$ equal either to the interval $[0,1]$ or to the circle $\mathds S^1$. 
Two major features of that process illustrate the analogy with the standard Brownian motion on a Euclidean space. 
First, the energy of the martingale part of the Wasserstein diffusion has the same form as that of a $k$-dimensional standard Brownian motion, up to replacing the Euclidean norm on $\R^k$ by the $L_2$-Wasserstein distance: 
\[
d_W(\mu,\nu)=\inf \E{|X-Y|^2}^{1/2},
\]
where the infimum is taken over all couplings of two random variables $X$ and $Y$ such that $X$ (resp. $Y$) has law $\mu$ (resp. $\nu$). 
It should be noticed that the geometry of $\mathcal P_2(M)$, equipped with the Wasserstein distance, for $M$ a Euclidean space, was the subject of fundamental studies conducted by Ambrosio, Gigli, Savare, Villani, Lions and many others~(\cite{ambrosiogiglisavare08}, \cite{cardaliaguet13}, \cite{lions_college}, \cite{villani03}, \cite{villani09}), which led to important improvements in optimal transport theory.
Second, the transition costs of the Wasserstein diffusion are given by a Varadhan formula (see~\cite{vrenessesturm09}, Corollary~7.19).
The formula is identical to the Euclidean case, up to the replacement of the Euclidean norm by $d_W$. 

Although the existence of a Wasserstein diffusion was initially proven by von Renesse and Sturm using Dirichlet processes and the theory of Dirichlet forms (see~\cite{fukushimaetal11}), it can also be obtained as a limit of finite-dimensional systems of interacting particles, see~\cite{andresvonrenesse10} and~\cite{sturm14}. 
Nevertheless, we will focus in this paper on a construction of a system of particles which seems more natural and simpler and which is due to Konarovskyi in~\cite{konarovskii11} and~\cite{konarovskyi17system}.

\subsection{Konarovskyi's model}

In~\cite{konarovskyi17system}, Konarovskyi studies a simple system of $N$ interacting and coalescing particles and proves its convergence to an infinite-dimensional process which has the features of a diffusion on the $L_2$-Wasserstein space of probability measures (see also~\cite{konarovskii11}, \cite{konarovskyi17behavior}, \cite{konarovskyivonrenesse18}). 
However, even if it has common properties with the diffusion of von Renesse and Sturm, there are also important differences between the two processes. An outstanding property of Konarovskyi's process is the fact that, for a large family of initial measures, it takes values in the set of measures with finite support for each time $t>0$ (see~\cite{konarovskyi17behavior}), whereas the values of the Wasserstein diffusion of von Renesse and Sturm are probability measures on $[0,1]$ with no absolutely continuous part and no discrete part.

The model introduced by Konarovskyi is a modification of the Arratia flow, also called Coalescing Brownian flow, introduced by Arratia~\cite{arratia79} and subject of many interest, among others in~\cite{dorogovtsev04}, \cite{lejanraimond04}, \cite{norristurner15}, \cite{piterbarg98}. It consists of Brownian particles starting at discrete points of the real line and moving independently until they meet another particle: when they meet, they stick together to form a single Brownian particle. 

In his model (see~\cite{konarovskyi17system}), Konarovskyi adds a mass to every particle: at time $t=0$, $N$~particles, denoted by $(x_k(t))_{k \in \{1,\dots,N\}}$, start from $N$ points regularly distributed on the unit interval $[0,1]$, and each particle has a mass equal to $\frac{1}{N}$. When two particles stick together, they form as in the standard Arratia flow a unique particle, but with a mass equal to the sum of the two incident particles. Furthermore, the quadratic variation process of each particle is assumed to be inversely proportional to its mass. In other words, the heavier a particle is, the smaller its fluctuations are. 

Konarovskyi constructs an associated process $(y^N(u,t))_{u \in [0,1],t \in [0,T]}$ in the set $\mathcal D([0,1], \mathcal C[0,T])$ of càdlàg functions on $[0,1]$ taking values in $\mathcal C[0,T]$ by setting: 
\[
y^N(u,t):= \sum_{k =1}^N x_k(t) \mathds 1_{\{ u\in [\frac{k-1}{N},\frac{k}{N})\}}+x_N(t)\mathds 1_{\{u=1\}}.
\]
In other words, $y^N(\cdot,t)$ is the quantile function associated to the empirical measure $\frac{1}{N}\sum_{k=1}^N \delta_{x_k(t)}$. 
Konarovskyi showed in~\cite{konarovskyi17system} that the sequence $(y^N)_{N\geq 1}$ is tight in $\mathcal D([0,1], \mathcal C[0,T])$. Hence, by passing to the limit upon a subsequence, there exists a process $(y(u,t))_{u \in [0,1],t \in [0,T]}$ belonging to $\mathcal D([0,1], \mathcal C[0,T])$ and satisfying the following four properties: 
\begin{itemize}
\label{propriétés (i)-(iv)}
\item[$(i_0)$] for all $u \in [0,1]$, $y(u,0)=u$; 
\item[$(ii)$] for all $u \leq v$, for all $t \in [0,T]$, $y(u,t)\leq y(v,t)$;
\item[$(iii)$] for all $u \in [0,1]$, $y(u,\cdot)$ is a square integrable continuous martingale relatively to the filtration $(\mathcal F_t)_{t \in [0,T]}:=(\sigma (y(v,s),v\in [0,1],s\leq t))_{t \in [0,T]}$;
\item[$(iv)$] for all $u,u' \in [0,1]$, 
\[
\langle y(u,\cdot),y(u',\cdot)\rangle_t = \int_0^t \frac{\mathds 1_{\{ \tau_{u,u'} \leq s \}}}{m(u,s)}\mathrm ds,
\]
where $m(u,t)=\int_0^1 \mathds 1_{\{\exists s \leq t: \, y(u,s)=y(v,s)\}} \mathrm dv$ and $\tau_{u,u'}=\inf\{t\geq 0: y(u,t)=y(u',t)\}\wedge T$. 
\end{itemize}

By transporting the Lebesgue measure on $[0,1]$ by the map $y(\cdot,t)$, we obtain a measure-valued process $(\mu_t)_{t \in [0,T]}$ defined by: $\mu_t:= \Leb|_{[0,1]} \circ y(\cdot,t)^{-1}$. In other words, $u\mapsto y(u,t)$ is the quantile function associated to $\mu_t$. An important feature of this process is that for each positive $t$, $\mu_t$ is an atomic measure with a finite number of atoms, or in other words that $y(\cdot,t)$ is a step function. 

More generally, Konarovskyi proves in~\cite{konarovskyi17behavior} that this construction also holds for a greater family of initial measures $\mu_0$. He constructs a process $y^g$ in $\mathcal D([0,1],\mathcal C[0,T])$ satisfying $(ii)-(iv)$ and:
\begin{itemize}
\label{propriété (i)}
\item[$(i)$] for all $u \in [0,1]$, $y^g(u,0)=g(u)$,
\end{itemize}
for every non-decreasing càdlàg function $g$ from $[0,1]$ into $\R$ such that there exists $p>2$ satisfying $\int_0^1 |g(u)|^p \mathrm du<\infty$. 
In other words, he generalizes the construction of a diffusion starting from any probability measure $\mu_0$ satisfying $\int_\R |x|^p \mathrm d\mu_0(x)<\infty$ for a certain $p>2$, where $\mu_0=\Leb|_{[0,1]}\circ g^{-1}$, which means that $g$ is the quantile function of the initial measure. The property that $y^g(\cdot,t)$ is a step function for each $t>0$ remains true for this larger class of functions $g$.

The process $y^g$ is said to be \emph{coalescent}:  almost surely, for every $u,v \in [0,1]$ and for every $t \in (\tau_{u,v},T]$, we have $y^g(u,t)=y^g(v,t)$ (recall that $\tau_{u,v}=\inf\{t\geq 0: y^g(u,t)=y^g(v,t)\}\wedge T$). This property is a consequence of~$(ii)$, $(iii)$  and of the fact that for each $t>0$, $y^g(\cdot,t)$ is a step function (see~\cite[p.11]{konarovskyivonrenesse18}). Therefore, we can rewrite the formula for the mass as follows: 
\begin{align*}
m^g(u,t)= \int_0^1 \mathds 1_{\{\exists s \leq t: \, y^g(u,s)=y^g(v,s)\}} \mathrm dv
=\int_0^1 \mathds 1_{\{y^g(u,t)=y^g(v,t) \}} \mathrm dv.
\end{align*}

Moreover, we can compare the diffusive properties of the process~$(\mu_t)_{t \in [0,T]}$ in the Wasserstein space $\mathcal P_2(\R)$ with the Wasserstein diffusion of von Renesse and Sturm. To that extent and thanks to Lions' differential calculus on $\mathcal P_2(\R)$~(\cite{lions_college},\cite{cardaliaguet13}), we give in Appendix~\ref{appendix} an Itô formula on $\mathcal P_2(\R)$ for the process $(\mu_t)_{t \in [0,T]}$ in order to describe the energy of the martingale part of this diffusion. 
Appendix~\ref{appendix} also contains a small introduction to the differentiability on $\mathcal P_2(\R)$ in the sense of Lions. 

\subsection{Approximation of a Wasserstein diffusion}

In this paper, we propose a new method to construct a process $y$ satisfying properties $(i)$-$(iv)$, 
by approaching $y$ by a sequence of smooth processes. Finding smooth approximations of processes having singularities has already led to interesting results, typically in the case of the Arratia flow. Piterbarg~\cite{piterbarg98} shows that the Coalescing Brownian flow is the weak limit of isotropic homeomorphic flows in some space of discontinuous functions, and deduces from the properties of the limit process a careful description of contraction and expansion regions of homeomorphic flows. 
Dorogovtsev's approximation~\cite{dorogovtsev04} is based on a representation of the Arratia flow with a Brownian sheet. 

We propose an adaptation of Dorogovtsev's idea in the case of Wasserstein diffusions. First, we show that a process $y$ satisfying~$(i)$-$(iv)$ admits a representation in terms of a Brownian sheet; we refer to the lectures of Walsh~\cite{walsh86} for a complete introduction to Brownian sheet and to Section~\ref{sec sing repr} for the characterization of Brownian sheet which we use in this paper. 
\begin{théo}
\label{théo 1}
Let $g:[0,1] \to \R$ be a non-decreasing and càdlàg function such that there exists $p>2$ satisfying $\int_0^1 |g(u)|^p \mathrm du <+\infty$. 
Let $y$ be a process in $L_2([0,1],\mathcal C[0,T])$ that satisfies conditions $(i)$, $(ii)$, $(iii)$ and $(iv)$. 
There exists a Brownian sheet $w$ on $[0,1]\times [0,T]$ such that for all $u \in [0,1]$ and $t \in [0,T]$:
\begin{equation}
\label{eq brownian sheet}
y(u,t)=g(u)+\int_0^t \!\!\int_0^1 \frac{\mathds 1_{\{y(u,s)=y(u',s)\}}}{m(u,s)}  \mathrm dw(u',s),
\end{equation}
where $m(u,s)=\displaystyle\int_0^1 \mathds 1_{\{y(u,s)=y(v,s)\}} \mathrm dv$.
\end{théo}

\begin{rem}
We refer to Appendix~\ref{appendix} to justify the use of the term "Wasserstein diffusion" for a process satisfying equation~\eqref{eq brownian sheet}. Indeed, we can  write an Itô formula for this process for a smooth function $u: \mathcal P_2(\R)\to \R$. As in the case of the standard Euclidean Brownian motion, the quadratic variation of the martingale term is proportional to the square of the gradient of $u$, in the sense of Lions' differential calculus on $\mathcal P_2(\R)$, which is the same as the differential calculus on the Wasserstein space (see~\cite[Section 5.4]{carmonadelarue18}). 
\end{rem}

The aim of this paper is to construct a sequence of smooth processes approaching $y$ in the space $L_2([0,1],\mathcal C[0,T])$. Therefore, we use the representation~\eqref{eq brownian sheet} in terms of a Brownian sheet of $y$ and, given a positive parameter $\sigma$, we replace in the latter representation the indicator functions by a smooth function $\varphi_\sigma$ equal to 1 in the neighbourhood of 0 and whose support is included in the interval $\left[-\frac{\sigma}{2}, \frac{\sigma}{2}\right]$ of small diameter $\sigma$.  Fix $\sigma>0$ and $\eps>0$.
Given a Brownian sheet $w$ on $[0,1]\times[0,T]$, we prove the existence of a process $y_{\sigma,\eps}$ satisfying:
\begin{equation}
\label{def y sigma}
y_{\sigma,\eps}(u,t)=g(u)+\int_0^t\!\! \int_0^1 \frac{\varphi_\sigma(y_{\sigma,\eps}(u,s)-y_{\sigma,\eps}(u',s))}{\eps+m_{\sigma,\eps}(u,s)}  \mathrm dw(u',s),
\end{equation}
where $m_{\sigma,\eps}(u,s):= \int_0^1 \varphi_\sigma^2(y_{\sigma,\eps}(u,s)-y_{\sigma,\eps}(v,s))\mathrm dv$ can be seen as a kind of mass of particle $y_{\sigma,\eps}(u)$ at time~$s$. Remark that, due to the fact that the support of $\varphi_\sigma$ is small, only the particles located at a distance lower than $\frac{\sigma}{2}$ of particle $u$ at time $s$ are taken into account in the computation of the mass $m_{\sigma,\eps}(u,s)$.

The smooth process $(y_{\sigma,\eps}(u,t))_{u\in [0,1],t \in [0,T]}$ offers several advantages. First, we are able to construct a strong solution $(y_{\sigma,\eps},w)$ to equation~\eqref{def y sigma}, whereas in equation~\eqref{eq brownian sheet}, we do not know if, given a Brownian sheet $w$, there exists an adapted solution~$y$. 
Second, in Konarovskyi's process, the question of uniqueness of a solution to~$(1)$, even in the weak sense, or equivalently the question of uniqueness of a process in $L_2([0,1],\mathcal C[0,T])$ satisfying conditions $(i)$-$(iv)$, remains open. Here, pathwise uniqueness holds for equation~\eqref{def y sigma}. {Moreover, the measure-valued process $(\mu_t^{\sigma,\eps})_{t \in [0,T]}$ associated to the process of quantile functions $(y_{\sigma,\eps}(\cdot,t))_{t \in [0,T]}$ does generally no longer consist of atomic measures. For example, if $g(u)=u$, $(\mu_t^{\sigma,\eps})_{t \in [0,T]}$ is a process of absolutely continuous measures with respect to the Lebesgue measure.

Let $L_2[0,1]$ be the usual space of square integrable functions from $[0,1]$ to $\R$, and $(\cdot,\cdot)_{L_2}$ the usual scalar product. We denote by $L^\uparrow_2[0,1]$ the set of functions $f\in L_2[0,1]$ such that there exists a non-decreasing and therefore \emph{càdlàg} (\emph{i.e.} right-continuous with left limits everywhere) element in the equivalence class of $f$. 
Let $\mathcal D((0,1),\mathcal C[0,T])$ be the space of right-continuous $\mathcal C[0,T]$-valued functions with left limits, equipped with the Skorohod metric.

We follow the definition given in~\cite[p.21]{gawareckimandrekar11}: 
\begin{déf}
\label{def martingale}
An $(\mathcal F_t)_{t \in [0,T]}$-adapted process $M$ is an \emph{$L^\uparrow_2[0,1]$-valued $(\mathcal F_t)_{t \in [0,T]}$-martingale} if $M_t$ belongs to $L^\uparrow_2[0,1]$ for each $t \in [0,T]$, if $\E{\|M_t\|_{L_2}}< \infty$ and if for each $h \in L_2[0,1]$, $(M_t,h)_{L_2}$ is a real-valued $(\mathcal F_t)_{t \in [0,T]}$-martingale. 
The martingale is said to be \emph{square integrable} if for each $t \in [0,T]$, $\E{\|M_t\|_{L_2}^2}<+\infty$, and \emph{continuous} if the process $t \mapsto M_t$ is a continuous function from $[0,T]$ to $L_2[0,1]$.  
\end{déf}

Let us denote by $\mathcal  L^\uparrow_{2+}[0,1]$ the set of all non-decreasing and càdlàg functions $g:[0,1]\rightarrow\overline{\R}$, where $\overline{\R}:= \R\cup \{-\infty,+\infty\}$, such that there exists $p>2$ for which $\int_0^1 |g(u)|^p \mathrm du <+\infty$. Let $\Q_+=\Q \cap [0,1]$. 
The following Theorem states the convergence of the mollified sequence $(y_{\sigma,\eps})_{\sigma>0,\eps>0}$ to a limit process satisfying properties~$(i)-(iv)$.
It uses the framework introduced by Konarovskyi in~\cite{konarovskyi17behavior}:

\begin{théo}
\label{théo 2}
Let $g \in \mathcal  L^\uparrow_{2+}[0,1]$. For each positive $\sigma$ and $\eps$, there exists a solution $y_{\sigma,\eps}$ to equation~\eqref{def y sigma} such that  $(y_{\sigma,\eps}(u,t))_{u \in [0,1], t\in [0,T]}$ belongs to $L_2([0,1], \mathcal C[0,T])$ and almost surely, for each $t \in [0,T]$, $y_{\sigma,\eps}(\cdot,t) \in L^\uparrow_2[0,1]$. 

Furthermore, up to extracting a subsequence, the sequence $(y_{\sigma,\eps})_{\eps>0}$ converges in distribution in $L_2([0,1],\mathcal C[0,T])$ for every $\sigma \in \Q_+$ as $\eps$ tends to $0$ to a limit $y_\sigma$ and the sequence $(y_\sigma)_{\sigma \in \Q_+}$ converges in distribution in $L_2([0,1],\mathcal C[0,T])$ as $\sigma$ tends to $0$ to a limit $y$. Let $Y(t):=y(\cdot,t)$. Then $(Y(t))_{t \in [0,T]}$ is a $L^\uparrow_2[0,1]$-valued process such that:
\begin{itemize}
\item[$(C1)$] $Y(0)=g$;

\item[$(C2)$] $(Y(t))_{t \in [0,T]}$ is a square integrable continuous $L^\uparrow_2[0,1]$-valued $(\mathcal F_t)_{t \in [0,T]}$-martingale, where $\mathcal F_t:=\sigma(Y(s),s\leq t)$; 

\item[$(C3)$] almost surely, for every $t>0$, $Y(t)$ is a step function, \emph{i.e.} there exist $n \geq 1$,  $0=a_1<a_2<\dots<a_n<a_{n+1}=1$ and $z_1<z_2<\dots<z_n$ such that for all $u\in [0,1]$
\begin{align*}
Y(t)(u)=y(u,t)=\sum_{k=1}^n z_k \mathds 1_{\{u \in [a_k,a_{k+1})\}}+z_n \mathds 1_{\{u=1\}};  
\end{align*}

\item[$(C4)$]
$y$ belongs to $\mathcal D((0,1),\mathcal C[0,T])$ and for every $u \in (0,1)$, $y(u, \cdot)$ is a square integrable and continuous $(\mathcal F_t)_{t \in [0,T]}$-martingale and 
\begin{align*}
\P{\forall u,v \in (0,1), \forall s \in [0,T],y(u,s)=y(v,s) \text{ implies } \forall t \geq s, y(u,t)=y(v,t)}=1;
\end{align*}

\item[$(C5)$]
for each $u$ and $u'$ in $(0,1)$, 
\begin{align*}
\langle y(u,\cdot),y(u',\cdot)\rangle_t = \int_0^t \frac{\mathds 1_{\{ \tau_{u,u'} \leq s \}}}{m(u,s)}\mathrm ds,
\end{align*}
where $m(u,s)=\displaystyle\int_0^1 \mathds 1_{\{ y(u,s)=y(v,s) \}} \mathrm dv$ and $\tau_{u,u'}=\inf\{t\geq 0: y(u,t)=y(u',t)\}\wedge T$.
\end{itemize}
\end{théo}

\begin{rem}
More precisely, the filtration $(\mathcal F_t)_{t \in [0,T]}$ is given by:
\begin{align*}
\mathcal F_t=\sigma((Y(s),h)_{L_2}, s \leq t, h \in L_2[0,1]).
\end{align*}
\end{rem}

\begin{rem}
By property~$(C4)$, the limit process $y$ is said to be coalescent: if for a certain time~$t_0$, two particles $y(u,t_0)$ and $y(v,t_0)$ coincide, then they move together forever, \textit{i.e.} $ y(u,t)=y(v,t)$ for every $t \geq t_0$. 
\end{rem}

\begin{figure}[hbt]\centering
\includegraphics[width=16cm,height=12cm]{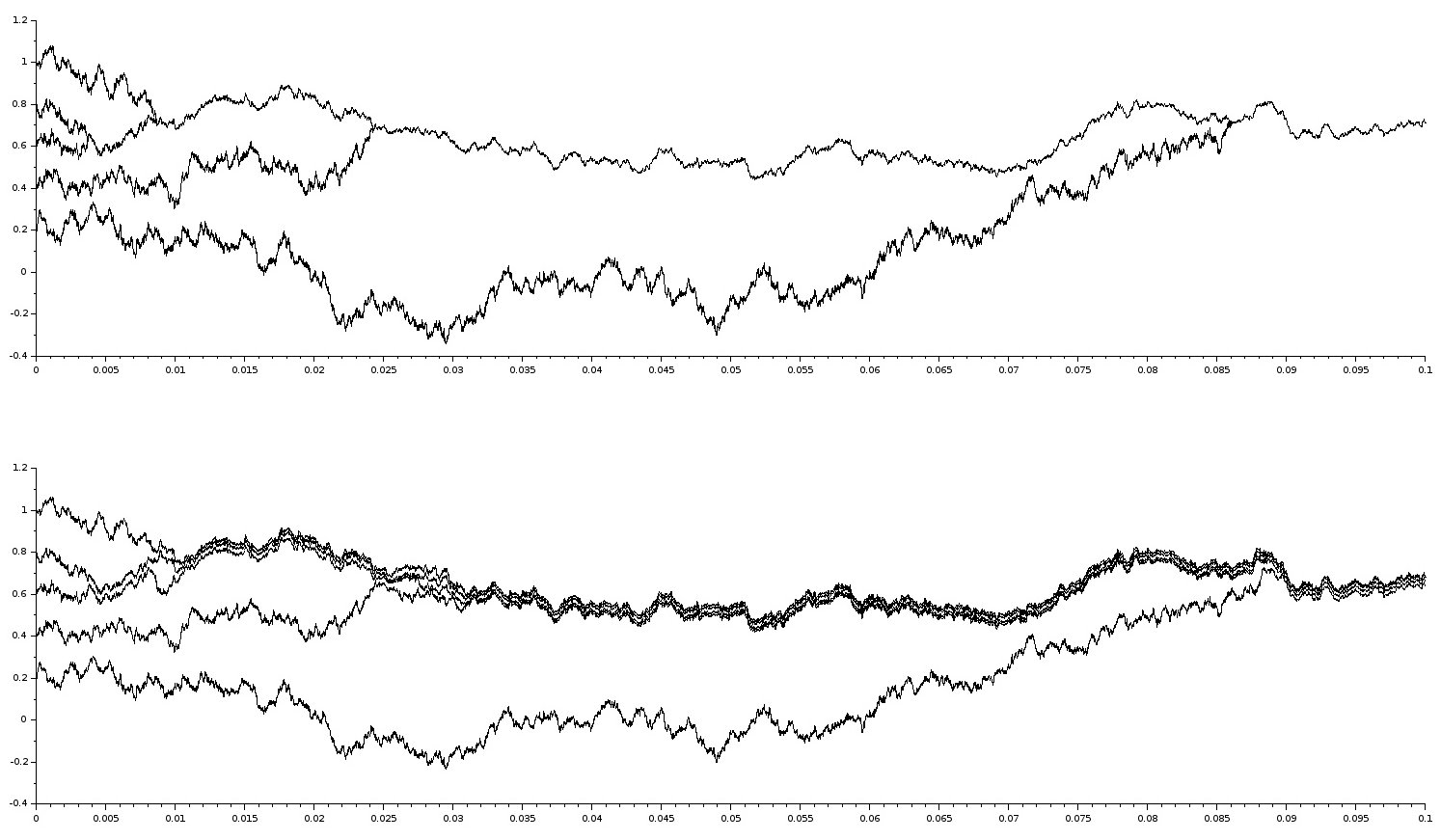}
\caption{\footnotesize{Two simulations, based on the same underlying Brownian sheet, for the limit process $(\mu_t)_{t \in [0,T]}$ \emph{(on top)} and for the process $(\mu_t^{\sigma,\eps})_{t \in [0,T]}$ with positive $\sigma$ and $\eps$ \emph{(on bottom)}. The horizontal axis represents time. On the vertical axis, we put the position of the particles (initially, we took five particles on $[0,1]$).} 
}
\label{figure}
\end{figure} 

It is interesting to wonder how the coalescence property of the process $y$ translates to its smooth approximation $y_{\sigma,\eps}$: two paths $(y_{\sigma,\eps}(u,t))_{t \in [0,T]}$ and 
$(y_{\sigma,\eps}(v,t))_{t \in [0,T]}$, starting from two distinct points $g(u)$ and $g(v)$, do not meet, which means that $y_{\sigma,\eps}(\cdot,t)$ is non-decreasing for each fixed~$t$. If $y_{\sigma,\eps}(u,\cdot)$ and $y_{\sigma,\eps}(v,\cdot)$ get close enough, at distance smaller than $\sigma$, they begin to interact and to move together, whereas as long as they remain at distance greater than $\sigma$, they move "independently": more precisely, the covariation $\langle y_{\sigma,\eps}(u,\cdot),y_{\sigma,\eps}(v,\cdot)\rangle_t$ is equal to zero for every time $t \leq \tau^\sigma_{u,v}:=\inf\{ s \geq 0: |y_{\sigma,\eps}(u,s)-y_{\sigma,\eps}(v,s)| \leq \sigma\}$ (see figure~\ref{figure}).

\subsection*{Organisation of the article}
We begin in Section~\ref{sec sing repr} by proving Theorem~\ref{théo 1}, which states that a process $y$ satisfying properties $(i)$-$(iv)$ admits a representation in terms of a Brownian sheet.
In Section~\ref{sec construction}, given a two-dimensional Brownian sheet, we prove the existence of a smooth process in the space $L_2([0,1],\mathcal C[0,T])$ intended to approach Konarovskyi's process of coalescing particles. This smooth process can be seen as a cloud of point-particles interacting with all the particles at a distance smaller than $\sigma$, and in which two particles have independent trajectories conditionally to the fact that the distance between them is greater than $ \sigma$. When the distance becomes smaller than $\sigma$, both trajectories are correlated, mimicking the coalescence property. 

Section~\ref{sec convergence} is devoted to the proof of convergence when the parameter $\eps$ and the range of interaction $\sigma$ tend to zero, using a tightness criterion in $L_2([0,1],\mathcal C[0,T])$. In Section~\ref{sec properties}, we study the stochastic properties of the limit process, including the convergence of the mass process. The aim of this final part is to prove that the limit process $y$ satisfies properties $(C1)$-$(C5)$ of Theorem~\ref{théo 2}, in other words that our sequence of short-range interaction processes converges in distribution to the process of coalescing particles. 

In Appendix~\ref{appendix}, we give an Itô formula in the Wasserstein space for the limit process $y$, after having recalled some basic definitions and properties of Lions' differential calculus on $\mathcal P_2(\R)$.

\section{Singular representation of the process $y$}
\label{sec sing repr}

Let $(\Omega, \mathcal F,  \mathbb P)$ be a probability space. 
Let us consider on $(\Omega, \mathcal F,  \mathbb P)$ a random process $y \in L_2((0,1),\mathcal C[0,T])$ satisfying properties $(i)$-$(iv)$.
We refer to~\cite{konarovskyi17behavior} for a comprehensive construction of $y$. We will give another one later in this paper. 

The aim of this Paragraph is to prove Theorem~\ref{théo 1}. Before that, we recall the definition of a Brownian sheet given by Walsh in~\cite[p.269]{walsh86}. 
Let $(E, \mathcal E,\nu)$ be a Euclidean space equipped with Lebesgue measure. A white noise based on $\nu$ is a random set function $W$ on the sets $A \in \mathcal E$ of finite $\nu$-measure such that
\begin{itemize}
\item  $W(A)$ is a $\mathcal N(0,\nu(A))$ random variable,
\item if $A \cap B = \emptyset$, then $W(A)$ and $W(B)$ are independent and $W(A \cap B)=W(A)+W(B)$.
\end{itemize}
Let $T>0$. Consider $E=[0,1]\times [0,T]$ and $\nu$ the associated Lebesgue measure. The \emph{Brownian sheet} $w$ on $[0,1]\times[0,T]$ associated to the white noise $W$ is the process $(w(u,t))_{u \in [0,1] \times [0,T]}$ defined by $w(u,t):=W((0,u] \times (0,t])$. 

Define the filtration $(\mathcal G_t)_{t\in [0,T]}$ by $\mathcal G_t:=\sigma(w(u,s), u\in [0,1], s\leq t)$. Then in particular,
\begin{itemize}
\item [(i)] for each $(\mathcal G_t)_{t\in[0,T]}$-progressively measurable function $f$ defined on $[0,1]\times[0,T]$ such that $\int_0^T \int_0^1 f^2(u,s)\mathrm du\mathrm ds<+\infty$ almost surely, the process $\left(\int_0^t \int_0^1 f(u,s)\mathrm dw(u,s)\right)_{t\in[0,T]}$ is a local martingale  (we often write $\mathrm dw(u,s)$ instead of $w(\mathrm du, \mathrm ds)$);
\item [(ii)] for each $f_1$ and $f_2$ satisfying the same conditions as $f$, 
\begin{align*}
\langle \int_0^\cdot \!\! \int_0^1 f_1(u,s)\mathrm dw(u,s),\int_0^\cdot\!\! \int_0^1 f_2(u,s)\mathrm dw(u,s) \rangle_t
&= \int_0^t\!\! \int_0^1 f_1(u,s)f_2(u,s) \mathrm du\mathrm ds.
\end{align*}
\end{itemize}
By Lévy's characterization of the Brownian motion, a process $w$ satisfying~$(i)$ and~$(ii)$ is a Brownian sheet. 
Let us now prove Theorem~\ref{théo 1}. 

\begin{proof}[Proof (Theorem~\ref{théo 1})]
We take a Brownian sheet $\eta$ on $[0,1]\times [0,T]$ independent of the process $y$, constructed by possibly extending the probability space $(\Omega, \mathcal F,  \mathbb P)$. 
Then, we define $(w(u,t))_{u\in [0,1],t\in[0,T]}$ by $w(0,\cdot)\equiv 0$, $w(\cdot,0)\equiv 0$ and:
\begin{equation*}
w(\mathrm du,\mathrm dt)=\eta(\mathrm du,\mathrm dt)+y(u,\mathrm dt)\mathrm du - \frac{1}{m(u,t)}\int_0^1 \mathds 1_{\{y(u,t)=y(u',t)  \}}\eta (\mathrm du',\mathrm dt)\mathrm du.
\end{equation*}
We denote by $\mathcal{H}_t$ the filtration $\sigma((y(u,s))_{u \in [0,1],s\leq t},(\eta(u,s))_{u \in [0,1],s\leq t})$.

In order to prove that $w$ is an $(\mathcal H_t)_{t\in[0,T]}$-Brownian sheet on $[0,1]\times[0,T]$, let us consider two $(\mathcal H_t)_{t\in[0,T]}$-progressively measurable functions $f_1$ and $f_2$ and compute, using independence of $\eta$ and $y$:
\begin{equation*}
\langle \int_0^\cdot \!\!\int_0^1 f_1(u,s) \mathrm dw(u,s),\int_0^\cdot \!\! \int_0^1 f_2(v,s) \mathrm dw(v,s) \rangle_t = V_1+V_2-V_3-V_4+V_5, 
\end{equation*}
where 
\begin{equation*}
V_1:=\langle \int_0^\cdot \!\!\int_0^1 f_1(u,s) \mathrm d\eta(u,s),\int_0^\cdot\!\! \int_0^1 f_2(v,s) \mathrm d\eta(v,s) \rangle_t =\int_0^t \!\! \int_0^1 f_1(u,s)f_2(u,s) \mathrm du \mathrm ds,
\end{equation*}
since $\eta$ is an $(\mathcal H_t)_{t\in[0,T]}$-Brownian sheet;
\begin{align*}
V_2&:=\langle \int_0^\cdot \!\!\int_0^1 f_1(u,s) \mathrm dy(u,s) \mathrm du,\int_0^\cdot\!\! \int_0^1 f_2(v,s) \mathrm dy(v,s)\mathrm dv \rangle_t \\
&=\int_0^t \!\!\int_0^1 \!\!\int_0^1 f_1(u,s)f_2(v,s) \frac{\mathds 1_{\{ y(u,s)=y(v,s) \}}}{m(u,s)}\mathrm du \mathrm dv \mathrm ds,
\end{align*}
using property $(iv)$ of process $y$;
\begin{align*}
V_3&:=\langle \int_0^\cdot\!\! \int_0^1 f_1(u,s) \mathrm d\eta(u,s) ,\int_0^\cdot \!\!\int_0^1 \frac{f_2(v,s)}{m(v,s)}\int_0^1 \mathds 1_{\{y(v,s)=y(v',s)  \}} \mathrm d\eta(v',s)\mathrm dv \rangle_t \\
& =\int_0^t\!\! \int_0^1 \!\!\int_0^1  \frac{f_1(u,s)f_2(v,s)}{m(v,s)} \mathds 1_{\{ y(v,s)=y(u,s) \}}\mathrm du  \mathrm dv \mathrm ds= V_2,
\end{align*}
since $m(u,s)=m(v,s)$ whenever $y(u,s)$ is equal to $y(v,s)$. 
By similar computations, 
\begin{equation*}
V_4:=\langle \int_0^\cdot \!\!\int_0^1 \frac{f_1(u,s)}{m(u,s)}\int_0^1 \mathds 1_{\{y(u,s)=y(u',s)  \}} \mathrm d\eta(u',s)\mathrm du,\int_0^\cdot\!\! \int_0^1 f_2(v,s) \mathrm d\eta(v,s) \rangle_t =V_2,
\end{equation*}
and
\begin{align*}
V_5:&=\langle \int\limits_0^\cdot \int\limits_0^1 \frac{f_1(u,s)}{m(u,s)}\int\limits_0^1 \mathds 1_{\{y(u,s)=y(u',s)  \}} \mathrm d\eta(u',s)\mathrm du,\int\limits_0^\cdot \int\limits_0^1 \frac{f_2(v,s)}{m(v,s)}\int\limits_0^1 \mathds 1_{\{y(v,s)=y(v',s)  \}} \mathrm d\eta(v',s)\mathrm dv\rangle_t \\
&= \int_0^t\!\! \int_0^1 \!\!\int_0^1 \!\!\int_0^1 \frac{f_1(u,s)f_2(v,s)}{m(u,s)m(v,s)}\mathds 1_{\{y(u,s)=y(u',s)\}}\mathds 1_{\{y(v,s)=y(u',s)\}}\mathrm du'  \mathrm du \mathrm dv \mathrm ds\\
&= \int_0^t \!\!\int_0^1 \!\!\int_0^1  \frac{f_1(u,s)f_2(v,s)}{m(u,s)^2}\left(\int_0^1\mathds 1_{\{y(u,s)=y(u',s)\}}\mathrm du'\right)\mathds 1_{\{y(u,s)=y(v,s)\}}  \mathrm du \mathrm dv \mathrm ds\\
&= \int_0^t \!\!\int_0^1 \!\!\int_0^1  \frac{f_1(u,s)f_2(v,s)}{m(u,s)}\mathds 1_{\{y(u,s)=y(v,s)\}}  \mathrm du \mathrm dv \mathrm ds=V_2. 
\end{align*}
To sum up, 
\begin{equation*}
\langle \int_0^\cdot \!\!\int_0^1 f_1(u,s) \mathrm dw(u,s),\int_0^\cdot \!\! \int_0^1 f_2(v,s) \mathrm dw(v,s) \rangle_t = V_1=\int_0^t \!\!\int_0^1 f_1(u,s)f_2(u,s) \mathrm du \mathrm ds, 
\end{equation*}
whence $w$ is an $(\mathcal H_t)_{t \in [0,T]}$-Brownian sheet. Finally, we show that $(y,w)$ satisfies equation~(\ref{eq brownian sheet}):
\begin{align}
\int_0^t \!\!\int_0^1 \frac{\mathds 1_{\{y(u,s)=y(u',s)\}}}{m(u,s)}  \mathrm dw(u',s)
&=\int_0^t \!\!\int_0^1 \frac{\mathds 1_{\{y(u,s)=y(u',s)\}}}{m(u,s)}\mathrm d\eta(u',s) \tag{$=: W_1$}\\
&+ \int_0^t \!\!\int_0^1 \frac{\mathds 1_{\{y(u,s)=y(u',s)\}}}{m(u,s)}\mathrm dy(u',s)\mathrm du' \tag{$=: W_2$}\\
&-\int_0^t \!\!\int_0^1 \frac{\mathds 1_{\{y(u,s)=y(u',s)\}}}{m(u,s)} \int_0^1 \frac{\mathds 1_{\{y(u',s)=y(v,s)  \}}}{m(u',s)} \mathrm d\eta (v,s)\mathrm du'\tag{$=: W_3$}.
\end{align}
The result follows from the two below equalities:
\begin{equation*}
W_2=\int_0^t \!\!\int_0^1 \frac{\mathds 1_{\{y(u,s)=y(u',s)\}}}{m(u,s)}\mathrm dy(u,s)\mathrm du'=\int_0^t \mathrm dy(u,s)=y(u,t)-y(u,0)=y(u,t)-g(u);
\end{equation*}
\begin{align*}
W_3&=\int_0^t\!\! \int_0^1 \frac{\mathds 1_{\{y(u,s)=y(u',s)\}}}{m(u,s)} \int_0^1 \frac{\mathds 1_{\{y(u',s)=y(v,s)  \}}}{m(v,s)} \mathrm d\eta (v,s)\mathrm du' \\
&=\int_0^t \!\!\int_0^1 \frac{\mathds 1_{\{y(u,s)=y(v,s)\}}}{m(u,s)m(v,s)} \left(\int_0^1 \mathds 1_{\{y(u',s)=y(v,s) \}}\mathrm du'\right) \mathrm d\eta (v,s) 
=\int_0^t \!\!\int_0^1 \frac{\mathds 1_{\{y(u,s)=y(v,s)\}}}{m(u,s)}  \mathrm d\eta (v,s),
\end{align*}
which implies that $W_3=W_1$ and consequently equation~(\ref{eq brownian sheet}). 
\end{proof}

Therefore, every solution of the martingale problem $(i)$-$(iv)$ has a representation in terms of a Brownian sheet. 
In the next Section, we will construct, given a Brownian sheet, an approximation of the process $y$.

\section{Construction of a process with short-range interactions}
\label{sec construction}

Let $(\Omega, \mathcal F,  \mathbb P)$ be a probability space, on which we define a Brownian sheet $w$ on $[0,1]\times[0,T]$. We associate to that process the filtration $\mathcal G_t:= \sigma(w(u,s),u\in [0,1],s\leq t)$. Up to completing the filtration, we assume that $\mathcal G_0$ contains all the $\mathbb P$-null sets of $\mathcal F$ and that the filtration $(\mathcal G_t )_{t \in [0,T]}$ is right-continuous.

Fix $\sigma>0$ and $\eps >0$. Let  $\varphi_\sigma$ denote a smooth and even function, bounded by 1, equal to~1 on $[0,\frac{\sigma}{3}]$ and equal to 0 on $[\frac{\sigma}{2},+\infty)$. 
Recall that $\mathcal L^\uparrow_{2+}[0,1]$ represents the set of non-decreasing  and càdlàg functions $g:[0,1]\rightarrow \R$ such that there exists $p>2$ satisfying $\int_0^1 |g(u)|^p \mathrm du <+\infty$. 
The aim of this Section is to construct, for each initial quantile function $g \in \mathcal L^\uparrow_{2+}[0,1]$, a square integrable random variable $y^g_{\sigma,\eps}$ taking values in $L_2([0,1],\mathcal C[0,T])$  such that almost surely, for every $t \in [0,T]$, the following equality holds in $L_2[0,1]$:
\begin{equation}
\label{equation avec phi}
y^g_{\sigma,\eps}(\cdot,t)
= g+ \int_0^t\!\!\int_0^1 \frac{\mathds \varphi_\sigma(y^g_{\sigma,\eps}(\cdot,s)-y^g_{\sigma,\eps}(u',s))}
{\eps + \int_0^1\varphi_\sigma^2(y^g_{\sigma,\eps}(\cdot,s)-y^g_{\sigma,\eps}(v,s)) \mathrm dv}    \mathrm dw(u',s).
\end{equation}

\begin{rem}
We add the parameter $\eps$ to the denominator in order to ensure that it is bounded by below. 
We also point out that relation~(\ref{equation avec phi}) has to be compared with equation~(\ref{eq brownian sheet}), 
where $x \mapsto \mathds 1_{\{x=0\}}$ is replaced by the function $\varphi_\sigma$.
\end{rem}

More precisely, we will prove the following Proposition. Recall that $L^\uparrow_2[0,1]$ represents the set of functions $f\in L_2[0,1]$ such that there is a non-decreasing and càdlàg element in the equivalence class of $f$.
\begin{prop}
\label{proprietes A1-A3}
Let $g \in \mathcal L^\uparrow_{2+}[0,1]$. There exists an $L^\uparrow_2[0,1]$-valued process $(Y^g_{\sigma,\eps}(t))_{t \in [0,T]}=(y^g_{\sigma,\eps}(\cdot,t))_{t \in [0,T]}$ such that:
\begin{itemize}
\item[$(A1)$] $Y^g_{\sigma,\eps}(0)=g$;

\item[$(A2)$] $Y^g_{\sigma,\eps}$ is a square integrable continuous $L^\uparrow_2[0,1]$-valued $(\mathcal F^{\sigma,\eps}_t)_{t \in [0,T]}$-martingale, where $\mathcal F^{\sigma,\eps}_t:=\sigma(Y^g_{\sigma,\eps}(s),s\leq t)$;

\item[$(A3)$]
for every $h,k \in L_2[0,1]$, 
\begin{align*}
\langle (Y^g_{\sigma,\eps},h)_{L_2},(Y^g_{\sigma,\eps},k)_{L_2}\rangle_t = \int_0^t \!\! \int_0^1 \!\! \int_0^1 h(u)k(u') \frac{m^g_{\sigma,\eps}(u,u',s)}{(\eps+m^g_{\sigma,\eps}(u,s))(\eps+m^g_{\sigma,\eps}(u',s))}\mathrm du\mathrm du' \mathrm ds,
\end{align*}
where $m^g_{\sigma,\eps}(u,u',s) =\int_0^1 \varphi_\sigma (y^g_{\sigma,\eps}(u,s)-y^g_{\sigma,\eps}(v,s)) \varphi_\sigma (y^g_{\sigma,\eps}(u',s)-y^g_{\sigma,\eps}(v,s)) \mathrm dv$ and \newline $m^g_{\sigma,\eps}(u,s)=\int_0^1 \varphi_\sigma^2(y^g_{\sigma,\eps}(u,s)-y^g_{\sigma,\eps}(v,s))\mathrm dv$. 
\end{itemize}
\end{prop}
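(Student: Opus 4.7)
The plan is to recast equation~(\ref{equation avec phi}) as a stochastic differential equation in the Hilbert space $H:=L_2[0,1]$ driven by the Brownian sheet $w$ (viewed through its cylindrical-Brownian-motion representation on $H$), and to appeal to a standard strong existence-and-uniqueness theorem once the diffusion coefficient has been shown to be globally Lipschitz and bounded. For $y\in H$, let $F(y)$ denote the integral operator on $H$ with kernel
\[
F(y)(u,u'):=\frac{\varphi_\sigma(y(u)-y(u'))}{\eps+\int_0^1 \varphi_\sigma^2(y(u)-y(v))\,\mathrm dv}.
\]
Since $|\varphi_\sigma|\leq 1$ and the denominator is bounded below by $\eps$, one has $|F(y)(u,u')|\leq 1/\eps$, hence $\|F(y)\|_{\mathrm{HS}}\leq 1/\eps$ uniformly in $y$, so $F$ sends $H$ into the space of Hilbert--Schmidt operators on $H$. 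Using that $\varphi_\sigma$ and $\varphi_\sigma^2$ are uniformly Lipschitz (being smooth and compactly supported) together with the uniform lower bound $\eps$ on the denominator, a direct bookkeeping computation shows that $y\mapsto F(y)$ is itself Lipschitz, with constant depending only on $\sigma$, $\eps$ and the $C^1$-norm of $\varphi_\sigma$.

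First I would invoke the classical strong existence and uniqueness result for Hilbert-space-valued SDEs with bounded Lipschitz diffusion coefficient (in the spirit of~\cite{gawarecki_stochastic_2011}) to produce a unique continuous $H$-valued strong solution $(Y^g_{\sigma,\eps}(t))_{t\in[0,T]}$ of $\mathrm dY_t=F(Y_t)\,\mathrm dw$ started at $g$, adapted to the Brownian-sheet filtration $(\mathcal G_t)_{t\in[0,T]}$ and satisfying $\mathbb E\sup_{t\in[0,T]}\|Y^g_{\sigma,\eps}(t)\|_{L_2}^2<\infty$. Property~$(A1)$ is then built in. Property~$(A2)$ follows because $Y^g_{\sigma,\eps}(t)-g$ is a Hilbert-space-valued stochastic integral against $w$, hence a continuous square integrable $H$-valued $(\mathcal G_t)$-martingale; since it is adapted to the smaller filtration $(\mathcal F^{\sigma,\eps}_t)$, it remains an $(\mathcal F^{\sigma,\eps}_t)$-martingale by conditioning. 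Property~$(A3)$ is obtained by testing the equation against arbitrary $h,k\in L_2[0,1]$, applying Itô's isometry for Brownian-sheet integrals (property~$(ii)$ of Section~\ref{sec sing repr}) and rearranging via Fubini to recognise in the resulting integrand the cross-mass $m^g_{\sigma,\eps}(u,u',s)$ divided by the two denominators.

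The remaining issue is to check that $Y^g_{\sigma,\eps}(t)$ actually lies in $L^\uparrow_2[0,1]$, which I plan to handle by a finite-dimensional approximation. For $n\geq 1$, set $g_n:=\sum_{k=1}^n g(k/n)\mathds 1_{[(k-1)/n,k/n)}$, which is non-decreasing, piecewise constant, and converges to $g$ in $L_2[0,1]$. Looking for the solution with initial condition $g_n$ as a step function $y^n(u,t)=\sum_{k=1}^n x^n_k(t)\mathds 1_{[(k-1)/n,k/n)}(u)$ reduces equation~(\ref{equation avec phi}) to an $\R^n$-valued SDE for $(x^n_1,\ldots,x^n_n)$ with globally Lipschitz bounded coefficients, driven by the independent standard Brownian motions $B^n_j(t):=\sqrt n\,(w(j/n,t)-w((j-1)/n,t))$; classical theory yields strong existence and pathwise uniqueness. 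The key \emph{non-crossing} observation is that when $x^n_k=x^n_{k+1}$, rows $k$ and $k+1$ of the $\R^n$-diffusion matrix coincide, so the affine subspace $\{x^n_k=x^n_{k+1}\}$ is invariant by strong uniqueness (restricting the $n$-dimensional system to this subspace yields a well-posed $(n-1)$-dimensional SDE which, reinserted, provides a solution that must agree with the original by uniqueness); by path continuity, adjacent particles cannot cross, and each $y^n(\cdot,t)$ is almost surely non-decreasing. Lipschitz stability of SDEs in $H$ with respect to the initial condition then yields $\mathbb E\sup_{t\in[0,T]}\|y^n(\cdot,t)-Y^g_{\sigma,\eps}(t)\|_{L_2}^2\to 0$ as $n\to\infty$, so $Y^g_{\sigma,\eps}(t)$ is an $L_2$-limit of non-decreasing functions and therefore admits a non-decreasing càdlàg representative, placing it in $L^\uparrow_2[0,1]$. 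The main subtlety in the whole plan is precisely this monotonicity-preservation step; the presence of the regularisation $\eps>0$ keeps every coefficient bounded and globally Lipschitz, which makes the other steps essentially routine.
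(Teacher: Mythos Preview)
Your proposal is correct and, for properties $(A1)$–$(A3)$, essentially parallels the paper: both obtain the solution by a fixed-point/Lipschitz argument in the Hilbert space $L_2[0,1]$ (the paper writes out the Picard iteration explicitly as Propositions~\ref{prop belong to M} and~\ref{prop point fixe}, while you invoke a packaged theorem), and both read off the quadratic covariation formula from It\^o's isometry for the Brownian sheet.

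The genuine difference is in how you establish that $Y^g_{\sigma,\eps}(t)\in L^\uparrow_2[0,1]$. You approximate $g$ by step functions $g_n$, reduce to a finite-dimensional SDE, and prove non-crossing there via invariance of the diagonal under strong uniqueness; then you pass to the $L_2$-limit using Lipschitz stability in the initial condition, together with closedness of the set of non-decreasing functions in $L_2$. The paper instead freezes the already-constructed $y^g_{\sigma,\eps}$ inside the coefficient and solves, for each $x\in\R$, a real-valued SDE $\mathrm d z(x,t)=\theta_{\sigma,\eps}(z(x,t),\cdot,\cdot)\,\mathrm dw$; it then shows that the difference $z(x_2,t)-z(x_1,t)$ satisfies a linear SDE whose explicit solution is $(x_2-x_1)\exp\bigl(M_t-\tfrac12\langle M,M\rangle_t\bigr)$ for a suitable martingale $M$ (Lemma~\ref{lemme croissance}), and finally takes a right-limit in $u$ to produce the c\`adl\`ag version. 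Your route is arguably more direct and self-contained; the paper's route buys more, namely that \emph{strict} inequality $g(u_1)<g(u_2)$ is preserved to strict inequality $y(u_1,t)<y(u_2,t)$ for all $t$, and that equality is preserved to equality. The strict part is not needed for Proposition~\ref{proprietes A1-A3} as stated, but it is used later in the mass estimate of Lemma~\ref{lemme controle masse}; your non-crossing argument yields only the non-strict inequality (touching, once it occurs, forces subsequent equality but is not a priori excluded), so you would need to supplement it before carrying the construction into Section~\ref{sec convergence}.
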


\subsection{Existence of an approximate solution}

Denote by $\mathcal M$ the set of random variables  $z\in L_2(\Omega,\mathcal C([0,T], L_2(0,1)))$ such that $(z(\omega,\cdot,t))_{t \in [0,T]}$ is a $(\mathcal G_t)_{t \in [0,T]}$-progressively measurable process with values in $L_2(0,1)$. We consider the following norm on $\mathcal M$:
\[\| z \|_\mathcal M=\E{ \sup_{t \leq T}\int_0^1 | z(u,t) |^2 \mathrm du}^{1/2}.\]

Throughout this Section, $\sigma$ and $\eps$ are two \emph{fixed} positive numbers. To begin, we want to prove that the map $\psi: \mathcal M \rightarrow \mathcal M$, defined below, admits a unique fixed point. 
Fix $g \in \mathcal L^\uparrow_{2+}[0,1]$ an initial quantile function. 
For all $z \in \mathcal M$, define:
\begin{equation}
\psi(z)(\omega, u,t):=g(u) +\int_0^t \!\! \int_0^1 \frac{\varphi_\sigma(z(\omega,u,s)-z(\omega,u',s))}{\eps + m_\sigma(\omega,u,s) } \mathrm dw(\omega,u',s),
\label{eq definition psi}
\end{equation}
where $m_\sigma(\omega,u,s)=\int_0^1 \varphi_\sigma^2(z(\omega,u,s)-z(\omega,v,s)) \mathrm dv$. 
We start by making sure that $\psi$ is well-defined.}
\begin{prop}\label{prop belong to M}
For all $z\in \mathcal M$, $\psi(z)$ belongs to $\mathcal M$. Furthermore, $(\psi(z)(\cdot,t))_{t \in [0,T]}$ is an  $L_2(0,1)$-valued continuous $(\mathcal G_t)_{t \in [0,T]}$-martingale. 
\end{prop}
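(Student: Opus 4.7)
\medskip

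\noindent\textbf{Proof plan (Proposition \ref{prop belong to M}).} The plan is to exploit the uniform boundedness of the integrand in \eqref{eq definition psi} to reduce everything to standard estimates for stochastic integrals against a Brownian sheet. Since $|\varphi_\sigma|\leq 1$ and $m_\sigma\geq 0$, the integrand
\[
f_z(\omega,u,u',s):=\frac{\varphi_\sigma(z(\omega,u,s)-z(\omega,u',s))}{\eps+m_\sigma(\omega,u,s)}
\]
is bounded in absolute value by $1/\eps$. Because $z$ is progressively measurable with respect to $(\mathcal G_t)_{t\in[0,T]}$ and $\varphi_\sigma$ is continuous, $f_z$ is jointly measurable in $(\omega,u,u',s)$ and progressively measurable in $(s,\omega)$ for each fixed $(u,u')$. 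Hence for every $u\in [0,1]$ the stochastic integral
\[
M_u(t):=\int_0^t\!\!\int_0^1 f_z(u,u',s)\,\mathrm dw(u',s)
\]
is well defined, and a joint-measurability argument (using Walsh's construction of integrals against a white noise in \cite{walsh1986introduction}) gives a jointly measurable, progressively measurable version of $(u,t,\omega)\mapsto M_u(t)$. Then $\psi(z)(\omega,u,t)=g(u)+M_u(t)$ is progressively measurable.

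Next I would obtain the moment bound required to be in $\mathcal M$. Itô's isometry yields $\E{M_u(T)^2}\leq T/\eps^2$, and Doob's $L_2$-inequality applied fibre-wise gives $\E{\sup_{t\leq T} M_u(t)^2}\leq 4T/\eps^2$. Integrating in $u$ by Tonelli,
\[
\E{\sup_{t\leq T}\int_0^1 M_u(t)^2\,\mathrm du}\leq \int_0^1 \E{\sup_{t\leq T}M_u(t)^2}\mathrm du\leq \frac{4T}{\eps^2}.
\]
Combined with $\|g\|_{L_2}^2<\infty$ (which follows from $g\in \mathcal L^\uparrow_{2+}[0,1]$) and the elementary inequality $(a+b)^2\leq 2a^2+2b^2$, this shows $\|\psi(z)\|_{\mathcal M}<\infty$.

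For the a.s. continuity of $t\mapsto \psi(z)(\cdot,t)$ as an $L_2(0,1)$-valued map, I would apply a Kolmogorov-type criterion. Using the BDG inequality fibre-wise together with the bound $|f_z|\leq 1/\eps$ gives, for any $p>2$,
\[
\E{|M_u(t)-M_u(s)|^p}\leq C_p\,|t-s|^{p/2}/\eps^p,
\]
uniformly in $u$. Minkowski's integral inequality (valid since $p\geq 2$) then upgrades this to
\[
\E{\|M_\cdot(t)-M_\cdot(s)\|_{L_2}^p}\leq C_p\,|t-s|^{p/2}/\eps^p,
\]
and since $p/2>1$, Kolmogorov's continuity theorem for Banach-valued processes produces a version of $t\mapsto \psi(z)(\cdot,t)\in L_2(0,1)$ with continuous paths. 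This shows $\psi(z)\in \mathcal M$.

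Finally, I would establish the martingale property in the sense of Definition~\ref{def martingale}. For each $h\in L_2[0,1]$, apply the stochastic Fubini theorem (which is justified by the $L_2$-bound on $h\otimes f_z$ coming from $|f_z|\leq 1/\eps$) to obtain
\[
(\psi(z)(\cdot,t),h)_{L_2} = (g,h)_{L_2}+ \int_0^t\!\!\int_0^1\!\Bigl(\int_0^1 h(u)f_z(u,u',s)\,\mathrm du\Bigr)\mathrm dw(u',s),
\]
which is a real-valued continuous $(\mathcal G_t)$-martingale as a stochastic integral of a bounded, progressively measurable kernel against the Brownian sheet. Together with the moment bound and path continuity already obtained, this gives that $(\psi(z)(\cdot,t))_{t\in[0,T]}$ is a continuous $L_2(0,1)$-valued $(\mathcal G_t)_{t\in[0,T]}$-martingale. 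The only technical point that deserves care is the joint measurability in $u$ of the stochastic integral, so that Fubini and Kolmogorov can be invoked rigorously; this is the step I expect to occupy the most space, and is handled using the Walsh construction via simple-function approximations of $f_z$.
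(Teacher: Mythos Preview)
Your proposal is correct and covers the same ground as the paper's proof, but the ordering and the tools are slightly different. The paper first verifies square integrability of $\psi(z)(\cdot,t)$ in $L_2(0,1)$ via It\^o isometry (same $t/\eps^2$ bound as yours), then establishes the $L_2(0,1)$-valued martingale property by pairing with $h\in L_2[0,1]$ and showing the resulting real-valued process is a true martingale via a finite quadratic-variation computation, and only afterwards obtains the $\|\cdot\|_{\mathcal M}$ bound by applying the \emph{Hilbert-valued} Doob inequality from \cite{gawarecki_stochastic_2011}. Continuity of paths in $L_2(0,1)$ is not argued separately; it comes as part of that Hilbert-valued framework. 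By contrast, you work fibre-wise throughout: real-valued Doob for each $u$, then Tonelli for the $\mathcal M$-norm, and an explicit Kolmogorov criterion for continuity (your use of ``Minkowski's integral inequality'' is really Jensen applied to $x\mapsto x^{p/2}$ with $p\geq 2$, but the conclusion is correct). Your approach is more elementary in that it avoids Hilbert-valued martingale theory, at the cost of needing the joint-measurability argument you flag; the paper's approach is more streamlined once the infinite-dimensional Doob inequality is available.
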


\begin{rem}
The definition of an $L^\uparrow_2[0,1]$-valued martingale was given in Definition~\ref{def martingale}. 
Up to replacing $L^\uparrow_2$ by $L_2$, the definition of an $L_2(0,1)$-valued martingale is exactly the same. 
\end{rem}

\begin{proof}
We want to prove that $(\psi(z)(\cdot,t))_{t \in [0,T]}$ is an $L_2(0,1)$-valued $(\mathcal G_t)_{t \in [0,T]}$-martingale.
Since $z$ belongs to $\mathcal M$, the process $(z(\cdot,t))_{t\in[0,T]}$ is $(\mathcal G_t)_{t \in [0,T]}$-progressively measurable. Therefore $(m_\sigma(\cdot,t))_{t\in[0,T]}$ is also $(\mathcal G_t)_{t \in [0,T]}$-progressively measurable and we deduce that $(\psi(z)(\cdot,t))_{t\in[0,T]}$ is $(\mathcal G_t)_{t \in [0,T]}$-progressively measurable.

Then, we check that for each $t \in [0,T]$, $\psi(z)(\cdot,t) \in L_2(0,1)$ and $\E{\|\psi(z)(\cdot,t)\|_{L_2}}<\infty$. We deduce this statement by recalling that $\|g\|_{L_2}<+\infty$, because $g \in \mathcal L^\uparrow_{2+}[0,1]$, and by computing:
\begin{multline}
\label{ineg L2 prop 3.3}
\E{\left\| \int_0^t\!\!\int_0^1 \frac{\varphi_\sigma(z(\cdot,s)-z(u',s))}{\eps + m_\sigma(\cdot,s) } \mathrm dw(u',s) \right\|_{L_2}}^2 \!\!
\leq \E{\left\| \int_0^t\!\!\int_0^1 \frac{\varphi_\sigma(z(\cdot,s)-z(u',s))}{\eps + m_\sigma(\cdot,s) } \mathrm dw(u',s) \right\|_{L_2}^2} \\
\begin{aligned}
&= \E{\int_0^1 \left| \int_0^t\!\!\int_0^1 \frac{\varphi_\sigma(z(u,s)-z(u',s))}{\eps + m_\sigma(u,s) } \mathrm dw(u',s) \right|^2 \mathrm du}\\
&= \int_0^1\E{ \left| \int_0^t\!\!\int_0^1 \frac{\varphi_\sigma(z(u,s)-z(u',s))}{\eps + m_\sigma(u,s) } \mathrm dw(u',s) \right|^2 }\mathrm du \\
&= \int_0^1\E{ \ \int_0^t\!\!\int_0^1  \left(\frac{\varphi_\sigma(z(u,s)-z(u',s))}{\eps + m_\sigma(u,s) } \right)^2 \mathrm du' \mathrm ds }\mathrm du\\
&\leq \frac{\| \varphi_\sigma\|_\infty^2 t}{\eps^2}= \frac{ t}{\eps^2}<+\infty.
\end{aligned}
\end{multline}
Furthermore, for each $h \in L_2[0,1]$,
\begin{align*}
(\psi(z)(\cdot,t),h)_{L_2}=(g,h)_{L_2} + \int_0^t \!\! \int_0^1 \!\!\int_0^1 h(u) \frac{\varphi_\sigma(z(u,s)-z(u',s))}{\eps + m_\sigma(u,s) }\mathrm du\mathrm dw(u',s)  
\end{align*}
is a $(\mathcal G_t)_{t \in [0,T]}$-local martingale. Then, we compute the quadratic variation:
\begin{multline*}
\E{\langle (\psi(z),h)_{L_2},  (\psi(z),h)_{L_2}\rangle_t}
\\=\!\!\int_0^t \!\! \int_0^1 \!\!\int_0^1 \!\!\int_0^1\!\! h(u_1) h(u_2)  \frac{\varphi_\sigma(z(u_1,s)-z(u',s))\varphi_\sigma(z(u_2,s)-z(u',s))}{(\eps + m_\sigma(u_1,s) )(\eps + m_\sigma(u_2,s))}      \mathrm du_1 \mathrm du_2 \mathrm du' \mathrm ds 
\leq \frac{t}{\eps^2} \|h\|_{L_2}^2. 
\end{multline*}
Since it is finite, the local martingale is actually a martingale.

Moreover, by Doob's inequality
(see Theorem 2.2 in~\cite[p.22]{gawareckimandrekar11})
\begin{align*}
\|\psi(z)\|_{\mathcal M}
&=\E{ \sup_{t \leq T}\int_0^1  |\psi(z)(u,t)|^2\mathrm du}^{1/2}\\
&\leq \|g\|_{L_2}+ \E{ \sup_{t \leq T}\int_0^1  \left| \int_0^t\!\!\int_0^1 \frac{\varphi_\sigma(z(u,s)-z(u',s))}{\eps + m_\sigma(u,s) } \mathrm dw(u',s)   \right|^2\mathrm du}^{1/2}\\
&\leq \|g\|_{L_2}+ 2 \E{ \int_0^1  \left| \int_0^T\!\!\int_0^1 \frac{\varphi_\sigma(z(u,s)-z(u',s))}{\eps + m_\sigma(u,s) } \mathrm dw(u',s)   \right|^2\mathrm du}^{1/2}.
\end{align*}
The last term is finite by~\eqref{ineg L2 prop 3.3}.
Thus $\|\psi(z)\|_{\mathcal M}$ is finite and $\psi(z)$ belongs to $\mathcal M$, which concludes the proof. 
\end{proof}

Let us  now prove that $\psi$ has a unique fixed point: 
\begin{prop}
Let $\sigma>0$ and $\eps >0$. Then the map $\psi: \mathcal M \rightarrow \mathcal M$ defined by~(\ref{eq definition psi}) has a unique fixed point in $\mathcal M$,  denoted by $y_{\sigma,\eps}^g$. 
\label{prop point fixe}
\end{prop}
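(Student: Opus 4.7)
The plan is to apply Banach's fixed point theorem, via a Picard iteration argument built on a single quantitative Lipschitz-type estimate for $\psi$. Concretely, I would aim to prove that for every $T' \leq T$ and every $z_1, z_2 \in \mathcal M$,
\begin{align*}
\E{\sup_{t \leq T'} \|\psi(z_1)(\cdot,t) - \psi(z_2)(\cdot,t)\|_{L_2}^2} \leq C(\sigma,\eps) \int_0^{T'} \E{\|z_1(\cdot,s) - z_2(\cdot,s)\|_{L_2}^2} \mathrm ds,
\end{align*}
for some constant $C(\sigma,\eps)$ depending only on $\sigma$ and $\eps$ (through $\|\varphi_\sigma'\|_\infty$, $\|\varphi_\sigma\|_\infty = 1$, and the lower bound $\eps$ on the denominators). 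From this estimate, uniqueness is immediate: if $z_1,z_2$ are two fixed points, then $\phi(t) := \E{\sup_{s\leq t}\|z_1(s)-z_2(s)\|_{L_2}^2}$ satisfies $\phi(t) \leq C \int_0^t \phi(s)\,\mathrm ds$, so Gronwall's lemma gives $\phi \equiv 0$. Existence follows by iterating: starting from $z^{(0)} \equiv g$ and setting $z^{(n+1)} = \psi(z^{(n)})$ (which stays in $\mathcal M$ by Proposition~\ref{prop belong to M}), the estimate yields by induction $\E{\sup_{t \leq T}\|z^{(n+1)}(t)-z^{(n)}(t)\|_{L_2}^2} \leq \frac{(CT)^n}{n!} D_0$ with $D_0 := \E{\sup_{t \leq T}\|z^{(1)}(t)-g\|_{L_2}^2} < \infty$. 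Summing $\sqrt{D_n}$ shows $(z^{(n)})$ is Cauchy in $\mathcal M$, and its limit is the sought fixed point $y^g_{\sigma,\eps}$.

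To derive the Lipschitz estimate, I would first apply Doob's inequality for the $L_2(0,1)$-valued continuous martingale $\psi(z_1) - \psi(z_2)$ (whose continuity is given by Proposition~\ref{prop belong to M}), then Tonelli and the one-dimensional Itô isometry at each fixed $u$, reducing the problem to a pointwise control of
\begin{align*}
\Delta_\psi(u,u',s) := \frac{\varphi_\sigma(z_1(u,s)-z_1(u',s))}{\eps + m^1_\sigma(u,s)} - \frac{\varphi_\sigma(z_2(u,s)-z_2(u',s))}{\eps + m^2_\sigma(u,s)}.
\end{align*}
Writing $\frac{a_1}{b_1} - \frac{a_2}{b_2} = \frac{a_1-a_2}{b_1} + a_2\,\frac{b_2-b_1}{b_1 b_2}$ and using $b_i \geq \eps$, $|a_2| \leq 1$, and that both $\varphi_\sigma$ and $\varphi_\sigma^2$ are globally Lipschitz, one gets
\begin{align*}
|\Delta_\psi(u,u',s)| \leq \frac{C_1}{\eps}\bigl(|\Delta(u,s)|+|\Delta(u',s)|\bigr) + \frac{C_2}{\eps^2}\int_0^1 \bigl(|\Delta(u,s)|+|\Delta(v,s)|\bigr)\,\mathrm dv,
\end{align*}
with $\Delta := z_1-z_2$. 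Squaring, applying Cauchy-Schwarz to the last integral term, and integrating over $(u,u') \in [0,1]^2$ collapses every term into a multiple of $\|\Delta(\cdot,s)\|_{L_2}^2$.

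The main obstacle is the algebraic/analytic step of controlling $\Delta_\psi$ cleanly: the denominator $\eps + m_\sigma(u,s)$ depends nonlocally on $z$ through an integral over $v \in [0,1]$, so the expression is not a pointwise function of $z(u,s)$ alone, and a naïve Lipschitz estimate gives nonlocal coupling between $u$ and the whole profile $z(\cdot,s)$. The decomposition above and the ordering of integrations (Tonelli first, Itô isometry inside, then integrate out $u$ and $u'$) are what keep the right-hand side expressible in terms of a single $L_2$-norm of $\Delta(\cdot,s)$, which is precisely what Gronwall and Picard need. Once this estimate is in place, the rest of the proof is routine.
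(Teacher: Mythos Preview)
Your proposal is correct and follows essentially the same route as the paper: Doob's inequality for the $L_2(0,1)$-valued martingale $\psi(z_1)-\psi(z_2)$, then the same algebraic decomposition $\tfrac{a_1}{b_1}-\tfrac{a_2}{b_2}=\tfrac{a_1-a_2}{b_1}+a_2\tfrac{b_2-b_1}{b_1b_2}$ together with the Lipschitz bounds on $\varphi_\sigma$ and $\varphi_\sigma^2$, yielding the key estimate $h_1(t)\leq C_{\sigma,\eps}\int_0^t h_0(s)\,\mathrm ds$. The only cosmetic difference is the closing step: the paper iterates to show $\psi^n$ is a contraction for $n$ large and then invokes completeness of $\mathcal M$, whereas you run the Picard sequence and show it is Cauchy directly; both are standard consequences of the same inequality.
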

\begin{proof}
For all $n \in \mathds N$, denote by $\psi^n$ the $n$-fold composition of $\psi$, where $\psi^0$ denotes the identity function of $\mathcal M$. 
We want to prove that $\psi^n$ is a contraction for $n$ large enough. 

Let $z_1$ and $z_2$ be two elements of $\mathcal M$. We define
\[h_n(t):=\E{\sup_{s \leq t} \int_0^1 | \psi^n(z_1)(u,s)-\psi^n(z_2)(u,s)  |^2  \mathrm du}.
\]
Let us remark that $h_n(T)=\|\psi^n(z_1)-\psi^n(z_2)  \|^2_{\mathcal M}$ and recall that, by Proposition~\ref{prop belong to M}, $(\psi(z_1)(\cdot,t)-\psi(z_2)(\cdot,t))_{t \in [0,T]}$ is a $(\mathcal G_t)_{t \in[0,T]}$-martingale.
We denote by $m_{\sigma,1}$ and $m_{\sigma,2}$ the masses associated respectively to $z_1$ and $z_2$. 
By Doob's inequality, we have:
\begin{align*}
h_1(t) &=   \E{\sup_{s \leq t} \int_0^1| \psi(z_1)(u,s)-\psi(z_2)(u,s)  |^2\mathrm du }  \\
&=   \E{\sup_{s \leq t} \int_0^1\left| \int_0^s\!\! \int_0^1 \left( \frac{\varphi_\sigma(z_1(u,r)-z_1(u',r))}{\eps + m_{\sigma,1}(u,r)} - \frac{\varphi_\sigma(z_2(u,r)-z_2(u',r))}{\eps + m_{\sigma,2}(u,r)}  \right) \mathrm dw(u',r)  \right|^2\mathrm du }  \\
&\leq  4  \E{ \int_0^1 \!\! \int_0^t \!\!\int_0^1 \left|\frac{\varphi_\sigma(z_1(u,s)-z_1(u',s))}{\eps + m_{\sigma,1}(u,s)} - \frac{\varphi_\sigma(z_2(u,s)-z_2(u',s))}{\eps + m_{\sigma,2}(u,s)}  \right|^2 \mathrm du' \mathrm ds  \mathrm du } .
\end{align*}
Furthermore, we compute: 
\begin{multline*}
\left|\frac{\varphi_\sigma(z_1(u,s)-z_1(u',s))}{\eps + m_{\sigma,1}(u,s)} - \frac{\varphi_\sigma(z_2(u,s)-z_2(u',s))}{\eps + m_{\sigma,2}(u,s)}  \right|^2 \\
 \leq 2 \bigg( \left | \frac{\varphi_\sigma(z_1(u,s)-z_1(u',s))-\varphi_\sigma(z_2(u,s)-z_2(u',s))}{\eps + m_{\sigma,1}(u,s)}  \right|^2  \\
 +\left | \frac{\varphi_\sigma(z_2(u,s)-z_2(u',s))}{(\eps + m_{\sigma,1}(u,s))(\eps + m_{\sigma,2}(u,s))}  \left(  m_{\sigma,1}(u,s)- m_{\sigma,2}(u,s) \right) \right|^2 \bigg).
 \end{multline*}
Moreover, we have: 
\begin{align*}
| m_{\sigma,1}(u,s)- m_{\sigma,2}(u,s)|
&\leq \int_0^1 |\varphi_\sigma^2(z_1(u,s)-z_1(v,s))-\varphi_\sigma^2(z_2(u,s)-z_2(v,s))  |\mathrm dv \\
&\leq \operatorname{Lip}(\varphi_\sigma^2) \int_0^1 |(z_1(u,s)-z_1(v,s))-(z_2(u,s)-z_2(v,s))   |\mathrm dv\\
&\leq \operatorname{Lip}(\varphi_\sigma^2)\left( |z_1(u,s)-z_2(u,s)| +\int_0^1 |z_1(v,s)-z_2(v,s)| \mathrm dv \right).
\end{align*}
We obtain the following upper bound: 
\begin{multline*}
\left|\frac{\varphi_\sigma(z_1(u,s)-z_1(u',s))}{\eps + m_{\sigma,1}(u,s)} - \frac{\varphi_\sigma(z_2(u,s)-z_2(u',s))}{\eps + m_{\sigma,2}(u,s)}  \right|^2 \\
\leq  \left( 4 \left( \frac{\operatorname{Lip} \varphi_\sigma}{\eps} \right)^2+4\left(\frac{ \operatorname{Lip} (\varphi_\sigma^2)}{\eps^2}\right)^2\right)
\bigg(|z_1(u,s)-z_2(u,s)|^2 \\
+|z_1(u',s)-z_2(u',s)|^2+\int_0^1|z_1(v,s)-z_2(v,s)|^2 \mathrm dv \bigg).
\end{multline*}
Finally, we deduce that there is a constant $C_{\sigma,\eps}$ depending only on $\sigma$ and $\eps$ such that
\begin{align*}
h_1(t)&\leq C_{\sigma,\eps} \E{\int_0^t \!\! \int_0^1 |z_1(u,s)-z_2(u,s)|^2 \mathrm du \mathrm ds } \\
&\leq C_{\sigma,\eps} \int_0^t  \E{ \sup_{r\leq s}\int_0^1 |z_1(u,r)-z_2(u,r)|^2 \mathrm du }\mathrm ds = C_{\sigma,\eps} \int_0^t h_0(s) \mathrm ds.
\end{align*}
Applied to $\psi^n(z_1)$ and $\psi^n(z_2)$ instead of $z_1$ and $z_2$, those computations show that for every $t\in [0,T]$, 
$h_{n+1}(t) \leq C_{\sigma,\eps} \int_0^t h_n(s)\mathrm ds$. 
Using the fact that $h_0$ is non-decreasing with respect to $t$, it follows that
$h_n(T) \leq \frac{(C_{\sigma,\eps}T)^n}{n!}h_0(T)$,
whence we have: 
\[
\|\psi^n(z_1)-\psi^n(z_2)  \|^2_{\mathcal M} \leq \frac{(C_{\sigma,\eps}T)^n}{n!}\|z_1-z_2  \|^2_{\mathcal M}.
\]
Thus, for $n$ large enough, the map $\psi^n$ is a contraction. By completeness of $\mathcal M$ under the norm~$\|\cdot\|_{\mathcal M}$ (remark that $\mathcal M$ is a closed subset of $L_2(\Omega,\mathcal C([0,T], L_2(0,1)))$, 
it follows that $\psi$ has a unique fixed point in $\mathcal M$. 
\end{proof}

We denote by $y^g_{\sigma,\eps}$ the unique fixed point of $\psi$. Remark that by construction it satisfies equation~\eqref{equation avec phi} almost surely and for every $t\in [0,T]$.

\subsection{Non-decreasing property}

Define, for each $t \in [0,T]$,  $Y^g_{\sigma,\eps}(t):=y^g_{\sigma,\eps}(\cdot,t)$.
So far, by Proposition~\ref{prop point fixe}, 
we have established that $(Y^g_{\sigma,\eps}(t))_{t \in [0,T]}$ is an $L_2[0,1]$-valued process, satisfying property $(A1)$ of Proposition~\ref{proprietes A1-A3}. Since $Y^g_{\sigma,\eps}$ belongs to $\mathcal M$, and by Proposition~\ref{prop belong to M}, $(Y^g_{\sigma,\eps}(t))_{t \in [0,T]}$ is a square integrable continuous $L_2[0,1]$-valued martingale, with respect to the filtration $(\mathcal G_t)_{t \in [0,T]}$. Therefore, it is also an $(\mathcal F^{\sigma,\eps}_t)_{t \in [0,T]}$-martingale, where $\mathcal F^{\sigma,\eps}_t:=\sigma(Y^g_{\sigma,\eps}(s),s\leq t)$. 

In order to obtain property $(A2)$, it remains to prove the following statement:
\begin{prop}
\label{prop non-decreasing property}
$(Y^g_{\sigma,\eps}(t))_{t \in [0,T]}$ is an $L_2^\uparrow[0,1]$-valued process. 
\end{prop}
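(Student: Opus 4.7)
The plan is to reduce monotonicity of $Y^g_{\sigma,\eps}(t)$ to a pairwise comparison: for each fixed pair $0\leq u<v\leq 1$, I will prove that $X^{u,v}(t):=y^g_{\sigma,\eps}(v,t)-y^g_{\sigma,\eps}(u,t)$ is almost surely nonnegative for every $t\in[0,T]$. The conceptual point is that $X^{u,v}$ is a continuous martingale whose quadratic variation vanishes on $\{X^{u,v}=0\}$, so the nonnegative initial condition $g(v)-g(u)\geq 0$ should be preserved. The only subtlety is that $y^g_{\sigma,\eps}$ is intrinsically only an $L_2(0,1)$-valued object, so values at fixed $u$ are not a priori meaningful. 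I bypass this by first treating $(\tfrac12+\eta)$-Hölder continuous initial data $g$, for which the remark following Proposition~\ref{prop modification} provides a modification in $\mathcal C([0,1]\times[0,T])$, and then passing to general $g\in\mathcal L^\uparrow_{2+}[0,1]$ by approximation.

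In the Hölder case, working with the continuous modification, $X^{u,v}$ is a continuous martingale of the form
\[
X^{u,v}(t) = g(v)-g(u) + \int_0^t\!\!\int_0^1 \Delta(u,v,u',s)\,\mathrm dw(u',s),
\]
where $\Delta(u,v,u',s)$ is the difference at $v$ and $u$ of the integrand in~\eqref{equation avec phi}. Splitting $\Delta$ into a $\varphi_\sigma$-difference and a mass-difference and reusing exactly the Lipschitz manipulation from the proof of Proposition~\ref{prop point fixe}, I obtain a constant $K_{\sigma,\eps}$ with $|\Delta(u,v,u',s)|\leq K_{\sigma,\eps}|X^{u,v}(s)|$ uniformly in $u'$, and therefore $\mathrm d\langle X^{u,v}\rangle_t \leq K_{\sigma,\eps}^2\, X^{u,v}(t)^2\,\mathrm dt$. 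Applying Itô's formula to $\phi_n(X^{u,v}(\cdot))$ for smooth $\phi_n\in C^2(\R)$ with $\phi_n(x)\to (x^-)^2$, $\phi_n''(x)\to 2\mathds 1_{\{x<0\}}$ and $0\leq\phi_n''\leq 2$, then localising, taking expectations (so the martingale term drops) and passing to the limit by dominated convergence gives
\[
\E{(X^{u,v}(t)^-)^2} \leq K_{\sigma,\eps}^2 \int_0^t \E{(X^{u,v}(s)^-)^2}\,\mathrm ds,
\]
where I used $X^{u,v}(0)^-=0$. Gronwall forces the left-hand side to vanish, so $X^{u,v}(t)\geq 0$ almost surely for every $t$. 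Applying this to every pair in $\Q\cap[0,1]$ and invoking the joint continuity of the modification, I conclude that almost surely $y^g_{\sigma,\eps}(\cdot,t)$ is non-decreasing for every $t\in[0,T]$.

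For a general $g\in\mathcal L^\uparrow_{2+}[0,1]$, I approximate by a sequence of $(\tfrac12+\eta)$-Hölder continuous non-decreasing $g_n$ (for instance via convolution with a smooth bump after extending $g$ by constants outside $[0,1]$) with $g_n\to g$ in $L_2[0,1]$. Revisiting the contraction argument of Proposition~\ref{prop point fixe} while tracking the dependence on the initial datum yields $\|y^{g_n}_{\sigma,\eps}-y^g_{\sigma,\eps}\|_\mathcal M\to 0$, so along a subsequence $Y^{g_{n_k}}_{\sigma,\eps}(t)\to Y^g_{\sigma,\eps}(t)$ in $L_2[0,1]$ almost surely for every $t$. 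Extracting a further a.e.-convergent subsequence shows that $L^\uparrow_2[0,1]$ is stable under $L_2$-convergence (pointwise limits of non-decreasing functions are non-decreasing on a full-measure set, hence admit a non-decreasing càdlàg representative), whence $Y^g_{\sigma,\eps}(t)\in L^\uparrow_2[0,1]$ almost surely for every $t$. The main obstacle is precisely the pointwise-versus-$L_2$ interpretation of $y^g_{\sigma,\eps}$: the key inequality $\mathrm d\langle X^{u,v}\rangle_t\leq K_{\sigma,\eps}^2(X^{u,v})^2\,\mathrm dt$ relies on genuine pointwise values in $u$, and the detour via Hölder initial data combined with approximation is what makes the Itô argument rigorous in the general case.
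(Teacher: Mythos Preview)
Your argument is correct but differs from the paper's in two places. For the pairwise comparison you bound $\mathrm d\langle X^{u,v}\rangle_t\leq K_{\sigma,\eps}^2\,(X^{u,v}_t)^2\,\mathrm dt$ and run It\^o--Gronwall on $(X^{u,v})^-$; the paper uses the same Lipschitz bound on $\partial_x\theta_{\sigma,\eps}$ to rewrite the difference as a linear SDE $\mathrm dX^{u,v}_t=X^{u,v}_t\,\mathrm dM_t$ and solves it explicitly as a Dol\'eans exponential $X^{u,v}_t=(g(v)-g(u))\exp(M_t-\tfrac12\langle M\rangle_t)$. Both rest on the same estimate, but the explicit solution additionally yields the dichotomy (strict positivity when $g(u)<g(v)$, identical vanishing when $g(u)=g(v)$) that is invoked later, e.g.\ in Lemma~\ref{lemme controle masse}. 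For general $g$ you pass through H\"older approximants plus stability of the fixed point in the initial datum and closedness of $L^\uparrow_2[0,1]$ in $L_2$; the paper instead introduces an auxiliary one-dimensional SDE $z(x,\cdot)$ indexed by the real starting point $x$ (with coefficients built from the already-constructed $L_2$-valued fixed point), obtains via Kolmogorov a modification $\widetilde z$ continuous in $(x,t)$, and sets $\widetilde y(u,t):=\widetilde z(g(u),t)$, which handles arbitrary c\`adl\`ag $g$ in one stroke and produces the c\`adl\`ag-in-$u$ version $\widehat y^g_{\sigma,\eps}$ used throughout Section~\ref{sec convergence}. Your route is a legitimate alternative; the paper's buys the monotonicity dichotomy and a canonical pointwise-in-$u$ version without an approximation step.
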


We will start by proving three Lemmas and then we will conclude the proof of Proposition~\ref{prop non-decreasing property}. 
For every $x \in \R$, we consider the following stochastic differential equation:
\begin{align}
\label{equation en x}
z(x,t)
= x+ \int_0^t\!\!\int_0^1 \frac{\mathds \varphi_\sigma(z(x,s)-y^g_{\sigma,\eps}(u',s))}
{\eps + \int_0^1\varphi_\sigma^2(z(x,s)-y^g_{\sigma,\eps}(v,s)) \mathrm dv}    \mathrm dw(u',s),
\end{align}
where $y^g_{\sigma,\eps}$ is the unique solution of equation~\eqref{equation avec phi}. 
\begin{lemme}
\label{lemme 3.9 bis}
Let $x \in \R$. For almost every $\omega \in \Omega$, equation~\eqref{equation en x} has a unique solution in $\mathcal C[0,T]$, denoted by $(z(\omega,x,t))_{t\in [0,T]}$.
Moreover, $(z(x,t))_{t\in[0,T]}$ is a real-valued $(\mathcal G_t)_{t\in[0,T]}$-martingale. 
\end{lemme}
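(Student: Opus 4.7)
The plan is to adapt the contraction argument of Proposition~\ref{prop point fixe} to a simpler, one-dimensional setting, viewing the already-constructed random field $y^g_{\sigma,\eps}$ as a frozen coefficient in the integrand.

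First I would introduce the Banach space $\mathcal N$ of $(\mathcal G_t)_{t\in[0,T]}$-progressively measurable real-valued processes $z$ with $\|z\|_{\mathcal N}^2:=\E{\sup_{t\leq T}|z(t)|^2}<\infty$, and define $\Psi:\mathcal N\to\mathcal N$ by sending $z$ to the right-hand side of~\eqref{equation en x} with $z(s)$ substituted for $z(x,s)$. Since $|\varphi_\sigma|\leq 1$ and the denominator in the integrand is bounded below by $\eps$, the integrand is uniformly bounded by $1/\eps$; by Doob's inequality together with the Itô isometry for stochastic integrals against a Brownian sheet, $\Psi(z)$ lies in $\mathcal N$ with $\|\Psi(z)\|_{\mathcal N}^2\leq 2x^2+8T/\eps^2$. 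Continuity of the paths of $\Psi(z)$ also follows from the general theory of Brownian sheet integrals with bounded integrand.

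Next, for $z_1,z_2\in\mathcal N$ with associated masses $m_{\sigma,i}(s):=\int_0^1\varphi_\sigma^2(z_i(s)-y^g_{\sigma,\eps}(v,s))\mathrm dv$, I would establish the pointwise bound
\[
\left|\frac{\varphi_\sigma(z_1(s)-y^g_{\sigma,\eps}(u',s))}{\eps+m_{\sigma,1}(s)}-\frac{\varphi_\sigma(z_2(s)-y^g_{\sigma,\eps}(u',s))}{\eps+m_{\sigma,2}(s)}\right|^2\leq C_{\sigma,\eps}\,|z_1(s)-z_2(s)|^2,
\]
exactly as in Proposition~\ref{prop point fixe}, only simpler since the $u$-variable is frozen to the single value $x$. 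Combined with Doob's inequality and the Itô isometry, this yields $h_1(t)\leq C_{\sigma,\eps}\int_0^t h_0(s)\mathrm ds$ where $h_n(t):=\E{\sup_{s\leq t}|\Psi^n(z_1)(s)-\Psi^n(z_2)(s)|^2}$, and by induction on $n$,
\[
\|\Psi^n(z_1)-\Psi^n(z_2)\|_{\mathcal N}^2\leq \frac{(C_{\sigma,\eps}T)^n}{n!}\|z_1-z_2\|_{\mathcal N}^2.
\]
Hence $\Psi^n$ is a contraction for $n$ large enough and $\Psi$ admits a unique fixed point $z(x,\cdot)$ in $\mathcal N$, with continuous paths by the previous paragraph.

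To upgrade this to pathwise uniqueness in $\mathcal C[0,T]$ for almost every $\omega$, I would note that any continuous $(\mathcal G_t)_{t\in[0,T]}$-adapted solution of~\eqref{equation en x} automatically lies in $\mathcal N$ (via the same $1/\eps$ bound on the integrand) and therefore coincides almost surely with the fixed point. Finally, $z(x,t)=x+M(t)$ where $M$ is a stochastic integral against the Brownian sheet whose integrand is uniformly bounded by $1/\eps$, hence a genuine $L^2$-bounded continuous $(\mathcal G_t)_{t\in[0,T]}$-martingale, which gives the martingale property of $(z(x,t))_{t\in[0,T]}$. The only mildly delicate step is the Lipschitz estimate displayed above, but since it is a strict specialization of the one already carried out for $\psi$, no genuinely new difficulty arises.
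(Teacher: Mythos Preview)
Your proposal is correct and follows essentially the same approach as the paper: the paper's proof simply states that one applies the fixed-point argument of Proposition~\ref{prop point fixe} and the martingale argument of Proposition~\ref{prop belong to M}, and you have accurately fleshed out those details in the scalar setting. Your observation that the Lipschitz estimate is a strict simplification of the one for $\psi$ (since only the variable $x$ varies, not an $L_2$-valued field) is exactly the right remark.
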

\begin{proof}
We get existence and uniqueness of the solution by applying a fixed-point argument. The proof is the same as the proof of Proposition~\ref{prop point fixe}. 
We obtain the martingale property by the same argument as in Proposition~\ref{prop belong to M}. 
\end{proof}

Then, take $x_1$, $x_2 \in \R$. After some computations similar to those of the proof of Proposition~\ref{prop point fixe}, we have for every $t\in[0,T]$: 
\begin{align*}
\E{\sup_{ s \leq t} | z(x_1,s)-z(x_2,s) |^2} 
\leq 2|x_1-x_2|^2 +C_{\sigma,\eps}\int_0^t \E{\sup_{r \leq s} \left|z(x_1,r)-z(x_2,r)\right|^2} \mathrm ds.
\end{align*}
By Gronwall's Lemma, we deduce that:
\begin{align*}
\E{\sup_{t \leq T} |z(x_1,t)-z(x_2,t)|^2}\leq C_{\sigma,\eps} |x_1-x_2|^2. 
\end{align*}
By Kolmogorov's Lemma, there is a modification $\widetilde{z}$ of $z$ in $\mathcal C(\R\times[0,T])$. 
We define $\widetilde{y}^g_{\sigma,\eps}(u,t):= \widetilde{z}(g(u),t)$. In particular, $u \mapsto \widetilde{y}^g_{\sigma,\eps}(u,\cdot)$ is measurable and, since $g$ is a càdlàg function, $\widetilde{y}^g_{\sigma,\eps}$ belongs to $\mathcal D((0,1), \mathcal C[0,T])$. 

\begin{rem}
In the case where $g$ is continuous, it is straightforward to see that $\widetilde{y}^g_{\sigma,\eps}$ belongs to $\mathcal C([0,1] \times [0,T])$. 
\end{rem}

 Furthermore,  
$\widetilde{y}^g_{\sigma,\eps}$ belongs to $\mathcal M$. Indeed, 
\begin{align*}
\E{\sup_{t\leq T} \int_0^1\left|\widetilde{y}^g_{\sigma,\eps}(u,t)\right|^2 \mathrm du}\leq \E{ \int_0^1\sup_{t\leq T}\left|\widetilde{y}^g_{\sigma,\eps}(u,t)\right|^2 \mathrm du}
=\int_0^1 \E{ \sup_{t\leq T}\left|\widetilde{y}^g_{\sigma,\eps}(u,t)\right|^2} \mathrm du.
\end{align*}
By Lemma~\ref{lemme 3.9 bis}, for every $u\in [0,1]$,  $(\widetilde{y}^g_{\sigma,\eps}(u,t))_{t\in[0,T]}$ is a martingale, we have by Doob's inequality:
\begin{align*}
\E{ \sup_{t\leq T}\left|\widetilde{y}^g_{\sigma,\eps}(u,t)\right|^2} 
&\leq C \E{\left|\widetilde{y}^g_{\sigma,\eps}(u,T)\right|^2} \\
&\leq 2C g(u)^2 +2C \E{\int_0^T\!\!\int_0^1 \left|\frac{\mathds \varphi_\sigma(\widetilde{y}^g_{\sigma,\eps}(u,s)-y^g_{\sigma,\eps}(u',s))}
{\eps + \int_0^1\varphi_\sigma^2(\widetilde{y}^g_{\sigma,\eps}(u,s)-y^g_{\sigma,\eps}(v,s)) \mathrm dv}\right|^2    \mathrm du' \mathrm ds}\\
&\leq 2C g(u)^2 +2C\frac{T}{\eps^2}.
\end{align*}
Therefore, 
$\|\widetilde{y}^g_{\sigma,\eps}\|_{\mathcal M} \leq 2C \|g\|_{L_2}^2 +2C\frac{T}{\eps^2}<+\infty$.
Moreover, $(\widetilde{y}^g_{\sigma,\eps}(\cdot,t))_{t\in[0,T]}$ is an $L_2[0,1]$-valued $(\mathcal G_t)_{t\in[0,T]}$-martingale. Indeed, for every $h \in L_2[0,1]$, for every $t \in [0,T]$,  $\E{(\widetilde{y}^g_{\sigma,\eps}(\cdot,t), h)_{L_2}}$ is finite.  Fix $0\leq s \leq t \leq T$, and $A_s \in \mathcal G_s$. We have:
\begin{align*}
\E{\left( \int_0^1 \widetilde{y}^g_{\sigma,\eps}(u,t) h(u) \mathrm du - \int_0^1 \widetilde{y}^g_{\sigma,\eps}(u,s) h(u) \mathrm du \right)\! \mathds 1_{A_s}}
&= \int_0^1\!\!\E{(\widetilde{y}^g_{\sigma,\eps}(u,t) - \widetilde{y}^g_{\sigma,\eps}(u,s) ) \mathds 1_{A_s}} h(u) \mathrm du\\
&=0. 
\end{align*}

\begin{lemme}
\label{lemme egalite dans M}
We have $\E{\sup_{t \leq T}\int_0^1 \left| \widetilde{y}^g_{\sigma,\eps}(u,t)-y^g_{\sigma,\eps}(u,t) \right|^2 \mathrm du}=0$. Therefore, $\widetilde{y}^g_{\sigma,\eps}=y^g_{\sigma,\eps}$ in~$\mathcal M$. 
\end{lemme}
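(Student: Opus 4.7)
The plan is to show that $\widetilde{y}^g_{\sigma,\eps}$ satisfies the same stochastic equation as $y^g_{\sigma,\eps}$, and then use a Gronwall-type argument exploiting the fact that the \emph{same} process $y^g_{\sigma,\eps}$ appears on the right-hand side of the equations defining both. Plugging $x = g(u)$ into equation~\eqref{equation en x} and using the identity $\widetilde{y}^g_{\sigma,\eps}(u,t) = \widetilde{z}(g(u),t)$, we obtain that for every $u \in [0,1]$, almost surely and for every $t \in [0,T]$,
\begin{align*}
\widetilde{y}^g_{\sigma,\eps}(u,t)
= g(u) + \int_0^t\!\!\int_0^1 \frac{\varphi_\sigma(\widetilde{y}^g_{\sigma,\eps}(u,s) - y^g_{\sigma,\eps}(u',s))}{\eps + \widetilde{m}(u,s)} \mathrm dw(u',s),
\end{align*}
where $\widetilde{m}(u,s) := \int_0^1 \varphi_\sigma^2(\widetilde{y}^g_{\sigma,\eps}(u,s) - y^g_{\sigma,\eps}(v,s)) \mathrm dv$. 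A Fubini argument (justified because $u \mapsto \widetilde{y}^g_{\sigma,\eps}(u,\cdot)$ is measurable and $\widetilde{y}^g_{\sigma,\eps} \in \mathcal{M}$) turns this into an equality in $L_2(0,1)$, so that we may compare $\widetilde{y}^g_{\sigma,\eps}$ and $y^g_{\sigma,\eps}$ inside the fixed-point framework of Proposition~\ref{prop point fixe}.

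Set $D(t) := \E{\sup_{s \leq t}\int_0^1 |\widetilde{y}^g_{\sigma,\eps}(u,s) - y^g_{\sigma,\eps}(u,s)|^2 \mathrm du}$. Because both processes belong to $\mathcal M$, one has $D(T) < \infty$, and $D(0)=0$. Subtracting the two equations, applying Doob's inequality and the $L_2[0,1]$-valued Itô isometry yields
\begin{align*}
D(t) \leq 4 \E{\int_0^t \!\!\int_0^1 \!\!\int_0^1 \left| \frac{\varphi_\sigma(\widetilde{y}^g_{\sigma,\eps}(u,s) - y^g_{\sigma,\eps}(u',s))}{\eps + \widetilde{m}(u,s)} - \frac{\varphi_\sigma(y^g_{\sigma,\eps}(u,s) - y^g_{\sigma,\eps}(u',s))}{\eps + m^g_{\sigma,\eps}(u,s)} \right|^2 \mathrm du' \mathrm du \mathrm ds}.
\end{align*}

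The key point is that the bound on the integrand now simplifies drastically compared with the proof of Proposition~\ref{prop point fixe}. There, comparing two arbitrary elements $z_1, z_2 \in \mathcal M$ produced an extra term $\int_0^1 |z_1(v,s) - z_2(v,s)|^2 \mathrm dv$ coming from the difference of the two masses. Here both masses are built from the \emph{same} family $(y^g_{\sigma,\eps}(v,\cdot))_{v \in [0,1]}$, so using the Lipschitz property of $\varphi_\sigma$ and $\varphi_\sigma^2$ together with the triangle-style estimate
\begin{align*}
|\widetilde{m}(u,s) - m^g_{\sigma,\eps}(u,s)| \leq \operatorname{Lip}(\varphi_\sigma^2)\, |\widetilde{y}^g_{\sigma,\eps}(u,s) - y^g_{\sigma,\eps}(u,s)|,
\end{align*}
one obtains a pointwise bound of the form $C_{\sigma,\eps}|\widetilde{y}^g_{\sigma,\eps}(u,s) - y^g_{\sigma,\eps}(u,s)|^2$ with no residual $v$-integral. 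Integrating over $u,u' \in [0,1]$ gives $D(t) \leq 4 C_{\sigma,\eps} \int_0^t D(s) \mathrm ds$, and Gronwall's lemma forces $D(T) = 0$, which is precisely the claimed equality in $\mathcal M$.

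The only delicate point I anticipate is the passage from the $\mathbb P$-almost-sure, pointwise-in-$u$ identity satisfied by $\widetilde{y}^g_{\sigma,\eps}$ to an identity holding in $L_2[0,1]$ uniformly in $t$ with full probability, in order to legitimately use the $L_2$-valued Itô isometry. This relies on the joint continuity of $\widetilde{z}$ on $\R \times [0,T]$ provided by Kolmogorov's lemma and on a stochastic Fubini theorem applied to the Brownian sheet integral; once this is in place, the contraction estimate and Gronwall conclusion are routine.
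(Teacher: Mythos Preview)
Your proof is correct and follows essentially the same route as the paper: both recognise that $\widetilde{y}^g_{\sigma,\eps}$ and $y^g_{\sigma,\eps}$ satisfy equations of the form $g(u)+\int_0^t\!\int_0^1 \theta_{\sigma,\eps}(\,\cdot\,,u',s)\,\mathrm dw(u',s)$ with the \emph{same} background process $y^g_{\sigma,\eps}$ inside $\theta_{\sigma,\eps}$, then combine Doob's inequality, the It\^o isometry and the Lipschitz bound on $x\mapsto\theta_{\sigma,\eps}(x,u',s)$ to close a Gronwall loop. Your explicit remark that the mass-difference term collapses to $\operatorname{Lip}(\varphi_\sigma^2)\,|\widetilde{y}^g_{\sigma,\eps}(u,s)-y^g_{\sigma,\eps}(u,s)|$ (with no residual $v$-integral) is exactly the simplification the paper exploits when it invokes ``the same constant $C_{\sigma,\eps}$'' without redoing the full estimate of Proposition~\ref{prop point fixe}.
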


\begin{proof}
Since $(\widetilde{y}^g_{\sigma,\eps}(\cdot,t)-y^g_{\sigma,\eps}(\cdot,t))_{t\in [0,T]}$ is an $L_2[0,1]$-valued martingale, then by~\cite[p.21-22]{gawareckimandrekar11}  $\int_0^1 \left| \widetilde{y}^g_{\sigma,\eps}(u,t)-y^g_{\sigma,\eps}(u,t) \right|^2 \mathrm du$ is a real-valued submartingale. By Doob's inequality, 
\begin{multline*}
\E{\sup_{s \leq t}\int_0^1 \left| \widetilde{y}^g_{\sigma,\eps}(u,s)-y^g_{\sigma,\eps}(u,s) \right|^2 \mathrm du}
\leq C \E{\int_0^1 \left| \widetilde{y}^g_{\sigma,\eps}(u,t)-y^g_{\sigma,\eps}(u,t) \right|^2 \mathrm du}\\
\begin{aligned}
&\leq C \E{\int_0^1\left| \int_0^t\!\!\int_0^1 (\theta_{\sigma,\eps} (\widetilde{y}^g_{\sigma,\eps}(u,s),u',s) - \theta_{\sigma,\eps} (y^g_{\sigma,\eps}(u,s),u',s))   \mathrm dw(u',s) \right|^2\mathrm du} \\
&\leq C \E{\int_0^1\!\! \int_0^t\!\!\int_0^1 \left|\theta_{\sigma,\eps} (\widetilde{y}^g_{\sigma,\eps}(u,s),u',s) - \theta_{\sigma,\eps} (y^g_{\sigma,\eps}(u,s),u',s)\right|^2 \mathrm du' \mathrm ds\mathrm du},
\end{aligned}
\end{multline*} 
where $\theta_{\sigma,\eps}(x,u',s)= \frac{\varphi_\sigma(x-y^g_{\sigma,\eps}(u',s))}{\eps + \int_0^1 \varphi_\sigma^2(x-y^g_{\sigma,\eps}(v,s))\mathrm dv}$. Using the same constant $C_{\sigma,\eps}$ as in the proof of Proposition~\ref{prop point fixe}, we have:
\begin{align*}
\E{\sup_{s \leq t}\int_0^1 \left| \widetilde{y}^g_{\sigma,\eps}(u,s)-y^g_{\sigma,\eps}(u,s) \right|^2 \mathrm du}
&\leq C_{\sigma,\eps} \E{\int_0^1\!\! \int_0^t \left|\widetilde{y}^g_{\sigma,\eps}(u,s)-y^g_{\sigma,\eps}(u,s)\right|^2 \mathrm ds\mathrm du}\\
&\leq C_{\sigma,\eps}\int_0^t\E{\sup_{r\leq s}\int_0^1 \left|\widetilde{y}^g_{\sigma,\eps}(u,r)-y^g_{\sigma,\eps}(u,r)\right|^2\mathrm du} \mathrm ds.
\end{align*}
By Gronwall's Lemma, we deduce that $\E{\sup_{s \leq t}\int_0^1 \left| \widetilde{y}^g_{\sigma,\eps}(u,s)-y^g_{\sigma,\eps}(u,s) \right|^2 \mathrm du}=0$ for every $t\in [0,T]$.
This implies the statement of the Lemma. 
\end{proof}

\begin{lemme}
\label{lemme croissance}
Almost surely, for every $u_1,u_2 \in \mathds Q$ such that $u_1<u_2$, we have for every $t \geq 0$, $\widetilde{y}^g_{\sigma,\eps}(u_1,t) \leq \widetilde{y}^g_{\sigma,\eps}(u_2,t)$. 
Furthermore, if $g(u_1) < g(u_2)$ (resp. $g(u_1) = g(u_2)$), then for every $t \geq 0$, $\widetilde{y}^g_{\sigma,\eps}(u_1,t) < \widetilde{y}^g_{\sigma,\eps}(u_2,t)$ (resp. $\widetilde{y}^g_{\sigma,\eps}(u_1,t) = \widetilde{y}^g_{\sigma,\eps}(u_2,t)$).
\end{lemme}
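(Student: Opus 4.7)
The plan is to exploit the construction $\widetilde{y}^g_{\sigma,\eps}(u,\cdot)=\widetilde{z}(g(u),\cdot)$ established just before the statement, and to reduce the claim to a comparison of two solutions of equation~\eqref{equation en x} starting from different initial points. Fix $u_1,u_2\in\mathds Q$ with $u_1<u_2$ and set $x_i:=g(u_i)$; since $g$ is non-decreasing, $x_1\leq x_2$. If $x_1=x_2$, then $\widetilde{z}(x_1,\cdot)$ and $\widetilde{z}(x_2,\cdot)$ solve~\eqref{equation en x} with the same initial condition and the same driving Brownian sheet, so pathwise uniqueness from Lemma~\ref{lemme 3.9 bis} forces $\widetilde{z}(x_1,\cdot)=\widetilde{z}(x_2,\cdot)$ almost surely, which is the equality case.

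When $x_1<x_2$, set $D_t:=\widetilde{z}(x_2,t)-\widetilde{z}(x_1,t)$, a continuous martingale with $D_0=x_2-x_1>0$, and write $D_t=(x_2-x_1)+\int_0^t\!\int_0^1\Theta(s,u')\,\mathrm dw(u',s)$ where $\Theta(s,u')$ denotes the difference of the two integrands in~\eqref{equation en x}. A triangle-inequality computation analogous to the one in the proof of Proposition~\ref{prop point fixe} (using $\|\varphi_\sigma\|_\infty\leq 1$, the Lipschitz character of $\varphi_\sigma$ and $\varphi_\sigma^2$, and the uniform lower bound $\eps$ on the denominators) yields the pointwise estimate $|\Theta(s,u')|\leq C_{\sigma,\eps}|D_s|$ for some deterministic constant $C_{\sigma,\eps}$.

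Non-negativity of $D$ then follows from Itô's formula applied to the convex $\mathcal C^{1,1}$ function $f(x):=(x_-)^2$, whose derivative $f'(x)=-2x_-$ is continuous (so no local-time correction is needed) and whose bounded second derivative is $f''(x)=2\mathds 1_{x\leq 0}$. After the usual localisation argument, the stochastic integral against $\mathrm dD$ has zero expectation, and $f(D_0)=0$ combined with the key estimate gives
\begin{equation*}
\E{(D_t)_-^2}\leq C_{\sigma,\eps}^2\int_0^t\E{(D_s)_-^2}\,\mathrm ds.
\end{equation*}
Gronwall's Lemma yields $\E{(D_t)_-^2}=0$ for every $t\in[0,T]$, and by continuity of paths $D_t\geq 0$ on $[0,T]$ on an event of full probability.

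For strict positivity, set $\tau:=\inf\{t\geq 0:D_t=0\}$ (with $\inf\emptyset=+\infty$). On $[0,\tau)$, Itô's formula applied to $\log D_t$ gives
\begin{equation*}
\log D_t=\log(x_2-x_1)+M_t-\tfrac{1}{2}\langle M\rangle_t,\qquad M_t:=\int_0^t\!\int_0^1 \frac{\Theta(s,u')}{D_s}\,\mathrm dw(u',s),
\end{equation*}
and the key estimate gives $|\Theta(s,u')/D_s|\leq C_{\sigma,\eps}$, hence $\langle M\rangle_t\leq C_{\sigma,\eps}^2 t$ for $t<\tau$. Cutting the integrand at $\tau$ turns $M$ into an $L_2$-bounded continuous martingale on $[0,T]$ with bounded quadratic variation, which therefore admits a finite limit at $\tau$; but on $\{\tau\leq T\}$ one would have $\log D_t\to-\infty$ as $t\uparrow\tau$, a contradiction. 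Hence $\mathbb P(\tau\leq T)=0$ and $D_t>0$ throughout $[0,T]$ almost surely. Intersecting over the countably many rational pairs $(u_1,u_2)$ preserves almost-sure validity and yields the simultaneous statement. The main technical obstacle is precisely this logarithm argument: it encodes the fact that, unlike in the singular limit, the smoothed particles cannot coalesce in finite time, and it relies crucially on the Lipschitz-type bound $|\Theta(s,u')|\leq C_{\sigma,\eps}|D_s|$ that the regularisation by $\varphi_\sigma$ together with $\eps>0$ makes available.
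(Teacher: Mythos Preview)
Your proof is correct. The key Lipschitz bound $|\Theta(s,u')|\leq C_{\sigma,\eps}|D_s|$ is exactly the estimate $\|\partial_x\theta_{\sigma,\eps}\|_\infty\leq C_{\sigma,\eps}$ that the paper also uses, and both the $(x_-)^2$/Gronwall argument and the logarithm contradiction are sound.

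The paper takes a shorter path that you may find instructive. Rather than treating non-negativity and strict positivity separately, it observes that the difference satisfies the \emph{linear} SDE
\[
D_t=D_0+\int_0^t D_s\,\mathrm dM_s,\qquad M_t:=\int_0^t\!\!\int_0^1 \mathds 1_{\{D_s\neq 0\}}\frac{\Theta(s,u')}{D_s}\,\mathrm dw(u',s),
\]
with $\langle M\rangle_T\leq C_{\sigma,\eps}^2T$ (the same bound you derived), and then simply invokes the Dol\'eans--Dade exponential: $D_t=D_0\exp\!\big(M_t-\tfrac12\langle M\rangle_t\big)$. This single formula yields simultaneously $D_t>0$ when $D_0>0$ and $D_t\equiv 0$ when $D_0=0$. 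Your logarithm step is precisely this computation run in reverse, so the two arguments are morally the same; your step~1 with $(x_-)^2$ is then redundant (though harmless), since strict positivity already implies non-negativity and the equality case is handled by uniqueness. What your route buys is that it avoids quoting the stochastic-exponential formula and stays at the level of It\^o's formula plus Gronwall, which is a perfectly reasonable trade-off.
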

\begin{proof}
Let $(u_1,u_2) \in \Q^2$ such that $0\leq u_1<u_2 \leq 1$. For $u=u_1,u_2$, we have:
\begin{equation*}
\widetilde{y}^g_{\sigma,\eps} (u,t)=g(u)+\int_0^t \!\!\int_0^1 \theta_{\sigma,\eps} (\widetilde{y}^g_{\sigma,\eps}(u,s),u',s) \mathrm dw(u',s),
\end{equation*}
where $\theta_{\sigma,\eps}(x,u',s)= \frac{\varphi_\sigma(x-y^g_{\sigma,\eps}(u',s))}{\eps + \int_0^1 \varphi_\sigma^2(x-y^g_{\sigma,\eps}(v,s))\mathrm dv}$. 
Therefore, we have (writing $\widetilde{y}$ instead of $\widetilde{y}^g_{\sigma,\eps}$ and $\theta$ instead of $\theta_{\sigma,\eps}$):
\begin{align}\label{eds 1}
\widetilde{y} (u_2,t)-\widetilde{y} (u_1,t)
&=g(u_2)-g(u_1)+\int_0^t \!\!\int_0^1 (\theta (\widetilde{y}(u_2,s),u',s)-\theta (\widetilde{y}(u_1,s),u',s)) \mathrm dw(u',s)\notag
\\
&=g(u_2)-g(u_1) +\int_0^t (\widetilde{y} (u_2,s)-\widetilde{y} (u_1,s))\mathrm dM_s
\end{align}
where
$M_t =\int_0^t \int_0^1 \mathds 1_{\{\widetilde{y}(u_2,s)\neq \widetilde{y} (u_1,s)\} } \frac{\theta (\widetilde{y}(u_2,s),u',s)-\theta (\widetilde{y}(u_1,s),u',s)}{\widetilde{y}(u_2,s)-\widetilde{y}(u_1,s)} \mathrm dw(u',s)$. 
Observe that:
\begin{align*}
\theta (\widetilde{y}(u_2,s),u',s)-\theta (\widetilde{y}(u_1,s),u',s)
=\int_{\widetilde{y}(u_1,s)}^{\widetilde{y}(u_2,s)} \partial_x \theta (x,u',s)\mathrm dx,
\end{align*}
and that $\partial_x \theta (x,u',s)=\frac{\varphi_\sigma'(x-y(u',s))}{\eps + \int_0^1 \varphi_\sigma^2(x-y(v,s))\mathrm dv}-\frac{\varphi_\sigma(x-y(u',s))\int_0^1 (\varphi_\sigma^2)'(x-y(v,s))\mathrm dv}{(\eps + \int_0^1 \varphi_\sigma^2(x-y(v,s))\mathrm dv)^2}$. Therefore, $\partial_x \theta$ is bounded uniformly in $(x,u',s)\in \R \times[0,1]\times [0,T]$ by $ C_{\sigma,\eps}:=\frac{\| \varphi_\sigma '\|_{L_\infty}}{\eps}+\frac{\| \varphi_\sigma \|_{L_\infty}\| (\varphi_\sigma^2)' \|_{L_\infty}}{\eps^2}$. We deduce that
\begin{align*}
\E{\langle M,M \rangle_T}
&=\E{\int_0^T \!\!\int_0^1 \mathds 1_{\{\widetilde{y}(u_2,s)\neq \widetilde{y} (u_1,s) \}} \left(\frac{\theta (\widetilde{y}(u_2,s),u',s)-\theta (\widetilde{y}(u_1,s),u',s)}{\widetilde{y}(u_2,s)-\widetilde{y}(u_1,s)} \right)^2\mathrm du' \mathrm ds}\\
&\leq \E{\int_0^T\!\! \int_0^1  \left(C_{\sigma,\eps}\right)^2\mathrm du' \mathrm ds}
\leq T \left(C_{\sigma,\eps}\right)^2,
\end{align*}
and thus $M$ is a $(\mathcal G_t)_{t \in [0,T]}$-martingale on $[0,T]$. 
We resolve the stochastic differential equation~\eqref{eds 1}:
$\widetilde{y}^g_{\sigma,\eps} (u_2,t)-\widetilde{y}^g_{\sigma,\eps} (u_1,t)
=(g(u_2)-g(u_1))\exp\left(M_t - \frac{1}{2}\langle M,M\rangle_t  \right)$.
If $g(u_1)< g(u_2)$ (resp. $g(u_1)=g(u_2)$), then almost surely for every $t\in [0,T]$,  $\widetilde{y}^g_{\sigma,\eps}(u_1,t) < \widetilde{y}^g_{\sigma,\eps}(u_2,t)$ (resp. $=$). Thus it is true almost surely for every $(u_1,u_2) \in \mathds Q^2$ such that $u_1<u_2$. 
\end{proof}

Therefore the proof of Proposition~\ref{prop non-decreasing property} is complete:
\begin{proof}[Proof (Proposition~\ref{prop non-decreasing property})]
For each $t \in [0,T]$, $Y^g_{\sigma,\eps}(t)=y^g_{\sigma,\eps}(\cdot,t)$ has a modification $\widetilde{y}^g_{\sigma,\eps}(\cdot,t)$ belonging to $L_2^\uparrow[0,1]$. 
\end{proof}

We precise the properties of $\widetilde{y}^g_{\sigma,\eps}$ in the following Corollary, which derives directly from Proposition~\ref{prop non-decreasing property}. From now on, we will always use this version of the process. 
\begin{coro}
\label{coro version continue ps}
The following two statements hold:
\begin{itemize}
\item for almost every $u \in (0,1)$,  $(\widetilde{y}^g_{\sigma,\eps}(\omega,u,t))_{t \in [0,T]}$ is a $(\mathcal F^{\sigma,\eps}_t)_{t \in [0,T]}$-martingale, and it is continuous for almost every $(u,\omega) \in (0,1)\times \Omega$. 
\item almost surely, for every $t\in [0,T]$, $u \mapsto \widetilde{y}^g_{\sigma,\eps}(u,t)$ is càdlàg and non-decreasing. 
\end{itemize} 
\end{coro}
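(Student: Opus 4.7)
The plan is to upgrade the identification $\widehat{y}^g_{\sigma,\eps} = \widetilde{y}^g_{\sigma,\eps}$ established in $\mathcal{M}$ (Lemma~\ref{lemme egalite dans M} combined with the last display of the proof of Proposition~\ref{prop non-decreasing property}) into a pointwise statement in $u$ and $\omega$, and then to transfer the regularity properties already known for $\widetilde{y}^g_{\sigma,\eps}$ and for $\widehat{y}^g_{\sigma,\eps}$ onto the common modification.

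First I would apply Fubini--Tonelli to the nonnegative random field
$(u,\omega) \mapsto \sup_{t\leq T} |\widehat{y}^g_{\sigma,\eps}(\omega,u,t)-\widetilde{y}^g_{\sigma,\eps}(\omega,u,t)|^2$: from $\E{\sup_{t\leq T}\int_0^1 |\widehat{y}^g_{\sigma,\eps}-\widetilde{y}^g_{\sigma,\eps}|^2 \mathrm du} = 0$ and the monotone exchange of sup and integral in $u$, one gets $\int_0^1 \E{\sup_{t\leq T}|\widehat{y}^g_{\sigma,\eps}(u,t)-\widetilde{y}^g_{\sigma,\eps}(u,t)|^2} \mathrm du = 0$. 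Hence for Lebesgue-almost every $u\in(0,1)$ the inner expectation vanishes, and for such a $u$ there is a $\mathbb P$-null set $N_u$ outside which $\widehat{y}^g_{\sigma,\eps}(\omega,u,t)=\widetilde{y}^g_{\sigma,\eps}(\omega,u,t)$ for \emph{every} $t\in[0,T]$ simultaneously. This is the first statement of the Corollary.

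Then the two ``in particular'' assertions are immediate. For almost every $u$, the process $t\mapsto \widetilde{y}^g_{\sigma,\eps}(u,t) = \widetilde{z}(g(u),t)$ is a continuous real-valued $(\mathcal G_t)$-martingale (Lemma~\ref{lemme 3.9 bis} together with the continuous modification $\widetilde z$), so the common version $t\mapsto \widehat{y}^g_{\sigma,\eps}(u,t)$ inherits continuity for almost every $(u,\omega)$. For the filtration $(\mathcal F^{\sigma,\eps}_t)_{t\in[0,T]}$, note that $\widehat{y}^g_{\sigma,\eps}(u,t)$ is by construction a $\liminf$ of $\widetilde{y}^g_{\sigma,\eps}(u_n,t)$ for rational $u_n\searrow u$, each of which is (after the identification above, up to a null set) a pointwise value of $Y^g_{\sigma,\eps}(t)$, hence $\mathcal F^{\sigma,\eps}_t$-measurable. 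Since a $(\mathcal G_t)$-martingale that is adapted to the smaller filtration $(\mathcal F^{\sigma,\eps}_t)\subseteq(\mathcal G_t)$ remains a martingale with respect to the smaller filtration, the martingale property w.r.t.\ $(\mathcal F^{\sigma,\eps}_t)$ follows.

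Finally, the second bullet (càdlàg and non-decreasing in $u$, almost surely, for every $t$) was already proved in the construction step of Proposition~\ref{prop non-decreasing property}: monotonicity of $u\mapsto \widetilde{y}^g_{\sigma,\eps}(u,t)$ along rationals from Lemma~\ref{lemme croissance} yields monotonicity of $\widehat{y}^g_{\sigma,\eps}(\cdot,t)$, while the $\liminf$-from-the-right definition gives right-continuity. The only delicate point is really the Fubini exchange in the first step, but since the integrand is nonnegative and jointly measurable as a supremum over a countable dense subset of $[0,T]$ (using $t$-continuity of $\widetilde{y}^g_{\sigma,\eps}$), this causes no difficulty.
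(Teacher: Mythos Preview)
Your argument has a genuine gap at its very first step. From
\[
\E{\sup_{t\leq T}\int_0^1 |\widehat{y}^g_{\sigma,\eps}(u,t)-\widetilde{y}^g_{\sigma,\eps}(u,t)|^2\,\mathrm du} = 0
\]
you cannot conclude
\[
\int_0^1 \E{\sup_{t\leq T}|\widehat{y}^g_{\sigma,\eps}(u,t)-\widetilde{y}^g_{\sigma,\eps}(u,t)|^2}\,\mathrm du = 0
\]
by any ``monotone exchange'': for nonnegative integrands the general inequality is $\sup_t \int_0^1 f(u,t)\,\mathrm du \leq \int_0^1 \sup_t f(u,t)\,\mathrm du$, which goes the wrong way. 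Knowing that the smaller quantity vanishes says nothing about the larger one. A countable-times argument (restrict to rational $t$, take a union of null sets) would close the gap \emph{if} you already knew that $t\mapsto \widehat{y}^g_{\sigma,\eps}(u,t)-\widetilde{y}^g_{\sigma,\eps}(u,t)$ is continuous for a.e.\ $u$; but the $t$-continuity of $\widehat{y}^g_{\sigma,\eps}(u,\cdot)$ is precisely one of the conclusions you are trying to establish, so invoking it here would be circular. Your final paragraph only uses continuity of $\widetilde{y}^g_{\sigma,\eps}$ for measurability of the sup, which does not resolve the exchange.

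The paper avoids this by re-running a Gronwall estimate directly on $h(t):=\int_0^1 \E{\sup_{s\leq t}|\widehat{y}^g_{\sigma,\eps}(u,s)-\widetilde{y}^g_{\sigma,\eps}(u,s)|^2}\,\mathrm du$, with the supremum already \emph{inside} the expectation. Writing $\widehat{y}^g_{\sigma,\eps}(u,s)=\lim_n \widetilde{y}^g_{\sigma,\eps}(u_n,s)$, applying Fatou, and then invoking the Doob-based estimate of inequality~\eqref{ineg 9} (which was proved precisely for $\int_0^1 \E{\sup_{s\leq t}|\widetilde{y}^g_{\sigma,\eps}(u_n,s)-\widetilde{y}^g_{\sigma,\eps}(u,s)|^2}\,\mathrm du$, sup inside), one obtains $h(t)\leq C_{\sigma,\eps}\int_0^t h(s)\,\mathrm ds$ and hence $h\equiv 0$. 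Once this is in hand, your deductions for the two bullet points are correct; but the identification in $\mathcal M$ alone only controls $\sup_t\int_0^1$, and moving the sup inside is exactly the substance of the Corollary.
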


We complete the proof of Proposition~\ref{proprietes A1-A3}.
\begin{proof}[Proof (Proposition~\ref{proprietes A1-A3}).]
Thanks to Proposition~\ref{prop non-decreasing property}, the proof of properties $(A1)$ and $(A2)$ has been completed. 
It remains to compute the quadratic variation. 
Recall that for every $u\in [0,1]$, $(\widetilde{y}^g_{\sigma,\eps}(u,t))_{t\in[0,T]}$ is a $(\mathcal G_t)_{t\in[0,T]}$-martingale and that 
\begin{equation*}
\widetilde{y}_{\sigma,\eps}^g (u,t)=g(u)+\int_0^t \!\! \int_0^1 \theta_{\sigma,\eps} (\widetilde{y}_{\sigma,\eps}^g(u,s),u',s) \mathrm dw(u',s).
\end{equation*}
Therefore, for every $u,u' \in [0,1]$, 
\begin{align*}
\langle \widetilde{y}_{\sigma,\eps}^g (u,\cdot), \widetilde{y}_{\sigma,\eps}^g (u',\cdot) \rangle_t
&= \langle \int_0^{\cdot}  \!\! \int_0^1 \theta_{\sigma,\eps} (\widetilde{y}_{\sigma,\eps}^g(u,s),v,s) \mathrm dw(v,s), \int_0^{\cdot} \!\! \int_0^1 \theta_{\sigma,\eps} (\widetilde{y}_{\sigma,\eps}^g(u',s),v,s) \mathrm dw(v,s)
\rangle_t \\
&= \int_0^t  \!\! \int_0^1 \theta_{\sigma,\eps} (\widetilde{y}_{\sigma,\eps}^g(u,s),v,s)  \theta_{\sigma,\eps} (\widetilde{y}_{\sigma,\eps}^g(u',s),v,s) \mathrm dv \mathrm ds.
\end{align*}
Therefore, for every $h,k \in L_2[0,1]$, 
\begin{align*}
\langle (Y^g_{\sigma,\eps},h)_{L_2},(Y^g_{\sigma,\eps},k)_{L_2}\rangle_t 
&\!\!= \int_0^t \!\! \int_0^1 \!\! \int_0^1\!\! h(u)k(u')\!\!\int_0^1\!\!\theta_{\sigma,\eps} (\widetilde{y}_{\sigma,\eps}^g(u,s),v,s)  \theta_{\sigma,\eps} (\widetilde{y}_{\sigma,\eps}^g(u',s),v,s)\mathrm dv\mathrm du\mathrm du' \mathrm ds \\
&\!\!= \int_0^t \!\! \int_0^1 \!\! \int_0^1 \!\!h(u)k(u')\!\!\int_0^1\!\!\theta_{\sigma,\eps} (y_{\sigma,\eps}^g(u,s),v,s)  \theta_{\sigma,\eps} (y_{\sigma,\eps}^g(u',s),v,s)\mathrm dv\mathrm du\mathrm du' \mathrm ds \\
&\!\!= \int_0^t \!\! \int_0^1 \!\! \int_0^1 h(u)k(u')\frac{m^g_{\sigma,\eps}(u,u',s)}{(\eps+m^g_{\sigma,\eps}(u,s))(\eps+m^g_{\sigma,\eps}(u',s))}\mathrm du\mathrm du' \mathrm ds, 
\end{align*}
which completes the proof. 
\end{proof} 

We conclude this Section with a property on the quadratic variation of two fixed particles, which will be useful to obtain lower bounds on the mass in the next Section. 
\begin{coro}
\label{coro crochet}
For almost every $u,u' \in [0,1]$, 
 \begin{align}
 \label{coro 3.14}
\langle \widetilde{y}_{\sigma,\eps}^g (u,\cdot), \widetilde{y}_{\sigma,\eps}^g (u',\cdot) \rangle_t
= \int_0^t  \!\! \int_0^1 \frac{m^g_{\sigma,\eps}(u,u',s)}{(\eps+m^g_{\sigma,\eps}(u,s))(\eps+m^g_{\sigma,\eps}(u',s))} \mathrm dv \mathrm ds.
\end{align}
\end{coro}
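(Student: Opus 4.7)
The plan is to recycle the computation already performed during the proof of Proposition~\ref{proprietes A1-A3}, stop before integrating against test functions $h,k$, and then transfer the resulting pointwise identity from $\widetilde{y}^g_{\sigma,\eps}$ to $\widehat{y}^g_{\sigma,\eps}$ via Corollary~\ref{coro version continue ps}.

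More precisely, I would first recall that in the proof of Proposition~\ref{proprietes A1-A3} the identity
\[
\langle \widetilde{y}^g_{\sigma,\eps}(u,\cdot),\widetilde{y}^g_{\sigma,\eps}(u',\cdot)\rangle_t
= \int_0^t\!\!\int_0^1 \theta_{\sigma,\eps}(\widetilde{y}^g_{\sigma,\eps}(u,s),v,s)\,\theta_{\sigma,\eps}(\widetilde{y}^g_{\sigma,\eps}(u',s),v,s)\,\mathrm dv\,\mathrm ds
\]
has been established for \emph{every} $u,u'\in[0,1]$, as a consequence of the fact that each $\widetilde{y}^g_{\sigma,\eps}(u,\cdot)$ is a Walsh stochastic integral against the Brownian sheet $w$ with kernel $(v,s)\mapsto \theta_{\sigma,\eps}(\widetilde{y}^g_{\sigma,\eps}(u,s),v,s)$. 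The denominators $\eps+\int_0^1\varphi_\sigma^2(\widetilde{y}^g_{\sigma,\eps}(u,s)-y^g_{\sigma,\eps}(w,s))\mathrm dw$ factor out of the inner $\mathrm dv$ integral, leaving a numerator equal to $\int_0^1 \varphi_\sigma(\widetilde{y}^g_{\sigma,\eps}(u,s)-y^g_{\sigma,\eps}(v,s))\varphi_\sigma(\widetilde{y}^g_{\sigma,\eps}(u',s)-y^g_{\sigma,\eps}(v,s))\,\mathrm dv$.

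Second, by Lemma~\ref{lemme egalite dans M}, $\widetilde{y}^g_{\sigma,\eps}=y^g_{\sigma,\eps}$ in $\mathcal M$, so the two processes agree for a.e. $(\omega,u,s)$. Fubini allows us to fix a full-measure set of $u\in[0,1]$ on which $\widetilde{y}^g_{\sigma,\eps}(u,s)=y^g_{\sigma,\eps}(u,s)$ holds for a.e. $(\omega,s)$; on the intersection of two such sets (one for $u$, one for $u'$) we may substitute $y^g_{\sigma,\eps}$ for $\widetilde{y}^g_{\sigma,\eps}$ everywhere inside the integrals without modifying the $\mathrm dv\,\mathrm ds$-a.e. value of the integrand, and the numerator and denominators become respectively $m^g_{\sigma,\eps}(u,u',s)$, $\eps+m^g_{\sigma,\eps}(u,s)$ and $\eps+m^g_{\sigma,\eps}(u',s)$. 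Finally, by Corollary~\ref{coro version continue ps}, for a.e. $u$ the paths of $\widehat{y}^g_{\sigma,\eps}(u,\cdot)$ and $\widetilde{y}^g_{\sigma,\eps}(u,\cdot)$ coincide almost surely as continuous martingales, hence so do their quadratic covariations, which gives~\eqref{coro 3.14}.

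The only genuine difficulty is not analytic but a matter of careful bookkeeping of the various almost-sure and almost-everywhere sets: one needs to combine the Fubini argument applied to Lemma~\ref{lemme egalite dans M} with the a.e. $u$ statement of Corollary~\ref{coro version continue ps} so that the resulting exceptional set in $[0,1]^2$ has Lebesgue measure zero, which is routine since each of the three identifications ($\widetilde y=y^g_{\sigma,\eps}$ in $u$, $\widetilde y=y^g_{\sigma,\eps}$ in $u'$, $\widehat y=\widetilde y$) fails only on a null set.
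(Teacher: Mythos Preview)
Your proposal is correct and follows essentially the same approach as the paper: the paper's own proof is a one-line reference to the bracket computation carried out in the proof of Proposition~\ref{proprietes A1-A3} together with the identification $\widehat{y}^g_{\sigma,\eps}=\widetilde{y}^g_{\sigma,\eps}$ in $\mathcal M$ from Corollary~\ref{coro version continue ps}. You have simply written out in more detail the bookkeeping of null sets that the paper leaves implicit.
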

\begin{proof}
This statement follows clearly from the proof of Proposition~\ref{proprietes A1-A3}, from the fact that for almost every $u \in (0,1)$, $(\widetilde{y}^g_{\sigma,\eps}(u,t))_{t \in [0,T]}$ is a continuous martingale.
\end{proof}

\section{Convergence of the process $(y^g_{\sigma,\eps})_{\sigma, \eps \in \Q_+}$}
\label{sec convergence}

From now on, for the sake of simplicity, we fix a function $g$ in $\mathcal  L^\uparrow_{2+}[0,1]$ and  $y_{\sigma,\eps}$ will denote the version $\widetilde{y}^g_{\sigma,\eps}$ starting from $g$.
We denote by $p$ a number such that $p>2$ and $g \in L_p(0,1)$. 

We begin by proving the tightness of the sequence  $(y_{\sigma,\eps})_{\sigma, \eps \in \Q_+}$ in the space $ L_2([0,1],\mathcal C[0,T])$ in Paragraph~\ref{subsec tightness}. We will then pass to the limit in distribution, first when $\eps \to 0$ and then when $\sigma \to 0$ and prove, in Paragraph~\ref{par conv sigma 0}, that the limit process is also a martingale.

\subsection{Tightness of the collection $(y_{\sigma,\eps})_{\sigma>0,\eps>0}$ in $ L_2([0,1],\mathcal C[0,T])$ }
\label{subsec tightness}

Recall that for all $\sigma>0$, the map $\varphi_\sigma$ is smooth, even, bounded by 1, equal to 1 on $\left [0,\frac{\sigma-\eta}{2}\right]$ and equal to 0 on $\left [ \frac{\sigma}{2},+\infty \right)$, where $\eta$ is chosen so that $\eta < \frac{\sigma}{3}$. Recall that $y_{\sigma,\eps}$ is solution of the following equation: 
\[
y_{\sigma,\eps} (u,t)=g(u)+\int_0^t \!\! \int_0^1 \frac{\varphi_\sigma(y_{\sigma,\eps}(u,s)-y_{\sigma,\eps}(u',s))}{\eps + \int_0^1 \varphi_\sigma^2(y_{\sigma,\eps}(u,s)-y_{\sigma,\eps}(v,s))\mathrm dv} \mathrm dw(u',s).
\]

We begin by proving that the collection $(y_{\sigma,\eps})_{\sigma>0,\eps>0}$ satisfies a compactness criterion in the space $L_2([0,1],\mathcal C[0,T])$. We recall the following criterion (see~\cite[Theorem 1, p.71]{simon87}):
\begin{prop} 
\label{critere simon}
Let $K$ be a subset of $L_2([0,1],\mathcal C[0,T])$. 

$K$ is relatively compact in $L_2([0,1],\mathcal C[0,T])$ if and only if:
\begin{itemize}
\item[$(H1)$] for every $0\leq u_1 < u_2 \leq 1$, $\left\{ \int_{u_1}^{u_2} f(u,\cdot)\mathrm du, f \in K \right\}$ is relatively compact in $C[0,T]$,

\item[$(H2)$] $\lim_{h\rightarrow 0^+} \sup_{f \in K} \int_0^{1-h} \|f(u+h,\cdot)-f(u,\cdot)\|^2_{\mathcal C[0,T]}\mathrm du =0$.
\end{itemize}
\end{prop}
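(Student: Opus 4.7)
The plan is to adapt the classical Kolmogorov--Riesz--Fréchet compactness theorem to this Banach-space-valued setting, following the strategy of Simon's paper. The key tool is an averaging operator in the $u$-variable that effectively decouples the two hypotheses.

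For the necessity of $(H1)$, observe that for fixed $u_1 < u_2$, the Bochner integral $f \mapsto \int_{u_1}^{u_2} f(u,\cdot) \mathrm du$ defines a bounded linear map from $L_2([0,1],\mathcal C[0,T])$ to $\mathcal C[0,T]$: by Cauchy--Schwarz applied in the $\mathcal C[0,T]$-norm, $\| \int_{u_1}^{u_2} f(u,\cdot)\mathrm du \|_{\mathcal C[0,T]} \leq \sqrt{u_2-u_1}\, \|f\|_{L_2([0,1],\mathcal C[0,T])}$. Continuous images of relatively compact sets are relatively compact, which yields $(H1)$. For the necessity of $(H2)$, translation is continuous in $L_2(\R, \mathcal C[0,T])$ for each fixed element (after extending $f$ by $0$ outside $[0,1]$), which follows from density of continuous compactly supported functions; this is then promoted to uniformity over $K$ by covering $K$ with a finite $\delta$-net and combining the pointwise convergence on the net with a triangle inequality.

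For the sufficiency direction, introduce for each $h \in (0,1/2)$ the averaging operator $M_h f(u,\cdot) := \frac{1}{h}\int_0^h f((u+s)\wedge 1, \cdot)\, \mathrm ds$. The argument has two parts. First, using $(H2)$ one shows that $\sup_{f \in K} \|M_h f - f\|_{L_2([0,1], \mathcal C[0,T])} \to 0$ as $h \to 0$, since $\|M_h f - f\|_{L_2([0,1],\mathcal C[0,T])}^2$ is controlled by a Jensen-type average over $0 \leq k \leq h$ of the quantities $\int_0^{1-k} \|f(u+k,\cdot)-f(u,\cdot)\|_{\mathcal C[0,T]}^2 \mathrm du$ appearing in $(H2)$. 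Second, for each fixed $h>0$, the family $\{M_h f : f \in K\}$ is relatively compact in $\mathcal C([0,1], \mathcal C[0,T])$ by Arzelà--Ascoli: the pointwise relative compactness in $\mathcal C[0,T]$ at each $u$ follows from $(H1)$ applied with $u_1 = u$, $u_2 = (u+h)\wedge 1$ (since $h\, M_h f(u,\cdot)$ is precisely such an integral), while equicontinuity in $u$ follows from the estimate $\|M_h f(u+\delta,\cdot)-M_h f(u,\cdot)\|_{\mathcal C[0,T]} \leq \frac{2\sqrt{\delta}}{h} \|f\|_{L_2}$ obtained from Cauchy--Schwarz, which is uniform in $f \in K$ because $K$ is bounded in $L_2$ (boundedness being a consequence of $(H1)$ with $u_1=0$, $u_2=1$ together with $(H2)$). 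Combining the uniform approximation with the relative compactness of $\{M_h f\}_{f \in K}$ for each fixed $h$ shows that $K$ is totally bounded in $L_2([0,1], \mathcal C[0,T])$, hence relatively compact.

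The main obstacle is the uniform-in-$K$ approximation $\sup_{f \in K} \|M_h f - f\|_{L_2} \to 0$: it requires rewriting $M_h f(u,\cdot) - f(u,\cdot) = \frac{1}{h}\int_0^h [f((u+s)\wedge 1,\cdot) - f(u,\cdot)]\, \mathrm ds$, applying Jensen's inequality in $s$, and then Fubini to reduce to the translation moduli controlled by $(H2)$. The boundary truncation $(u+s)\wedge 1$ in the definition of $M_h$ is a minor technical nuisance handled by separating the integrand according to whether $u+s \leq 1$ or not, the latter region contributing a term of size $O(h)$.
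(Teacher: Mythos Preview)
The paper does not prove this proposition; it is quoted directly from Simon's compactness paper with only a citation. Your proposal is therefore not competing with a proof in the paper but rather sketching Simon's own argument, and the overall architecture you give---averaging operator $M_h$, uniform approximation via $(H2)$, Arzel\`a--Ascoli on $\{M_h f\}$ via $(H1)$---is indeed Simon's strategy.

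There is, however, a circularity in your equicontinuity step. You bound
\[
\|M_h f(u+\delta,\cdot)-M_h f(u,\cdot)\|_{\mathcal C[0,T]} \leq \tfrac{2\sqrt{\delta}}{h}\,\|f\|_{L_2},
\]
and then appeal to boundedness of $K$ in $L_2$, which you claim follows from $(H1)$ on $[0,1]$ together with $(H2)$. But $(H1)$ on $[0,1]$ only controls $\int_0^1 f$, not $\|f\|_{L_2}$, and you give no explicit mechanism to upgrade this; the natural route to boundedness is \emph{through} the relative compactness of $\{M_h f\}$, which in your scheme already requires equicontinuity. The correct fix (this is how Simon does it) is to derive equicontinuity directly from $(H2)$: for $0<\delta\leq h$ and $u\in[0,1-h-\delta]$,
\[
\|M_h f(u+\delta,\cdot)-M_h f(u,\cdot)\|_{\mathcal C[0,T]}
= \Big\|\tfrac{1}{h}\int_0^h \big(f(u+\delta+s,\cdot)-f(u+s,\cdot)\big)\,\mathrm ds\Big\|
\leq \tfrac{1}{\sqrt{h}}\Big(\int_0^{1-\delta}\|f(v+\delta,\cdot)-f(v,\cdot)\|^2\,\mathrm dv\Big)^{1/2},
\]
which goes to $0$ uniformly in $f\in K$ by $(H2)$. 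With this replacement, Arzel\`a--Ascoli applies without any a priori boundedness assumption, and the rest of your argument goes through.
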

By Ascoli's Theorem, $(H1)$ is satisfied if and only if for every $0\leq u_1 <u_2 \leq 1$, 
\begin{itemize}
\item[-] for every $t \in [0,T]$, $\int_{u_1}^{u_2} f(u,t) \mathrm du $ is uniformly bounded,
\item[-] $\lim_{\eta \to 0^+} \sup_{f \in K} \sup_{ |t_2-t_1| < \eta} \left| \int_{u_1}^{u_2} (f(u,t_2)-f(u,t_1)) \mathrm du \right|=0$. 
\end{itemize}

In order to prove tightness for the collection $(y_{\sigma,\eps})_{\sigma>0,\eps>0}$, we will prove the following Proposition: 
\begin{prop}
\label{prop crit tight}
Let $\delta>0$. The following statements hold:
\begin{itemize}
\item[$(K1)$] there exists $M>0$ such that for all $\sigma>0$, $\eps>0$,  
$\P{\int_0^1 \|y_{\sigma,\eps}(u,\cdot)\|_{\mathcal C[0,T]}^2\mathrm du \leq M}\geq 1-\delta$,
\item[$(K2)$] for all $k \geq 1$, there exists $\eta_k>0$ such that for all $\sigma>0$, $\eps>0$,  
\begin{align*}
\P{\int_0^1 \sup_{|t_2-t_1| < \eta_k} |y_{\sigma,\eps}(u,t_2)-y_{\sigma,\eps}(u,t_1)| \mathrm du \leq \frac{1}{k}} \geq 1-\frac{\delta}{2^k},
\end{align*}
\item[$(K3)$] for all $k \geq 1$, there exists $h_k>0$ such that for all $\sigma>0$, $\eps>0$,
\begin{align*}
\P{\forall h \in (0,h_k), \int_0^{1-h} \|y_{\sigma,\eps}(u+h,\cdot)-y_{\sigma,\eps}(u,\cdot)\|_{\mathcal C[0,T]}^2 \mathrm du \leq \frac{1}{k}} \geq 1-\frac{\delta}{2^k}.
\end{align*}
\end{itemize}
\end{prop}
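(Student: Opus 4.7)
The plan is to reduce each of $(K1)$--$(K3)$ to a uniform-in-$(\sigma,\eps)$ bound in $L^1(\Omega)$ of the quantity appearing inside the probability, and then to conclude via Markov's inequality (choosing $M$ proportional to $1/\delta$ in $(K1)$, and $\eta_k, h_k$ small enough compared to $2^{-k}\delta$ in $(K2),(K3)$). The shared backbone, given by Corollary~\ref{coro crochet}, is that for almost every $u\in(0,1)$ the process $(y_{\sigma,\eps}(u,t))_{t\in[0,T]}$ is a continuous real-valued $(\mathcal G_t)_{t\in[0,T]}$-martingale with explicit quadratic variation $\int_0^{\cdot}\frac{m_{\sigma,\eps}(u,s)}{(\eps+m_{\sigma,\eps}(u,s))^2}\mathrm ds$, and the bilinear covariation $\langle y_{\sigma,\eps}(u,\cdot),y_{\sigma,\eps}(u',\cdot)\rangle$ is given by the expression in that Corollary.

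For $(K1)$, I would apply Doob's $L^2$-inequality fibrewise in $u$ and Fubini to reduce the claim to a uniform-in-$(\sigma,\eps)$ bound on $\E{I_{\sigma,\eps}(T)}$, where $I_{\sigma,\eps}(T):=\int_0^T\!\!\int_0^1 \frac{m_{\sigma,\eps}(u,s)}{(\eps+m_{\sigma,\eps}(u,s))^2}\mathrm du\,\mathrm ds$. For $(K2)$, the Burkholder--Davis--Gundy inequality applied fibrewise and combined with a dyadic chaining argument on the time axis bounds $\E{\sup_{|t_2-t_1|<\eta_k}|y_{\sigma,\eps}(u,t_2)-y_{\sigma,\eps}(u,t_1)|}$ in terms of higher moments of the oscillation of $\langle y_{\sigma,\eps}(u,\cdot)\rangle$; Fubini in $u$ together with the uniform bound from $(K1)$ then sends the result to $0$ as $\eta_k \to 0$. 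For $(K3)$, the increment $\xi_h(u,\cdot):=y_{\sigma,\eps}(u+h,\cdot)-y_{\sigma,\eps}(u,\cdot)$ is a continuous martingale starting from $g(u+h)-g(u)$, whose quadratic variation is explicit via the bilinear formula of Corollary~\ref{coro crochet}. Doob fibrewise and Fubini split $(K3)$ into two sub-claims: $\int_0^{1-h}(g(u+h)-g(u))^2\mathrm du\to 0$ (standard $L_2$-continuity of translations for $g\in L_p$), and $\E{\int_0^{1-h}\langle \xi_h(u,\cdot)\rangle_T\mathrm du}\to 0$, which follows by dominated convergence using the c\`adl\`ag-in-$u$ modification from Corollary~\ref{coro version continue ps} and the continuity of $\varphi_\sigma$.

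The main obstacle, common to all three parts, is the uniform-in-$(\sigma,\eps)$ bound on $\E{I_{\sigma,\eps}(T)}$: the naive pointwise estimate $\frac{m}{(\eps+m)^2}\leq\frac{1}{4\eps}$ blows up as $\eps\to 0$ and is unusable. My plan is to use instead $\frac{m}{(\eps+m)^2}\leq\frac{1}{\eps+m}$ and, exploiting the monotonicity of $y_{\sigma,\eps}(\cdot,s)$ from Proposition~\ref{prop non-decreasing property}, to change variables $x=y_{\sigma,\eps}(u,s)$ and rewrite the spatial integral as $\int_{\R}\frac{\mathrm d\mu_s^{\sigma,\eps}(x)}{\eps+(\varphi_\sigma^2*\mu_s^{\sigma,\eps})(x)}$, where $\mu_s^{\sigma,\eps}$ denotes the pushforward of the Lebesgue measure on $[0,1]$ by $y_{\sigma,\eps}(\cdot,s)$. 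Partitioning $\R$ into intervals of length $\sigma/3$ (on which $\varphi_\sigma^2\equiv 1$) combined with a Gronwall-type bootstrap on $\E{\|Y_{\sigma,\eps}(t)\|_{L_2}^2}$, which propagates second-moment bounds for $\mu_s^{\sigma,\eps}$ from the initial data $g$, should yield the required uniform control; the quantitative form of this estimate is the technical heart of the argument.
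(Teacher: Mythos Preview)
Your plan has a genuine gap at the point you yourself flag as ``the technical heart'': the partition argument does \emph{not} give a $\sigma$-uniform bound on $\E{I_{\sigma,\eps}(T)}$. On each interval $J$ of length $\sigma/3$ you correctly get $\int_J \frac{\mathrm d\mu}{\eps+(\varphi_\sigma^2*\mu)}\le \frac{\mu(J)}{\eps+\mu(J)}\le 1$, so the total is bounded by the number of such intervals that $\mu_s^{\sigma,\eps}$ charges. But for a diffuse measure (e.g.\ $g=\mathrm{id}$, where $\mu_s^{\sigma,\eps}$ has a density), that number is of order $\mathrm{diam}(\mathrm{supp}\,\mu_s^{\sigma,\eps})/\sigma$, which blows up as $\sigma\to 0$. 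A Gr\"onwall bootstrap on $\E{\|Y_{\sigma,\eps}(t)\|_{L_2}^2}$ cannot repair this: the second moment controls the \emph{spread} of $\mu_s^{\sigma,\eps}$, not the number of $\sigma/3$-cells it occupies, and it is precisely the $1/\sigma$ factor that you need to kill.

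The paper's route is essentially different and uses a structural fact you do not invoke: for $u>u'$ the difference $N_t=y_{\sigma,\eps}(u,t)-y_{\sigma,\eps}(u',t)$ is a \emph{nonnegative} continuous martingale (Lemma~\ref{lemme croissance}). Whenever $M_{\sigma,\eps}(u,s)<\gamma h$ (with $u'=u-h$), a case analysis on the value of $N_s$ shows that $\frac{\mathrm d}{\mathrm ds}\langle N\rangle_s\ge c/h$; writing $N$ as a time-changed Brownian motion and using that $N\ge 0$ forbids large accumulated quadratic variation yields the tail bound of Lemma~\ref{lemme controle masse}. Integrating that tail estimate over levels gives Lemma~\ref{lemme inverse masse approchee sig eps}: $\E{\int_s^t\int_0^1 M_{\sigma,\eps}^{-\beta}\,\mathrm du\,\mathrm dr}\le C\sqrt{t-s}$ for every $\beta<\tfrac32-\tfrac1p$, uniformly in $\sigma,\eps$. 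This is the engine behind all three parts.

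Your treatments of $(K2)$ and $(K3)$ inherit the same problem of non-uniformity in $(\sigma,\eps)$. For $(K3)$, a dominated-convergence argument on $\langle\xi_h(u,\cdot)\rangle_T$ only works at fixed $(\sigma,\eps)$; the paper instead exploits positivity of $N_{u,\cdot}$ to obtain the maximal bound $\P{N_{u,T}\ge\kappa}\le N_{u,0}/\kappa$ and combines it with the $\beta>1$ case of Lemma~\ref{lemme inverse masse approchee sig eps} to prove $\int_0^{1-h}\E{N_{u,T}^2}\,\mathrm du\le C\,h^q$ for some $q>0$, uniformly in $(\sigma,\eps)$. For $(K2)$, the paper uses Aldous' criterion: for stopping times $\tau$, $\E{\int_\tau^{\tau+\eta}M_{\sigma,\eps}^{-1}\,\mathrm ds}\le \eta^{1-1/\beta}\,\E{\int_0^T M_{\sigma,\eps}^{-\beta}\,\mathrm ds}^{1/\beta}$, and the right-hand side is uniformly controlled again by Lemma~\ref{lemme inverse masse approchee sig eps}. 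In short, the missing idea throughout is the probabilistic lower bound on the mass coming from the \emph{nonnegative martingale} structure of spatial increments; without it, none of your estimates can be made uniform in $\sigma$.
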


Proposition~\ref{prop crit tight} will be proved in Paragraph~\ref{subsec proof prop 4.2}.  It implies tightness of $(y^g_{\sigma,\eps})_{\sigma>0,\eps>0}$ in $L_2([0,1],\mathcal C[0,T])$:
\begin{coro}
\label{coro tension y sig eps}
For all $g \in \mathcal L^\uparrow_{2+}[0,1]$, the collection $(y^g_{\sigma,\eps})_{\sigma>0,\eps>0}$ is tight in $L_2([0,1],\mathcal C[0,T])$. 
\end{coro}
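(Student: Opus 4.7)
The plan is to deduce tightness directly from Proposition~\ref{prop crit tight} by building, for each $\delta>0$, a deterministic subset $K_\delta$ of $L_2([0,1],\mathcal C[0,T])$ which is relatively compact (via the Simon criterion of Proposition~\ref{critere simon}) and such that $\mathbb P(y^g_{\sigma,\eps}\in K_\delta)\geq 1-2\delta$ uniformly in $\sigma,\eps>0$.

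Fix $\delta>0$. Let $M=M(\delta)$ be as in $(K1)$ and, for each $k\geq 1$, let $\eta_k=\eta_k(\delta)$ and $h_k=h_k(\delta)$ be given by $(K2)$ and $(K3)$ respectively. I would then define
\begin{align*}
K_\delta:=\Big\{ f \in L_2([0,1],\mathcal C[0,T]):\; &\int_0^1 \|f(u,\cdot)\|_{\mathcal C[0,T]}^2 \mathrm du \leq M, \\
& \forall k\geq 1,\; \int_0^1 \sup_{|t_2-t_1|<\eta_k} |f(u,t_2)-f(u,t_1)|\mathrm du \leq \tfrac{1}{k},\\
& \forall k\geq 1,\forall h\in(0,h_k),\; \int_0^{1-h} \|f(u+h,\cdot)-f(u,\cdot)\|_{\mathcal C[0,T]}^2 \mathrm du \leq \tfrac{1}{k} \Big\}.
\end{align*}
By a union bound combined with Proposition~\ref{prop crit tight}, the complement of $\{y^g_{\sigma,\eps}\in K_\delta\}$ has probability at most $\delta + \sum_{k\geq 1} \delta/2^k + \sum_{k\geq 1}\delta/2^k = 3\delta$ uniformly in $\sigma,\eps>0$, which is enough (up to replacing $\delta$ by $\delta/3$ at the beginning).

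The main thing to check is that $K_\delta$ is relatively compact. For this I would apply Proposition~\ref{critere simon}. Condition $(H2)$ is immediate: given $\alpha>0$, choose $k$ with $1/k<\alpha$, then for every $h\in(0,h_k)$ and every $f\in K_\delta$ the integral $\int_0^{1-h}\|f(u+h,\cdot)-f(u,\cdot)\|_{\mathcal C[0,T]}^2\mathrm du$ is bounded by $1/k<\alpha$. For $(H1)$, I would use the Ascoli reformulation quoted after Proposition~\ref{critere simon}: for fixed $0\leq u_1<u_2\leq 1$,
\begin{align*}
\sup_{t\in[0,T]}\Big|\int_{u_1}^{u_2} f(u,t)\mathrm du\Big| \leq \sqrt{u_2-u_1}\left(\int_0^1 \|f(u,\cdot)\|_{\mathcal C[0,T]}^2 \mathrm du\right)^{1/2}\leq \sqrt{(u_2-u_1)M},
\end{align*}
so the first Ascoli condition holds; and for equicontinuity in $t$,
\begin{align*}
\sup_{|t_2-t_1|<\eta_k}\Big|\int_{u_1}^{u_2}(f(u,t_2)-f(u,t_1))\mathrm du\Big|\leq \int_0^1 \sup_{|t_2-t_1|<\eta_k}|f(u,t_2)-f(u,t_1)|\mathrm du\leq \tfrac{1}{k},
\end{align*}
so letting $k\to\infty$ gives the required uniform modulus of continuity.

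I expect the main obstacle not to be in the present argument, which is essentially a routine assembly of the three ingredients of Proposition~\ref{prop crit tight} and of Simon's criterion, but rather in the verification of Proposition~\ref{prop crit tight} itself, and in particular of $(K3)$: estimates on spatial increments in $u$ of the solution of~\eqref{equation avec phi} are delicate because, unlike the temporal increments controlled by Doob's and BDG's inequalities through the martingale property, the spatial modulus of continuity has to be obtained by exploiting the monotonicity $Y^g_{\sigma,\eps}(t)\in L^\uparrow_2[0,1]$ together with $L_p$-bounds on $g$, uniformly in $\sigma,\eps$.
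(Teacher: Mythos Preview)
Your proof is correct and follows essentially the same route as the paper: define the deterministic set $K_\delta$ via the three constraints coming from $(K1)$--$(K3)$, check the Simon criterion (Proposition~\ref{critere simon}) using the Ascoli reformulation for $(H1)$ and the third constraint for $(H2)$, and conclude by a union bound that $\mathbb P(y^g_{\sigma,\eps}\in \overline{K_\delta})\geq 1-3\delta$. The paper does exactly this, with the only cosmetic difference that it asserts $K_\delta$ is closed (hence compact) rather than relatively compact.
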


\begin{proof}[Proof (Corollary~\ref{coro tension y sig eps})]
Let $\delta>0$. Let $M$, $(h_k)_{k \geq 1}$, $(\eta_k)_{k \geq 1}$ be such that the statements of Proposition~\ref{prop crit tight} hold for~$\delta$. 

Denote $K_\delta$ the closed set of all functions $f \in L_2([0,1],\mathcal C[0,T])$ satisfying:
\begin{itemize}
\item[$(L1)$] 
$\displaystyle \int_0^1 \|f(u,\cdot)\|_{\mathcal C[0,T]}^2\mathrm du \leq M$. 
\item[$(L2)$] for all $k \geq 1$, 
$\displaystyle \int_0^1 \sup_{|t_2-t_1| < \eta_k} |f(u,t_2)-f(u,t_1)| \mathrm du \leq \frac{1}{k}$. 
\item[$(L3)$] for all $k\geq 1$, 
$\displaystyle \forall h \in (0,h_k), \int_0^{1-h} \|f(u+h,\cdot)-f(u,\cdot)\|_{\mathcal C[0,T]}^2 \mathrm du \leq \frac{1}{k}$. 
\end{itemize}

Let $0\leq u_1<u_2\leq 1$. We deduce from~$(L1)$ that for every $t \in [0,T]$, and every $f \in K_\delta$, 
$\left|\int_{u_1}^{u_2} f(u,t) \mathrm du\right| \leq \left( \int_{u_1}^{u_2} f(u,t)^2 \mathrm du\right)^{1/2} \leq \left( \int_0^1 \|f(u,\cdot)\|_{\mathcal C[0,T]}^2 \mathrm du\right)^{1/2}\leq \sqrt{M}$. 
We deduce from~$(L2)$ that for every $k \geq 1$, 
\begin{align*}
\sup_{f \in K_\delta} \sup_{ |t_2-t_1| < \eta_k} \left| \int_{u_1}^{u_2} (f(u,t_2)-f(u,t_1)) \mathrm du \right|
\leq \sup_{f \in K_\delta} \int_0^1 \sup_{|t_2-t_1| < \eta_k} |f(u,t_2)-f(u,t_1)| \mathrm du
\leq \frac{1}{k}.
\end{align*}
Therefore, by Ascoli's Theorem, condition~$(H1)$ of Proposition~\ref{critere simon} is satisfied. 

Furthermore, by~$(L3)$, condition~$(H2)$ is also satisfied uniformly for $f\in K_\delta$. Therefore, $K_\delta$ is compact in $L_2([0,1],\mathcal C[0,T])$. 
By Proposition~\ref{prop crit tight}, for all $\sigma>0$, $\eps>0$, $\P{y_{\sigma,\eps} \in K_\delta} \geq 1-3\delta$. This concludes the proof. 
\end{proof}

To prove Proposition~\ref{prop crit tight}, we will first give in the next Paragraph an estimation of the inverse of the mass function (see Lemma~\ref{lemme inverse masse approchee sig eps}). This Lemma is an equivalent in our case of short-range interacting particles of Lemma~2.16 in~\cite{konarovskyi17system}, stated in the case of a system of coalescing particles. 

\subsubsection{Estimation of the inverse of mass}

Recall that $m_{\sigma,\eps}(u,t)=\int_0^1 \varphi_\sigma^2(y_{\sigma,\eps}(u,t)-y_{\sigma,\eps}(v,t))\mathrm dv $.
We  define a modified mass 
\begin{align*}
 M_{\sigma,\eps}(u,t):=
\left \{\begin{aligned}
&\frac{(\eps + m_{\sigma,\eps})^2 }{m_{\sigma,\eps}}(u,t) \text{ if }m_{\sigma,\eps}(u,t)>0,\\
&+\infty \text{ otherwise.}
\end{aligned}\right.
\end{align*}
Clearly, $M_{\sigma,\eps}(u,t)\geq m_{\sigma,\eps}(u,t)$ for every $u \in [0,1]$ and $t \in [0,T]$.

By Corollary~\ref{coro version continue ps}, there exists a (non-random) Borel set $\A$ in $[0,1]$, $\Leb(\A)=1$, such that for all $u \in \A$, $(y_{\sigma,\eps}(u,t))_{t \in [0,T]}$ is almost surely a continuous $(\mathcal F^{\sigma,\eps}_t)_{t \in [0,T]}$-martingale. 
Recall also that almost surely, for every $t\in [0,T]$, $u \mapsto y_{\sigma,\eps}(u,t)$ is càdlàg and non-decreasing. 
Moreover, we assume that for every $u,u' \in  \A$, equality~\eqref{coro 3.14} holds.

\begin{lemme}
There exist $C>0$ and $\gamma \in (0,1)$ such that for each $\sigma , \eps >0$, $t \in (0,T]$ and for every $u \in \A$ and every $h>0$ satisfying $u-h \in (0,1)$, 
\begin{align}
\label{proba M}
\P{\int_0^T \mathds 1_{\left\{M_{\sigma,\eps}(u,s)<\gamma h\right\}} \mathrm ds \geq t} \leq C \left[ g(u)-g(u-h) \right]\sqrt{\frac{h}{t}}.
\end{align}
\label{lemme controle masse}
\end{lemme}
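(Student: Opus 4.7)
The plan is to adapt the martingale argument used in the coalescing case~\cite[Lemma 2.16]{konarovskyi2017system} to the present smoothed setting. First I would dispose of the trivial case $g(u)=g(u-h)$: by Lemma~\ref{lemme croissance}, $y_{\sigma,\eps}(u,\cdot)\equiv y_{\sigma,\eps}(u-h,\cdot)$, so monotonicity of $y_{\sigma,\eps}(\cdot,s)$ forces every $v\in[u-h,u]$ to satisfy $\varphi_\sigma^2(y_{\sigma,\eps}(u,s)-y_{\sigma,\eps}(v,s))=1$, whence $m_{\sigma,\eps}(u,s)\geq h$ and $M_{\sigma,\eps}(u,s)\geq h>\gamma h$ for every $\gamma\in(0,1)$; the probability on the left-hand side then vanishes. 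One may therefore assume $\delta:=g(u)-g(u-h)>0$ and study the \emph{gap martingale} $Z_s:=y_{\sigma,\eps}(u,s)-y_{\sigma,\eps}(u-h,s)$, which by Corollary~\ref{coro version continue ps} and Lemma~\ref{lemme croissance} is a continuous, strictly positive $(\mathcal F^{\sigma,\eps}_t)$-martingale with $Z_0=\delta$.

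Corollary~\ref{coro crochet} expresses its bracket as
\[
\frac{d\langle Z\rangle_s}{ds}=\frac{1}{M_{\sigma,\eps}(u,s)}+\frac{1}{M_{\sigma,\eps}(u-h,s)}-\frac{2\,m_{\sigma,\eps}(u,u-h,s)}{(\eps+m_{\sigma,\eps}(u,s))(\eps+m_{\sigma,\eps}(u-h,s))}.
\]
The crucial step is to exhibit $\gamma\in(0,1)$, depending only on the shape of $\varphi_\sigma$ (and thus independent of $\sigma,\eps,h$), such that on $\{M_{\sigma,\eps}(u,s)<\gamma h\}$ the cross term vanishes, whence $d\langle Z\rangle_s/ds\geq 1/M_{\sigma,\eps}(u,s)>1/(\gamma h)$. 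The underlying geometric fact is: if $Z_s\geq\sigma$, the supports of $\varphi_\sigma(y_{\sigma,\eps}(u,s)-\cdot)$ and $\varphi_\sigma(y_{\sigma,\eps}(u-h,s)-\cdot)$ are disjoint, so $m_{\sigma,\eps}(u,u-h,s)=0$. To prove $Z_s\geq\sigma$ on the event of interest, use $M_{\sigma,\eps}\geq m_{\sigma,\eps}$ to deduce $m_{\sigma,\eps}(u,s)<\gamma h<h$; combined with the monotonicity of $y_{\sigma,\eps}(\cdot,s)$ and with $\varphi_\sigma^2\equiv 1$ on $[-\sigma/3,\sigma/3]$, this already gives $Z_s>\sigma/3$ (otherwise every $v\in[u-h,u]$ would contribute $1$ to $m_{\sigma,\eps}(u,s)$ and yield $m_{\sigma,\eps}(u,s)\geq h$). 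A sharper quantitative accounting of the contribution to $m_{\sigma,\eps}(u,s)$ of the particles $v$ sitting in the transition zone $\sigma/3<y_{\sigma,\eps}(u,s)-y_{\sigma,\eps}(v,s)<\sigma/2$, for $\gamma$ chosen small enough, upgrades this to $Z_s\geq\sigma$ on $\{M_{\sigma,\eps}(u,s)<\gamma h\}$.

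Integrating the lower bound, on the event $\{\int_0^T\mathds 1_{\{M_{\sigma,\eps}(u,s)<\gamma h\}}\mathrm ds\geq t\}$ one obtains $\langle Z\rangle_T\geq t/(\gamma h)$. By the Dambis-Dubins-Schwarz theorem (possibly after enlarging the probability space), write $Z_s=\delta+\beta_{\langle Z\rangle_s}$ for some standard Brownian motion $\beta$. Since $Z_s>0$ almost surely on $[0,T]$, the Brownian motion $\delta+\beta$ cannot hit $0$ before time $\langle Z\rangle_T$, so $\langle Z\rangle_T\leq\tau_0:=\inf\{r\geq 0:\delta+\beta_r=0\}$. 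The classical reflection-principle estimate $\mathbb P[\tau_0\geq r]\leq\delta\sqrt{2/(\pi r)}$ then yields
\[
\mathbb P\Bigl[\int_0^T\mathds 1_{\{M_{\sigma,\eps}(u,s)<\gamma h\}}\mathrm ds\geq t\Bigr]\leq \mathbb P[\tau_0\geq t/(\gamma h)]\leq C\delta\sqrt{h/t},
\]
with $C$ depending only on $\gamma$ and hence independent of $\sigma$ and $\eps$, which is the desired conclusion.

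The main obstacle is the geometric upgrade from $Z_s>\sigma/3$ to $Z_s\geq\sigma$ with a universal choice of $\gamma$: the contribution of the transition zone $(\sigma/3,\sigma/2)$ to $m_{\sigma,\eps}(u,s)$ must be shown to be incompatible with the mass bound $M_{\sigma,\eps}(u,s)<\gamma h$ unless $Z_s\geq\sigma$. Should this sharp form resist a uniform $\gamma$, a fallback is to control the cross term via Cauchy-Schwarz, $m_{\sigma,\eps}(u,u-h,s)\leq\sqrt{m_{\sigma,\eps}(u,s)\,m_{\sigma,\eps}(u-h,s)}$, combined with a comparison of $M_{\sigma,\eps}(u,s)$ and $M_{\sigma,\eps}(u-h,s)$ in the intermediate regime, which would produce a weaker but still sufficient bound $d\langle Z\rangle_s/ds\geq c/(\gamma h)$.
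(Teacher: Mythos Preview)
Your overall architecture (trivial case, gap martingale $Z$, Dambis--Dubins--Schwarz, hitting-time bound) matches the paper. The gap is in the ``crucial step'': the claim that on $\{M_{\sigma,\eps}(u,s)<\gamma h\}$ one can force $Z_s\geq\sigma$ (and hence $m_{\sigma,\eps}(u,u-h,s)=0$) is not correct for any universal $\gamma$. Here is a configuration ruling it out: take $Z_s\in(\sigma-\eta,\sigma)$ and let the values $y_{\sigma,\eps}(v,s)$ for $v\in[u-h,u]$ cluster near $y_{\sigma,\eps}(u-h,s)$ except on a tiny set of length $\gamma h$ near $u$. Monotonicity permits this; then $m_{\sigma,\eps}(u,s)\leq\gamma h$ while $Z_s<\sigma$ and the cross term does not vanish. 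Your lower bound $Z_s>\sigma/3$ is as far as the bare mass hypothesis takes you.

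The Cauchy--Schwarz fallback does not close the gap either. It yields
\[
\frac{d\langle Z\rangle_s}{ds}\;\geq\;\Bigl(M_{\sigma,\eps}(u,s)^{-1/2}-M_{\sigma,\eps}(u-h,s)^{-1/2}\Bigr)^2,
\]
which is useless when $M_{\sigma,\eps}(u-h,s)\approx M_{\sigma,\eps}(u,s)$. In the regime $Z_s\leq\sigma-\eta$ this cannot happen (each $v\in[u-h,u]$ lies within $(\sigma-\eta)/2$ of either endpoint, so $m_{\sigma,\eps}(u,s)+m_{\sigma,\eps}(u-h,s)\geq h$ and one mass is $\geq h/2$). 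But in the intermediate regime $Z_s\in(\sigma-\eta,\sigma)$ a ``median'' set of particles contributes to neither mass, and both $M_{\sigma,\eps}(u,s)$ and $M_{\sigma,\eps}(u-h,s)$ can be simultaneously $<\gamma h$. This is precisely the obstruction.

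The paper handles this by introducing an auxiliary midpoint particle $u-k$ with $k\in(h/3,2h/3)$ and splitting the event according to whether $\widetilde N_s:=y_{\sigma,\eps}(u,s)-y_{\sigma,\eps}(u-k,s)$ exceeds $(\sigma+\eta)/2$. On one side of the split, a combinatorial argument forces the median set to have Lebesgue measure $>2h/3$, hence to contain $u-k$, contradicting the split hypothesis; on the other side one has $\widetilde N_s\leq\sigma-\eta$ and runs the martingale argument with $\widetilde N$ instead of $Z$. Along the way the paper replaces the naive Cauchy--Schwarz bound by the asymmetric estimate $2m_{\sigma,\eps}(u,u-h,s)/[(\eps+m_{\sigma,\eps}(u,s))(\eps+m_{\sigma,\eps}(u-h,s))]\leq M_{\sigma,\eps}(u,s)^{-3/4}M_{\sigma,\eps}(u-h,s)^{-1/4}$, which via $a^{3/4}b^{1/4}\leq\tfrac34 a+\tfrac14 b$ yields $d\langle Z\rangle_s/ds\geq 1/(4M_{\sigma,\eps}(u,s))$. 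The midpoint idea is what is missing from your sketch; without it the intermediate regime cannot be controlled.
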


\begin{proof}
Fix $\sigma>0$ and $\eps >0$. Let $h>0$ be such that $u-h$ belongs to $\A$. 
If $g(u-h)=g(u)$, then for every $t \in [0,T]$, $y_{\sigma,\eps}(u-h,t)=y_{\sigma,\eps}(u,t)$. By the non-decreasing and càdlàg property, for every $v \in (u-h,u)$, we have $y_{\sigma,\eps}(v,t)=y_{\sigma,\eps}(u,t)$. We deduce that $m_{\sigma,\eps} (u,t) \geq \int_{u-h}^u \varphi_\sigma^2(y_{\sigma,\eps}(u,t)-y_{\sigma,\eps}(v,t))\mathrm dv=\int_{u-h}^u\varphi_\sigma^2(0) \mathrm dv =h $. Therefore, $M_{\sigma,\eps} (u,t) \geq h \geq \gamma h$ for every $t \in [0,T]$, and~\eqref{proba M} is satisfied. 

Consider now the case where $g(u-h)<g(u)$.  Choose $k$ in $(\frac{h}{3},\frac{2h}{3})$ such that $u-k \in \A$.  Denote by $N$ and $\widetilde{N}$ the two following $(\mathcal F^{\sigma,\eps}_t)_{t \in [0,T]}$-martingales: 
\begin{align*}
\textstyle N_t &= \textstyle y_{\sigma,\eps}(u,t)-y_{\sigma,\eps}(u-h,t), \\
\widetilde{N}_t &= \textstyle y_{\sigma,\eps}(u,t)-y_{\sigma,\eps}(u-k,t).
\end{align*}
Denote by $G_s$ and $H_s$  respectively the events $\left\{M_{\sigma,\eps}(u,s)<\frac{h}{2^6}\right\}$ and $\{\widetilde{N}_s > \frac{\sigma+\eta}{2}\}$. We want to prove the existence of a constant $C_1$ independent of $h$ and $u$ such that for all $\sigma>0$, $\eps >0$ and $t>0$, 
\begin{align}
\P{\int_0^T \mathds 1_{\{G_s\}} \mathrm ds \geq t} \leq C_1\left[ g(u)-g(u-h) \right]\sqrt{\frac{h}{t}}.
\label{inégalité C_1}
\end{align}

Decompose this probability in two terms: 
\begin{equation}
\P{\int_0^T \mathds 1_{\{G_s\}} \mathrm ds \geq t} \leq \P{\int_0^T \mathds 1_{\{G_s \cap H_s\}} \mathrm ds \geq \frac{t}{2}}+\P{\int_0^T \mathds 1_{\{G_s \cap H_s^\complement\}} \mathrm ds \geq \frac{t}{2}},
\label{inég deux parties}
\end{equation}
where $H_s^\complement$ denotes the complement of the event $H_s$.

\begin{description}
\item[\textbullet \, First step: ]Study of $G_s\cap H_s$.

Fix $s \in [0,T]$. 
Under $G_s\cap H_s$, we have $M_{\sigma,\eps}(u,s)<\frac{h}{2^6}$ and $\widetilde{N}_s >\frac{\sigma+\eta}{2}$.
We want to show that it implies the following inequality: 
\begin{equation}
\frac{ 2m_{\sigma,\eps}(u,u-h,s)}{(\eps + m_{\sigma,\eps}(u,s))(\eps + m_{\sigma,\eps}(u-h,s))} \leq \frac{1}{M_{\sigma,\eps}(u,s)^{3/4} M_{\sigma,\eps}(u-h,s)^{1/4}}. 
\label{eq inég masse corrélée 2}
\end{equation}
Suppose, by contradiction, that~\eqref{eq inég masse corrélée 2} is false. Using Cauchy-Schwarz inequality, $m_{\sigma,\eps}(u,u-h,s)\leq  m_{\sigma,\eps}(u,s)^{1/2} m_{\sigma,\eps}(u-h,s)^{1/2}$, and we would deduce that:
\[
\frac{1}{M_{\sigma,\eps}(u,s)^{3/4}M_{\sigma,\eps}(u-h,s) ^{1/4}}
\leq \frac{2}{M_{\sigma,\eps}(u,s)^{1/2}M_{\sigma,\eps}(u-h,s) ^{1/2}},
\]
and thus $M_{\sigma,\eps}(u-h,s) \leq 2^4 M_{\sigma,\eps}(u,s)$. Using the fact that $M_{\sigma,\eps} \geq m_{\sigma,\eps}$, we can deduce that
\begin{equation}
\textstyle m_{\sigma,\eps}(u,s)+m_{\sigma,\eps}(u-h,s) \leq M_{\sigma,\eps}(u,s)+M_{\sigma,\eps}(u-h,s)\leq (1+2^4) \frac{h}{2^6}<\frac{h}{3}.
\label{eq maj masse}
\end{equation}
We distinguish three cases depending on the value of $N_s= y_{\sigma, \eps}(u,s)-y_{\sigma, \eps}(u-h,s)$.
\begin{description}
\item[\textbullet \,  $N_s \leq \sigma-\eta$:]
For each $v \in [u-h,u]$, one of the two terms $y_{\sigma,\eps}(u,s)-y_{\sigma,\eps}(v,s)$ and $y_{\sigma,\eps}(v,s)-y_{\sigma,\eps}(u-h,s)$ is lower than $\frac{\sigma-\eta}{2}$, which means that one of those terms belongs to the preimage of 1 by the function $\varphi_\sigma$. Hence $m_{\sigma,\eps}(u,s)+ m_{\sigma,\eps}(u-h,s) = \int_0^1 \left(\varphi_\sigma^2(y_{\sigma,\eps}(u,s)-y_{\sigma,\eps}(v,s)) +\varphi_\sigma^2(y_{\sigma,\eps}(u-h,s)-y_{\sigma,\eps}(v,s))\right)\mathrm dv\geq \int_{u-h}^u \mathrm dv=h$. 

This is in contradiction with~(\ref{eq maj masse}). Therefore inequality~(\ref{eq inég masse corrélée 2}) is satisfied in this case.  
\item[\textbullet \, $N_s \in (\sigma-\eta,\sigma)$:] Introduce $\operatorname{Med}:=\{v: y_{\sigma,\eps}(u,s)-y_{\sigma,\eps}(v,s) \in [\frac{\sigma-\eta}{2},\frac{\sigma+\eta}{2}]\}$, which is a set of particles more or less at half distance between particle $u$ and particle $u-h$. Since $\eta<\frac{\sigma}{3}$, we have $N_s > \sigma-\eta \geq \frac{\sigma+\eta}{2}$ and thus $\operatorname{Med} \subset [u-h,u]$.
Let $v\in [u-h,u]$. We distinguish three new cases: 
\begin{itemize}
\item[-] if $y_{\sigma,\eps}(u,s)-y_{\sigma,\eps}(v,s) < \frac{\sigma-\eta}{2}$, then $\varphi_\sigma(y_{\sigma,\eps}(u,s)-y_{\sigma,\eps}(v,s) )=1$. 
\item[-] if $y_{\sigma,\eps}(u,s)-y_{\sigma,\eps}(v,s) > \frac{\sigma+\eta}{2}$, and since $N_s\leq \sigma$, 
$y_{\sigma,\eps}(v,s)-y_{\sigma,\eps}(u-h,s)$ is lower than $\frac{\sigma-\eta}{2}$ and thus
 $\varphi_\sigma(y_{\sigma,\eps}(u-h,s)-y_{\sigma,\eps}(v,s) )=1$.
\item[-] otherwise, $v$ belongs to $\operatorname{Med}$.
\end{itemize}
It follows that: 
\begin{align*}
h
&=\int_{u-h}^u( \mathds 1_{\{y_{\sigma,\eps}(u,s)-y_{\sigma,\eps}(v,s) < \frac{\sigma-\eta}{2}\}}+\mathds 1_{\{y_{\sigma,\eps}(u,s)-y_{\sigma,\eps}(v,s) > \frac{\sigma+\eta}{2}\}}+\mathds 1_{\{v\in \operatorname{Med}\}}) \mathrm dv\\
&\leq \int_{u-h}^u( \varphi_\sigma^2(y_{\sigma,\eps}(u,s)-y_{\sigma,\eps}(v,s) )
+\varphi_\sigma^2(y_{\sigma,\eps}(u-h,s)-y_{\sigma,\eps}(v,s) )+\mathds 1_{\{v\in \operatorname{Med}\}}) \mathrm dv\\
&\leq m_{\sigma,\eps}(u,s)+m_{\sigma,\eps}(u-h,s)+\Leb(\operatorname{Med}).
\end{align*}
By inequality~\eqref{eq maj masse}, we deduce that $\Leb(\operatorname{Med}) >\frac{2h}{3}$.
As $\operatorname{Med}$ is an interval included in $[u-h,u]$ and  since $k \in (\frac{h}{3},\frac{2h}{3})$ we deduce that $u-k \in \operatorname{Med}$, \emph{i.e.} $\widetilde{N}_s \in [\frac{\sigma-\eta}{2},\frac{\sigma+\eta}{2}]$, which is in contradiction with the hypothesis $\widetilde{N}_s>\frac{\sigma+\eta}{2}$.
Thus inequality~(\ref{eq inég masse corrélée 2}) is also true in this case. 
\item[\textbullet \,  $N_s  \geq \sigma$:]
In this case, the two particles $u$ and $u-h$ do not have any interaction. In other words, since the support of $\varphi_\sigma$ is included in $[-\frac{\sigma}{2},\frac{\sigma}{2}]$,  $\varphi_\sigma(y_{\sigma,\eps}(u,s)-y_{\sigma,\eps}(v,s))$ and $\varphi_\sigma(y_{\sigma,\eps}(u-h,s)-y_{\sigma,\eps}(v,s))$ can not be simultaneously non-zero, whence we deduce that $m_{\sigma,\eps}(u,u-h,s)=0$. Inequality~(\ref{eq inég masse corrélée 2}) follows clearly. 
\end{description}

Therefore, inequality~\eqref{eq inég masse corrélée 2} is proved. 
By Corollary~\ref{coro crochet}, it follows that, on $G_s \cap H_s$:
\begin{align*}
\frac{\mathrm d}{\mathrm ds}\langle N,N \rangle_s
&=\frac{1}{M_{\sigma,\eps}(u,s)}
+\frac{1}{M_{\sigma,\eps}(u-h,s)}-\frac{2m_{\sigma,\eps}(u,u-h,s)}{(\eps + m_{\sigma,\eps}(u,s))(\eps + m_{\sigma,\eps}(u-h,s))} \\
&\geq \frac{1}{M_{\sigma,\eps}(u,s)}
+\frac{1}{M_{\sigma,\eps}(u-h,s)} -\frac{1}{M_{\sigma,\eps}(u,s)^{3/4}M_{\sigma,\eps}(u-h,s) ^{1/4}} \\
&\geq\frac{1}{4M_{\sigma,\eps}(u,s)}
+\frac{3}{4M_{\sigma,\eps}(u-h,s)}
\geq \frac{1}{4M_{\sigma,\eps}(u,s)}
\geq \frac{2^4}{h} ,  
\end{align*}
where we have applied a convexity inequality: $\forall a,b >0,  a^{3/4}b^{1/4}\leq \frac{3a}{4}+\frac{b}{4}$. 

To sum up, we showed that $G_s \cap H_s$ implies $\frac{\mathrm d}{\mathrm ds}\langle N,N\rangle_s \geq \frac{2^4}{h}$. 
If $\int_0^T \mathds 1_{\{G_s \cap H_s\}} \mathrm ds \geq \frac{t}{2}$, we get 
\[\langle N, N\rangle_T 
=\int_0^T \frac{\mathrm d}{\mathrm ds}\langle N,N\rangle_s \mathrm ds 
\geq \int_0^T \frac{\mathrm d}{\mathrm ds}\langle N,N\rangle_s \mathds 1_{\{G_s \cap H_s\}}\mathrm ds
\geq  \frac{2^4}{h} \int_0^T  \mathds 1_{\{G_s \cap H_s\}}\mathrm ds
\geq \frac{2^3 t }{h}.\]
Hence, since $N$ is a continuous square integrable $(\mathcal F_t^{\sigma,\eps})_{t \in [0,T]}$-martingale, there exists a standard $(\mathcal F_t^{\sigma,\eps})_{t \in [0,T]}$-Brownian motion $\beta$ such that $N_t=g(u)-g(u-h)-\beta(\langle N,N \rangle_t)$. Since $N$ remains positive on $[0,T]$ by Lemma~\ref{lemme croissance} (because $g(u-h)<g(u)$), we deduce that $\sup_{[0,\langle N,N\rangle_T]} \beta \leq g(u)-g(u-h)$. 
Therefore, 
\begin{align}
\P{\int_0^T \mathds 1_{\{G_s \cap H_s\}} \mathrm ds \geq \frac{t}{2}}
&\leq \P{\sup_{[0,\frac{2^3t}{h}]} \beta\leq g(u)-g(u-h)} \notag\\
&=\P{\sqrt{\frac{2^3}{h}}\sup_{[0,t]} \widehat{\beta}\leq g(u)-g(u-h)} \notag\\
&\leq C_2\left[g(u)-g(u-h)\right] \sqrt{ \frac{h}{t}},
\label{inég partie 1}
\end{align}
where $\widehat{\beta}$ is a rescaled Brownian motion and $C_2$ does not depend on $u$, $h$, $\sigma$, $\eps$ and $t$.

 \item[\textbullet \,  Second step: ]Study of $G_s\cap H_s^\complement$.
 
Under this event, we have  $M_{\sigma,\eps}(u,s)<\frac{h}{2^6}$ and $\widetilde{N}_s \leq \frac{\sigma+\eta}{2}$. In particular, by the assumption $\eta <\frac{\sigma}{3}$, we have $\widetilde{N}_s \leq \sigma-\eta$. 
We claim that the following inequality holds true:
\begin{equation}
\frac{ 2m_{\sigma,\eps}(u,u-k,s)}{(\eps + m_{\sigma,\eps}(u,s))(\eps + m_{\sigma,\eps}(u-k,s))} \leq \frac{1}{M_{\sigma,\eps}(u,s)^{3/4}M_{\sigma,\eps}(u-k,s) ^{1/4}}. 
\label{eq inég masse corrélée}
\end{equation}
To prove it, it is sufficient to imitate the proof of the case $N_s \leq \sigma-\eta$ of the previous step. We should notice that we did not use the hypothesis $\widetilde{N}_s >\frac{\sigma+\eta}{2}$ in that case.

Using inequality~(\ref{eq inég masse corrélée}) as in the first step, we show that $\frac{\mathrm d}{\mathrm ds}\langle \widetilde{N},\widetilde{N}\rangle_s\geq \frac{2^4}{h}$. Therefore, $\P{\int_0^T \mathds 1_{\{G_s \cap H_s^\complement\}} \mathrm ds \geq \frac{t}{2}}
\leq \P{\langle \widetilde{N},\widetilde{N} \rangle_T \geq \frac{2^3t}{h}}$.
There exists a $(\mathcal F_t^{\sigma,\eps})_{t \in [0,T]}$-Brownian motion $\widetilde{\beta}$ such that $\widetilde{N}_t=g(u)-g(u-k)-\widetilde{\beta}(\langle \widetilde{N},\widetilde{N}\rangle_t)$. Finally, we obtain the existence of a constant $C_3$ independent of $u$, $h$, $k$, $\sigma$, $\eps$ and $t$ such that:  
\begin{align}
\P{\int_0^T \mathds 1_{\{G_s \cap H_s^\complement\}} \mathrm ds \geq \frac{t}{2}}\leq \P{\sup_{[0,\frac{2^3t}{h}]} \widetilde{\beta}\leq \textstyle g(u)-g(u-k)} &\leq C_3\left[\textstyle g(u)-g(u-k)\right] \sqrt{ \frac{h}{t}} \notag\\
&\leq C_3\left[g(u)-g(u-h)\right] \sqrt{ \frac{h}{t}}.
\label{inég partie 2}
\end{align}

\end{description}

Putting together inequality~(\ref{inég deux parties}) and inequalities~(\ref{inég partie 1}) and~(\ref{inég partie 2}), we conclude the proof of inequality~(\ref{inégalité C_1}). 
 Thus inequality~\eqref{proba M} is proved for every $h$ such that $u-h \in \A$. Let $h>0$ be such that $u-h \in (0,1)$. Let $h_1 \in (\frac{h}{2},h)$ be such that $u-h_1 \in \A$. 
\begin{align*}
\P{\int_0^T \mathds 1_{\left\{M_{\sigma,\eps}(u,s)<\frac{\gamma h}{2}\right\}} \mathrm ds \geq t} \leq\P{\int_0^T \mathds 1_{\left\{M_{\sigma,\eps}(u,s)<\gamma h_1\right\}} \mathrm ds \geq t}
&\leq C \left[ g(u)-g(u-h_1) \right]\sqrt{\frac{h_1}{t}} \\
&\leq C \left[ g(u)-g(u-h) \right]\sqrt{\frac{h}{t}}.
\end{align*}
Up to replacing $\gamma$ by $\frac{\gamma}{2}$, inequality~\eqref{proba M} follows for every $h>0$ such that $u-h \in (0,1)$. 
\end{proof}

\begin{rem}
Similarly, there exist $C>0$ and $\gamma \in (0,1)$ such that for each $\sigma >0$, $\eps >0$, $t \in (0,T]$ and for every $u \in \A$ and every $h>0$ satisfying $u+h \in (0,1)$, 
\begin{align*}
\P{\int_0^T \mathds 1_{\left\{M_{\sigma,\eps}(u,s)<\gamma h\right\}} \mathrm ds \geq t} \leq C \left[ g(u+h)-g(u) \right]\sqrt{\frac{h}{t}}.
\end{align*}
\end{rem}

Thanks to Lemma~\ref{lemme controle masse} and to the above remark, we obtain the following result, which has to be compared with Proposition~4.3 in~\cite{konarovskyi17behavior}:
\begin{lemme}
\label{lemme inverse masse approchee sig eps}
Let $g \in L_p(0,1)$.
For all  $\beta \in (0, \frac{3}{2}-\frac{1}{p})$, there is a constant $C >0$ depending only on $\beta$ and $\|g\|_{L_p}$ such that for all $\sigma, \eps >0$ and $0\leq s<t\leq T$,    we have the following inequality:
\begin{align}
\E{\int_s^t \!\! \int_0^1   \frac{1}{M_{\sigma,\eps}^\beta(u,r)}\mathrm du\mathrm dr}\leq C\sqrt{t-s}. 
\label{ineg du lemme}
\end{align}
\end{lemme}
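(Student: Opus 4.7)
The plan is to apply Fubini and a layer-cake decomposition to reduce the claim to the pointwise tail bound from Lemma~\ref{lemme controle masse}. Writing
\[
\frac{1}{M_{\sigma,\eps}^\beta(u,r)} = \beta \int_0^\infty \mu^{-\beta-1}\, \mathds 1_{\{M_{\sigma,\eps}(u,r) < \mu\}}\, \mathrm d\mu,
\]
Fubini's theorem turns the left-hand side of~\eqref{ineg du lemme} into
\[
\beta \int_0^\infty \mu^{-\beta-1} \int_0^1 \E{\Leb\{r \in [s,t]: M_{\sigma,\eps}(u,r) < \mu\}} \, \mathrm du \, \mathrm d\mu.
\]
To estimate the inner expectation, I will use the tail formula $\E{X}=\int_0^{t-s}\P{X>t'}\mathrm dt'$ applied to the nonnegative random variable $X=\Leb\{r \in [s,t]:M_{\sigma,\eps}(u,r)<\mu\}\leq t-s$, together with the trivial inclusion of events $\{X>t'\}\subset \{\int_0^T \mathds 1_{\{M_{\sigma,\eps}(u,r)<\mu\}}\mathrm dr \geq t'\}$. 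Lemma~\ref{lemme controle masse}, applied with $h=\mu/\gamma$ for $u-\mu/\gamma\in (0,1)$, then yields
\[
\E{X} \leq \int_0^{t-s} C[g(u)-g(u-\mu/\gamma)]\sqrt{\mu/(\gamma t')}\,\mathrm dt' = 2C\sqrt{(t-s)\mu/\gamma}\,[g(u)-g(u-\mu/\gamma)],
\]
and the remark after the lemma produces the analogous bound with $g(u+\mu/\gamma)-g(u)$ for $u+\mu/\gamma\in(0,1)$.

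Next, I integrate in $u$. Setting $h=\mu/\gamma<1/2$ and splitting $[0,1]$ into $[h,1]$ (where the first bound applies) and $[0,h]$ (where the second is used), the monotonicity of $g$ yields the telescoping identity
\[
\int_h^1 [g(u)-g(u-h)]\,\mathrm du = \int_{1-h}^1 g(u)\,\mathrm du - \int_0^h g(u)\,\mathrm du,
\]
together with an analogous one for $\int_0^h [g(u+h)-g(u)]\,\mathrm du$. Hölder's inequality combined with $g \in L_p$ (for some $p>2$) bounds each of $\int_{1-h}^1 |g|$ and $\int_0^h |g|$ by $h^{1-1/p}\|g\|_{L_p}$, whence, after combining the two sides,
\[
\int_0^1 \E{\Leb\{r \in [s,t]: M_{\sigma,\eps}(u,r)<\mu\}}\,\mathrm du \leq C_1\|g\|_{L_p}\sqrt{t-s}\,\mu^{3/2-1/p}
\]
for $\mu<\gamma/2$, where $C_1$ depends only on $C$, $\gamma$ and $p$.

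Finally, I split the $\mu$-integral at the threshold $\mu_0=\gamma/2$. For $\mu\leq\mu_0$ the previous bound yields the contribution
\[
\beta C_1\|g\|_{L_p}\sqrt{t-s}\int_0^{\mu_0}\mu^{1/2-\beta-1/p}\,\mathrm d\mu,
\]
which is finite \emph{precisely} because $\beta<\frac{3}{2}-\frac{1}{p}$. For $\mu>\mu_0$, the trivial bound $\Leb\{\cdots\}\leq t-s\leq \sqrt{T}\sqrt{t-s}$ contributes $\beta\sqrt{T}\sqrt{t-s}\int_{\mu_0}^\infty \mu^{-\beta-1}\,\mathrm d\mu=\sqrt{T}\,\mu_0^{-\beta}\sqrt{t-s}$. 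Summing the two pieces gives~\eqref{ineg du lemme} with a constant $C$ depending only on $\beta$, $T$, $\gamma$ and $\|g\|_{L_p}$. The principal technical point is the careful bookkeeping at the boundary of $[0,1]$, which is handled by combining Lemma~\ref{lemme controle masse} with its remark so that the Hölder estimate on $g$ applies on both halves; once this is done, the proof reduces to an interchange of integration and a critical-scale split whose convergence exponent exactly matches the hypothesis on $\beta$.
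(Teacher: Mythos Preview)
Your proof is correct and follows essentially the same approach as the paper: a layer-cake decomposition of $M_{\sigma,\eps}^{-\beta}$, a tail formula for the expected occupation time, the tail bound from Lemma~\ref{lemme controle masse} (and its remark) with $h$ proportional to the level parameter, and a Hölder estimate on $\int (g(u+h)-g(u))\,\mathrm du$ that produces the critical exponent $\frac{3}{2}-\frac{1}{p}$. The only cosmetic differences are that the paper works with the variable $x=\mu^{-\beta}$ rather than $\mu$ and splits the $u$-interval at $1/2$ rather than at $h$; neither affects the argument.
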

\begin{rem}
Observe that by the assumption $p>2$, made at the beginning of Section~\ref{sec convergence}, there exists some $\beta >1$ such that~(\ref{ineg du lemme}) holds. 

\end{rem}
\begin{proof}
By Fubini-Tonelli Theorem, we have: 
\begin{align*}
\E{\int_s^t \!\!\int_0^1   \frac{1}{M_{\sigma,\eps}^\beta(u,r)}\mathrm du\mathrm dr}
&=\int_0^1\E{\int_s^t \!\! \int_0^{+\infty}  \mathds 1_{\{M^{-\beta}_{\sigma,\eps}(u,r)>x\}}  \mathrm dx\mathrm dr}\mathrm du\notag\\
&\leq 2^\beta (t-s)+\int_0^1\!\!\int_{2^\beta}^{+\infty} \E{\int_s^t \mathds 1_{\{M_{\sigma,\eps}(u,r) <x^{-1/\beta}\}} \mathrm dr}  \mathrm dx\mathrm du \\
&\leq 2^\beta \sqrt{T}\sqrt{t-s} +\int_0^1\!\!\int_{2^\beta \gamma^\beta}^{+\infty} \E{\int_s^t \mathds 1_{\{M_{\sigma,\eps}(u,r) <\gamma x^{-1/\beta}\}} \mathrm dr} \gamma^{-\beta} \mathrm dx\mathrm du. 
\end{align*}
Furthermore, we compute:
\begin{align*}
\E{\int_s^t \mathds 1_{\{M_{\sigma,\eps}(u,r) <\gamma x^{-1/\beta}\}} \mathrm dr} 
&=\int_0^{t-s} \P{\int_s^t \mathds 1_{\{M_{\sigma,\eps}(u,r) <\gamma x^{-1/\beta}\}} \mathrm dr>\alpha}  \mathrm d\alpha\\
&\leq \int_0^{t-s} \P{\int_0^T \mathds 1_{\{M_{\sigma,\eps}(u,r) <\gamma x^{-1/\beta}\}} \mathrm dr>\alpha}  \mathrm d\alpha.
\end{align*}
Using Lemma~\ref{lemme controle masse}, we obtain a constant $C_1$ independent of $\sigma$ and $\eps$ such that for all $x>2^\beta$:
\begin{align*}
\int_{\frac{1}{2}}^1 \E{\int_s^t \mathds 1_{\{M_{\sigma,\eps}(u,r) <\gamma x^{-1/\beta}\}} \mathrm dr} \mathrm du 
&\leq\int_{\frac{1}{2}}^1 \!\!\int_0^{t-s} C_1\left[g(u)-g(u-x^{-1/\beta})\right]\sqrt{\frac{x^{-1/\beta}}{\alpha}} \mathrm d\alpha \mathrm du \\
&\leq 2C_1\frac{\int_{1/2}^1 (g(u)-g(u-x^{-1/\beta}))\mathrm du}{x^{1/(2\beta)}}\sqrt{t-s}.
\end{align*}
Moreover, we have for each $x>2^\beta$, using Hölder's inequality:
\begin{align}
\int_{\frac{1}{2}}^1 \left(g(u)-g(u-x^{-1/\beta})\right) \mathrm du
&= \int_0^1 \left( \mathds 1_{[\frac{1}{2},1]}(u)- \mathds 1_{[\frac{1}{2}-x^{-1/\beta},1-x^{-1/\beta}]}(u)\right)g(u) \mathrm du \notag\\
&\leq \|g\|_{L_p} (2x^{-1/\beta})^{1-\frac{1}{p}}. 
\label{ineg Holder}
\end{align}
Therefore, 
\begin{align*}
\int_{\frac{1}{2}}^1\!\!\int_{2^\beta \gamma^\beta }^{+\infty} \E{\int_s^t \mathds 1_{\{M_{\sigma,\eps}(u,r) <x^{-1/\beta}\}} \mathrm dr}  \mathrm dx\mathrm du 
&\leq C_2 \int_{2^\beta \gamma^\beta }^{+\infty}  \frac{\|g \|_{L_p}\sqrt{t-s}}{x^{\frac{1}{2\beta}}x^{\frac{1}{\beta}(1-\frac{1}{p})}} \mathrm dx \\
& \leq C_3 \|g\|_{L_p} \sqrt{t-s},
\end{align*}
where $C_2$ and $C_3$ are independent of $\sigma$, $\eps$, and $t$. The last inequality holds because $\frac{1}{\beta}\left(\frac{3}{2}-\frac{1}{p}\right)>1$. 

We conclude the proof of the Lemma by using a similar argument for $u$ belonging to $[0,\frac{1}{2}]$ and using  $g(u+x^{-1/\beta})-g(u)$ instead of $g(u)-g(u-x^{-1/\beta})$. 
\end{proof}

\begin{coro}
\label{corollaire borne L_2 sig eps}
There is a constant $C$ such that for every $t \in [0,T]$ and for every $\sigma,\eps >0$, 
\begin{align*}
\E{\int_0^1 y^2_{\sigma,\eps}(u,t) \mathrm du}\leq C. 
\end{align*}
\end{coro}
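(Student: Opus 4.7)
The plan is to reduce the bound to a direct application of Lemma~\ref{lemme inverse masse approchee sig eps} via the Itô isometry. By Corollary~\ref{coro version continue ps}, for almost every $u\in(0,1)$ the process $(y_{\sigma,\eps}(u,t))_{t\in[0,T]}$ is a continuous square-integrable $(\mathcal F^{\sigma,\eps}_t)$-martingale starting from $g(u)$. Therefore
\[
\E{y_{\sigma,\eps}(u,t)^2}=g(u)^2+\E{\langle y_{\sigma,\eps}(u,\cdot),y_{\sigma,\eps}(u,\cdot)\rangle_t}.
\]

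Next I would compute the quadratic variation using Corollary~\ref{coro crochet} applied with $u'=u$: for almost every $u\in(0,1)$,
\[
\langle y_{\sigma,\eps}(u,\cdot),y_{\sigma,\eps}(u,\cdot)\rangle_t
=\int_0^t\frac{m^g_{\sigma,\eps}(u,u,s)}{(\eps+m^g_{\sigma,\eps}(u,s))^2}\mathrm ds
=\int_0^t\frac{m_{\sigma,\eps}(u,s)}{(\eps+m_{\sigma,\eps}(u,s))^2}\mathrm ds
=\int_0^t\frac{\mathrm ds}{M_{\sigma,\eps}(u,s)},
\]
where I use that $m^g_{\sigma,\eps}(u,u,s)=m^g_{\sigma,\eps}(u,s)$ by definition, and that $\frac{m}{(\eps+m)^2}=\frac{1}{M_{\sigma,\eps}}$ when $m>0$ (and both sides vanish otherwise by the convention $M_{\sigma,\eps}=+\infty$).

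Integrating over $u\in[0,1]$ and applying Fubini--Tonelli then yields
\[
\E{\int_0^1 y_{\sigma,\eps}(u,t)^2\,\mathrm du}
=\|g\|_{L_2}^2+\E{\int_0^t\!\!\int_0^1\frac{1}{M_{\sigma,\eps}(u,s)}\,\mathrm du\,\mathrm ds}.
\]
Finally, since $p>2$, one has $\tfrac{3}{2}-\tfrac{1}{p}>1$, so $\beta=1$ lies in the admissible range of Lemma~\ref{lemme inverse masse approchee sig eps}, which provides a constant depending only on $\|g\|_{L_p}$ (hence independent of $\sigma$, $\eps$ and $t$) such that the remaining expectation is bounded by $C\sqrt{t}\leq C\sqrt{T}$. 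Combining the two terms gives the desired uniform bound.

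There is no real obstacle here: all the work has already been done in Lemma~\ref{lemme inverse masse approchee sig eps}. The only point to be slightly careful about is that the pointwise-in-$u$ identities for the quadratic variation hold only almost everywhere in $u$, which is precisely the content of Corollary~\ref{coro crochet}, and this is harmless once we integrate in~$u$.
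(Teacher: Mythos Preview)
Your proof is correct and follows essentially the same route as the paper: both use Corollary~\ref{coro crochet} with $u'=u$ to identify the quadratic variation as $\int_0^t M_{\sigma,\eps}(u,s)^{-1}\,\mathrm ds$, then apply Lemma~\ref{lemme inverse masse approchee sig eps} with $\beta=1$. The only cosmetic difference is that the paper bounds $\E{\int_0^1 y_{\sigma,\eps}^2}^{1/2}$ via the $L_2$ triangle inequality applied to $g$ and $y_{\sigma,\eps}-g$, whereas you use the martingale identity $\E{M_t^2}=M_0^2+\E{\langle M\rangle_t}$ directly, which is slightly cleaner.
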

\begin{proof}
We have:
\begin{align*}
\E{\int_0^1 y^2_{\sigma,\eps}(u,t) \mathrm du}^{1/2}
\leq \E{\int_0^1 g(u)^2 \mathrm du}^{1/2} + \E{\int_0^1 (y_{\sigma,\eps}(u,t) -g(u))^2\mathrm du}^{1/2}.
\end{align*}
Since $g$ belongs to $L_2(0,1)$, the first term of the right hand side is bounded. Furthermore, by Corollary~\ref{coro crochet} and Fubini-Tonelli Theorem: 
\begin{align*}
\E{\int_0^1 (y_{\sigma,\eps}(u,t) -g(u))^2\mathrm du}
=\int_0^1 \E{\langle y_{\sigma,\eps}(u,\cdot),  y_{\sigma,\eps}(u,\cdot)\rangle_t} \mathrm du
&= \int_0^1 \E{ \int_0^t \frac{1}{M_{\sigma,\eps}(u,s)}\mathrm ds}\mathrm du\\
&\leq C\sqrt{t},
\end{align*}
by Lemma~\ref{lemme inverse masse approchee sig eps}. 
\end{proof}

\subsubsection{Proof of Proposition~\ref{prop crit tight}}
\label{subsec proof prop 4.2}
We will now use Lemma~\ref{lemme inverse masse approchee sig eps} and its Corollary~\ref{corollaire borne L_2 sig eps} to prove Proposition~\ref{prop crit tight}. We start by~$(K1)$: 
\begin{prop}
Let $g \in \mathcal L^\uparrow_{2+}[0,1]$ and $\delta$ be positive. Then there exists $M>0$ such that for all $\sigma>0$ and $\eps>0$, 
$\P{\int_0^1 \|y_{\sigma,\eps}(u,\cdot)\|_{\mathcal C[0,T]}^2\mathrm du  \geq M} \leq \delta$. 
\label{prop tension 1}
\end{prop}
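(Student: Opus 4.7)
The plan is to bound the expectation $\E{\int_0^1 \|y_{\sigma,\eps}(u,\cdot)\|_{\mathcal C[0,T]}^2 \mathrm du}$ uniformly in $\sigma$ and $\eps$, and then conclude by Markov's inequality with $M = \delta^{-1}\E{\int_0^1 \|y_{\sigma,\eps}(u,\cdot)\|_{\mathcal C[0,T]}^2 \mathrm du}$.

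First, by Corollary~\ref{coro version continue ps}, for almost every $u \in (0,1)$ the process $(y_{\sigma,\eps}(u,t)-g(u))_{t \in [0,T]}$ is a continuous $(\mathcal F^{\sigma,\eps}_t)_{t \in [0,T]}$-martingale starting from $0$, whose quadratic variation at time $T$ equals $\int_0^T \frac{1}{M_{\sigma,\eps}(u,s)}\mathrm ds$ by Corollary~\ref{coro crochet}. Using $(a+b)^2 \leq 2a^2 + 2b^2$ and Doob's $L_2$-inequality, I get, for almost every $u$,
\begin{align*}
\E{\|y_{\sigma,\eps}(u,\cdot)\|_{\mathcal C[0,T]}^2}
&\leq 2g(u)^2 + 2\E{\sup_{t \leq T}|y_{\sigma,\eps}(u,t)-g(u)|^2} \\
&\leq 2g(u)^2 + 8\,\E{|y_{\sigma,\eps}(u,T)-g(u)|^2}
= 2g(u)^2 + 8\,\E{\int_0^T \frac{1}{M_{\sigma,\eps}(u,s)}\mathrm ds}.
\end{align*}

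Then I integrate over $u \in [0,1]$ and invoke Fubini-Tonelli together with Lemma~\ref{lemme inverse masse approchee sig eps} applied with $\beta=1$, which is admissible because the assumption $p>2$ yields $\frac{3}{2}-\frac{1}{p}>1$. This gives a constant $C>0$, independent of $\sigma$ and $\eps$, such that
\begin{equation*}
\E{\int_0^1 \|y_{\sigma,\eps}(u,\cdot)\|_{\mathcal C[0,T]}^2 \mathrm du}
\leq 2 \|g\|_{L_2}^2 + 8 C \sqrt{T}.
\end{equation*}
Setting $M:= \delta^{-1}(2 \|g\|_{L_2}^2 + 8 C \sqrt{T})$, Markov's inequality yields the stated bound uniformly in $\sigma,\eps>0$.

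I do not expect any real difficulty here: the only point that deserves a little care is the application of Doob's inequality, which requires the martingale property for almost every fixed~$u$ (granted by Corollary~\ref{coro version continue ps}) rather than the $L_2[0,1]$-valued martingale property. Everything else reduces to plugging in the mass estimate of Lemma~\ref{lemme inverse masse approchee sig eps}.
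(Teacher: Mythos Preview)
Your proof is correct and follows essentially the same route as the paper: Doob's $L_2$-inequality for the martingale $y_{\sigma,\eps}(u,\cdot)$ (for almost every $u$), combined with the uniform mass estimate of Lemma~\ref{lemme inverse masse approchee sig eps}, then Markov's inequality. The only cosmetic difference is that the paper applies Doob directly to $y_{\sigma,\eps}(u,\cdot)$ and cites Corollary~\ref{corollaire borne L_2 sig eps}, whereas you split off $g(u)$ first and invoke Lemma~\ref{lemme inverse masse approchee sig eps} directly---your version simply unpacks that corollary inline.
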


\begin{proof}
Using again Fubini-Tonelli Theorem, 
\begin{align*}
\E{\int_0^1 \sup_{t \leq T} |y_{\sigma,\eps}(u,t)|^2 \mathrm du} =\int_0^1 \E{\sup_{t\leq T} |y_{\sigma,\eps}(u,t)|^2}   \mathrm du.
\end{align*}
Moreover, for almost every $u\in [0,1]$, $y_{\sigma,\eps}(u,\cdot)$ is a $(\mathcal F^{\sigma,\eps}_t)_{t \in [0,T]}$-martingale. Hence by Doob's inequality, there is a constant $C_1$ independent of $u$, $\sigma$ and $\eps$ such that:
\begin{align*}
\E{\sup_{t \leq T}|y_{\sigma,\eps}(u,t)|^2}&\leq C_1 \E{|y_{\sigma,\eps}(u,T)|^2}.
\end{align*}
Therefore, by Corollary~\ref{corollaire borne L_2 sig eps}, 
\begin{align}
\label{borne L^2}
\E{\int_0^1 \sup_{t \leq T} |y_{\sigma,\eps}(u,t)|^2 \mathrm du} \leq C_1 \int_0^1 \E{|y_{\sigma,\eps}(u,T)|^2} \mathrm du 
\leq C_2,
\end{align}
where $C_2$ is independent of $\sigma$ and $\eps$. 
We conclude by Markov's inequality: there is a constant $C>0$ such that for all $\sigma,\eps>0$, 
\[
\P{\int_0^1 \|y_{\sigma,\eps}(u,\cdot)\|_{\mathcal C[0,T]}^2\mathrm du  \geq M}  \leq \frac{\E{\int_0^1 \sup_{t \leq T} |y_{\sigma,\eps}(u,t)|^2 \mathrm du}}{M}\leq\frac{C}{M}.
\]
For $M$ large enough, that last quantity is smaller than $\delta$. 
\end{proof}

Then, we show criterion~$(K2)$:

\begin{prop}
Let $g \in L_p[0,1]$ and $\delta>0$. Then for all $k \geq 1$, there exists $\eta_k>0$ such that for every $\sigma, \eps>0$, 
\begin{align*}
\P{\int_0^1 \sup_{|t_2-t_1| < \eta_k} |y_{\sigma,\eps}(u,t_2)-y_{\sigma,\eps}(u,t_1)| \mathrm du \geq \frac{1}{k}} \leq \frac{\delta}{2^k}.
\end{align*}
\label{propo critere K2}
\end{prop}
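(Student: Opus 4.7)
The plan is to apply Markov's inequality and reduce to showing that $\E{\int_0^1 \sup_{|t_1-t_2|<\eta}|y_{\sigma,\eps}(u,t_2)-y_{\sigma,\eps}(u,t_1)|\mathrm du}$ tends to $0$ as $\eta \to 0^+$, uniformly in $\sigma, \eps > 0$. Then any threshold $\eta_k$ is selected to make this bound $\leq \delta/(k 2^k)$.

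First I would partition $[0,T]$ into $N=\lceil T/\eta\rceil$ intervals $[t_j,t_{j+1}]$ of length at most $\eta$ and use the standard estimate
\[
\sup_{|t_1-t_2|<\eta}|f(t_1)-f(t_2)| \leq 4\max_{0\leq j<N}\sup_{r\in[t_j,t_{j+1}]}|f(r)-f(t_j)|,
\]
valid for any $f\in \mathcal C[0,T]$. Setting $X_j(u):=\sup_{r\in[t_j,t_{j+1}]}|y_{\sigma,\eps}(u,r)-y_{\sigma,\eps}(u,t_j)|$, Jensen's inequality gives, for any $\beta \geq 1$,
\[
\left(\int_0^1 \max_j X_j(u)\mathrm du\right)^{2\beta} \leq \int_0^1 \max_j X_j(u)^{2\beta}\mathrm du \leq \sum_{j}\int_0^1 X_j(u)^{2\beta}\mathrm du.
\]

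Next, by Corollary~\ref{coro version continue ps}, for almost every $u$ the process $y_{\sigma,\eps}(u,\cdot)$ is a continuous $(\mathcal F^{\sigma,\eps}_t)$-martingale, whose bracket equals $\int_0^\cdot M_{\sigma,\eps}(u,s)^{-1}\mathrm ds$ by Corollary~\ref{coro crochet}. Burkholder-Davis-Gundy followed by Jensen's inequality in the time variable on $[t_j,t_{j+1}]$ (of length $\leq\eta$) yields, for $\beta \geq 1$,
\[
\E{X_j(u)^{2\beta}} \leq C_\beta \E{\left(\int_{t_j}^{t_{j+1}}\!\!\frac{\mathrm dr}{M_{\sigma,\eps}(u,r)}\right)^{\!\beta}} \leq C_\beta\, \eta^{\beta-1}\,\E{\int_{t_j}^{t_{j+1}}\!\!\frac{\mathrm dr}{M_{\sigma,\eps}(u,r)^\beta}}.
\]
Summing over $j$ telescopes the time intervals into $\int_0^T$, and choosing $\beta\in(1,\tfrac{3}{2}-\tfrac{1}{p})$ (a non-empty range since $p>2$), Lemma~\ref{lemme inverse masse approchee sig eps} gives $\sum_j \E{\int_0^1 X_j(u)^{2\beta}\mathrm du}\leq C\,\eta^{\beta-1}$. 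Combining with the Jensen bound and applying Jensen once more,
\[
\E{\int_0^1 \max_j X_j(u)\mathrm du} \leq \E{\left(\int_0^1\max_j X_j(u)\mathrm du\right)^{\!2\beta}}^{\!1/(2\beta)} \leq C^{1/(2\beta)}\,\eta^{(\beta-1)/(2\beta)},
\]
with constants independent of $\sigma,\eps$. Since $\beta>1$ the exponent is strictly positive, so one selects $\eta_k$ small enough and concludes by Markov's inequality.

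The main obstacle is that the naive $L^2$-BDG estimate combined with the $\sqrt{\eta}$ factor from Lemma~\ref{lemme inverse masse approchee sig eps} would only produce an $O(\sqrt\eta)$ bound per interval; summing over the $T/\eta$ intervals then blows up as $\eta^{-1/2}$. The crucial observation is that the hypothesis $p>2$ grants access to exponents $\beta>1$ in Lemma~\ref{lemme inverse masse approchee sig eps}, and working with moments of order $2\beta>2$ lets Jensen's inequality in time produce the compensating factor $\eta^{\beta-1}$ needed to secure a positive overall power of $\eta$ after summation.
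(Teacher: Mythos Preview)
Your argument is correct and takes a genuinely different route from the paper. The paper reduces to the same expectation limit, but then proceeds \emph{non-quantitatively}: it defines the ``good'' sets $K_1=\{u:\E{\|y_{\sigma,\eps}(u,\cdot)\|_{\mathcal C[0,T]}}\le C\}$ and $K_2=\{u:\E{\int_0^T M_{\sigma,\eps}(u,s)^{-\beta}\mathrm ds}\le C\}$, shows via Aldous' criterion that the family $(y_{\sigma,\eps}(u,\cdot))_{\sigma,\eps>0,\,u\in K_1\cap K_2}$ is tight in $\mathcal C[0,T]$, extracts a compact set $L$, and splits the expectation according to whether $u\in K_1\cap K_2$ and $y_{\sigma,\eps}(u,\cdot)\in L$.

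Your approach is more elementary and more quantitative: the time partition together with the BDG inequality at exponent $2\beta$ and Jensen in the time variable produces the factor $\eta^{\beta-1}$, and summing over the $O(T/\eta)$ intervals telescopes back to $\int_0^T$, so that Lemma~\ref{lemme inverse masse approchee sig eps} closes the estimate with the explicit rate $\eta^{(\beta-1)/(2\beta)}$. This avoids the Aldous machinery and the good/bad decomposition entirely, at the price of requiring a slightly sharper bookkeeping of exponents. Both arguments ultimately rest on the same key input, namely the availability of $\beta>1$ in Lemma~\ref{lemme inverse masse approchee sig eps} thanks to $p>2$; you have simply exploited that lemma in a more direct fashion. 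A very minor point: your constant $4$ in the modulus-of-continuity bound tacitly assumes the partition mesh is comparable to $\eta$ (e.g.\ $T/N\ge \eta/2$), which holds for $N=\lceil T/\eta\rceil$ and $\eta\le T$; this is harmless but worth a half-line of justification.
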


\begin{proof}
By Markov's inequality, it is sufficient to prove that:
\begin{align}
\label{egalite crit K2}
\lim_{\eta \to 0^+} \sup_{\sigma>0,\eps>0} \E{\int_0^1  \sup_{|t_2-t_1| < \eta} |y_{\sigma,\eps}(u,t_2)-y_{\sigma,\eps}(u,t_1)| \mathrm du} =0. 
\end{align}
Fix $\delta >0$ and $\beta \in (1, \frac{3}{2}-\frac{1}{p})$. 
For every $u \in (0,1)$, define
\begin{align*}
K_1(u)&:=\E{\|y_{\sigma,\eps}(u,\cdot)\|_{\mathcal C[0,T]}} ,\\
K_2(u)&:=\E{\int_0^T \frac{1}{M_{\sigma,\eps}^\beta(u,s)}\mathrm ds}.
\end{align*}
Since $y_{\sigma,\eps}$ is uniformly bounded for $\sigma>0$ and $\eps>0$ in $L_2([0,1],\mathcal C[0,T])$ (see inequality~\eqref{borne L^2}) and by Lemma~\ref{lemme inverse masse approchee sig eps},  $\int_0^1 K_1(u)\mathrm du$ and $\int_0^1 K_2(u)\mathrm du$ are uniformly bounded for $\sigma>0$ and $\eps>0$. 
Therefore, there exists $C>0$ such that $\int_0^1 \mathds 1_{\{K_1(u)\geq C\}} \mathrm du \leq \delta$ and $\int_0^1 \mathds 1_{\{K_2(u)\geq C\}} \mathrm du \leq \delta$. We define:
\begin{align*}
K_1&:=\{ u \in (0,1): K_1(u)\leq C  \},\\
K_2&:=\{u\in (0,1): K_2(u)\leq C\}. 
\end{align*}

The collection $(y_{\sigma,\eps}(u,\cdot))_{ \sigma>0,\eps>0, u\in K_1\cap K_2}$ is tight in $\mathcal C[0,T]$. We use Aldous' tightness criterion to prove this claim (see~\cite[Theorem 16.10]{billingsley99}). We prove the two following statements:
\begin{itemize}
\item[-] $\lim_{a\to \infty} \sup_{\sigma>0,\eps>0, u \in K_1\cap K_2} \P{\|y_{\sigma,\eps}(u,\cdot)\|_{\mathcal C[0,T]} \geq a} =0$.
\item[-] for all $\alpha>0$ and $r>0$, there is $\eta_0$ such that for all $\eta\in (0,\eta_0)$, for all $\sigma>0$, $\eps>0$ and $u \in K_1\cap K_2$, if $\tau$ is a stopping time for $y_{\sigma,\eps}(u,\cdot)$ such that $\tau \leq T$, then $\P{|y_{\sigma,\eps}(u,\tau+\eta)-y_{\sigma,\eps}(u,\tau)| \geq r} \leq\alpha$. 
\end{itemize}
By Markov's inequality, for all $a>0$, $\sigma>0$, $\eps>0$ and $u \in K_1\cap K_2$, 
\begin{align*}
\P{\|y_{\sigma,\eps}(u,\cdot)\|_{\mathcal C[0,T]} \geq a}
\leq \frac{1}{a}\E{\|y_{\sigma,\eps}(u,\cdot)\|_{\mathcal C[0,T]}}
=\frac{K_1(u)}{a}
\leq\frac{C}{a},
\end{align*}
whence we obtain the first statement. Moreover, for all $u\in K_1\cap K_2$, by Hölder's inequality, 
\begin{align*}
\E{|y_{\sigma,\eps}(u,\tau+\eta)-y_{\sigma,\eps}(u,\tau)|^2}=\E{\int_\tau^{\tau+\eta} \frac{1}{M_{\sigma,\eps}(u,s)}\mathrm ds}
\leq  K_2(u)^{\frac{1}{\beta}} \eta^{1-\frac{1}{\beta}}
 \leq C^{\frac{1}{\beta}} \eta^{1-\frac{1}{\beta}},
\end{align*}
whence we obtain the second statement. 

By Aldous' tightness criterion, there exists a compact $L$ of the set $\mathcal D[0,T]$ of càdlàg functions on $[0,T]$ such that for all $\sigma>0$, $\eps>0$ and $u\in K_1\cap K_2$, $\P{y_{\sigma,\eps}(u,\cdot) \in L} \geq 1-\delta$. 
Since $\mathcal C[0,T]$ is closed in $\mathcal D[0,T]$ with respect to Skorohod's topology, and $y_{\sigma,\eps}(u,\cdot) \in \mathcal C[0,T]$ almost surely, we may suppose that $L$ is a compact set of $\mathcal C[0,T]$. 

Back to~\eqref{egalite crit K2}, we have: 
\begin{align}
\label{egal K1 K2 L}
\E{\int_0^1  \sup_{|t_2-t_1| < \eta} |y_{\sigma,\eps}(u,t_2)-y_{\sigma,\eps}(u,t_1)| \mathrm du}
=\int_0^1 \E{  \sup_{|t_2-t_1| < \eta} |y_{\sigma,\eps}(u,t_2)-y_{\sigma,\eps}(u,t_1)|}\mathrm du\notag
\\ 
\begin{aligned}
&=\int_0^1 \E{ \mathds 1_{\{u \in K_1\cap K_2, y_{\sigma,\eps}(u,\cdot)\in L\}^\complement} \sup_{|t_2-t_1| < \eta} |y_{\sigma,\eps}(u,t_2)-y_{\sigma,\eps}(u,t_1)|}\mathrm du\\
&\quad+\int_0^1 \E{ \mathds 1_{\{u \in K_1\cap K_2, y_{\sigma,\eps}(u,\cdot)\in L\}} \sup_{|t_2-t_1| < \eta} |y_{\sigma,\eps}(u,t_2)-y_{\sigma,\eps}(u,t_1)|}\mathrm du.
\end{aligned}
\end{align}
The first term on the right hand side of~\eqref{egal K1 K2 L} is bounded by:
\begin{align*}
\left( \int_0^1 \E{ \mathds 1_{\{u \in K_1\cap K_2, y_{\sigma,\eps}(u,\cdot)\in L\}^\complement}}\mathrm du \right)^{1/2}
\left(\int_0^1 \E{ \sup_{|t_2-t_1| < \eta} |y_{\sigma,\eps}(u,t_2)-y_{\sigma,\eps}(u,t_1)|^2}\mathrm du \right)^{1/2}. 
\end{align*}
We have:
\begin{align*}
\int_0^1 \E{ \mathds 1_{\{u \in K_1\cap K_2, y_{\sigma,\eps}(u,\cdot)\in L\}^\complement}}\mathrm du
&\leq \int_0^1 \mathds 1_{\{u \in K_1\cap K_2\}} \P{y_{\sigma,\eps}(u,\cdot) \notin L}\mathrm du\\
&\quad+\int_0^1 \mathds 1_{\{K_1(u) \geq C\}}\mathrm du
+\int_0^1 \mathds 1_{\{K_2(u) \geq C\}}\mathrm du\\
&\leq 3 \delta. 
\end{align*}
Moreover, 
\begin{align*}
\int_0^1 \E{ \sup_{|t_2-t_1| < \eta} |y_{\sigma,\eps}(u,t_2)-y_{\sigma,\eps}(u,t_1)|^2}\mathrm du
\leq 4\int_0^1 \E{\sup_{t \leq T}|y_{\sigma,\eps}(u,t)|^2}\mathrm du\leq 4M,
\end{align*}
where $M$ is a constant independent of $\sigma>0$ and $\eps>0$ by inequality~\eqref{borne L^2}. 

It remains to handle the second term on the right hand side of~\eqref{egal K1 K2 L}. Since $L$ is a compact set of $\mathcal C[0,T]$, there exists $\eta>0$ such that for every $f \in L$, $\omega_f(\eta):=\sup_{|t-s|<\eta} |f(t)-f(s)| <\delta$. 
Therefore, there exists $\eta >0$ such that:
\begin{align*}
\int_0^1 \E{ \mathds 1_{\{u \in K_1\cap K_2, y_{\sigma,\eps}(u,\cdot)\in L\}} \sup_{|t_2-t_1| < \eta} |y_{\sigma,\eps}(u,t_2)-y_{\sigma,\eps}(u,t_1)|}\mathrm du \leq \delta.
\end{align*}
Back to equality~\eqref{egal K1 K2 L}, we have proved that there is $\eta>0$ such that for every $\sigma>0$ and $\eps>0$:
\begin{align*}
\E{\int_0^1  \sup_{|t_2-t_1| < \eta} |y_{\sigma,\eps}(u,t_2)-y_{\sigma,\eps}(u,t_1)| \mathrm du}
\leq \delta + \sqrt{12\delta M}.
\end{align*}
This proves convergence~\eqref{egalite crit K2} and thus concludes the proof of the Proposition. 
\end{proof}

Then, to obtain criterion~$(K3)$, we state the following Proposition:
\begin{prop}
Let $g \in \mathcal L^\uparrow_{2+}[0,1]$ and $\delta >0$. Then for all $k \geq1$, there is $h_k>0$ such that for all $\sigma, \eps >0$, 
\[
\P{\int_0^{1-h_k}\|y_{\sigma,\eps}(u+h_k,\cdot)-y_{\sigma,\eps}(u,\cdot)\|^2_{\mathcal C[0,T]}  \mathrm du\geq \frac{1}{k}}\leq \frac{\delta}{2^k}.
\]
\label{prop tension 2}
\end{prop}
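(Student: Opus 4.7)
The plan is to mirror the proof of Proposition~\ref{prop tension 1}: use Markov's inequality to reduce the statement to the uniform-in-$(\sigma,\eps)$ convergence
\[
\E{\int_0^{1-h}\|y_{\sigma,\eps}(u+h,\cdot)-y_{\sigma,\eps}(u,\cdot)\|^2_{\mathcal C[0,T]} \mathrm du}\longrightarrow 0 \quad\text{as } h\to 0^+.
\]
By Corollary~\ref{coro version continue ps}, the increment $N^{u,h}_t:=y_{\sigma,\eps}(u+h,t)-y_{\sigma,\eps}(u,t)$ is, for almost every $u$, a non-negative continuous $(\mathcal F^{\sigma,\eps}_t)_{t\in[0,T]}$-martingale with $N^{u,h}_0=g(u+h)-g(u)$, so Doob's $L^2$ maximal inequality yields $\E{\|N^{u,h}\|^2_{\mathcal C[0,T]}}\leq 4\,\E{(N^{u,h}_T)^2}$. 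After Fubini, the problem becomes that of controlling $\E{\int_0^{1-h}(y_{\sigma,\eps}(u+h,T)-y_{\sigma,\eps}(u,T))^2\mathrm du}$.

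I would then split this integral at a cutoff $\alpha=\alpha(h)>0$ to be optimized later. On the bulk $[\alpha,1-\alpha-h]$, the monotonicity of $u\mapsto y_{\sigma,\eps}(u,T)$ (also from Corollary~\ref{coro version continue ps}) gives, pointwise in $\omega$,
\[
\int_\alpha^{1-\alpha-h}(y(u+h,T)-y(u,T))^2\mathrm du \leq h\,(y_{\sigma,\eps}(1-\alpha,T)-y_{\sigma,\eps}(\alpha,T))^2,
\]
combining the bound $y(u+h,T)-y(u,T)\leq y(1-\alpha,T)-y(\alpha,T)$ with the telescoping identity $\int_\alpha^{1-\alpha-h}(y(u+h,T)-y(u,T))\mathrm du=\int_{1-\alpha-h}^{1-\alpha}y(u,T)\mathrm du-\int_\alpha^{\alpha+h}y(u,T)\mathrm du\leq h(y(1-\alpha,T)-y(\alpha,T))$. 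Taking expectation then reduces the bulk contribution to a uniform-in-$(\sigma,\eps)$ bound on $\E{(y_{\sigma,\eps}(1-\alpha,T)-y_{\sigma,\eps}(\alpha,T))^2}$, which comes from the identity $\E{y_{\sigma,\eps}(v,T)^2}=g(v)^2+\E{\int_0^T M_{\sigma,\eps}(v,s)^{-1}\mathrm ds}$ applied at $v=\alpha$ and $v=1-\alpha$.

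The boundary strips $[0,\alpha]$ and $[1-\alpha-h,1-h]$ (for $h\leq\alpha$) are handled via $\int_0^\alpha(y(u+h,T)-y(u,T))^2\mathrm du\leq 4\int_0^{2\alpha}y_{\sigma,\eps}(u,T)^2\mathrm du$ and its mirror image near~$1$; I would then repeat the proof of Lemma~\ref{lemme inverse masse approchee sig eps} on the subinterval $[0,2\alpha]$ instead of $[0,1]$, the Hölder bound~\eqref{ineg Holder} there involving $\|g\mathds 1_{[0,2\alpha]}\|_{L_p}$, which tends to $0$ as $\alpha\to 0^+$ by absolute continuity of the integral since $g\in L_p[0,1]$ with $p>2$. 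Choosing $\alpha=\alpha(h)\to 0^+$ at an appropriate rate (e.g.\ $\alpha=h^{1/2}$) makes the central $h\,C(\alpha,g)$-type contribution and the boundary $o_\alpha(1)$-type contribution vanish simultaneously as $h\to 0^+$.

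The hard part will be extracting from Lemma~\ref{lemme controle masse} a pointwise-in-$v$ estimate on $\E{\int_0^T M_{\sigma,\eps}(v,s)^{-1}\mathrm ds}$ for the cutoff $v=\alpha$: Lemma~\ref{lemme inverse masse approchee sig eps} only delivers this estimate in $u$-averaged form. Recasting its layer-cake argument at a single particle $v$ yields a bound of the form $C\sqrt{T}\int_0^\infty[g(v)-g(v-h)]h^{-3/2}\mathrm dh$, which is not pointwise finite for an arbitrary $v$; however, by Fubini together with $g\in L_p$ and $p>2$, this integral is finite for Lebesgue-almost every $v\in(0,1)$, so the cutoff $\alpha$ can be selected inside this full-measure set, giving the constant $C(\alpha,g)$ needed above and completing the splitting.
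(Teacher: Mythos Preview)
Your approach works but differs from the paper's. Both reduce, via Doob and Fubini, to bounding $\int_0^{1-h}\E{N_{u,T}^2}\,\mathrm du$ with $N_{u,t}=y_{\sigma,\eps}(u+h,t)-y_{\sigma,\eps}(u,t)$. The paper then splits not in $u$ but according to $\{N_{u,T}\lessgtr 1\}$: on $\{N_{u,T}\leq 1\}$ it exploits that $N_u$ is a \emph{non-negative} continuous martingale, time-changing it to a Brownian motion and using a gambler's-ruin exit probability to get $\P{N_{u,T}\geq\kappa}\leq N_{u,0}/\kappa$ (inequality~\eqref{eq proba N_T}), whence $\E{N_{u,T}^2\mathds 1_{\{N_{u,T}\leq 1\}}}\leq 3N_{u,0}=3(g(u+h)-g(u))$; on $\{N_{u,T}>1\}$ it combines H\"older, Burkholder--Davis--Gundy and the $\beta>1$ moment bound of Lemma~\ref{lemme inverse masse approchee sig eps}. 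This yields an \emph{explicit polynomial rate} $\int_0^{1-h}\E{N_{u,T}^2}\,\mathrm du\leq C\|g\|_{L_p}h^q$ (inequality~\eqref{h polynomial}), which is later reused in the proof of Lemma~\ref{lemme conv des crochets} via Borel--Cantelli. Your spatial bulk/boundary splitting avoids the time-change argument at the cost of the pointwise-in-$u$ inverse-mass estimate you describe; since your bulk constant $C(\alpha,g)$ involves $g(\alpha)^2$ and the pointwise layer-cake integral at a single $\alpha$, neither of which obeys a power law in $\alpha$ for general $g\in L_p$, your method gives the required convergence as $h\to 0$ but no quantitative rate. One minor correction to your boundary argument: localizing Lemma~\ref{lemme inverse masse approchee sig eps} to $[0,2\alpha]$ does not straightforwardly produce $\|g\mathds 1_{[0,2\alpha]}\|_{L_p}$, because the increment $g(u+x^{-1/\beta})-g(u)$ reaches outside $[0,2\alpha]$ when $x^{-1/\beta}>2\alpha$; splitting the $x$-integral at $x=(2\alpha)^{-\beta}$ and bounding the small-$x$ range by $\|g\|_{L_p}(2\alpha)^{1-1/p}$ repairs this and still gives $o(1)$ as $\alpha\to 0$.
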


If $\int_0^{1-h_k}\|y_{\sigma,\eps}(u+h_k,\cdot)-y_{\sigma,\eps}(u,\cdot)\|^2_{\mathcal C[0,T]}  \mathrm du\leq \frac{1}{k}$, we deduce by monotonicity of $u \mapsto y_{\sigma,\eps}(u,t)$ for every $t \in [0,T]$ that for every $h \in (0,h_k)$, 
\begin{multline*}
\int_0^{1-h} \|y_{\sigma,\eps}(u+h,\cdot)-y_{\sigma,\eps}(u,\cdot)\|^2_{\mathcal C[0,T]}  \mathrm du\\
\leq \int_0^{1-h_k} \!\!\!\!\!\!\!\!\|y_{\sigma,\eps}(u+h,\cdot)-y_{\sigma,\eps}(u,\cdot)\|^2_{\mathcal C[0,T]}  \mathrm du +\int_{1-2h_k+h}^{1-h_k}\!\!\!\!\!\!\!\! \|y_{\sigma,\eps}(u+h_k,\cdot)-y_{\sigma,\eps}(u+h_k-h,\cdot)\|^2_{\mathcal C[0,T]}  \mathrm du\\
\leq 2\int_0^{1-h_k} \|y_{\sigma,\eps}(u+h_k,\cdot)-y_{\sigma,\eps}(u,\cdot)\|^2_{\mathcal C[0,T]}  \mathrm du \leq \frac{2}{k}.
\end{multline*}
Therefore, the latter Proposition implies the following Corollary, which is equivalent to criterion~$(K3)$:

\begin{coro}
Let $g \in \mathcal L^\uparrow_{2+}[0,1]$ and $\delta >0$. Then for all $k \geq 1$, there is $h_k>0$ such that for all $\sigma ,\eps>0$,
\[
\P{\forall h \in (0,h_k), \int_0^{1-h} \|y_{\sigma,\eps}(u+h,\cdot)-y_{\sigma,\eps}(u,\cdot)\|^2_{\mathcal C[0,T]}  \mathrm du\leq \frac{2}{k}}\geq1- \frac{\delta}{2^k}.
\]
\label{coro tension 2}
\end{coro}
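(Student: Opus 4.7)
By Markov's inequality, the desired bound follows at once if we prove
\begin{align*}
\sup_{\sigma,\eps>0}\ \E{\int_0^{1-h}\|y_{\sigma,\eps}(u+h,\cdot)-y_{\sigma,\eps}(u,\cdot)\|_{\mathcal C[0,T]}^2 \mathrm du} \leq C h^{\alpha'}
\end{align*}
for some $\alpha', C>0$ independent of $\sigma,\eps,h$. By Corollary~\ref{coro version continue ps}, for a.e.\ $u \in (0,1-h)$ the process $N_t := y_{\sigma,\eps}(u+h,t)-y_{\sigma,\eps}(u,t)$ is a continuous square-integrable martingale, so Doob's $L^2$ inequality gives $\E{\sup_{t\leq T} |N_t|^2} \leq 4\E{|N_T|^2}$. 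After Fubini, the task reduces to bounding $\int_0^{1-h}\E{N_T^2}\mathrm du$ uniformly in $\sigma,\eps$.

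The first ingredient is a deterministic interpolation inequality: for any non-decreasing $f\in L_{p'}[0,1]$ with $p'>2$,
\begin{align*}
\int_0^{1-h}(f(u+h)-f(u))^2 \mathrm du \leq 4 h^{\alpha'} \|f\|_{L_{p'}}^2, \qquad \alpha':=\tfrac{p'-2}{2(p'-1)}.
\end{align*}
Indeed, setting $g_h(u):=f(u+h)-f(u)\geq 0$, the log-convexity of $q \mapsto \log\|g_h\|_{L_q}$ yields $\|g_h\|_{L_2}^2 \leq \|g_h\|_{L_1}^{2\alpha'}\|g_h\|_{L_{p'}}^{2(1-\alpha')}$; monotonicity of $f$ and Cauchy--Schwarz give $\|g_h\|_{L_1}= \int_{1-h}^{1}f \,\mathrm du -\int_0^h f \,\mathrm du \leq 2\sqrt{h}\|f\|_{L_2}$; the triangle inequality gives $\|g_h\|_{L_{p'}}\leq 2\|f\|_{L_{p'}}$; and $\|f\|_{L_2}\leq \|f\|_{L_{p'}}$ since $[0,1]$ is a probability space. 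Applying this to $f=y_{\sigma,\eps}(\cdot,T)$, which lies in $L^\uparrow_2[0,1]$ almost surely by Proposition~\ref{prop non-decreasing property}, taking expectations, and then Jensen's inequality for the concave map $x\mapsto x^{2/p'}$, the problem is reduced to proving
\begin{align*}
\sup_{\sigma,\eps>0}\E{\int_0^1 |y_{\sigma,\eps}(u,T)|^{p'}\mathrm du}<+\infty \qquad \text{for some } p'>2.
\end{align*}

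This uniform $L_{p'}$ bound is the stochastic core of the argument. Combining Burkholder--Davis--Gundy applied to the martingale $y_{\sigma,\eps}(u,\cdot)-g(u)$ (whose quadratic variation is $\int_0^T M_{\sigma,\eps}(u,s)^{-1}\mathrm ds$) with Jensen's inequality on the time integral (valid because $p'/2\geq 1$) yields
\begin{align*}
\E{\int_0^1|y_{\sigma,\eps}(u,T)-g(u)|^{p'}\mathrm du}\leq C_{p'}\, T^{p'/2-1}\,\E{\int_0^T\!\!\int_0^1 \frac{1}{M_{\sigma,\eps}^{p'/2}(u,s)}\mathrm du\, \mathrm ds}.
\end{align*}
By Lemma~\ref{lemme inverse masse approchee sig eps} with $\beta=p'/2$, this is uniformly bounded provided $p'\in (2, 3-2/p)$, an interval that is non-empty precisely because $g \in \mathcal L^\uparrow_{2+}[0,1]$ supplies some $p>2$. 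Since moreover $3-2/p \leq p$ when $p>2$, we automatically have $p'<p$, so $g \in L_{p'}$ and $\|g\|_{L_{p'}}^{p'}$ is finite. Assembling all steps produces the announced $Ch^{\alpha'}$ bound, and Markov's inequality with $h_k:=\bigl(\delta/(4Ck 2^k)\bigr)^{1/\alpha'}$ closes the argument. The main delicate point is the simultaneous threading of $p'$ between $p'\geq 2$ (to apply Jensen in BDG and to ensure $\alpha'>0$ in the interpolation) and $p'<3-2/p$ (to invoke Lemma~\ref{lemme inverse masse approchee sig eps}); this is possible exactly thanks to the strict inequality $p>2$ built into the definition of $\mathcal L^\uparrow_{2+}[0,1]$.
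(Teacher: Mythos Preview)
Your argument for bounding $\int_0^{1-h}\E{N_T^2}\,\mathrm du$ uniformly in $\sigma,\eps$ by $Ch^{\alpha'}$ is correct and takes a genuinely different route from the paper. The paper establishes Proposition~\ref{prop tension 2} by a Brownian hitting-time (gambler's ruin) estimate: writing $N_{u,\cdot}=N_{u,0}+\beta_u(\langle N_u\rangle_\cdot)$ with $N_{u,\cdot}>0$, it shows $\P{N_{u,T}\geq\kappa}\leq N_{u,0}/\kappa$, splits $\E{N_{u,T}^2}$ according to $\{N_{u,T}\leq 1\}$ and $\{N_{u,T}>1\}$, and only at the end applies an interpolation inequality to the \emph{deterministic} function $g$. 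Your approach applies the interpolation inequality directly to the random monotone function $y_{\sigma,\eps}(\cdot,T)$, and then the only stochastic input needed is the uniform $L_{p'}$ bound on $y_{\sigma,\eps}(\cdot,T)$ supplied by BDG and Lemma~\ref{lemme inverse masse approchee sig eps}. This is shorter and avoids the positivity-of-$N$ argument entirely; it also yields a somewhat better exponent $\alpha'$.

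There is, however, a small gap in your final step. Markov's inequality applied to your bound $\E{\int_0^{1-h}\|\cdots\|^2_{\mathcal C[0,T]}\,\mathrm du}\leq Ch^{\alpha'}$ controls, for each \emph{fixed} $h$, the probability that $\int_0^{1-h}\|\cdots\|^2>\tfrac{2}{k}$; it does not by itself yield the event $\{\forall h\in(0,h_k),\ldots\}$. To close this gap you must invoke the pathwise monotonicity of $u\mapsto y_{\sigma,\eps}(u,t)$ (Proposition~\ref{prop non-decreasing property}) exactly as the paper does in the paragraph preceding Corollary~\ref{coro tension 2}: for $0<h<h_k$ one splits $\int_0^{1-h}=\int_0^{1-h_k}+\int_{1-h_k}^{1-h}$, bounds each piece by $\int_0^{1-h_k}\|y_{\sigma,\eps}(u+h_k,\cdot)-y_{\sigma,\eps}(u,\cdot)\|^2_{\mathcal C[0,T]}\,\mathrm du$, and concludes that the single-$h_k$ event implies the uniform event (with the factor $2$ that appears in the statement). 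Once you add this one-line reduction, your proof is complete.
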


\begin{proof}[Proof (Proposition~\ref{prop tension 2})]
Let $h\in(0,1)$. By Corollary~\ref{coro version continue ps}, for almost every $u \in (0,1-h)$,  $N_{u,t}:=y_{\sigma,\eps}(u+h,t)-y_{\sigma,\eps}(u,t)$ is a martingale. By Fubini-Tonelli Theorem and Doob's inequality, we have:
\begin{align}\label{ineg initiale}
\E{\int_0^{1-h}\|N_{u,\cdot}\|^2_{\mathcal C[0,T]}  \mathrm du}
&=\int_0^{1-h}\E{\|N_{u,\cdot}\|^2_{\mathcal C[0,T]}}  \mathrm du
\leq C \int_0^{1-h}\E{N_{u,T}^2}  \mathrm du.
\end{align}
Let us split $\E{N_{u,T}^2}$ in two terms $\E{N_{u,T}^2 \mathds 1_{\{N_{u,T} \leq 1\}}}+\E{N_{u,T}^2 \mathds 1_{\{N_{u,T} >1\}}}$. 

\begin{description}
\item[Study of $\int_0^{1-h}\E{N_{u,T}^2 \mathds 1_{\{N_{u,T} \leq 1\}}}\mathrm du$. ]

Let $u\in (0,1-h)$ be such that $N_{u,\cdot}$ is a martingale.  
By Lemma~\ref{lemme croissance}, if $g(u+h)-g(u)=0$, then $N_{u,T}=0$ almost surely, thus $\E{N_{u,T}^2 \mathds 1_{N_{u,T} \leq 1}}=0$. From now on, we suppose that $g(u+h)-g(u)>0$.
$N_{u,\cdot}$ is a square integrable continuous martingale, starting from $g(u+h)-g(u)>0$ and positive by Lemma~\ref{lemme croissance}. 
Therefore, there exists a standard Brownian motion $\beta_u$ such that $N_{u,t}=N_{u,0}+\beta_u(\langle N_{u,\cdot},N_{u,\cdot} \rangle_t)$. Recall that $N_{u,0}=g(u+h)-g(u)$ is a deterministic quantity. 
If $N_{u,0}\geq 1$, then the inequality $\E{N_{u,T}^2 \mathds 1_{\{N_{u,T} \leq 1\}}}\leq N_{u,0}$ is obvious. Otherwise, we have
\begin{multline}
\E{N_{u,T}^2 \mathds 1_{\{N_{u,T} \leq 1\}}} 
=\int_0^{+\infty}\!\! \P{N_{u,T}^2 \mathds 1_{\{N_{u,T} \leq 1\}}\geq \lambda}   \mathrm d\lambda
\leq \int_0^1 \P{N_{u,T}^2 \geq \lambda} \mathrm d\lambda  \\
\leq N_{u,0}^2+\int_{N_{u,0}^2}^1 \P{N_{u,T} \geq \lambda^{1/2}} \mathrm d\lambda. 
\label{eq: lemme 2}
\end{multline}

Let us estimate $\P{N_{u,T} \geq \kappa}$ for a real number $\kappa > N_{u,0}$. 
We define the following stopping times: 
\begin{align*}
\tau_{-N_{u,0}}&:=\inf \{ t \geq 0: N_{u,0} +\beta_u(t) \leq 0 \}; \\
\tau_{\kappa-N_{u,0}}&:=\inf \{ t \geq 0: N_{u,0} +\beta_u(t) \geq \kappa \}; \\
\tau &:=\inf \{ t \geq 0: N_{u,t}\geq \kappa \}\wedge T.
\end{align*}

On the first hand, we know that almost surely, for all  $t \in [0,T]$, $N_{u,t} >0$, hence $\tau_{-N_{u,0}} \geq \langle N_{u,\cdot},N_{u,\cdot} \rangle_T$. 
On the other hand, if $N_{u,T} \geq \kappa$, $N_{u,\tau}$ is equal to $\kappa$ by continuity of $N_{u,\cdot}$, hence $\langle N_{u,\cdot},N_{u,\cdot} \rangle_\tau \geq \tau_{\kappa -N_{u,0}}$. It follows from both inequalities that $\tau_{\kappa-N_{u,0}} \leq \tau_{-N_{u,0}}$. Therefore, 
\begin{equation}
\label{eq proba N_T}
\P{N_{u,T} \geq \kappa} \leq \P{\tau_{\kappa-N_{u,0}} \leq \tau_{-N_{u,0}}} =\frac{N_{u,0}}{\kappa},
\end{equation}
by a usual martingale equality. 
Using inequality~(\ref{eq: lemme 2}) and $N_{u,0}\leq 1$, we obtain: 
\begin{align*}
\E{N_{u,T}^2\mathds 1_{\{N_{u,T}\leq 1\}}} \leq N_{u,0}^2+\int_{N_{u,0}^2}^1 \frac{N_{u,0}}{\lambda^{1/2}} \mathrm d\lambda
\leq N_{u,0}^2+2N_{u,0}\leq 3 N_{u,0}.
\end{align*}
Therefore, we have: $\int_0^{1-h}\E{N_{u,T}^2 \mathds 1_{\{N_{u,T} \leq 1\}}}\mathrm du\leq 3\int_0^{1-h} N_{u,0}\mathrm du$.

\item[Study of $\int_0^{1-h} \E{N_{u,T}^2 \mathds 1_{\{N_{u,T} > 1\}}}\mathrm du$.]
Recall that $g$ belongs to $L_p(0,1)$ for some $p>2$. Fix $\beta \in (1,\frac{3}{2}-\frac{1}{p})$. We compute:
\begin{align*}
\int_0^{1-h} \!\!\! \E{N_{u,T}^2 \mathds 1_{\{N_{u,T} > 1\}}}\mathrm du
&\leq 2\int_0^{1-h} \!\!\! \E{(N_{u,T}-N_{u,0})^2 \mathds 1_{\{N_{u,T} > 1\}}}\mathrm du \\
&\quad
+2\int_0^{1-h} \!\!\! \E{N_{u,0}^2 \mathds 1_{\{N_{u,T} > 1\}}}\mathrm du\\
&\leq 2\left(\int_0^{1-h} \!\!\!\!\!\!\E{(N_{u,T}-N_{u,0})^{2\beta} }\mathrm du\right)^{\frac{1}{\beta}}
 \left(\int_0^{1-h}\!\!\!\!\!\!\P{N_{u,T}>1}\mathrm du\right) ^{1-\frac{1}{\beta}}\\
 &\quad+2\int_0^{1-h} N_{u,0}^2\mathrm du.
\end{align*}
Furthermore, we have $\P{N_{u,T}>1}\leq N_{u,0}$: that inequality is obvious if $N_{u,0}\geq 1$ and otherwise, it is a consequence of inequality~\eqref{eq proba N_T}. 

Then, we want to give an upper bound for $\E{(N_{u,T}-N_{u,0})^{2\beta}}$. 
Using Burkholder-Davis-Gundy inequality, there exists $C_\beta$ such that $\E{(N_{u,T}-N_{u,0})^{2\beta}}\leq C_\beta\E{\langle N_{u,\cdot},N_{u,\cdot} \rangle_T^\beta}$. 
We compute the quadratic variation of the martingale $N_{u,t}=y_{\sigma,\eps}(u+h,t)-y_{\sigma,\eps}(u,t)$:
\begin{multline*}
\E{\langle N_{u,\cdot},N_{u,\cdot} \rangle_T^\beta}\\
=\E{\left|\int_0^T \left( \frac{1}{M_{\sigma,\eps}(u,s)}+\frac{1}{M_{\sigma,\eps}(u+h,s)} -\frac{2m_{\sigma,\eps} (u,u+h,s)}{(\eps + m_{\sigma,\eps}(u,s)) (\eps + m_{\sigma,\eps}(u+h,s))}  \right) \mathrm ds\right| ^\beta }. 
\end{multline*}
By Cauchy-Schwarz inequality $m_{\sigma,\eps} (u,u+h,s)\leq m_{\sigma,\eps}^{1/2}(u,s) m_{\sigma,\eps}^{1/2}(u+h,s)$, we deduce that the sum of the three terms in the integral is non-negative and thus that it is bounded by $\frac{1}{M_{\sigma,\eps}(u,s)}+\frac{1}{M_{\sigma,\eps}(u+h,s)} $, whence we obtain:
\begin{align*}
\E{\langle N_{u,\cdot},N_{u,\cdot} \rangle_T^\beta}
&\leq T^{\beta-1} \E{\int_0^T \left| \frac{1}{M_{\sigma,\eps}(u,s)}+\frac{1}{M_{\sigma,\eps}(u+h,s)}  \right|^\beta \mathrm ds }\\
&\leq C_{\beta,T} \left( \E{\int_0^T \frac{\mathrm ds}{M_{\sigma,\eps}^\beta(u,s)}}+ \E{\int_0^T \frac{\mathrm ds}{M_{\sigma,\eps}^\beta(u+h,s)}}\right).
\end{align*}
By Lemma~\ref{lemme inverse masse approchee sig eps}, we deduce that $\int_0^{1-h}\E{\langle N_{u,\cdot},N_{u,\cdot} \rangle_T^\beta}\mathrm du$ is bounded, because $\beta<\frac{3}{2}-\frac{1}{p}$.
Therefore, 
we can conclude that there is a constant $C_{T,\beta}$ such that:
\[
\int_0^{1-h}\E{N_{u,T}^2\mathds 1_{\{N_{u,T} > 1\}}}\mathrm du
\leq 2 C_{T,\beta} \left(\int_0^{1-h} \!N_{u,0}\;\mathrm du\right)^{1-1/\beta}+2\int_0^{1-h}\! N_{u,0}^2 \;\mathrm du.
\]
\end{description}
\textbf{Conclusion:} Putting together the studies of both cases, we have proved that there is a positive constant $C$ satisfying, for all $\sigma$, $\eps$ and $h \in (0,1)$:
\begin{align}\label{ineg finale}
\int_0^{1-h} \E{N_{u,T}^2} \mathrm du 
\leq C\int_0^{1-h} \!N_{u,0}\;\mathrm du + C \left(\int_0^{1-h} \!N_{u,0}\;\mathrm du\right)^{1-1/\beta}+C\int_0^{1-h} \!N_{u,0}^2 \;\mathrm du.
\end{align}
Recall that there is $p>2$ such that $g\in L_p(0,1)$. As for inequality~\eqref{ineg Holder}, we get:
\begin{align*}
\int_0^{1-h}\! N_{u,0}\;\mathrm du=\int_0^{1-h} (g(u+h)-g(u))\mathrm du \leq \|g\|_{L_p} (2h)^{1-\frac{1}{p}}. 
\end{align*}
Furthermore, define $\alpha:=\frac{p-2}{p-1} \in (0,1)$. We have
\begin{align*}
\int_0^{1-h} \!\!N_{u,0}^2\; \mathrm du
&=\int_0^{1-h} (g(u+h)-g(u))^\alpha (g(u+h)-g(u))^{2-\alpha} \mathrm du \\
&\leq \left(\int_0^{1-h} (g(u+h)-g(u))\mathrm du \right)^\alpha \left( \int_0^{1-h} (g(u+h)-g(u))^{\frac{2-\alpha}{1-\alpha}}\mathrm du\right)^{1-\alpha}\\
&\leq \left( \|g\|_{L_p} (2h)^{1-\frac{1}{p}} \right)^\alpha \left( C_p \|g\|_{L_p}   \right)^{1-\alpha},
\end{align*}
because $\frac{2-\alpha}{1-\alpha}=p$. Therefore
\begin{align}
\int_0^{1-h} \!N_{u,0}^2 \;\mathrm du
=\int_0^{1-h} (g(u+h)-g(u))^2  \mathrm du
&\leq C_p ^{1-\alpha} \|g\|_{L_p} h^{\frac{p-2}{p}}. 
\label{ineg g carré}
\end{align}
It follows from~\eqref{ineg finale} that there is $C_\beta$ such that for each $\sigma ,\eps >0$, 
\begin{align*}
\int_0^{1-h} \E{(y_{\sigma,\eps}(u+h,T)-y_{\sigma,\eps}(u,T))^2} \mathrm du 
\leq C_\beta \|g\|_{L_p} \left( h^{\frac{p-1}{p}}+h^{\frac{p-1}{p}(1-\frac{1}{\beta})}+h^{\frac{p-2}{p}}\right),
\end{align*}
for every $\beta < \frac{3}{2}-\frac{1}{p}$, \textit{i.e.} such that  $0<1-\frac{1}{\beta}<\frac{p-2}{3p-2}$. 
Thus, there is $q>0$ depending on $p$ (e.g. $q=\frac{(p-1)(p-2)}{2p(3p-2)}$ by choosing $1-\frac{1}{\beta}=\frac{p-2}{2(3p-2)}$) and a constant $C$ such that for each $\sigma ,\eps >0$, 
\begin{align}
\label{h polynomial}
\int_0^{1-h} \E{(y_{\sigma,\eps}(u+h,T)-y_{\sigma,\eps}(u,T))^2} \mathrm du 
\leq C \|g\|_{L_p} h^q.
\end{align}
Therefore, by~\eqref{ineg initiale} and Markov's inequality, there is $C$ such that for each $\sigma ,\eps >0$, 
\begin{align*}
\P{\int_0^{1-h}\|y_{\sigma,\eps}(u+h,\cdot)-y_{\sigma,\eps}(u,\cdot)\|^2_{\mathcal C[0,T]}  \mathrm du\geq \frac{1}{k}}\leq k C \|g\|_{L_p} h^q,
\end{align*} 
whence it is sufficient to choose $h_k$ so that $k C \|g\|_{L_p} h_k^q< \frac{\delta}{2^k}$. 
\end{proof}

\subsection{Convergence when $\eps \to 0$}
\label{par conv eps 0}

Fix $\sigma \in \Q_+$. By Prokhorov's Theorem, it follows from Corollary~\ref{coro tension y sig eps} that the collection of laws of the sequence $(y_{\sigma,\eps})_{\eps \in \Q_+}$ is relatively compact in $\mathcal P(L_2([0,1],\mathcal C[0,T]))$. In particular, up to extracting a subsequence, we may suppose that $(y_{\sigma,\eps})_{\eps \in \Q_+}$ converges in distribution in $L_2([0,1],\mathcal C[0,T])$ to a limit, denoted by $y_\sigma$.

For every $t \in [0,T]$, let us denote by $e_t(f):=f(\cdot,t)$ the continuous  evaluation function: $L_2([0,1],\mathcal C[0,T]) \to  L_2[0,1]$.
We define $Y_\sigma(t):=e_t(y_\sigma)=y_\sigma(\cdot,t)$. Under the same model as Proposition~\ref{proprietes A1-A3}, we obtain:
\begin{prop}
\label{proprietes B1-B3}
Fix $\sigma \in \Q_+$. 
Suppose that $g \in \mathcal L^\uparrow_{2+}[0,1]$. $(Y_\sigma(t))_{t \in [0,T]}$ is a $L^\uparrow_2[0,1]$-valued process such that:
\begin{itemize}
\item[$(B1)$] $Y_\sigma(0)=g$;

\item[$(B2)$] $(Y_\sigma(t))_{t \in [0,T]}$ is a square integrable continuous $L^\uparrow_2[0,1]$-valued $(\mathcal F^\sigma_t)_{t \in [0,T]}$-martingale, where $\mathcal F^\sigma_t=\sigma(Y_\sigma(s),s\leq t)$; 

\item[$(B3)$]
for every $h,k \in L_2[0,1]$, 
\begin{align*}
\langle (Y_\sigma,h)_{L_2},(Y_\sigma,k)_{L_2}\rangle_t = \int_0^t \!\! \int_0^1\!\!\int_0^1 h(u)k(u') \frac{m_\sigma(u,u',s)}{m_\sigma(u,s)m_\sigma(u',s)}\mathrm du\mathrm du' \mathrm ds,
\end{align*}
where $m_\sigma(u,u',s) =\int_0^1 \varphi_\sigma (y_\sigma(u,s)-y_\sigma(v,s)) \varphi_\sigma (y_\sigma(u',s)-y_\sigma(v,s)) \mathrm dv$ and \newline $m_\sigma(u,s)=\int_0^1 \varphi_\sigma^2(y_\sigma(u,s)-y_\sigma(v,s))\mathrm dv$. 
\end{itemize}
\end{prop}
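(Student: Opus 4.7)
The plan is to deduce $(B1)$--$(B3)$ from the corresponding properties $(A1)$--$(A3)$ of $Y^g_{\sigma,\eps}$ established in Proposition~\ref{proprietes A1-A3} by letting $\eps\to 0^+$. Using Skorohod's representation theorem, I would re-realize the processes on a common probability space so that $y_{\sigma,\eps}\to y_\sigma$ almost surely in $L_2([0,1],\mathcal{C}[0,T])$, and by extracting a further subsequence, so that for a.e.\ $u\in[0,1]$, $y_{\sigma,\eps}(u,\cdot)\to y_\sigma(u,\cdot)$ uniformly on $[0,T]$ almost surely. Continuity of the evaluation map $e_t$ then gives $Y^g_{\sigma,\eps}(t)\to Y_\sigma(t)$ in $L_2[0,1]$ almost surely for every $t\in[0,T]$. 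Property $(B1)$ is immediate from $Y^g_{\sigma,\eps}(0)=g$. Square integrability of $Y_\sigma(t)$ follows from Corollary~\ref{corollaire borne L_2 sig eps} by Fatou's lemma, continuity of $t\mapsto Y_\sigma(t)$ in $L_2[0,1]$ is built into the ambient space, and $Y_\sigma(t)\in L^\uparrow_2[0,1]$ a.s.\ because $L^\uparrow_2[0,1]$ is a closed subset of $L_2[0,1]$ (it equals the intersection of the closed half-spaces $\{f\in L_2[0,1]:\int_a^b f\le\int_c^d f\}$ over rational intervals $[a,b]$ lying to the left of $[c,d]$ of equal length).

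For the martingale property in $(B2)$ and the bracket in $(B3)$, fix $h,k\in L_2[0,1]$ and set $M^\eps_t:=(Y^g_{\sigma,\eps}(t),h)_{L_2}$, $N^\eps_t:=(Y^g_{\sigma,\eps}(t),k)_{L_2}$, with pointwise limits $M_t:=(Y_\sigma(t),h)_{L_2}$, $N_t:=(Y_\sigma(t),k)_{L_2}$. Corollary~\ref{corollaire borne L_2 sig eps} gives $\sup_\eps \E{(M^\eps_t)^2}<\infty$, hence uniform integrability of $\{M^\eps_t\}_\eps$. I would pass to the limit in the martingale identity $\E{(M^\eps_t-M^\eps_s)\Phi}=0$, where $\Phi$ is a bounded continuous function of finitely many variables of the form $(Y^g_{\sigma,\eps}(s_i),k_j)_{L_2}$ with $s_i\le s$, using the almost sure convergence above combined with the uniform integrability; this shows that $M$ is an $(\mathcal F^\sigma_t)$-martingale. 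The bracket would be identified via the standard stability result: if $(M^\eps,N^\eps)\to (M,N)$ in law in $\mathcal C([0,T],\R^2)$ with continuous square integrable brackets $\langle M^\eps,N^\eps\rangle \to A$ in probability uniformly on $[0,T]$, and $(M^\eps_T N^\eps_T)_\eps$ is uniformly integrable, then $MN-A$ is a martingale and so $\langle M,N\rangle=A$.

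The hard part will be showing that the bracket given by $(A3)$,
\begin{align*}
A^\eps_t\,=\,\int_0^t\!\!\int_0^1\!\!\int_0^1 h(u)k(u')\,\frac{m^g_{\sigma,\eps}(u,u',s)}{(\eps+m^g_{\sigma,\eps}(u,s))(\eps+m^g_{\sigma,\eps}(u',s))}\,du\,du'\,ds,
\end{align*}
converges to the analogous expression $A_t$ with $\eps$ removed and $m^g_{\sigma,\eps}$ replaced by $m_\sigma$, despite the absence of any a priori lower bound on $m_\sigma$. Continuity of $\varphi_\sigma$ together with the uniform convergence $y_{\sigma,\eps}(u,\cdot)\to y_\sigma(u,\cdot)$ for a.e.\ $u$ yields, by bounded convergence, the pointwise limits $m^g_{\sigma,\eps}(u,s)\to m_\sigma(u,s)$ and $m^g_{\sigma,\eps}(u,u',s)\to m_\sigma(u,u',s)$ almost surely, and $m_\sigma(u,s)>0$ for every $u\in[0,1)$ and every $s$, because right-continuity of $v\mapsto y_\sigma(v,s)$ provides a right-neighborhood of $u$ on which $\varphi_\sigma(y_\sigma(u,s)-y_\sigma(v,s))=1$; the limiting integrand is therefore well defined almost everywhere. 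To upgrade pointwise to integral convergence, I would dominate the integrand using Cauchy--Schwarz (namely $m^g_{\sigma,\eps}(u,u',s)\le \sqrt{m^g_{\sigma,\eps}(u,s)m^g_{\sigma,\eps}(u',s)}$) by $|h(u)k(u')|/\sqrt{M_{\sigma,\eps}(u,s)M_{\sigma,\eps}(u',s)}$, and then by Hölder together with Lemma~\ref{lemme inverse masse approchee sig eps} applied with some $\beta>1$ (possible because $p>2$), I would obtain $\sup_\eps\E{|A^\eps_T|^\beta}<\infty$. This uniform $L^\beta$-bound supplies both the uniform integrability of $\langle M^\eps,N^\eps\rangle_T$ needed above and the upgrade from almost sure convergence of the integrands to convergence of $A^\eps$ in probability uniformly in $t$ (the latter using monotonicity of $t\mapsto A^\eps_t$ and continuity of the limit), completing the identification of the bracket.
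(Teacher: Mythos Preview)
Your approach is essentially the paper's: Skorohod representation, closedness of $L^\uparrow_2[0,1]$ in $L_2[0,1]$, passage to the limit in the martingale identity using uniform integrability, pointwise convergence of the mass functions via continuity of $\varphi_\sigma$, and the $L^\beta$ bound for $\beta>1$ from Lemma~\ref{lemme inverse masse approchee sig eps}. Two loose ends deserve tightening. First, $t\mapsto A^\eps_t$ is \emph{not} monotone for signed $h,k$; you should either reduce to $h=k\ge 0$ by polarization (the paper effectively does this and later records Corollary~\ref{coro marting 2}), or use the bounded-variation structure directly. Second, the $L^2$ bound from Corollary~\ref{corollaire borne L_2 sig eps} alone does not yield uniform integrability of the products $M^\eps_t N^\eps_t$; you need $\sup_\eps\E{|M^\eps_t|^{2\beta}}<\infty$ for some $\beta>1$, which follows from your own bound $\sup_\eps\E{\langle M^\eps\rangle_T^\beta}<\infty$ via Burkholder--Davis--Gundy --- exactly the route the paper takes in proving~\eqref{ui for t}.
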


\begin{proof}
Fix $t \in [0,T]$. We want to prove that $Y_\sigma(t)$ belongs to $L^\uparrow_2[0,1]$. 
For each $\eps \in \Q_+$, $Y_{\sigma,\eps}(t)$ belongs with probability $1$ to the set $\mathcal K:=$
\[
\left\{f\in L_2(0,1): 
\forall u,u', \forall r,r', \text{if }0<u<u+r<u'<u'+r'<1,\text{then } \frac{1}{r}\int_u^{u+r} \!\!\! f \leq \frac{1}{r'} \int_{u'}^{u'+r'}\!\!\! f\right\}
\]
which is closed in $L_2(0,1)$. Recall that the sequence $(y_{\sigma,\eps})_{\eps\in \Q_+}$ converges in distribution to~$y_\sigma$ in $L_2([0,1],\mathcal C[0,T])$. Therefore, $(Y_{\sigma,\eps}(t))_{\eps\in \Q_+}$ converges in distribution to $Y_\sigma(t)$ in $L_2[0,1]$. 
Because $\mathcal K$ is closed, the limit $Y_\sigma(t)$ also belongs to $\mathcal K$ with probability 1.

Therefore, almost surely, for every $t\in [0,T] \cap \Q$, $Y_\sigma(t) \in \mathcal K$. 
Let $\omega \in \Omega'$, where $\Omega'$ is such that $\P{\Omega'}=1$ and for every $\omega \in \Omega'$, $\int_0^1 \sup_{s \leq T} |y_\sigma(v,s)|^2(\omega) \mathrm dv <+\infty$ and for every $t\in [0,T] \cap \Q$, $Y_\sigma(t)(\omega) \in \mathcal K$. 
Let $t \in [0,T]$ and $(t_n)$ be a sequence in $ [0,T]\cap \Q$ tending to $t$. For every $n \in \N$ and each $u,u',r,r'$ such that $0<u<u+r<u'<u'+r'<1$, $\frac{1}{r} \int_u^{u+r} y_\sigma(v,t_n)(\omega) \mathrm dv \leq \frac{1}{r'}\int_{u'}^{u'+r'} y_\sigma(v,t_n)(\omega) \mathrm dv$. 
Since $y_\sigma(\omega)$ belongs to $L_2([0,1], \mathcal C[0,T])$, and since $\int_u^{u+r} y_\sigma(v,t_n)^2(\omega) \mathrm dv \leq \int_0^1 \sup_{s \leq T} |y_\sigma(v,s)|^2(\omega) \mathrm dv <+\infty$, $\frac{1}{r} \int_u^{u+r} y_\sigma(v,t_n)(\omega) \mathrm dv$ tends to $\frac{1}{r} \int_u^{u+r} y_\sigma(v,t)(\omega) \mathrm dv$ (and the same is true for $u'$ and $r'$). Thus almost surely $Y_\sigma(t) $ belongs to $\mathcal K$ for every $t\in [0,T]$. 
It remains to prove that it implies that $Y_\sigma(t)$ belongs to $L^\uparrow_2[0,1]$.

Let $f \in \mathcal K$. 
Define, for each $u\in (0,1)$, $\widehat{f}(u):=\liminf_{h \to 0^+} \frac{1}{h} \int_u^{(u+h)\wedge 1} f(v) \mathrm dv$. 
First, remark that $\widehat{f}$ is non-decreasing. Then, since $h \mapsto \frac{1}{h}\int_u^{u+h} f$ is non-increasing, we have $\widehat{f}(u)=\lim_{h \to 0^+}  \frac{1}{h} \int_u^{(u+h)\wedge 1} f(v) \mathrm dv$. 
Choose a sequence $(u_n) \searrow u$. By monotonicity, $\widehat{f}(u)\leq \widehat{f}(u_n)$. Fix $\delta >0$. There exists $h>0$ such that $u+h <1$ and $|\widehat{f}(u)-\frac{1}{h}\int_u^{u+h}f| <\delta$. Since $f \in L_2$, there exists~$N$ such that for all $n \geq N$, $|\frac{1}{h}\int_{u_n}^{u_n+h}f -\frac{1}{h}\int_u^{u+h} f|<\delta$. 
Therefore, $\widehat{f}(u_n)\leq \frac{1}{h}\int_{u_n}^{u_n+h} f \leq \widehat{f}(u)+2\delta$ for all $n \geq N$. Thus $\widehat{f}(u_n) \to \widehat{f}(u)$. In addition, $\widehat{f}$ has left limits because of its monotonicity. Hence $\widehat{f}$ is a càdlàg function. 

Furthermore, $\widehat{f}=f$ almost everywhere. Indeed, for every $\delta>0$, there exists $F \in \mathcal C[0,1]$ such that $\|f-F\|_{L_1(0,1)} <\delta$. 
Define $\widehat{F}(u)=\lim_{h \to 0^+}  \frac{1}{h} \int_u^{(u+h)\wedge 1}F(v) \mathrm dv$. By continuity of $F$, $F(u)=\widehat{F}(u)$ for every $u \in (0,1)$. Thus we have:
\begin{align*}
\int_0^1 |f(u)- \widehat{f}(u)|\mathrm du 
&\leq \int_0^1 |f(u)-F(u)|\mathrm du +\int_0^1 |\widehat{f}(u)-\widehat{F}(u)|\mathrm du\\
&\leq \delta +\int_0^1 \lim_{h \to 0^+} \frac{1}{h}\int_u^{(u+h)\wedge 1} |f(v)-F(v)|\mathrm dv \mathrm du\\
&\leq \delta + \liminf_{h \to 0^+} \int_0^1 |f(v)-F(v)|\mathrm dv \leq 2\delta,
\end{align*}
where we used Fatou's Lemma to obtain the last line. Thus $\int_0^1 |f(u)- \widehat{f}(u)|\mathrm du=0$, whence $\widehat{f}=f$ almost everywhere. Thus $f$ belongs to $L^\uparrow_2[0,1]$: $Y_\sigma$ is a $L^\uparrow_2[0,1]$-valued process.  

\paragraph{Property $(B1)$.} 
$(Y_{\sigma,\eps}(0))_{\eps\in \Q_+}$ converges in law to $Y_\sigma(0)$ in $L_2[0,1]$. Therefore, $Y_\sigma(0)=g$.

\paragraph{Property $(B2)$.} 
By inequality~\eqref{borne L^2}, $\E{\|Y_{\sigma,\eps}\|^2_{L_2([0,1],\mathcal C[0,T])} }$ is bounded uniformly in $\eps \in \Q_+$. 
We deduce that for every $t \in [0,T]$, $\E{\|Y_{\sigma}(t)\|^2_{L_2([0,1])} }<+\infty$, thus the process $Y_\sigma$ is square integrable. 

Furthermore, $Y_\sigma$ is a continuous $L^\uparrow_2[0,1]$-valued process. Indeed, for each sequence $(t_n)_{n\geq 0}$ converging to a time $t$, 
$\|Y_\sigma(t_n)-Y_\sigma(t)\|_{L_2}^2=\int_0^1 (y_\sigma(u,t_n)-y_\sigma(u,t))^2\mathrm du \underset{n \to \infty}{\longrightarrow} 0$
by dominated convergence Theorem, since for almost every $u \in (0,1)$, $y_\sigma(u,\cdot)$ is continuous at time $t$, and $(y_\sigma(u,t_n)-y_\sigma(u,t))^2 \leq 4 \sup_{t \leq T}|y_\sigma(u,t)|^2$ which is almost surely integrable.

Moreover, we know from property~$(A2)$ that for each $h \in L_2(0,1)$, each $l \geq 1$, $0\leq s_1\leq s_2\leq \dots \leq s_l\leq s\leq t$ and each bounded and continuous function $f_l:(L_2(0,1))^l \rightarrow \R$:
\begin{align}
\label{martingale avant fubini}
\E{\int_0^1 h(u) (y_{\sigma,\eps}(u,t)-y_{\sigma,\eps}(u,s))\mathrm du \; f_l(y_{\sigma,\eps}(\cdot,s_1),\dots,y_{\sigma,\eps}(\cdot,s_l))}=0.
\end{align}
Since $\left| \int_0^1  h(u) b(u,t) \mathrm du \right| \leq \| h\|_{L_2} \left( \int_0^1 \sup_{[0,T]} |b(u,\cdot)|^2 \mathrm du\right)^{1/2}$ for every $b \in L_2([0,1],\mathcal C[0,T])$, the function $\varphi: b \in L_2([0,1],\mathcal C[0,T]) \mapsto \int_0^1 h(u) (b(u,t)-b(u,s))\mathrm du\; f_l(b(\cdot,s_1),\dots, b(\cdot,s_l)) $ is continuous. 
Furthermore, we prove that $(\varphi(y_{\sigma,\eps}))_{\eps \in \Q_+}$ is bounded in $L_2$:
\begin{align*}
\E{\varphi(y_{\sigma,\eps})^2}
&\leq \|f_l\|_{\infty}^2 \| h\|_{L_2}^2 \E{\int_0^1 (y_{\sigma,\eps}(u,t)-y_{\sigma,\eps}(u,s))^2 \mathrm du}\\
&\leq C \|f_l\|_{\infty}^2 \| h\|_{L_2}^2,
\end{align*}
where $C$ is independent of $\eps$ by Corollary~\ref{corollaire borne L_2 sig eps}. 
We deduce that $(\varphi(y_{\sigma,\eps}))_{\eps \in \Q_+}$ is uniformly integrable. By continuity of $\varphi$ and since $(y_{\sigma,\eps})_{\eps \in \Q_+}$ converges in law to $y_\sigma$ in $ L_2([0,1],\mathcal C[0,T])$, we get: 
$\E{\varphi(y_{\sigma,\eps})}\underset{\eps \to 0}{\longrightarrow} \E{\varphi(y_\sigma)}$. Since by equality~\eqref{martingale avant fubini}, $\E{\varphi(y_{\sigma,\eps})}=0$ for each $\eps \in \Q_+$, we have:
\begin{align}
\label{eq martingale limite}
\E{\int_0^1 h(u) (y_\sigma(u,t)-y_\sigma(u,s))\mathrm du f_l(Y_\sigma(s_1),\dots,Y_\sigma(s_l))}=0.
\end{align}
Therefore, $Y_\sigma(\cdot)$ is a square integrable continuous $(\mathcal F^\sigma_t)_{t \in [0,T]}$-martingale.

\paragraph{Property $(B3)$.} 
We know, by property~$(A3)$, that for every $l\geq 1$, for every $0\leq s_1\leq s_2\leq \dots \leq s_l\leq s\leq t$, for every bounded and continuous $f_l:(L_2(0,1))^l \rightarrow \R$ and for every $h$ and $k$ in $L_2(0,1)$:
\begin{multline}
\label{big equation}
\mathbb E \bigg[ \int_0^1 \!\!\int_0^1 h(u)k(u')  [ (y_{\sigma,\eps}(u,t)-g(u))(y_{\sigma,\eps}(u',t)-g(u'))
\\-(y_{\sigma,\eps}(u,s)-g(u))(y_{\sigma,\eps}(u',s)-g(u')) ] 
\mathrm du du' f_l(Y_{\sigma,\eps}(s_1),\dots,Y_{\sigma,\eps}(s_l))   \bigg]
\\ =\mathbb E \bigg[\int_0^1\!\!\int_0^1 h(u)k(u')  \int_s^t \frac{m_{\sigma,\eps}(u,u',r)}{(\eps+m_{\sigma,\eps}(u,r))(\eps+m_{\sigma,\eps}(u',r))}\mathrm dr \mathrm du du'
 f_l(Y_{\sigma,\eps}(s_1),\dots,Y_{\sigma,\eps}(s_l))  \bigg].
\end{multline}

First, we want to obtain the convergence of the left hand side of~\eqref{big equation}. We proceed in the same way as for the proof of equality~\eqref{eq martingale limite}; to get a uniform integrability property, we have now to prove the existence of $\beta >1$ such that
\begin{align}
\label{ui for t}
\sup_{\eps \in \Q_+}\E{\left(\int_0^1 h(u)(y_{\sigma,\eps}(u,t)-g(u))\mathrm du  \int_0^1  k(u')(y_{\sigma,\eps}(u',t)-g(u'))\mathrm du'\right)^\beta}
\end{align}
is finite. 
Therefore, it is sufficient to prove the existence of $\beta>1$ such that
\begin{align*}
\sup_{\eps \in \Q_+}\E{\left(\int_0^1 h(u)(y_{\sigma,\eps}(u,t)-g(u))\mathrm du \right)^{2\beta}}
\end{align*}
is finite for every $h \in L_2[0,1]$. 
By Cauchy-Schwarz inequality, 
\begin{align}
\label{CS ineq}
\E{\left(\int_0^1 h(u)(y_{\sigma,\eps}(u,t)-g(u))\mathrm du \right)^{2\beta}}
&\leq \E{\|h\|_{L_2}^{2\beta} \left( \int_0^1 (y_{\sigma,\eps}(u,t)-g(u))^2 \mathrm du \right)^{\beta}}\notag \\
&\leq \|h\|_{L_2}^{2\beta} \E{ \int_0^1 (y_{\sigma,\eps}(u,t)-g(u))^{2\beta} \mathrm du }.
\end{align}
We deduce by Burkholder-Davis-Gundy inequality and Fubini's Theorem that there are some constants independent of $\eps$ such that
\begin{align*}
\E{\int_0^1(y_{\sigma,\eps}(u,t)-g(u))^{2\beta}\mathrm du}
&\leq C_1\int_0^1\E{\langle y_{\sigma,\eps}(u,\cdot),y_{\sigma,\eps}(u,\cdot)\rangle_t^\beta}\mathrm du \notag
\\&\leq C_2\E{\int_0^1\!\!\int_0^t \frac{1}{M_{\sigma,\eps}^\beta(u,r)}\mathrm dr\mathrm du}.
\end{align*}
By Lemma~\ref{lemme inverse masse approchee sig eps}, there exists $\beta >1$ such that $\E{\int_0^1\int_0^t \frac{1}{M_{\sigma,\eps}^\beta(u,r)}\mathrm dr\mathrm du}$ is bounded uniformly for $\eps \in \Q_+$. Thus~\eqref{ui for t} is finite. It is also finite if we replace $t$ by $s$. 

To obtain the convergence of the right hand side of~\eqref{big equation}, we start by using Skorohod's representation Theorem\footnote{$L_2([0,1],\mathcal C[0,T])$ is a Polish space. Its separability can be proved using the separability of $C([0,1]\times[0,T])$ and the density of $C([0,1]\times[0,T])$ in $L_2([0,1],\mathcal C[0,T])$.}: there exists a sequence $(\widehat{y}_{\sigma,\eps})_{\eps \in \Q_+}$ defined on a common probability space $(\widehat{\Omega},\widehat{\mathbb P})$ that converges to $\widehat{y}_\sigma$ in $L_2([0,1],\mathcal C[0,T])$ almost surely, where $\widehat{y}_{\sigma,\eps}$ (resp. $\widehat{y}_\sigma$) has same distribution as $y_{\sigma,\eps}$ (resp. $y_\sigma$). 
We denote by $\widehat{m}_{\sigma,\eps}$ (resp. $\widehat{m}_\sigma$) the mass associated to $\widehat{y}_{\sigma,\eps}$ (resp. $\widehat{y}_\sigma$). 

Furthermore, on the probability space $(\widehat{\Omega}\times [0,1], \widehat{\mathbb P} \otimes \Leb|_{[0,1]})$, $\widehat{y}_{\sigma,\eps}$ converges in probability in the space $\mathcal C[0,T]$ to $\widehat{y}_\sigma$. Indeed, for every $\delta >0$, we have:
\begin{align*}
\widehat{\mathbb P} \otimes \Leb|_{[0,1]}
\{ (\omega,u): \|(\widehat{y}_{\sigma,\eps}-\widehat{y}_\sigma)(\omega,u)\|_{\mathcal C[0,T]} \geq \delta\}
&=\widehat{\mathbb E} \left[  \Leb 
\{ u: \|(\widehat{y}_{\sigma,\eps}-\widehat{y}_\sigma)(\omega,u)\|_{\mathcal C[0,T]} \geq \delta  \}    \right] \\
&\leq \widetilde{\mathbb E} \left[1\wedge \frac{1}{\delta^2} \int_0^1  \|(\widehat{y}_{\sigma,\eps}-\widehat{y}_\sigma)(\omega,u)\|_{\mathcal C[0,T]}^2 \mathrm du  \right].
\end{align*}
We know that, for every fixed $\delta>0$, $1\wedge \frac{1}{\delta^2}\int_0^1  \|(\widehat{y}_{\sigma,\eps}-\widehat{y}_\sigma)(\omega,u)\|_{\mathcal C[0,T]}^2 \mathrm du $ converges to $0$ almost surely, and it is bounded by $1$, so we deduce that the latter term tends to $0$. We deduce from the convergence in probability that there exists a subsequence $(\eps_n)_n$, $\eps_n \to 0$, such that for almost every $(\omega, u) \in \widehat{\Omega}\times[0,1]$, $\|(\widehat{y}_{\sigma,\eps_n}-\widehat{y}_\sigma)(\omega,u)\|_{\mathcal C[0,T]} \to 0$. 

We want to prove that,  
\begin{multline}
\label{conv masses}
\E {\int_0^1\!\!\int_0^1 h(u)k(u') \! \int_s^t \!\frac{\widehat{m}_{\sigma,\eps_n}(u,u',r)}{(\eps_n+\widehat{m}_{\sigma,\eps_n}(u,r))(\eps_n+\widehat{m}_{\sigma,\eps_n}(u',r))}\mathrm dr \mathrm du \mathrm du'f_l(\widehat{Y}_{\sigma,\eps_n}(s_1),\dots,\widehat{Y}_{\sigma,\eps_n}(s_l)) }
\\\underset{n \to \infty}{\longrightarrow}
\E {\int_0^1\!\!\int_0^1 h(u)k(u')  \int_s^t \frac{\widehat{m}_\sigma(u,u',r)}{\widehat{m}_\sigma(u,r)\widehat{m}_\sigma(u',r)}\mathrm dr \mathrm du \mathrm du'f_l(\widehat{Y}_\sigma(s_1),\dots,\widehat{Y}_\sigma(s_l)) }.
\end{multline}
On the one hand, almost surely and for almost every $u\in (0,1)$, $\widehat{y}_{\sigma,\eps_n}(u,\cdot) \to \widehat{y}_\sigma(u,\cdot)$ in $\mathcal C[0,T]$. Then for almost every $u,u' \in (0,1)$, 
\begin{align}
\label{convergence masse 1}
\widehat{m}_{\sigma,\eps_n}(u,u',r)\!
=\!\!\int_0^1 \!\!\!\varphi_\sigma(\widehat{y}_{\sigma,\eps_n}(u,r)-\widehat{y}_{\sigma,\eps_n}(v,r))\varphi_\sigma(\widehat{y}_{\sigma,\eps_n}(u',r)-\widehat{y}_{\sigma,\eps_n}(v,r))\mathrm dv
&\!\underset{n \to \infty }{\longrightarrow} \!\widehat{m}_{\sigma}(u,u',r),\\
\eps_n+\widehat{m}_{\sigma,\eps_n}(u,r)
=\eps_n+\int_0^1 \varphi_\sigma^2(\widehat{y}_{\sigma,\eps_n}(u,r)-\widehat{y}_{\sigma,\eps_n}(v,r))\mathrm dv
& \!\underset{n \to \infty }{\longrightarrow}\!\widehat{m}_{\sigma}(u,r).
\label{convergence masse 2}
\end{align}
Therefore, in order to obtain~\eqref{conv masses}, it remains to justify that there exists $\beta >1$ such that:
\begin{align*}
\sup_{n \in \mathds N}\E{\left( \int_0^1\!\!\int_0^1h(u)k(u') \int_s^t  \frac{\widehat{m}_{\sigma,\eps_n}(u,u',r)}{(\eps_n + \widehat{m}_{\sigma,\eps_n}(u,r))(\eps_n + \widehat{m}_{\sigma,\eps_n}(u',r))}  \mathrm dr \mathrm du \mathrm du'\right)^\beta}
\end{align*}
is finite. 
By Cauchy-Schwarz inequality, $\widehat{m}_{\sigma,\eps_n}(u,u',r)\leq \widehat{m}^{1/2}_{\sigma,\eps_n}(u,r)\widehat{m}^{1/2}_{\sigma,\eps_n}(u',r)$, so that it is sufficient to prove that there is $\beta >1$ such that 
\[
\sup_{n \in \mathds N}\E{\left(\int_0^1\!\!\int_0^1h(u)k(u')\int_s^t  \frac{1}{\widehat{M}_{\sigma,\eps_n}^{1/2}(u,r)\widehat{M}_{\sigma,\eps_n}^{1/2}(u',r)}  \mathrm dr \mathrm du \mathrm du'\right)^\beta}
\]
is finite, and thus that 
$\sup_{n \in \mathds N}\E{\int_0^1\int_s^t  \frac{1}{\widehat{M}_{\sigma,\eps_n}^{\beta}(u,r)}  \mathrm dr \mathrm du }$ is finite, using Cauchy-Schwarz inequality as in the proof of~\eqref{CS ineq}. By Lemma~\ref{lemme inverse masse approchee sig eps}, this statement holds. We conclude that we have the following equality:
\begin{multline}
\label{martingale crochet sigma}
\mathbb E \bigg[ \int_0^1\!\!\int_0^1 h(u)k(u')  [ (y_\sigma(u,t)-g(u))(y_\sigma(u',t)-g(u'))
\\-(y_\sigma(u,s)-g(u))(y_\sigma(u',s)-g(u')) ] 
\mathrm du du' f_l(Y_\sigma(s_1),\dots,Y_\sigma(s_l))   \bigg]
\\ =\mathbb E \bigg[\int_0^1\!\!\int_0^1 h(u)k(u')  \int_s^t \frac{m_\sigma(u,u',r)}{m_\sigma(u,r)m_\sigma(u',r)}\mathrm dr \mathrm du du'
 f_l(Y_\sigma(s_1),\dots,Y_\sigma(s_l))  \bigg],
\end{multline}
whence we obtain property~$(B3)$, since $\int_0^1\!\!\int_0^1 h(u)k(u')  \int_0^t \frac{m_\sigma(u,u',r)}{m_\sigma(u,r)m_\sigma(u',r)}\mathrm dr \mathrm du du'$ is $(\mathcal F^\sigma_t)_{t \in [0,T]}$-measurable. 
\end{proof}

Property~$(B3)$ implies the following Corollary:
\begin{coro}
\label{coro marting 2}
Let $\psi$ be a non-negative and bounded map: $[0,1]\to \R$. Then for every $l \in \mathds N \backslash \{0\}$, $0\leq s_1\leq s_2\leq \dots \leq s_l \leq s\leq t$ and for every bounded and continuous function $f_l:L_2[0,1]^l \to \R$, we have:
\begin{multline*}
\mathbb E\bigg[\int_0^1\psi(u)\left( (y_\sigma(u,t)-g(u))^2-(y_\sigma(u,s)-g(u))^2 -\int_s^t \frac{1}{m_\sigma(u,r)}\mathrm dr \right)\mathrm du \\ f_l(Y_\sigma(s_1),\dots,Y_\sigma(s_l))\bigg]=0.
\end{multline*}
\end{coro}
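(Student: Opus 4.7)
The plan is to reduce the $L_2[0,1]$--valued martingale identity in property $(B3)$ to a single--particle martingale identity, and then integrate it against $\psi(u)\,\mathrm du$ by Fubini--Tonelli.

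First, I would extract from properties $(B2)$--$(B3)$ a pointwise--in--$u$ martingale structure. Arguing as in Corollary~\ref{coro version continue ps}, from $(B2)$ applied to $h=\mathds 1_{[a,b]}$ for all rational $a\leq b$ and a Fubini argument (combined with the continuity of $t\mapsto Y_\sigma(t)$ in $L_2[0,1]$), one obtains a set $\A\subset(0,1)$ of full Lebesgue measure such that for every $u\in\A$ the process $(y_\sigma(u,t))_{t\in[0,T]}$ is a continuous $(\mathcal F^\sigma_t)$-martingale. Square integrability is inherited from the bound $\int_0^1\mathbb E[\sup_{t\leq T}|y_\sigma(u,t)|^2]\,\mathrm du<+\infty$, which follows from Fatou's lemma applied to the uniform estimate~\eqref{borne L^2} and Doob's inequality.

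Second, I would identify the quadratic variation. From $(B3)$ applied with $h=\mathds 1_{[a,b]}$ and $k=\mathds 1_{[c,d]}$, together with a stochastic Fubini argument, we get
\[
\int_a^b\!\!\int_c^d \langle y_\sigma(u,\cdot),y_\sigma(u',\cdot)\rangle_t\,\mathrm du\,\mathrm du'
=\int_0^t\!\!\int_a^b\!\!\int_c^d \frac{m_\sigma(u,u',s)}{m_\sigma(u,s)m_\sigma(u',s)}\,\mathrm du\,\mathrm du'\,\mathrm ds,
\]
for all rational $a\leq b$, $c\leq d$, so that for almost every $(u,u')$ the cross--variation equals $\int_0^t \frac{m_\sigma(u,u',s)}{m_\sigma(u,s)m_\sigma(u',s)}\,\mathrm ds$. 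Specialising to $u=u'$ (and noting $m_\sigma(u,u,s)=m_\sigma(u,s)$), we obtain that for almost every $u\in\A$,
\[
\langle y_\sigma(u,\cdot),y_\sigma(u,\cdot)\rangle_t=\int_0^t \frac{\mathrm dr}{m_\sigma(u,r)}.
\]

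Third, for each such $u$, the classical fact that $N_t^2-\langle N\rangle_t$ is a martingale whenever $N$ is a continuous square integrable martingale, applied to $N_t:=y_\sigma(u,t)-g(u)$, yields
\[
\mathbb E\bigl[\bigl((y_\sigma(u,t)-g(u))^2-(y_\sigma(u,s)-g(u))^2-\textstyle\int_s^t\frac{1}{m_\sigma(u,r)}\mathrm dr\bigr)\,f_l(Y_\sigma(s_1),\dots,Y_\sigma(s_l))\bigr]=0,
\]
for every $0\leq s_1\leq\dots\leq s_l\leq s\leq t$ and every bounded continuous $f_l$.

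Finally, I multiply by $\psi(u)\geq 0$, integrate in $u$, and swap expectation and integration by Fubini--Tonelli. Legitimacy of the swap requires controlling $\mathbb E[\int_0^1 \psi(u)(y_\sigma(u,t)-g(u))^2\,\mathrm du]$ and $\mathbb E[\int_0^1 \psi(u)\int_0^t \frac{1}{m_\sigma(u,r)}\,\mathrm dr\,\mathrm du]$; both are finite since $\psi$ is bounded, and by Burkholder--Davis--Gundy and Lemma~\ref{lemme inverse masse approchee sig eps} (passing to the limit $\eps\to 0$ through Fatou to keep the $L_2$--bound on $1/m_\sigma$). The main technical hurdle, as in Corollary~\ref{coro version continue ps}, lies in this last step: going from the double--integral identity of $(B3)$ to the diagonal single--particle identity requires carefully handling null sets in $u$ uniformly in the time parameters $s,t$ and in the conditioning event, which is resolved by working with countable dense families of rationals and continuous test functionals $f_l$.
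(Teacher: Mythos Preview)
Your overall strategy---extract a pointwise-in-$u$ martingale, identify its quadratic variation, and then integrate against $\psi(u)\,\mathrm du$---is natural, and the Fubini argument in your first step does yield that $(y_\sigma(u,\cdot))_{t\in[0,T]}$ is a continuous $(\mathcal F^\sigma_t)$-martingale for almost every $u$. The second step, however, contains a genuine gap. From the double-integral identity you obtain the cross-variation formula for almost every $(u,u')\in[0,1]^2$, and then you write ``Specialising to $u=u'$''. This is illegitimate: the diagonal $\{u=u'\}$ has two-dimensional Lebesgue measure zero, so the statement ``for almost every $(u,u')$'' says nothing about it. The technical hurdle you flag at the end---handling null sets uniformly in $s,t$ and in the conditioning event via countable families---does not address this; the obstruction is in the space variable, not in time. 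Identifying $\langle y_\sigma(u,\cdot)\rangle_t$ from $(B3)$ would instead require a limiting argument such as $h=k=\frac{1}{\epsilon}\mathds 1_{[u,u+\epsilon)}$ with $\epsilon\to 0$, and the uniform control needed for that limit is not available for the weak limit $y_\sigma$ (unlike for $y_{\sigma,\eps}$, where one has the explicit equation).

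The paper sidesteps the diagonal problem entirely by a Parseval argument. Writing $z(\cdot,t)=y_\sigma(\cdot,t)-g$ and choosing an orthonormal basis $(e_i)_{i\geq 1}$ of $L_2(\psi(x)\,\mathrm dx)$, one has $\int_0^1\psi(u)\,z(u,t)^2\,\mathrm du=\sum_{i\geq 1}(z(\cdot,t),e_i\psi)_{L_2}^2$; applying the identity~\eqref{martingale crochet sigma} coming from $(B3)$ with $h=k=e_i\psi$ to each summand and using Parseval once more on the right-hand side turns $\sum_i\int_0^1\!\!\int_0^1 e_i(u)\psi(u)e_i(u')\psi(u')\frac{m_\sigma(u,u',r)}{m_\sigma(u,r)m_\sigma(u',r)}\,\mathrm du\,\mathrm du'$ into $\int_0^1\!\!\int_0^1\psi(u)\frac{\varphi_\sigma^2(y_\sigma(u,r)-y_\sigma(v,r))}{m_\sigma^2(u,r)}\,\mathrm du\,\mathrm dv=\int_0^1\frac{\psi(u)}{m_\sigma(u,r)}\,\mathrm du$. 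The integrated identity is thus obtained directly at the level of $L_2$ test functions, without ever needing a pointwise-in-$u$ quadratic variation.
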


\begin{proof}
We use the following notations: $z(u,\cdot):=y_\sigma(u,\cdot)-g(u)$ and $F_l=f_l(Y_\sigma(s_1),\dots,Y_\sigma(s_l))$. 
Let us consider an orthonormal basis $(e_i)_{i \geq 1}$ in the Hilbert space $L_2(\psi(x)\mathrm dx)$. We denote by $[\cdot,\cdot]_{L_2(\psi)}$ the scalar product of $L_2(\psi(x) \mathrm dx)$: $[h,k]_{L_2(\psi)}=\int_0^1 hk\psi$. 
By Parseval's formula, we have:
\begin{multline*}
\begin{aligned}
\E{\int_0^1 \psi(u) (z(u,t)^2-z(u,s)^2) \mathrm du F_l}
&= \E{ \sum_{i \geq 1} ( [z(\cdot,t),e_i]^2_{L_2(\psi)} -[z(\cdot,s),e_i]^2_{L_2(\psi)}) F_l}\\
&= \sum_{i \geq 1} \E{  ( (z(\cdot,t),e_i\psi)^2_{L_2} -(z(\cdot,s),e_i\psi)^2_{L_2}) F_l} \end{aligned}\\
= \sum_{i \geq 1} \E{ \int_0^1\!\!\int_0^1 e_i(u)\psi(u)e_i(u')\psi(u') \int_s^t \frac{m_\sigma(u,u',r)}{m_\sigma(u,r)m_\sigma(u',r)}\mathrm dr \mathrm du \mathrm du'  F_l},
\end{multline*}
by applying equality~\eqref{martingale crochet sigma} with $h=k=e_i$. 
By definition of $m_\sigma(u,u',r)$, we have:
\begin{align*}
\E{\int_0^1 \psi(u) (z(u,t)^2-z(u,s)^2) \mathrm du F_l}&
= 
 \E{ \int_s^t\!\! \int_0^1\sum_{i \geq 1}
 \left[\frac{\varphi_\sigma(y_\sigma(\cdot,r)-y_\sigma(v,r))}{m_\sigma(\cdot,r)}  , e_i\right]_{L_2(\psi)}^2 
 \mathrm dv\mathrm dr F_l} \\
 &= \E{ \int_s^t\!\! \int_0^1\!\!\int_0^1
 \frac{\varphi_\sigma^2(y_\sigma(u,r)-y_\sigma(v,r))}{m^2_\sigma(u,r)}  
 \psi(u) \mathrm du\mathrm dv\mathrm dr F_l} \\
 &= \E{\int_0^1 \!\!  \int_s^t
 \frac{ 1 }{m_\sigma(u,r)}  \mathrm dr
  \psi(u) \mathrm du F_l},
 \end{align*}
 since $m_\sigma(u,r)=\int_0^1 \varphi_\sigma^2(y_\sigma(u,r)-y_\sigma(v,r)) \mathrm dv$. 
\end{proof}

We deduce the following estimation, by analogy with Lemma~\ref{lemme inverse masse approchee sig eps}:
\begin{lemme}
\label{lemme inverse masse approchee sigma}
For all $\beta \in (0, \frac{3}{2}-\frac{1}{p})$, there is a constant $C >0$  such that for all $\sigma>0$ and $0\leq s<t\leq T$, we have the following inequality:
\begin{align*}
\E{\int_s^t \!\! \int_0^1   \frac{1}{m_\sigma^\beta(u,r)}\mathrm du\mathrm dr}\leq C\sqrt{t-s}. 
\end{align*}
\end{lemme}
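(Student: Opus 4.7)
The plan is to pass to the limit in Lemma~\ref{lemme inverse masse approchee sig eps} along the Skorohod subsequence $(\eps_n)$ introduced in the proof of Proposition~\ref{proprietes B1-B3}. On the auxiliary probability space $(\widehat{\Omega}, \widehat{\mathbb P})$ of that construction, $\widehat y_{\sigma,\eps_n} \to \widehat y_\sigma$ almost surely in $L_2([0,1], \mathcal C[0,T])$, and, up to extracting a further subsequence, the convergence $\|\widehat y_{\sigma,\eps_n}(\omega, u, \cdot) - \widehat y_\sigma(\omega, u, \cdot)\|_{\mathcal C[0,T]} \to 0$ holds for $(\widehat{\mathbb P} \otimes \mathrm{Leb})$-a.e. $(\omega, u)$, exactly as already used in the proof of $(B3)$. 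In particular, $\widehat m_{\sigma,\eps_n}(u, r) \to \widehat m_\sigma(u, r)$ pointwise a.e. in $(\omega, u, r)$.

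The key point is that $\widehat m_\sigma(u,r) > 0$ for a.e. $(\omega, u, r)$. Indeed, picking the càdlàg representative of $\widehat y_\sigma(\cdot, r) \in L^\uparrow_2[0,1]$, right-continuity supplies a right-neighbourhood $[u, u+\delta) \subset [0,1]$ on which $|\widehat y_\sigma(v,r) - \widehat y_\sigma(u,r)| \leq (\sigma - \eta)/2$, and there $\varphi_\sigma^2(\widehat y_\sigma(u,r) - \widehat y_\sigma(v,r)) = 1$, so $\widehat m_\sigma(u,r) \geq \delta > 0$. Writing
\[
\widehat M_{\sigma,\eps_n}(u,r) = \frac{\eps_n^2}{\widehat m_{\sigma,\eps_n}(u,r)} + 2\eps_n + \widehat m_{\sigma,\eps_n}(u,r),
\]
the strict positivity of the limit combined with the pointwise convergence of $\widehat m_{\sigma,\eps_n}$ yields $\widehat M_{\sigma,\eps_n}(u,r) \to \widehat m_\sigma(u,r)$, and hence $\widehat M_{\sigma,\eps_n}^{-\beta}(u,r) \to \widehat m_\sigma^{-\beta}(u,r)$ for a.e.\ $(\omega, u, r)$.

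Applying Fatou's Lemma on the product space $\widehat{\Omega} \times [s,t] \times [0,1]$ and using that $\widehat m_\sigma$ has the same law as $m_\sigma$ then gives
\[
\E{\int_s^t \!\! \int_0^1 \frac{1}{m_\sigma^\beta(u,r)} \mathrm du \mathrm dr}
\leq \liminf_{n \to \infty} \widehat{\mathbb E}\!\left[\int_s^t \!\! \int_0^1 \frac{1}{\widehat M_{\sigma,\eps_n}^\beta(u,r)} \mathrm du \mathrm dr\right]
\leq C \sqrt{t-s},
\]
where the last bound is Lemma~\ref{lemme inverse masse approchee sig eps}, whose constant $C$ depends only on $\beta$ and $\|g\|_{L_p}$ and in particular not on $\eps$ nor $\sigma$. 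The only delicate step is the positivity of $\widehat m_\sigma$, which is what rules out a trivial Fatou bound; everything else is a transfer of the uniform-in-$\eps$ estimate already at our disposal.
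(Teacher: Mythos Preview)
Your argument is correct and follows the same route as the paper: Skorohod representation, pointwise convergence $\widehat M_{\sigma,\eps_n}\to\widehat m_\sigma$ (via the convergences already established for $(B3)$), then Fatou combined with the uniform-in-$\eps$ bound of Lemma~\ref{lemme inverse masse approchee sig eps}. The paper's version is terser and leaves the positivity of $\widehat m_\sigma$ implicit, whereas you spell it out---a detail that is indeed needed, since without $\widehat m_\sigma(u,r)>0$ the ratio $\widehat M_{\sigma,\eps_n}=(\eps_n+\widehat m_{\sigma,\eps_n})^2/\widehat m_{\sigma,\eps_n}$ need not converge to $\widehat m_\sigma$.
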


\begin{proof}
We use again the sequence $(\widehat{y}_{\sigma,\eps_n})_{n \in \mathds N}$ obtained by Skorohod's representation Theorem, as in the proof of convergence~\eqref{conv masses}.
Therefore, by Fatou's Lemma, 
\begin{align*}
\E{\int_s^t \!\! \int_0^1 \frac{1}{\widehat{m}_\sigma^\beta(u,r)}\mathrm du\mathrm dr}\leq
\liminf_{n \to \infty}  \;\E{\int_s^t \!\!\int_0^1  \frac{1}{\widehat{M}_{\sigma,\eps_n}^\beta(u,r)}\mathrm du\mathrm dr}\leq C \sqrt{t-s},
\end{align*}
where $C$ is obtained thanks to Lemma~\ref{lemme inverse masse approchee sig eps}. 
\end{proof}

By Burkholder-Davis-Gundy inequality, we obtain immediately the following Corollary:
\begin{coro}
For each $\beta \in (0, \frac{3}{2}-\frac{1}{p})$, $\displaystyle \sup_{\sigma \in \Q_+} \sup_{t \leq T}\E{\int_0^1(y_\sigma(u,t)-g(u))^{2\beta}\mathrm du}<+\infty$. 
\label{corollaire borne L_2 sig}
\end{coro}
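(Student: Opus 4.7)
The plan is to apply Burkholder--Davis--Gundy (BDG) to the real-valued martingales $y_\sigma(u,\cdot)-g(u)$ and then integrate in $u$. By Fubini--Tonelli, it suffices to bound $\int_0^1 \E{|y_\sigma(u,t)-g(u)|^{2\beta}}\mathrm du$ uniformly in $\sigma\in\Q_+$ and $t\in[0,T]$, and the main tool in the end is Lemma~\ref{lemme inverse masse approchee sigma}.

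First I would identify, for almost every $u\in(0,1)$, the process $N_u(t):=y_\sigma(u,t)-g(u)$ as a continuous real-valued martingale with quadratic variation $\int_0^t m_\sigma(u,r)^{-1}\mathrm dr$. This is the step where I expect the main technical difficulty, since properties $(B2)$ and $(B3)$ only describe $Y_\sigma$ as an $L^\uparrow_2[0,1]$-valued martingale. The cleanest extraction of pointwise information goes through Corollary~\ref{coro marting 2}: taken with arbitrary bounded non-negative $\psi$, it shows that $\int_0^1 \psi(u)\big(N_u(t)^2-\int_0^t m_\sigma(u,r)^{-1}\mathrm dr\big)\mathrm du$ is a martingale in $t$, and a disintegration in $u$ gives the pointwise statement on a set of full Lebesgue measure.

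Once the pointwise martingale structure is available, BDG yields
\[
\E{|N_u(t)|^{2\beta}}\leq C_\beta\,\E{\left(\int_0^t\frac{\mathrm dr}{m_\sigma(u,r)}\right)^{\beta}}.
\]
For $\beta\geq 1$, Jensen's inequality applied to the normalized measure $\mathrm dr/t$ upgrades this to $C_\beta T^{\beta-1}\,\E{\int_0^t m_\sigma(u,r)^{-\beta}\mathrm dr}$. For $\beta\in(0,1)$, concavity of $x\mapsto x^{\beta}$ gives instead $\E{(\int_0^t m_\sigma^{-1}\mathrm dr)^{\beta}}\leq \E{\int_0^t m_\sigma^{-1}\mathrm dr}^{\beta}$, and the exponent $1$ still lies strictly below $\tfrac{3}{2}-\tfrac{1}{p}$ since $p>2$. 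In either case, integrating over $u\in(0,1)$, exchanging expectation and $u$-integral, and invoking Lemma~\ref{lemme inverse masse approchee sigma} produces a constant depending only on $\beta$, $p$, $T$ and $\|g\|_{L_p}$, which is exactly the claimed uniform bound.

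A cleaner alternative, avoiding the disintegration step, is to run the same BDG computation at the level of $y_{\sigma,\eps}$, where each $y_{\sigma,\eps}(u,\cdot)$ is manifestly a real-valued martingale by the fixed-point construction and the bracket formula of Corollary~\ref{coro crochet}, with $m_\sigma$ replaced by $M_{\sigma,\eps}$. Combining BDG with Lemma~\ref{lemme inverse masse approchee sig eps} yields the bound uniformly in $\eps$, and one then passes to the limit $\eps\to 0$ via the Skorohod coupling and Fatou's lemma, exactly as in the proof of Lemma~\ref{lemme inverse masse approchee sigma}.
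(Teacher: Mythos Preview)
Your proposal is correct and matches the paper's approach: the paper simply writes ``By Burkholder--Davis--Gundy inequality, we obtain immediately the following Corollary,'' relying on BDG applied to $y_\sigma(u,\cdot)-g(u)$ together with Lemma~\ref{lemme inverse masse approchee sigma}, which is exactly your first route. You have in fact spelled out more than the paper does---in particular you correctly flag the disintegration step (from the $L_2^\uparrow[0,1]$-valued martingale and Corollary~\ref{coro marting 2} to the pointwise bracket $\langle y_\sigma(u,\cdot)\rangle_t=\int_0^t m_\sigma(u,r)^{-1}\mathrm dr$) as the only place needing care, and your alternative of running the computation at the $y_{\sigma,\eps}$ level (already carried out verbatim in the proof of property~$(B3)$) and passing to the limit via Fatou is a perfectly valid way to sidestep it.
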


\subsection{Convergence when $\sigma \to 0$}
\label{par conv sigma 0}

Recall that by Corollary~\ref{coro tension y sig eps} and Prokhorov's Theorem, the collection of laws of the sequence $(y_{\sigma,\eps})_{\sigma, \eps \in \Q_+}$ is relatively compact in $\mathcal P(L_2([0,1],\mathcal C[0,T]))$. By construction, the collection of laws of the sequence $(y_\sigma)_{\sigma \in \Q_+}$ inherits the same property. 

Thus, up to extracting a subsequence, we may suppose that $(y_\sigma)_{\sigma\in \Q_+}$ converges in distribution to a limit, denoted by $y$, in $L_2([0,1],\mathcal C[0,T])$. 
As before, we define $Y(t):=y(\cdot,t)$. We state the first part of Theorem~\ref{théo 2} in the following Proposition: 
\begin{prop}
\label{proprietes C1-C2}
Suppose that $g \in \mathcal L^\uparrow_{2+}[0,1]$. $(Y(t))_{t \in [0,T]}$ is a $L^\uparrow_2[0,1]$-valued process such that:
\begin{itemize}
\item[$(C1)$] $Y(0)=g$;

\item[$(C2)$] $(Y(t))_{t \in [0,T]}$ is a square integrable continuous $L^\uparrow_2[0,1]$-valued $(\mathcal F_t)_{t \in [0,T]}$-martingale, where $\mathcal F_t=\sigma(Y(s),s\leq t)$.
\end{itemize}
\end{prop}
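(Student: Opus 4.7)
The plan is to follow the proof of Proposition~\ref{proprietes B1-B3} line by line, substituting the index $\eps$ by $\sigma$ throughout, and invoking Lemma~\ref{lemme inverse masse approchee sigma} and Corollary~\ref{corollaire borne L_2 sig} wherever their $\eps$-uniform counterparts (Lemma~\ref{lemme inverse masse approchee sig eps} and Corollary~\ref{corollaire borne L_2 sig eps}) were used. Let $e_t \colon L_2([0,1],\mathcal C[0,T]) \to L_2[0,1]$ denote the continuous evaluation map $e_t(f) = f(\cdot,t)$, so that $Y_\sigma(t) = e_t(y_\sigma)$ converges in distribution to $Y(t) = e_t(y)$ as $\sigma \to 0$ along $\Q_+$.

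For the $L^\uparrow_2[0,1]$-membership I would reuse the closed subset $\mathcal K \subset L_2(0,1)$ of functions with non-decreasing averages introduced in the proof of Proposition~\ref{proprietes B1-B3}. Property $(B2)$ ensures $Y_\sigma(t) \in \mathcal K$ almost surely; since $\mathcal K$ is closed in $L_2[0,1]$ and $Y_\sigma(t) \to Y(t)$ in distribution, $Y(t) \in \mathcal K$ almost surely for each fixed $t$. I would then extend from rational times to every $t \in [0,T]$ via the almost-sure path continuity of $y$ in $L_2([0,1], \mathcal C[0,T])$, and conclude using the identification of $\mathcal K$ with $L^\uparrow_2[0,1]$ already carried out in that earlier proof. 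Property~$(C1)$ is then immediate, since $Y_\sigma(0) = g$ for every $\sigma$ and $e_0$ is continuous.

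Turning to $(C2)$: continuity of $t \mapsto Y(t)$ in $L_2[0,1]$ is obtained by dominated convergence, once the bound $\int_0^1 \sup_{t \leq T} y(u,t)^2 \, \mathrm du < +\infty$ is established almost surely, which follows from Fatou's lemma applied to a Skorohod representation of $(y_\sigma)_{\sigma \in \Q_+}$, combined with Corollary~\ref{corollaire borne L_2 sig} at $\beta = 1$. Square integrability $\E{\|Y(t)\|_{L_2}^2} < +\infty$ is obtained along the same lines. The martingale identity is the central point: for $h \in L_2[0,1]$, times $0 \leq s_1 \leq \dots \leq s_l \leq s \leq t$, and a bounded continuous $f_l \colon L_2[0,1]^l \to \R$, property~$(B2)$ yields, for every $\sigma \in \Q_+$,
\[
\E{\int_0^1 h(u)(y_\sigma(u,t)-y_\sigma(u,s))\mathrm du \; f_l(Y_\sigma(s_1),\dots,Y_\sigma(s_l))} = 0.
\]
The integrand is a continuous functional of $y_\sigma$ on $L_2([0,1],\mathcal C[0,T])$, so the remaining step is to pass to the limit inside the expectation and identify the resulting measurable event with an $(\mathcal F_t)$-event.

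The main obstacle is therefore establishing uniform integrability of the above sequence as $\sigma \to 0$. By Cauchy--Schwarz and Fubini, it suffices to exhibit some $\beta > 1$ for which
\[
\sup_{\sigma \in \Q_+} \E{\int_0^1 (y_\sigma(u,t) - g(u))^{2\beta}\mathrm du} < +\infty.
\]
Because $p > 2$, Corollary~\ref{corollaire borne L_2 sig} provides precisely such a $\beta \in \left(1, \tfrac{3}{2}-\tfrac{1}{p}\right)$, so uniform integrability holds and the limiting martingale identity
\[
\E{\int_0^1 h(u)(y(u,t)-y(u,s))\mathrm du \; f_l(Y(s_1),\dots,Y(s_l))} = 0
\]
follows. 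Since the test functionals of the form $f_l(Y(s_1),\dots,Y(s_l))$ generate $\mathcal F_s$, this yields the $(\mathcal F_t)$-martingale property for $(Y(t), h)_{L_2}$ for every $h \in L_2[0,1]$, completing the proof of $(C2)$.
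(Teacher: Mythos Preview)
Your proposal is correct and mirrors the paper's own proof, which simply reads ``We refer to the proof of Proposition~\ref{proprietes B1-B3}.'' Your detailed expansion of that reference is accurate; the only minor difference is that for the uniform integrability in the martingale step you invoke Corollary~\ref{corollaire borne L_2 sig} with some $\beta>1$, whereas the original argument for $(B2)$ uses the simpler $L_2$ bound (i.e.\ $\beta=1$) to conclude uniform integrability, and the almost-sure finiteness of $\int_0^1 \sup_{t\leq T}|y(u,t)|^2\,\mathrm du$ is automatic from $y\in L_2([0,1],\mathcal C[0,T])$ rather than requiring a separate Fatou argument.
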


\begin{proof}
We refer to the proof of Proposition~\ref{proprietes B1-B3}. 
\end{proof}

\begin{rem}
It should be noticed at this point that a new difficulty arises when we want to obtain a property analogous to~$(B3)$. Indeed, whereas it was straightforward to prove~\eqref{convergence masse 1} and~\eqref{convergence masse 2}, the convergence of $m_\sigma(u,t)=\int_0^1 \varphi_\sigma^2(y_\sigma(u,t)-y_\sigma(v,t))\mathrm dv$ to $m(u,t)=\int_0^1 \mathds 1_{\{ y(u,t)=y(v,t) \}} \mathrm dv$ is not obvious, due to the singularity of the indicator function. It will be the main goal of the next Section to prove this convergence. 

\end{rem}

In Section~\ref{sec properties}, we will study  the martingale properties of the limit process $Y$ and compute its quadratic variation (property~$(C5)$ of Theorem~\ref{théo 2}). To obtain this, we will first prove that for every \emph{positive} $t$, $Y(t)$ is a step function (see property~$(C3)$). It implies that $y$ has a version in $\mathcal D((0,1),\mathcal C[0,T])$ (see property~$(C4)$) by an argument given in~(\cite[Proposition~2.3]{konarovskyi17behavior}).

\section{Properties of the limit process $Y$}
\label{sec properties}

The aim of this Section is to complete the proof of Theorem~\ref{théo 2}. Properties~$(C3)$ and~$(C4)$ will be proved in Paragraph~\ref{para 1} and property~$(C5)$ will be proved in two steps in Paragraph~\ref{para 2} and Paragraph~\ref{para 3}.

\subsection{Coalescence properties and step functions}
\label{para 1}

In this Paragraph, we will prove the following Proposition:
\begin{prop}
\label{prop step function}
Almost surely, for every $t>0$, $Y(t)$ is a step function. 
\end{prop}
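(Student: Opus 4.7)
The plan is to transport the mass-integrability bound of Lemma~\ref{lemme inverse masse approchee sigma} to the limit process and then use the non-decreasing structure of $Y(r)$ to count the clusters. First, I would apply Skorohod's representation Theorem along the convergence $y_\sigma \to y$ in $L_2([0,1],\mathcal C[0,T])$, then extract a further subsequence so that $\widehat y_\sigma(\omega,u,\cdot) \to \widehat y(\omega,u,\cdot)$ in $\mathcal C[0,T]$ for almost every $(\omega,u)$ (and hence for almost every $(\omega,u,v)$ using Fubini). Since $\varphi_\sigma^2 \leq 1$ and is supported in $[-\sigma/2,\sigma/2]$, reverse Fatou over $v \in [0,1]$ gives
\[
\limsup_{\sigma \to 0} \widehat m_\sigma(u,r) \leq \int_0^1 \mathds 1_{\{\widehat y(u,r)=\widehat y(v,r)\}}\mathrm dv =: \widehat m(u,r),
\]
because on $\{\widehat y(u,r) \ne \widehat y(v,r)\}$ we have $|\widehat y_\sigma(u,r)-\widehat y_\sigma(v,r)| > \sigma/2$ eventually and $\varphi_\sigma^2$ vanishes. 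Applying Fatou's Lemma to $\widehat m_\sigma^{-\beta}$ together with Lemma~\ref{lemme inverse masse approchee sigma} then yields, for some $\beta>1$,
\[
\E{\int_0^T \!\!\int_0^1 \frac{1}{\widehat m^\beta(u,r)}\mathrm du \mathrm dr} \leq C\sqrt{T}.
\]
Since $\widehat y$ has the law of $y$, the same bound holds with $m(u,r):=\int_0^1 \mathds 1_{\{y(u,r)=y(v,r)\}}\mathrm dv$.

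Next, for each $r$ such that $\int_0^1 m(u,r)^{-\beta}\mathrm du < \infty$, I use that $Y(r) \in L^\uparrow_2[0,1]$ is non-decreasing and càdlàg: its level sets form a disjoint union of intervals of lengths $m_k(r) \in (0,1]$, indexed by $k = 1,\dots,N(r)$ with $N(r) \in \N \cup \{+\infty\}$ and $\sum_k m_k(r)=1$. On each such interval the integrand equals $m_k(r)^{-\beta}$, so
\[
\int_0^1 \frac{1}{m^\beta(u,r)}\mathrm du = \sum_{k=1}^{N(r)} m_k(r)^{1-\beta} \geq N(r),
\]
since $1-\beta <0$ and $m_k(r) \leq 1$ force each summand to be at least $1$. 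Hence finiteness of the integral implies $N(r) < \infty$, \emph{i.e.} $Y(r)$ is a step function. By Fubini this already gives the statement almost surely for almost every $r \in (0,T]$.

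To extend from ``almost every $r$'' to ``every $r > 0$ almost surely'', I would combine the above with a coalescence property of $y$, established in the same paragraph: if $y(u,r_0)=y(v,r_0)$ for some $r_0 > 0$, then $y(u,r)=y(v,r)$ for every $r \geq r_0$. This forces $r \mapsto N(r)$ to be non-increasing, so whenever $N(r_0)$ is finite at some rational $r_0 \in (0,r)$ we deduce $N(r) \leq N(r_0) < \infty$; since almost surely such rationals are dense in $(0,T]$, this yields $N(r) < \infty$ for every $r > 0$.

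The main obstacle is precisely the coalescence property itself. For positive $\sigma$, $y_\sigma$ is not coalescent: two particles within distance $\sigma$ move with strongly correlated but not identical increments, so strict coalescence only appears in the limit. Establishing it requires analyzing the joint dynamics of $y_\sigma(u,\cdot)$ and $y_\sigma(v,\cdot)$ and showing that the quadratic variation of their difference vanishes in the limit, which hinges on the same delicate convergence $m_\sigma \to m$ flagged at the end of Paragraph~\ref{par conv sigma 0}. An alternative route avoiding coalescence at this stage would be to refine Lemma~\ref{lemme inverse masse approchee sigma} into a pointwise-in-$r$ estimate $\E{\int_0^1 m^{-\beta}(u,r)\mathrm du} \leq C/\sqrt{r}$ valid at every $r>0$, which would directly yield $N(r)<\infty$ for every positive $r$.
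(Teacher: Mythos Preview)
Your first half matches the paper: the same Skorohod representation, reverse Fatou to obtain $\limsup_\sigma m_\sigma(u,r) \leq m(u,r)$, and Fatou on the inverse mass to deduce $\E{\int_0^T N(r)\,\mathrm dr}<\infty$ (the paper does this with $\beta=1$ directly, but your $\beta>1$ version is equivalent). So $N(r)<\infty$ almost surely for almost every $r$ is correct and argued essentially as in the paper.

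The gap is in the extension to every $r>0$. You correctly identify that a coalescence property is needed to make $r\mapsto N(r)$ non-increasing, but you overestimate the work required. You propose either to analyse the joint dynamics of $y_\sigma(u,\cdot)$ and $y_\sigma(v,\cdot)$ and pass the $m_\sigma\to m$ convergence through, or to sharpen the mass estimate to a pointwise-in-$r$ bound. The paper needs neither detour. It uses only property~$(C2)$, which you already have: $Y$ is a continuous $L^\uparrow_2[0,1]$-valued $(\mathcal F_t)$-martingale. For rational $u_1<u_2$ and rational $h>0$ with $u_1+h<u_2$, the local averages
\[
y^h(u_i,t):=\frac{1}{h}\int_{u_i}^{u_i+h} y(v,t)\,\mathrm dv=\Big(Y(t),\tfrac{1}{h}\mathds 1_{(u_i,u_i+h)}\Big)_{L_2}
\]
are continuous real-valued $(\mathcal F_t)$-martingales, and $Z(t):=y^h(u_2,t)-y^h(u_1,t)$ is non-negative by monotonicity of $Y(t)$. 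A non-negative continuous martingale is absorbed at zero, so $Z(t_0)=0$ implies $Z(t)=0$ for all $t\geq t_0$. This is coalescence at the level of local averages, and it is enough: if $Y(t_0)$ is a step function with plateaux $(a_k,a_{k+1})$, taking $u_1,u_2,h$ rational inside a plateau shows $Y(t)$ is constant there for every $t\geq t_0$, hence $N(t)\leq N(t_0)$. Combined with the almost-everywhere finiteness you already proved, this gives $N(t)<\infty$ for every $t>0$.

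So the coalescence step does not depend on the delicate convergence $m_\sigma\to m$ at all; that convergence is indeed established later in the paper (Lemma~\ref{lemme conv masse}), but only to identify the quadratic variation in property~$(C5)$, not for the present proposition.
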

Recall that $Y(0)=g$ is not necessarily a step function, since $g$ can be chosen arbitrarily in $\mathcal L^\uparrow_{2+}[0,1]$. If we denote for each $t \in [0,T]$ by $\mu_t$ the measure associated to the quantile function $Y(t)$, that is $\mu_t= \Leb|_{[0,1]} \circ Y(t)^{-1}$, Proposition~\ref{prop step function} means that for every \emph{positive} time $t$, $\mu_t$ is a finite weighted sum of Dirac measures. 
We begin by the following Lemma. Recall the definition of the mass: $m(u,t)=\int_0^1 \mathds 1_{\{ y(u,t)=y(v,t) \}} \mathrm dv$. 
\begin{lemme}
\label{lemme limsup masse}
There exists a probability space $(\widetilde{\Omega}, \widetilde{\mathbb P})$ on which the sequence $(\widetilde{y}_\sigma)_{\sigma\in \Q_+}$ converges almost surely to  $\widetilde{y}$ in $L_2([0,1],\mathcal C[0,T])$ and where, for each $\sigma \in \Q_+$, $\widetilde{y}_\sigma$ (resp. $\widetilde{y}$) has same law as $y_\sigma$ (resp. $y$). 
Furthermore, there is a subsequence $(\sigma_n)_n$, $\sigma_n \to 0$, such that for almost every $(\omega,u)\in \Omega\times (0,1)$ and for every time $t \in [0,T]$, 
\begin{align*}
\limsup_{n\to \infty} \widetilde{m}_{\sigma_n}(u,t) \leq \widetilde{m}(u,t). 
\end{align*}
\end{lemme}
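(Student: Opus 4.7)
The first claim follows from Skorohod's representation theorem applied in the Polish space $L_2([0,1],\mathcal C[0,T])$: since $(y_\sigma)_{\sigma\in\Q_+}$ converges in distribution to $y$, there is a probability space $(\widetilde\Omega,\widetilde{\mathbb P})$ carrying $(\widetilde y_\sigma)_{\sigma\in\Q_+}$ and $\widetilde y$ with the matching marginal laws and $\widetilde y_\sigma\to\widetilde y$ almost surely in $L_2([0,1],\mathcal C[0,T])$. To extract a subsequence along which the convergence holds pointwise, I set $F_\sigma(\omega,u):=\|\widetilde y_\sigma(u,\cdot)(\omega)-\widetilde y(u,\cdot)(\omega)\|_{\mathcal C[0,T]}$, so that $\int_0^1 F_\sigma^2\,\mathrm du\to 0$ almost surely. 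Mimicking the argument already used in the proof of Proposition~\ref{proprietes B1-B3}, the dominated convergence theorem applied to $1\wedge \tfrac{1}{\delta^2}\int_0^1 F_\sigma^2\,\mathrm du$ yields convergence in measure of $F_\sigma$ to $0$ on $(\widetilde\Omega\times(0,1),\widetilde{\mathbb P}\otimes\Leb)$. One then extracts a subsequence $(\sigma_n)$, $\sigma_n\to 0$, such that $F_{\sigma_n}\to 0$ $(\widetilde{\mathbb P}\otimes\Leb)$-almost everywhere; by Fubini, for $\widetilde{\mathbb P}$-a.e.\ $\omega$ there is a full-measure set $E_\omega\subset(0,1)$ on which $\widetilde y_{\sigma_n}(u,\cdot)(\omega)\to\widetilde y(u,\cdot)(\omega)$ uniformly in $t$.

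The core step is a pointwise application of reverse Fatou's lemma. Fix such an $\omega$ and $u\in E_\omega$, and fix $t\in[0,T]$. Since $0\le\varphi_{\sigma_n}^2\le 1$, reverse Fatou yields
\[
\limsup_{n\to\infty}\widetilde m_{\sigma_n}(u,t)\;\le\;\int_0^1\limsup_{n\to\infty}\varphi_{\sigma_n}^2\bigl(\widetilde y_{\sigma_n}(u,t)-\widetilde y_{\sigma_n}(v,t)\bigr)\,\mathrm dv.
\]
For $v\in E_\omega$, I analyze the integrand using the fact that $\varphi_{\sigma_n}$ is supported in $[-\sigma_n/2,\sigma_n/2]$. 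If $\widetilde y(u,t)\neq\widetilde y(v,t)$, then by uniform convergence $|\widetilde y_{\sigma_n}(u,t)-\widetilde y_{\sigma_n}(v,t)|$ stays bounded below by a positive constant for $n$ large, hence exceeds $\sigma_n/2$ eventually, forcing $\varphi_{\sigma_n}^2(\cdot)=0$ in the limit. Otherwise, the integrand is trivially bounded by $1=\mathds 1_{\{\widetilde y(u,t)=\widetilde y(v,t)\}}$. Therefore the $\limsup$ integrand is dominated by $\mathds 1_{\{\widetilde y(u,t)=\widetilde y(v,t)\}}$, and integration against $v$ over the full-measure set $E_\omega$ gives $\limsup_n \widetilde m_{\sigma_n}(u,t)\le \widetilde m(u,t)$.

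The only delicate point is that the conclusion must hold for \emph{every} $t\in[0,T]$, not merely for almost every $t$; this is precisely why I work with the $\mathcal C[0,T]$-topology rather than a pointwise-in-$t$ topology. The uniform convergence in $t$ secured by Skorohod guarantees that the pointwise argument for the integrand above applies simultaneously for each $t$, so no further measurability or exceptional-set issue in the time variable arises.
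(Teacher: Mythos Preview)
Your proof is correct and follows essentially the same approach as the paper: Skorohod representation, extraction of a subsequence converging $(\widetilde{\mathbb P}\otimes\Leb)$-a.e.\ in $\mathcal C[0,T]$, and then a Fatou-type bound on the integrand $\varphi_{\sigma_n}^2$. The only cosmetic difference is that the paper applies Fatou's lemma to the nonnegative functions $1-\varphi_{\sigma_n}^2$ (showing $\mathds 1_{\{y(u,t)\neq y(v,t)\}}\le\liminf(1-\varphi_{\sigma_n}^2)$), whereas you phrase the same step as reverse Fatou applied directly to $\varphi_{\sigma_n}^2$; your explicit remark about why uniform-in-$t$ convergence secures the conclusion for \emph{every} $t$ is a useful clarification that the paper leaves implicit.
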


\begin{proof}
Recall that $(y_\sigma)_{\sigma \in \Q_+}$ converges in distribution in $L_2([0,1],\mathcal C[0,T])$ to $y$. By Skorohod's representation Theorem, we deduce that there exists a sequence $(\widetilde{y}_\sigma)_{\sigma\in \Q_+}$ and a random variable~$\widetilde{y}$ defined on a common probability space $(\widetilde{\Omega},\widetilde{\mathbb P})$ such that for every $\sigma \in \Q_+$, the laws of $\widetilde{y}_\sigma$ and $y_\sigma$ are the same, the laws of $\widetilde{y}$ and $y$ are also equal and the sequence $(\widetilde{y}_\sigma)_{\sigma\in \Q_+}$ converges almost surely to $\widetilde{y}$ in $L_2([0,1],\mathcal C[0,T])$.

For every $\eps>0$, we get by Markov's inequality:
\begin{align}
\label{Markov}
\widetilde{\mathbb P} \otimes \Leb 
\{ (\omega,u): \|(\widetilde{y}_\sigma-\widetilde{y})(\omega,u)\|_{\mathcal C[0,T]} \geq \eps  \} \notag
&=\widetilde{\mathbb E} \left[  \Leb 
\{ u: \|(\widetilde{y}_\sigma-\widetilde{y})(\omega,u)\|_{\mathcal C[0,T]} \geq \eps  \}    \right] \\
&\leq \widetilde{\mathbb E} \left[1\wedge \frac{1}{\eps^2} \int_0^1  \|(\widetilde{y}_\sigma-\widetilde{y})(\omega,u)\|_{\mathcal C[0,T]}^2 \mathrm du  \right].
\end{align}
Since $(\widetilde{y}_\sigma)_{\sigma\in \Q_+}$ converges almost surely to $\widetilde{y}$ in $L_2([0,1],\mathcal C[0,T])$, the right hand side tends to $0$. 
Therefore, $(\widetilde{y}_\sigma)_{\sigma \in \Q_+}$ converges in probability to $\widetilde{y}$ in $\mathcal C[0,T]$ on the probability space $(\widetilde{\Omega} \times [0,1], \widetilde{\mathbb P} \otimes \Leb)$. 
Thus there exists a subsequence $(\sigma_n)_n$ tending to $0$ along which $\widetilde{y}_{\sigma_n}$ converges on an almost sure event of $\widetilde{\Omega} \times [0,1]$ to $\widetilde{y}$ in $\mathcal C[0,T]$. Therefore, there is $\Omega'$, $\widetilde{\mathbb P}[\Omega']=1$, such that for every $\omega \in \Omega'$, there exists a Borel set $\A=\A(\omega)$ in $[0,1]$, $\Leb(\A)=1$, such that for all $u \in \A$, $\|\widetilde{y}_{\sigma_n}(u,\cdot)-\widetilde{y}(u,\cdot)\|_{\mathcal C[0,T]}$ tends to zero. Remark that the extraction $(\sigma_n)_n$ does not depend on $\omega$. From now on, we forget the tildes and the extraction in our notation.

Let $\omega \in \Omega$. Fix $u\in \A(\omega)$ and $t\in[0,T]$.
We set $v \in \A$ such that $y(v,t)\neq y(u,t)$. Then there exist $\sigma_0 >0$ and $\delta >0$ such that for all $\sigma \in (0,\sigma_0)\cap \Q_+$, $|y_\sigma(v,t)-y_\sigma(u,t)| \geq \delta$.
For all $\sigma \leq \min(\sigma_0,\delta)$, we have $|y_\sigma(v,t)-y_\sigma(u,t)| \geq \sigma$ and thus $\varphi_\sigma(y_\sigma(v,t)-y_\sigma(u,t))=0$. Hence, 
$\lim_{\sigma \rightarrow 0} \left(1-\varphi_\sigma^2(y_\sigma(v,t)-y_\sigma(u,t))\right)=1$. 
Thus we have shown that for all $v \in \A$, 
\[
\mathds 1_{\{y(v,t)\neq y(u,t)\}} \leq \liminf_{\sigma \rightarrow 0} \left(1-\varphi_\sigma^2(y_\sigma(v,t)-y_\sigma(u,t))\right),
\]
since $1-\varphi_\sigma^2$ is non-negative. By Fatou's Lemma and since $\Leb(\A)=1$, we deduce that:
\[
1-m(u,t)=\int_0^1 \mathds 1_{\{y(v,t)\neq y(u,t)\}} \mathrm dv\leq \liminf_{\sigma \rightarrow 0} \int_0^1\left(1-\varphi_\sigma^2(y_\sigma(v,t)-y_\sigma(u,t))\right)\mathrm dv,
\]
whence for all $u\in \A$ and $t\in [0,T]$, $\limsup_{n \to \infty} m_{\sigma_n}(u,t) \leq m(u,t)$.
\end{proof}

We deduce from Lemma~\ref{lemme limsup masse} the following Corollary. 
Set $N(t):=\int_0^1 \frac{\mathrm du}{m(u,t)}$. 
By a classical combinatorial argument, $N(t)$ is the number of equivalence classes at time $t$ relatively to the equivalence relation $u \underset{t}{\sim} v \iff y(u,t)=y(v,t)$. In other words, if $N(t)<\infty$, $Y(t)$ is a càdlàg step function taking $N(t)$ distinct values: there exist
$0=a_1<a_2<\dots<a_{N(t)}<a_{N(t)+1}=1$ and $y_1<y_2<\dots<y_{N(t)}$ such that for all $u\in [0,1]$
\begin{align*}
Y(t)(u)=\sum_{k=1}^{N(t)} y_k \mathds 1_{\{u \in [a_k,a_{k+1})\}}+y_{N(t)} \mathds 1_{\{u=1\}}.
\end{align*}

\begin{coro}
For every time $t \in [0,T]$, $\E{\int_0^t N(s) \mathrm ds}$ is finite. 
\label{coro int N(t) finite}
\end{coro}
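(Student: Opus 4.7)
The plan is to combine Lemma~\ref{lemme limsup masse} with the uniform estimate of Lemma~\ref{lemme inverse masse approchee sigma} via Fatou's Lemma. Work on the probability space $(\widetilde{\Omega}, \widetilde{\mathbb P})$ given by Lemma~\ref{lemme limsup masse}; all quantities in the statement to be proved depend only on the law of $y$, so what we establish on $(\widetilde{\Omega}, \widetilde{\mathbb P})$ transfers back to the original space. I drop the tildes for readability.

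First I would translate the conclusion of Lemma~\ref{lemme limsup masse} into the form needed: for almost every $(\omega, u) \in \Omega \times (0,1)$ and every $s \in [0,T]$,
\[
\frac{1}{m(u,s)} \leq \liminf_{n \to \infty} \frac{1}{m_{\sigma_n}(u,s)},
\]
with the convention $1/0 = +\infty$ (which is harmless since $m(u,s)$ may be zero only on a negligible set of $u$).

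Next I would integrate this inequality in $(u,s) \in [0,1] \times [0,t]$, take expectations, and apply Fatou's Lemma twice (or once on the product space $\Omega \times [0,1] \times [0,t]$) to obtain
\[
\E{\int_0^t N(s) \,\mathrm ds} = \E{\int_0^t\!\!\int_0^1 \frac{\mathrm du}{m(u,s)}\mathrm ds} \leq \liminf_{n \to \infty} \E{\int_0^t\!\!\int_0^1 \frac{\mathrm du}{m_{\sigma_n}(u,s)}\mathrm ds}.
\]
Since $m_{\sigma_n}(u,s) \leq (\eps + m_{\sigma_n}(u,s))^2 / m_{\sigma_n}(u,s)$ in the approximating setting and passes to the limit $\eps \to 0$ cleanly at fixed $\sigma_n$, and more directly because $p > 2$ allows the choice $\beta = 1$ in Lemma~\ref{lemme inverse masse approchee sigma} (the interval $(0, \tfrac{3}{2} - \tfrac{1}{p})$ contains $1$), there is a constant $C$ independent of $\sigma_n$ such that
\[
\E{\int_0^t\!\!\int_0^1 \frac{\mathrm du}{m_{\sigma_n}(u,s)}\mathrm ds} \leq C\sqrt{t}.
\]
Combining the two displays yields $\E{\int_0^t N(s)\,\mathrm ds} \leq C\sqrt{t} < +\infty$.

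The only delicate point is the Fatou step: we must check that the almost-everywhere inequality $\frac{1}{m(u,s)} \leq \liminf_n \frac{1}{m_{\sigma_n}(u,s)}$ holds jointly on a set of full $\widetilde{\mathbb P} \otimes \mathrm du \otimes \mathrm ds$-measure, not only pointwise for each fixed $s$. This follows directly from Lemma~\ref{lemme limsup masse} since the exceptional $(u,\omega)$-set there is independent of $t \in [0,T]$, so Fubini makes the bad set a $\widetilde{\mathbb P} \otimes \mathrm du \otimes \mathrm ds$-null set. Everything else is a routine bookkeeping application of Fatou's Lemma and the uniform $L^1$ bound of Lemma~\ref{lemme inverse masse approchee sigma}.
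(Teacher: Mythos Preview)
Your proposal is correct and follows essentially the same route as the paper: apply Lemma~\ref{lemme limsup masse} to get $\frac{1}{m(u,s)} \leq \liminf_n \frac{1}{m_{\sigma_n}(u,s)}$, use Fatou's Lemma, and conclude with the uniform bound of Lemma~\ref{lemme inverse masse approchee sigma} (with $\beta=1$, which is admissible since $p>2$). You are even a bit more careful than the paper in justifying that the exceptional set is jointly null in $(\omega,u,s)$.
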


\begin{proof}
By Lemma~\ref{lemme limsup masse},  there is a subsequence $(\sigma_n)$ such that almost surely, for every $t\in [0,T]$ and for almost every $u\in [0,1]$, $\limsup_{n \to \infty} m_{\sigma_n}(u,t)\leq m(u,t)$. Therefore, $\frac{1}{m(u,t)}\leq \liminf\limits_{n \to \infty}\frac{1}{m_{\sigma_n}(u,t)}$. By Fatou's Lemma, we deduce that:
\[
\E{\int_0^t N(s)\mathrm ds}\leq  \E{\int_0^t \!\!\int_0^1 \liminf_{n \to \infty}\frac{1}{m_{\sigma_n}(u,t)} \mathrm du\mathrm ds} \leq \liminf_{n \to \infty} \E{\int_0^t \!\!\int_0^1 \frac{1}{m_{\sigma_n}(u,s)} \mathrm du\mathrm ds} \leq C \sqrt{t},
\]
by Lemma~\ref{lemme inverse masse approchee sigma}. 
\end{proof}

\begin{coro}
Almost surely, for every $t >0$, $N(t)$ is finite and $t \mapsto N(t)$ is non-increasing on $(0,T]$. 
\label{coro N(t) finite}
\end{coro}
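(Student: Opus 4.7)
The strategy is to first establish monotonicity of $t \mapsto N(t)$ on $(0,T]$, and then combine it with the integrability bound of Corollary~\ref{coro int N(t) finite} to deduce everywhere finiteness. Indeed, once monotonicity is known, $\int_0^T N(s)\,\mathrm ds < +\infty$ almost surely implies $N(s) < +\infty$ for Lebesgue-a.e.\ $s \in (0,T]$; for each $t > 0$ one may then pick any such $s \in (0,t)$ and use monotonicity to obtain $N(t) \leq N(s) < +\infty$.

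To prove monotonicity, I would introduce a countable family of non-negative martingales that detect constancy of $y(\cdot,t)$ on rational intervals. For every rational quadruple $0 < a < b \leq c < d < 1$, set
\begin{align*}
G_{a,b,c,d}(t) := \frac{1}{d-c}\int_c^d y(u,t)\,\mathrm du - \frac{1}{b-a}\int_a^b y(u,t)\,\mathrm du = (Y(t), h_{a,b,c,d})_{L_2},
\end{align*}
where $h_{a,b,c,d} := \frac{1}{d-c}\mathds 1_{[c,d]} - \frac{1}{b-a}\mathds 1_{[a,b]} \in L_2[0,1]$. By property $(C2)$, $G_{a,b,c,d}$ is a real-valued, continuous $(\mathcal F_t)_{t \in [0,T]}$-martingale; and since $Y(t) \in L^\uparrow_2[0,1]$, the non-decreasing càdlàg representative of $y(\cdot,t)$ guarantees $G_{a,b,c,d}(t) \geq 0$.

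A standard optional stopping argument then shows that such a non-negative continuous martingale is absorbed at zero: setting $\tau := \inf\{t \geq 0 : G_{a,b,c,d}(t) = 0\}$, one has $G_{a,b,c,d}(t) = 0$ for every $t \geq \tau$, almost surely. Intersecting over the countable family of rational quadruples yields an almost sure event $\Omega_0$ on which, for every $0 < s \leq t \leq T$ and every rational interval $[a,d] \subset (0,1)$ on which $y(\cdot,s)$ is constant, we have $G_{a,a',d',d}(s) = 0$ for all rationals $a < a' < d' < d$, hence $G_{a,a',d',d}(t) = 0$. Letting $a' \downarrow a$ and $d' \uparrow d$ along rationals and invoking the non-decreasing càdlàg structure, this forces $y(\cdot,t)$ to be constant on $[a,d]$. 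Covering the plateaus of $y(\cdot,s)$ by rational sub-intervals, one deduces $m(u,t) \geq m(u,s)$ for almost every $u \in (0,1)$, and integrating $\frac{1}{m(u,\cdot)}$ gives $N(t) \leq N(s)$.

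The main obstacle is the delicate transfer from the countable family of identities $\{G_{a,a',d',d}(t)=0\}$ to the essentially pointwise bound $m(u,t) \geq m(u,s)$: one must carefully exploit the density of rationals together with the specific non-decreasing càdlàg representative in order to rule out ambiguities at endpoints and to cover genuine plateaus by rational sub-intervals. The martingale absorption argument itself is standard, and the continuity of $Y$ in $L_2[0,1]$ provided by $(C2)$ ensures that all the integral manipulations are legitimate.
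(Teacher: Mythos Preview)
Your proposal is correct and follows essentially the same route as the paper: both arguments introduce, for rational parameters, the non-negative continuous $(\mathcal F_t)$-martingales obtained by pairing $Y(t)$ with differences of normalised indicator functions, use absorption at zero to propagate plateaus forward in time, and combine this coalescence property with the integrability bound of Corollary~\ref{coro int N(t) finite}.

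The only structural difference is the order of the two ingredients. The paper first uses $\int_0^T N(s)\,\mathrm ds<\infty$ to pick a time $t_0$ with $N(t_0)<\infty$, and then exploits the resulting \emph{explicit} step-function decomposition $Y(t_0)=\sum_k z_k\mathds 1_{[a_k,a_{k+1})}$ to propagate each of the finitely many plateaus to later times; monotonicity of $N$ on $(0,T]$ drops out as a by-product. You instead establish $m(u,t)\geq m(u,s)$ directly for all $0<s\leq t$ before knowing anything about finiteness, which forces you to cover arbitrary (possibly uncountably many) plateaus by rational sub-intervals. Your route is slightly more direct but, as you correctly flag, the transfer from the countable family of vanishing martingales to the pointwise mass comparison needs the observation that for Lebesgue-a.e.\ $u$ either $m(u,s)=0$ (trivial case) or $u$ lies in the interior of its plateau; the paper sidesteps this by working only at times where the step-function structure is already known.
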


\begin{proof}
We begin by proving the coalescence property. Let $u_1$, $u_2$, $h \in \Q$ be such that $0<u_1<u_1+h<u_2<u_2+h<1$. 
Define $y^h(u_1,t)=\frac{1}{h} \int_{u_1}^{u_1+h} y(v,t) \mathrm dv=(Y(t),\frac{1}{h} \mathds 1_{(u_1,u_1+h)})_{L_2}$ and $y^h(u_2,t)=(Y(t),\frac{1}{h} \mathds 1_{(u_2,u_2+h)})_{L_2}$. 
By Proposition~\ref{proprietes C1-C2}, $Z(t)=y^h(u_2,t)-y^h(u_1,t)$ is a continuous $\R$-valued $(\mathcal F_t)_{t \in [0,T]}$-martingale, almost surely non-negative. As a consequence, $Z(t)=0$ for every $t \geq \tau_0=\inf\{ s \geq 0, Z(s)=0\}$. 
In other terms, the following coalescence property holds: for every $u_1$, $u_2$, $h \in \Q$ such that $0<u_1<u_1+h<u_2<u_2+h<1$, $y^h(u_1,t_0)=y^h(u_2,t_0)$ implies $y^h(u_1,t)=y^h(u_2,t)$ for every $t \geq t_0$ almost surely. 

On a full event $\Omega'$ of $(\Omega, \mathbb P)$, the latter statement is true and $\int_0^T N(s) \mathrm ds$ is finite (by Corollary~\ref{coro int N(t) finite}). 
Fix $\omega \in \Omega'$. In particular, for almost every $t \in (0,T)$, $N(t)$ is finite. Let $t_0 \in (0,T)$ be such that $N(t_0)<+\infty$. There exist $0=a_1<a_2<\dots<a_{N(t_0)}<a_{N(t_0)+1}=1$ and $z_1<z_2<\dots<z_{N(t_0)}$, depending on $\omega$,  such that for all $u\in [0,1]$,
\begin{align*}
Y(t_0)(u)=\sum_{k=1}^{N(t_0)} z_k \mathds 1_{\{u \in [a_k,a_{k+1})\}}+z_{N(t_0)} \mathds 1_{\{u=1\}}.
\end{align*}

Fix $k \in \{1,\dots,N(t_0)\}$. By the coalescence property, almost surely, for all $u_1$, $u_2$, $h \in \Q$ such that $a_k < u_1 <u_1+h <u_2<u_2+h<a_{k+1}$, since $y^h(u_1,t_0)=z_k=y^h(u_2,t_0)$, we have $y^h(u_1,t)=y^h(u_2,t)$ for every $t \geq t_0$. Fix $t \geq t_0$. By monotonicity of $Y(t)$, we deduce that $Y(t)$ is constant on $(u_1,u_2+h)$. Thus  $Y(t)$ is constant on $(a_k, a_{k+1})$. 
Therefore, since $Y(t)$ is càdlàg, there exist $\widetilde{z}_1 \leq \widetilde{z}_2 \leq \dots\leq \widetilde{z}_{N(t_0)}$, depending on $\omega$, such that for all $u \in [0,1]$, 
\begin{align*}
Y(t)(u)=\sum_{k=1}^{N(t_0)} \widetilde{z}_k \mathds 1_{\{u \in [a_k,a_{k+1})\}}+\widetilde{z}_{N(t_0)} \mathds 1_{\{u=1\}}.  
\end{align*}
We deduce that  $N(t)\leq N(t_0)<+\infty$, for every $t \geq t_0$. Therefore, for every $\omega \in \Omega'$, $t \mapsto N(t)$ is finite and non-increasing on $(0,T]$. 
This concludes the proof of the Lemma. 
\end{proof}

Therefore, Corollary~\ref{coro N(t) finite} concludes the proof of Proposition~\ref{prop step function}. 
Then, Proposition~\ref{proprietes C1-C2} and Proposition~\ref{prop step function} imply the following property, by applying Proposition 2.3 of~\cite{konarovskyi17behavior}:
\begin{prop}
\label{propriete B4}
There exists a modification $\widetilde{y}$ of $y$ in $L_2([0,1],\mathcal C[0,T])$ such that $\widetilde{y}$ belongs to $\mathcal D((0,1),\mathcal C[0,T])$. In particular, for every $t \in [0,T]$, $y(\cdot,t)$ and $\widetilde{y}(\cdot,t)$ are equal in $L_2[0,1]$ almost surely. Moreover, for every $u \in (0,1)$, $\widetilde{y}(u, \cdot)$ is a square integrable and continuous $(\mathcal F_t)_{t \in [0,T]}$-martingale and 
\begin{align*}
\P{\forall u,v \in (0,1), \forall s \in [0,T], \widetilde{y}(u,s)=\widetilde{y}(v,s) \text{ implies } \forall t \geq s, \widetilde{y}(u,t)=\widetilde{y}(v,t)}=1. 
\end{align*}
\end{prop}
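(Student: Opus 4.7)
The plan is to invoke Proposition~2.3 of~\cite{konarovskyi2017asymptotic}. That result takes as input an $L^\uparrow_2[0,1]$-valued continuous square integrable $(\mathcal F_t)_{t \in [0,T]}$-martingale whose trajectories are step functions for every positive time, and produces a modification in $\mathcal D((0,1),\mathcal C[0,T])$ for which $\widetilde{y}(u,\cdot)$ is a continuous square integrable $(\mathcal F_t)_{t \in [0,T]}$-martingale for \emph{every} $u \in (0,1)$, together with the almost sure coalescence statement. Properties $(C1)$ and $(C2)$ from Proposition~\ref{proprietes C1-C2} combined with the step function property from Proposition~\ref{prop step function} give exactly the required hypotheses, so the conclusion follows at once.

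For transparency, I would briefly recall the construction underlying Konarovskyi's result, which is analogous to the proof of Proposition~\ref{prop non-decreasing property}. Define, for each $u \in (0,1)$ and $t \in [0,T]$,
\[
\widetilde{y}(u,t) := \liminf_{\substack{v \searrow u \\ v \in \Q_+}} y(v,t).
\]
Since $Y(t) \in L^\uparrow_2[0,1]$ almost surely, this $\liminf$ is actually a right-limit along a monotone sequence, so $u \mapsto \widetilde{y}(u,t)$ is non-decreasing and right-continuous, and has left limits by monotonicity. Moment estimates from Corollary~\ref{corollaire borne L_2 sig} and Lemma~\ref{lemme inverse masse approchee sigma}, together with Doob's inequality applied to the real-valued martingales $(Y(\cdot),\tfrac{1}{h}\mathbf 1_{(u,u+h)})_{L_2}$, allow one to control $\|y(v_n,\cdot) - y(u,\cdot)\|_{\mathcal C[0,T]}$ uniformly enough that the right-limit produces an element of $\mathcal C[0,T]$. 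The identification $\widetilde{y}(\cdot,t) = y(\cdot,t)$ in $L_2[0,1]$ follows at $t = 0$ from $(C1)$ and the càdlàg character of $g$, and for $t > 0$ from Proposition~\ref{prop step function} which pins down $Y(t)$ outside its (finitely many) jump points.

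The martingale property for each fixed $u \in (0,1)$ is then derived by approximating $\widetilde{y}(u,\cdot)$ by the averages $\tfrac{1}{h}(Y(\cdot),\mathbf 1_{(u,u+h)})_{L_2}$, which are real-valued continuous $(\mathcal F_t)_{t \in [0,T]}$-martingales thanks to $(C2)$; one passes to the limit $h \to 0^+$ using the uniform integrability provided by Corollary~\ref{corollaire borne L_2 sig}. Coalescence finally follows from the step function structure: if $\widetilde{y}(u,s) = \widetilde{y}(v,s)$ for some $s > 0$, then on the event where $Y(s)$ is a step function, $u$ and $v$ lie in a common interval of constancy, and since $t \mapsto N(t)$ is non-increasing on $(0,T]$ by Corollary~\ref{coro N(t) finite}, the partition cannot refine, so $u$ and $v$ remain in the same atom for all later times.

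The main technical obstacle is upgrading from the $L^\uparrow_2[0,1]$-valued martingale property, which only controls $Y(t)$ tested against $L_2$-functions, to a genuinely pointwise martingale statement for every $u \in (0,1)$ rather than almost every $u$. This step relies crucially on the uniform moment bound given by Lemma~\ref{lemme inverse masse approchee sigma}, which transfers along the regularization $\widetilde{y}(u,t) := \liminf_{v \searrow u, v \in \Q_+} y(v,t)$ thanks to Fatou's lemma, and on the step function structure which makes the choice of representative unambiguous for every $t > 0$.
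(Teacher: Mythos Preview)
Your proposal is correct and follows exactly the paper's approach: the paper does not give a self-contained proof but simply states that Proposition~\ref{proprietes C1-C2} and Proposition~\ref{prop step function} provide the hypotheses needed to invoke Proposition~2.3 of~\cite{konarovskyi2017asymptotic}, whose proof is in Appendix~B of that reference. Your additional sketch of the construction is helpful and broadly accurate; note only that the paper's accompanying remark singles out continuity of $\widetilde{y}(u,\cdot)$ at $t=0$ as the delicate point of Konarovskyi's argument, which your sketch passes over rather quickly, and that your coalescence argument as written only treats $s>0$ explicitly.
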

From now on, we denote by $y$ (instead of $\widetilde{y}$) the version of the limit process in $\mathcal D((0,1),\mathcal C[0,T])$. 

\begin{rem}
The proof can be found in Appendix~B of~\cite{konarovskyi17behavior}. It should be noticed that the difficult part of the proof relies on the construction of a version $\widetilde{y}$ 
such that for every $u \in (0,1)$, $\widetilde{y}(u,\cdot)$ is continuous at time $t=0$. 
\end{rem}

This concludes the proof of properties~$(C3)$ and $(C4)$ of Theorem~\ref{théo 2}. The aim of the next two Paragraphs is to prove property~$(C5)$, in two steps.

\subsection{Quadratic variation of $y(u,\cdot)$}
\label{para 2}

The following Proposition shows that the quadratic variation of a particle is proportional to the inverse of its mass:
\begin{prop} Let $y$ be the version in $\mathcal D((0,1),\mathcal C[0,T])$ of the limit process given by Proposition~\ref{propriete B4}.   
For every $u \in (0,1)$,
\[
\langle y(u,\cdot),y(u,\cdot)\rangle_t = \int_0^t \frac{1}{m(u,s)}\mathrm ds,
\]
where $m(u,s)=\int_0^1 \mathds 1_{\{ y(u,s)=y(v,s) \}} \mathrm dv$.
\label{propriete C5 partie 1}
\end{prop}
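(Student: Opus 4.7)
The strategy is to pass to the limit in the weak martingale identity from Corollary~\ref{coro marting 2} for the approximations $y_\sigma$, then combine this with the martingale property of $y(u,\cdot)$ established in Proposition~\ref{propriete B4} to identify the quadratic variation.

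First, I would apply Corollary~\ref{coro marting 2} to $y_\sigma$: for every bounded non-negative $\psi:[0,1]\to\R$, bounded continuous $f_l:L_2[0,1]^l\to\R$ and $0\leq s_1\leq\dots\leq s_l\leq s\leq t$, the quantity
\[
\E{\int_0^1 \psi(u)\left((y_\sigma(u,t)-g(u))^2-(y_\sigma(u,s)-g(u))^2-\int_s^t \frac{\mathrm dr}{m_\sigma(u,r)}\right)\mathrm du\, f_l(Y_\sigma(s_1),\dots,Y_\sigma(s_l))}
\]
vanishes. I then pass to the limit $\sigma_n\to 0$ along the Skorohod-coupled subsequence $(\sigma_n)_n$ provided by Lemma~\ref{lemme limsup masse}. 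The squared terms converge by the almost-sure convergence of $y_{\sigma_n}$ to $y$ in $L_2([0,1],\mathcal C[0,T])$ combined with uniform integrability furnished by Corollary~\ref{corollaire borne L_2 sig} for some exponent $\beta>1$. For the mass term, the bound $\limsup_n m_{\sigma_n}(u,r)\leq m(u,r)$ gives $\liminf_n 1/m_{\sigma_n}(u,r)\geq 1/m(u,r)$; Fatou's lemma applied first to $F_l\geq 0$ and extended to general $F_l=f_l(Y(s_1),\dots,Y(s_l))$ via the Jordan decomposition yields the one-sided inequality
\[
\E{\int_0^1 \psi(u)\bigl[(y(u,t)-g(u))^2-(y(u,s)-g(u))^2\bigr]\mathrm du\,F_l}\geq \E{\int_0^1\psi(u)\int_s^t\frac{\mathrm dr}{m(u,r)}\mathrm du\,F_l}.
\]

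Second, by Proposition~\ref{propriete B4}, for each $u\in(0,1)$ the process $y(u,\cdot)$ is a continuous square-integrable $(\mathcal F_t)$-martingale, so $(y(u,t)-g(u))^2-\langle y(u,\cdot)\rangle_t$ is a martingale. Substituting this into the inequality from the previous step and varying $\psi$, $f_l$ and $s_l$, I conclude that for almost every $u\in(0,1)$ the process $t\mapsto\langle y(u,\cdot)\rangle_t-\int_0^t\mathrm ds/m(u,s)$ is almost surely non-negative and non-decreasing.

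The main obstacle is the reverse inequality, because the $\limsup$-type control on $m_{\sigma_n}$ supplied by Lemma~\ref{lemme limsup masse} only yields one direction of the bound. To close the gap, I would supplement the argument with a global energy estimate: from the martingale identity for $y_\sigma$, $\int_0^1\E{(y_\sigma(u,t)-g(u))^2}\mathrm du=\E{\int_0^t\!\!\int_0^1\mathrm du\,\mathrm ds/m_\sigma(u,s)}$, while from Proposition~\ref{propriete B4} and Fubini, $\int_0^1\E{(y(u,t)-g(u))^2}\mathrm du=\int_0^1\E{\langle y(u,\cdot)\rangle_t}\mathrm du$. Using Corollary~\ref{coro N(t) finite} together with coalescence, $\int_0^1\mathrm du/m(u,s)=N(s)$ for every $s>0$, and the $L^\beta$-bound from Lemma~\ref{lemme inverse masse approchee sigma} provides the uniform integrability required to promote the one-sided Fatou inequality to an equality in the integrated energy. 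This forces the pointwise inequality established above to be an equality for almost every $u\in(0,1)$; the extension from "almost every $u$" to "every $u\in(0,1)$" then follows from the càdlàg property of $Y(t)$ given by property $(C4)$ applied to sequences of indices converging to $u$.
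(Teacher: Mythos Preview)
Your approach up to the one-sided inequality mirrors the paper's opening move, but the attempt to recover the reverse inequality via a ``global energy estimate'' has a genuine gap. From Lemma~\ref{lemme limsup masse} you only know $\limsup_n m_{\sigma_n}(u,r)\leq m(u,r)$, hence $\liminf_n 1/m_{\sigma_n}\geq 1/m$. Combining this with the uniform $L^\beta$-bound of Lemma~\ref{lemme inverse masse approchee sigma} does \emph{not} upgrade Fatou's inequality to an equality: uniform integrability converts pointwise convergence into $L^1$-convergence, but here you have no pointwise convergence, only a one-sided pointwise bound. Concretely, your energy identity yields
\[
\int_0^1 \E{\langle y(u,\cdot)\rangle_t}\,\mathrm du=\lim_n\E{\int_0^t\!\!\int_0^1\frac{\mathrm dr\,\mathrm du}{m_{\sigma_n}(u,r)}}\geq\E{\int_0^t\!\!\int_0^1\frac{\mathrm dr\,\mathrm du}{m(u,r)}},
\]
which is again the \emph{same} direction you already have; nothing rules out a strict inequality. (There are also smaller problems: the Jordan decomposition of $F_l$ does not transport the one-sided Fatou bound to signed $F_l$, since the inequalities for $F_l^+$ and $F_l^-$ point the same way; and a submartingale inequality does not give ``almost surely non-negative and non-decreasing''.)

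The paper closes the gap by a different and substantially harder route: it establishes in Lemma~\ref{lemme conv masse} the \emph{full} convergence $m_{\sigma_n}(\omega,u,t)\to m(\omega,u,t)$ for almost every $(\omega,u,t)$ along a further subsequence. The key input is Lemma~\ref{lemme conv des crochets}, which shows that the processes $C_\sigma(u_1,u_2,t)=\langle y_\sigma(u_1,\cdot)-y_\sigma(u_2,\cdot)\rangle_t$ are tight in $L_1([0,1]^2,\mathcal C[0,T])$ and converge jointly with $y_\sigma$ to the quadratic variation of $y(u_1,\cdot)-y(u_2,\cdot)$; a case-by-case analysis using coalescence and the geometry of the support of $\varphi_{\sigma_n}$ then yields the missing lower bound $\liminf_n m_{\sigma_n}(u,t)\geq m(u,t)$. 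With actual convergence of the mass available, the martingale identity~\eqref{eg martingale C4} passes to the limit as an \emph{equality}, and the extension from almost every $u$ to every $u\in(0,1)$ proceeds via right-continuity and a uniform-integrability argument.
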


\begin{proof}
By Corollary~\ref{coro marting 2}, for every positive $\psi \in L_\infty(0,1)$, we have:
\begin{multline}
\label{eg martingale C4}
\E{\int_0^1 \psi(u)  [(y_\sigma(u,t)-g(u))^2- (y_\sigma(u,s)-g(u))^2]   f_l(Y_\sigma(s_1),\dots,Y_\sigma(s_l))\mathrm du }
\\=\E{\int_0^1 \psi(u) \int_s^t \frac{1}{m_\sigma(u,r)} \mathrm dr \;  f_l(Y_\sigma(s_1),\dots,Y_\sigma(s_l))\mathrm du }. 
\end{multline}

To obtain the convergence of the left hand side of~\eqref{eg martingale C4}, we proceed in the same way as for the proof of equality~\eqref{martingale crochet sigma}. The uniform integrability property follows from Corollary~\ref{corollaire borne L_2 sig}. 
Therefore, the left hand side of~\eqref{eg martingale C4} converges when $\sigma \to 0$ to 
\begin{align*}
\E{\int_0^1 \psi(u)  [(y(u,t)-g(u))^2- (y(u,s)-g(u))^2]   f_l(Y(s_1),\dots,Y(s_l))\mathrm du }.
\end{align*} 

We also get a uniform integrability property for the right hand side of~\eqref{eg martingale C4} by the same argument as in the proof of property~$(B3)$ (see Proposition~\ref{proprietes B1-B3}). 
Assume that there exists a sequence $(\sigma_n)$ of rational numbers tending to $0$,  a probability space $(\widehat{\Omega},\widehat{\mathbb P})$, a modification $(\widehat{m}_{\sigma_n},\widehat{y}_{\sigma_n})_{n \in \N}$ of $(m_{\sigma_n},y_{\sigma_n})_{n \in \N}$ on $L_1([0,1],\mathcal C[0,T]) \times L_2([0,1],\mathcal C[0,T])$ and a modification $(\widehat{m},\widehat{y})$ of $(m,y)$ on the same space such that for almost each $\omega \in \Omega$ and almost every $(u,t) \in [0,1] \times [0,T]$, the sequence $(\widehat{m}_{\sigma_n}(\omega,u,t),$ $\widehat{y}_{\sigma_n}(\omega))_{n \in \N}$ converges to $(\widehat{m}(\omega,u,t),\widehat{y}(\omega))$ in $\R \times L_2([0,1],\mathcal C[0,T])$. 
This will be proved in Lemma~\ref{lemme conv masse}.

It follows that for every $\psi \in L_\infty(0,1)$:
\begin{align*}
\E{\int_0^1 \psi(u)  \left[(\widehat{y}(u,t)-g(u))^2- (\widehat{y}(u,s)-g(u))^2-\int_s^t \frac{\mathrm dr}{\widehat{m}(u,r)} \right]   f_l(\widehat{Y}(s_1),\dots,\widehat{Y}(s_l))\mathrm du }=0.
\end{align*}
By Fubini's Theorem, we deduce that for almost every $u \in (0,1)$, 
\begin{align}
\label{martingale crochet}
\E{ \left((\widehat{y}(u,t)-g(u))^2- (\widehat{y}(u,s)-g(u))^2-\int_s^t \frac{\mathrm dr}{\widehat{m}(u,r)} \right) f_l(\widehat{Y}(s_1),\dots,\widehat{Y}(s_l))}=0.
\end{align}

We want to prove that~\eqref{martingale crochet} holds for every $u\in (0,1)$. 
Let $u\in (0,1)$. Choose $\delta>0$ such that $u \in (\delta, 1-\delta)$. 
Let $(u_p)_{ p \in \N}$ be a decreasing sequence in $(\delta, 1-\delta)$ converging to~$u$ such that for every $p \in \N$, equality~\eqref{martingale crochet} holds at point $u_p$, $(y_{\sigma_n,\eps}(u_p,t))_{t \in [0,T]}$ is a square integrable continuous $(\mathcal F^{\sigma_n,\eps}_t)_{t \in [0,T]}$-martingale for every $n \in \N$ and $\eps \in \Q_+$ and  $\limsup_{n \to \infty} \widehat{m}_{\sigma_n}(u_p,t) \leq \widehat{m}(u_p,t)$ almost surely for all $t \in [0,T]$. Such a sequence exists by Corollary~\ref{coro version continue ps} and Lemma~\ref{lemme limsup masse}. We will use these different properties later in this proof. 

Almost surely, for every $r \in (0,T]$, $\widehat{y}(\cdot,r)$ is right-continuous at point $u$ and is a step function. Therefore, $\widehat{m}(\cdot,r)=\int_0^1 \mathds 1_{\{ \widehat{y}(\cdot,r)=\widehat{y}(v,r) \}} \mathrm dv$ is also right continuous  at point $u$ for every positive time~$r$.   In order to prove~\eqref{martingale crochet} at point $u$, it is thus sufficient to show the following uniform integrability property:
there exists $\beta >1$ such that
\begin{align}
\label{cond unif int}
\sup_{p \in \N} \E{ \left((\widehat{y}(u_p,t)-g(u_p))^2- (\widehat{y}(u_p,s)-g(u_p))^2-\int_s^t \frac{\mathrm dr}{\widehat{m}(u_p,r)} \right)^\beta} <+\infty. 
\end{align}
First, by monotonicity, for all $p \in \N$,  $\E{g(u_p)^{2\beta}} \leq g(\delta)^{2\beta} + g(1-\delta)^{2\beta}$. Then, the following statement holds: there exists $\beta >1$ such that for every $t \in [0,T]$, $\sup_{p \in \N} \E{ \widehat{y}(u_p,t)^{2\beta}}<+\infty$. 
Indeed, for every $p \in \N$, by monotonicity, 
\begin{align*}
\frac{1}{\delta}\int_0^\delta \widehat{y}(v,t) \mathrm dv \leq \widehat{y}(u_p,t) \leq \frac{1}{\delta}\int_{1-\delta}^1 \widehat{y}(v,t) \mathrm dv. 
\end{align*}
Therefore, we have:
\begin{align}
\label{inegalite delta 1-delta}
\E{ \widehat{y}(u_p,t)^{2\beta}}
&\leq \E{\left( \frac{1}{\delta}\int_0^\delta \widehat{y}(v,t) \mathrm dv \right)^{2\beta}}
+\E{\left( \frac{1}{\delta}\int_{1-\delta}^1  \widehat{y}(v,t) \mathrm dv \right)^{2\beta}}\notag\\
&\leq \frac{2}{\delta}\E{\int_0^1 \widehat{y}(v,t)^{2\beta}\mathrm dv},
\end{align}
by Hölder's inequality. 
By Fatou's Lemma
\begin{align*}
\E{\int_0^1 \widehat{y}(v,t)^{2\beta}\mathrm dv}\leq \liminf_{n \to \infty} \E{\int_0^1 \widehat{y}_{\sigma_n}(v,t)^{2\beta}\mathrm dv},
\end{align*}
which is finite by Corollary~\ref{corollaire borne L_2 sig}, for a $\beta$ chosen in $ (1,\frac{3}{2}-\frac{1}{p})$. 

Let us keep the same exponent $\beta\in (1,\frac{3}{2}-\frac{1}{p})$. It remains to show that  for every $t\in [0,T]$, $\sup_{p \in \N} \E{\left(\int_0^t \frac{\mathrm dr}{\widehat{m}(u_p,r)}\right)^{\beta}}<+\infty$. 
Since $\limsup_{n \to \infty} \widehat{m}_{\sigma_n}(u_p,t) \leq \widehat{m}(u_p,t)$ and by Fatou's Lemma, 
\begin{align*}
\E{\left(\int_0^t \frac{\mathrm dr}{\widehat{m}(u_p,r)}\right)^{\beta}}
\leq \E{\left(\int_0^t \liminf_{n \to \infty} \frac{\mathrm dr}{\widehat{m}_{\sigma_n}(u_p,r)}\right)^{\beta}}
&\leq \liminf_{n \to \infty} \E{\left(\int_0^t  \frac{\mathrm dr}{\widehat{m}_{\sigma_n}(u_p,r)}\right)^{\beta}}\\
&\leq \liminf_{n \to \infty, \eps \in \Q_+} \E{\left(\int_0^t  \frac{\mathrm dr}{\widehat{M}_{\sigma_n,\eps}(u_p,r)}\right)^{\beta}}.
\end{align*}

Because $(\widehat{y}_{\sigma_n,\eps}(u_p,t))_{t \in [0,T]}$ is a square integrable $(\mathcal F^{\sigma_n,\eps}_t)_{t \in [0,T]}$-martingale and $\langle \widehat{y}_{\sigma_n,\eps}(u_p,\cdot),$ $\widehat{y}_{\sigma_n,\eps}(u_p,\cdot) \rangle_t =\int_0^t \frac{\mathrm dr}{\widehat{M}_{\sigma_n,\eps}(u_p,r)}$, we obtain by Burkholder-Davis-Gundy inequality:
\begin{align*}
\E{\left(\int_0^t  \frac{\mathrm dr}{\widehat{M}_{\sigma_n,\eps}(u_p,r)}\right)^{\beta}}
\leq C \E{(\widehat{y}_{\sigma_n,\eps}(u_p,t)-g(u_p))^{2\beta}}.
\end{align*}
We have already seen that $\E{g(u_p)^{2\beta}}$ is uniformly bounded for $p \in \N$. By the same argument as for inequality~\eqref{inegalite delta 1-delta}, 
$\E{\widehat{y}_{\sigma_n,\eps}(u_p,t)^{2\beta}} \leq \frac{2}{\delta}\E{\int_0^1 \widehat{y}_{\sigma_n,\eps}(v,t)^{2\beta}\mathrm dv}$, which is uniformly bounded for $n \in \N$ and $\eps \in \Q_+$. 
This concludes the proof of~\eqref{cond unif int}. 

Therefore, equality~\eqref{martingale crochet} holds for every $u\in (0,1) $, for every bounded and continuous $f_l$ and for every $0\leq s_1\leq \dots \leq s_l\leq s\leq t$. Thus for every $u \in (0,1)$, the process $\Big((\widehat{y}(u,t)-g(u))^2- \int_0^t \frac{\mathrm ds}{\widehat{m}(u,s)}\Big)_{t \in [0,T]}$ is an $(\mathcal F_t)_{t \in [0,T]}$-martingale. This concludes the proof of the Proposition. 
\end{proof}

In the proof of Proposition~\ref{propriete C5 partie 1}, we used the following Lemma:
\begin{lemme}
\label{lemme conv masse}
There exists a sequence $(\sigma_n)$ of rational numbers tending to $0$, a sequence of processes $(\widehat{m}_{\sigma_n},\widehat{y}_{\sigma_n})_{n \in \N}$ and a process $(\widehat{m},\widehat{y})$ defined on the same probability space such that
\begin{itemize}
\item for all $n \in \N$,  $(\widehat{m}_{\sigma_n},\widehat{y}_{\sigma_n})$ and $(m_{\sigma_n},y_{\sigma_n})$ (resp. $(\widehat{m},\widehat{y})$ and $(m,y)$) have same law on $L_1([0,1],\mathcal C[0,T]) \times L_2([0,1],\mathcal C[0,T])$. 
\item for almost each $\omega \in \Omega$ and for almost every $(u,t)$ in $[0,1]\times[0,T]$, the sequence $(\widehat{m}_{\sigma_n}(\omega,u,t),$ $\widehat{y}_{\sigma_n}(\omega))_{n \in \N}$ converges to $(\widehat{m}(\omega,u,t),\widehat{y}(\omega))$ in $\R \times L_2([0,1],\mathcal C[0,T])$. 
\end{itemize}
\end{lemme}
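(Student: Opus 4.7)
The strategy is to construct the coupling via Prokhorov's and Skorohod's theorems applied to the joint distribution of $(m_\sigma, y_\sigma)_{\sigma \in \Q_+}$ in the Polish product space $L_1([0,1],\mathcal C[0,T]) \times L_2([0,1],\mathcal C[0,T])$, and then to identify the limit using the one-sided estimate of Lemma~\ref{lemme limsup masse}.

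The first step is to establish joint tightness in this product space. Tightness of the second marginal follows directly from Corollary~\ref{coro tension y sig eps}. For the first marginal, I plan to apply an $L_1$-valued analogue of Simon's criterion (Proposition~\ref{critere simon}). Since $0 \leq m_\sigma \leq 1$ uniformly, integrability is immediate; spatial regularity in $u$ will follow from the Lipschitz estimate
\begin{equation*}
|m_\sigma(u+h,t) - m_\sigma(u,t)| \leq \operatorname{Lip}(\varphi_\sigma^2)\left[|y_\sigma(u+h,t) - y_\sigma(u,t)| + \int_0^1 |y_\sigma(v+h,t) - y_\sigma(v,t)|\,\mathrm dv\right],
\end{equation*}
combined with the spatial $L_2$-estimate of Proposition~\ref{prop tension 2}, while time regularity will follow from an It\^o expansion of $t \mapsto m_\sigma(u,t)$ together with the mass bounds of Lemma~\ref{lemme inverse masse approchee sig eps} to control the resulting drift and martingale parts uniformly in $\sigma$.

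Once joint tightness is established, Prokhorov's theorem yields a subsequence $(\sigma_n)_n$ along which the joint laws converge weakly, and Skorohod's representation theorem provides a common probability space on which $(\widehat m_{\sigma_n}, \widehat y_{\sigma_n}) \to (\widehat m, \widehat y)$ almost surely, with matching laws at each rank and in the limit. A further diagonal extraction, based on the Markov--Fubini argument used in the proof of Lemma~\ref{lemme limsup masse}, upgrades this to pointwise almost-everywhere convergence: $\widehat m_{\sigma_n}(\omega,u,t) \to \widehat m(\omega,u,t)$ and $\widehat y_{\sigma_n}(\omega,u,t) \to \widehat y(\omega,u,t)$ for a.e. $(u,t,\omega)$.

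The remaining task, which I expect to be the main difficulty, is to verify that $(\widehat m, \widehat y)$ has the same joint distribution as $(m, y)$. Since the marginal law of $\widehat y$ is automatically that of the limit $y$ constructed in Section~\ref{par conv sigma 0}, the key point is to check that $\widehat m(u,t) = \int_0^1 \mathds 1_{\{\widehat y(u,t) = \widehat y(v,t)\}}\,\mathrm dv$ almost everywhere. The upper bound is supplied by Lemma~\ref{lemme limsup masse} applied to the new realization. The matching lower bound will rely on an integral identity: the $L_1([0,1],\mathcal C[0,T])$-convergence yields $\int_0^1 \widehat m_{\sigma_n}(u,t)\,\mathrm du \to \int_0^1 \widehat m(u,t)\,\mathrm du$ uniformly in $t$, while the left-hand side can be rewritten as a double integral $\int\!\!\int \varphi_{\sigma_n}^2(r-s)\,\mathrm d\widehat\mu_t^{\sigma_n}(r)\,\mathrm d\widehat\mu_t^{\sigma_n}(s)$ against the pushforward measures $\widehat\mu_t^{\sigma_n}$ of Lebesgue measure by $\widehat y_{\sigma_n}(\cdot,t)$. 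Using the strong $L_2$-convergence of $\widehat y_{\sigma_n}(\cdot,t)$ to $\widehat y(\cdot,t)$ (equivalent to $W_2$-convergence of the pushforwards), this integral is identified in the limit with $\sum_x \widehat\mu_t(\{x\})^2 = \int_0^1 \int_0^1 \mathds 1_{\{\widehat y(u,t) = \widehat y(v,t)\}}\,\mathrm du\,\mathrm dv$. Equality of integrals combined with the pointwise upper bound then forces pointwise equality almost everywhere, completing the identification.
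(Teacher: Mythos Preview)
Your overall architecture (joint tightness, Skorohod, then identification of the limit) is natural, but both the tightness step and the identification step contain real gaps.

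\medskip
\textbf{Tightness of $m_\sigma$.} Your spatial and temporal regularity estimates for $m_\sigma$ go through $\operatorname{Lip}(\varphi_\sigma^2)$ (and, in the It\^o expansion, through $\|\varphi_\sigma''\|_\infty$). These constants behave like $\sigma^{-1}$ and $\sigma^{-2}$, so the bounds you obtain are not uniform in $\sigma$ and do not yield tightness of $(m_\sigma)_{\sigma\in\Q_+}$ in $L_1([0,1],\mathcal C[0,T])$. The paper sidesteps this by never proving tightness of $m_\sigma$ itself: it proves tightness of the quadratic-variation process $C_\sigma(u_1,u_2,t)=\langle y_\sigma(u_1)-y_\sigma(u_2),y_\sigma(u_1)-y_\sigma(u_2)\rangle_t$ in $L_1([0,1]^2,\mathcal C[0,T])$, using Kunita--Watanabe and the uniform moment bounds of Lemma~\ref{lemme inverse masse approchee sigma}, which are genuinely $\sigma$-free.

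\medskip
\textbf{Identification of the limit.} The integral identity you propose for the lower bound does not follow from $W_2$-convergence. You need
\[
\iint \varphi_{\sigma_n}^2(r-s)\,\mathrm d\widehat\mu_t^{\sigma_n}(r)\,\mathrm d\widehat\mu_t^{\sigma_n}(s)\;\longrightarrow\;\sum_x \widehat\mu_t(\{x\})^2,
\]
but $\varphi_{\sigma_n}^2$ concentrates on a window of width $\sigma_n$, while $W_2(\widehat\mu_t^{\sigma_n},\widehat\mu_t)\to 0$ gives no control on the rate relative to $\sigma_n$. Concretely, if $\widehat\mu_t=\delta_0$ and $\widehat\mu_t^{\sigma_n}=\tfrac12\delta_{-\sigma_n/2}+\tfrac12\delta_{\sigma_n/2}$, then $W_2\to 0$ but the double integral equals $\tfrac12$, not $1$. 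Nothing in your setup rules out that coalesced limit particles sit at distance $\Theta(\sigma_n)$ in the approximation. Equivalently, the pointwise lower bound $\liminf_n m_{\sigma_n}(u,t)\ge m(u,t)$ would require $|\widehat y_{\sigma_n}(u,t)-\widehat y_{\sigma_n}(v,t)|=o(\sigma_n)$ whenever $\widehat y(u,t)=\widehat y(v,t)$, and you have not established this.

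\medskip
The paper obtains exactly this missing lower bound by exploiting the limit of $C_\sigma$: since $C(u_1,u_2,\cdot)$ is the quadratic variation of $\widehat y(u_1)-\widehat y(u_2)$, it is constant after the coalescence time $\tau_{u_1,u_2}$, which forces
\[
\frac{1}{m_{\sigma_n}(u_1,t)}+\frac{1}{m_{\sigma_n}(u_2,t)}-\frac{2m_{\sigma_n}(u_1,u_2,t)}{m_{\sigma_n}(u_1,t)m_{\sigma_n}(u_2,t)}\;\longrightarrow\;0
\]
for $t>\tau_{u_1,u_2}$. This says that the profiles $\varphi_{\sigma_n}(\widehat y_{\sigma_n}(u_1,t)-\widehat y_{\sigma_n}(\cdot,t))$ and $\varphi_{\sigma_n}(\widehat y_{\sigma_n}(u_2,t)-\widehat y_{\sigma_n}(\cdot,t))$ become close in $L_2$, which is precisely the ``rate'' information your argument is missing. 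A case analysis on the distance $\widehat y_{\sigma_n}(u_{\max},t)-\widehat y_{\sigma_n}(u_{\min},t)$ relative to $\sigma_n$ then delivers $\liminf_n m_{\sigma_n}(u,t)\ge m(u,t)$.
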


\begin{rem}
The Borel subset of $[0,1]\times [0,T]$ on which we have the convergence can depend on~$\omega$. 
\end{rem}

Before giving the proof of Lemma~\ref{lemme conv masse}, we give the following definition and state the following Lemma, which will be useful in the proof. 
Let us define in $L_1([0,1]\times [0,1],\mathcal C[0,T])$:
\begin{align*}
\label{def C sigma}
C_\sigma(u_1,u_2,t)
&:=\int_0^t \left( \frac{1}{m_\sigma(u_1,s)}+\frac{1}{m_\sigma(u_2,s)}-\frac{2m_\sigma(u_1,u_2,s)}{m_\sigma(u_1,s)m_\sigma(u_2,s)}  \right) \mathrm ds.
\end{align*}
\begin{lemme}
\label{lemme conv des crochets}
There exists a sequence $(\sigma_n)$ in $\Q_+$ tending to $0$ such that 
$(y_{\sigma_n},C_{\sigma_n})_{n \in \N}$ converges in distribution to $(y,C)$ in $L_2([0,1],\mathcal C[0,T])\times L_1([0,1]\times [0,1],\mathcal C[0,T])$. 
For almost every $u_1,u_2 \in [0,1]$, the limit process $C(u_1,u_2,\cdot)$ is the quadratic variation of $y(u_1,\cdot)-y(u_2,\cdot)$ relatively to the filtration generated by $Y$ and $C$. 
\end{lemme}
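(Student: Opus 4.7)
Plan for the proof of Lemma~\ref{lemme conv des crochets}.

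\emph{Tightness of $(C_\sigma)$.} For almost every $(u_1,u_2)$ the map $t \mapsto C_\sigma(u_1,u_2,t)$ is continuous, non-decreasing and vanishes at $0$, so $\|C_\sigma(u_1,u_2,\cdot)\|_{\mathcal C[0,T]} = C_\sigma(u_1,u_2,T)$. By Cauchy--Schwarz and the arithmetic-geometric mean inequality, the integrand in the definition of $C_\sigma$ is non-negative and bounded by $m_\sigma^{-1}(u_1,s) + m_\sigma^{-1}(u_2,s)$, so Lemma~\ref{lemme inverse masse approchee sigma} applied with $\beta = 1$ (admissible since $p>2$) gives the uniform $L_1$-bound
\[
\E{\int_0^1\!\!\int_0^1 \|C_\sigma(u_1,u_2,\cdot)\|_{\mathcal C[0,T]}\,\mathrm du_1\mathrm du_2}\leq 2\,\E{\int_0^1\!\!\int_0^T \frac{\mathrm ds\,\mathrm du}{m_\sigma(u,s)}}\leq C.
\]
I would upgrade this to tightness in $L_1([0,1]^2,\mathcal C[0,T])$ via a Simon-type compactness criterion analogous to Proposition~\ref{critere simon}. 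The only non-trivial verification is the spatial equi-continuity estimate
\begin{align*}
\lim_{h \to 0^+}\sup_{\sigma \in \Q_+}\E{\int_0^{1-h}\!\!\int_0^1 \|C_\sigma(u_1+h,u_2,\cdot)-C_\sigma(u_1,u_2,\cdot)\|_{\mathcal C[0,T]}\,\mathrm du_1\mathrm du_2}=0
\end{align*}
(and its symmetric analogue in $u_2$). Thanks again to monotonicity in $t$, the sup-norm collapses to the value at $T$, and the resulting increment is controlled by decomposing $m_\sigma(u_1+h,u_2,s) - m_\sigma(u_1,u_2,s)$, exploiting the Lipschitz regularity of $\varphi_\sigma^2$, the $L_p$-modulus estimate~\eqref{h polynomial} for $y_\sigma$, and Lemma~\ref{lemme inverse masse approchee sigma} with some $\beta > 1$ to absorb the denominators.

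\emph{Joint convergence.} Since $(y_\sigma)_{\sigma \in \Q_+}$ is already tight in $L_2([0,1],\mathcal C[0,T])$ by Corollary~\ref{coro tension y sig eps}, joint tightness of $(y_\sigma,C_\sigma)$ in the product space follows immediately, and Prokhorov's theorem yields a subsequence $(\sigma_n)$ along which $(y_{\sigma_n},C_{\sigma_n})$ converges in distribution to a limit $(y,C)$, the first coordinate having the law of the process denoted $y$ in Paragraph~\ref{par conv sigma 0}. Skorohod's representation theorem (the product space is Polish) then provides almost surely convergent versions on a common probability space, which we keep denoting $(y_{\sigma_n},C_{\sigma_n})$ and $(y,C)$.

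\emph{Identification of $C$.} For each $n$ and almost every $(u_1,u_2)$, the process $M^n_t := y_{\sigma_n}(u_1,t)-y_{\sigma_n}(u_2,t)$ is a continuous square-integrable martingale with $\langle M^n,M^n\rangle_t = C_{\sigma_n}(u_1,u_2,t)$; this is obtained from property~$(B3)$ of Proposition~\ref{proprietes B1-B3} by polarisation, or equivalently by passing $\eps \to 0$ in Corollary~\ref{coro crochet}. Consequently, for every $l \geq 1$, every $0 \leq s_1 \leq \dots \leq s_l \leq s \leq t$ and every bounded continuous $f_l$ evaluated at the history of $(y_{\sigma_n},C_{\sigma_n})$ up to times $s_1,\dots,s_l$,
\begin{align*}
\E{\bigl((M^n_t)^2 - (M^n_s)^2 - C_{\sigma_n}(u_1,u_2,t) + C_{\sigma_n}(u_1,u_2,s)\bigr) f_l}=0.
\end{align*}
Passing to the limit $n \to \infty$ along the Skorohod versions demands uniform integrability, which I would extract from Corollary~\ref{corollaire borne L_2 sig} for $(M^n_t)^2$ and from Cauchy--Schwarz combined with Lemma~\ref{lemme inverse masse approchee sigma} with some $\beta > 1$ for $C_{\sigma_n}(u_1,u_2,t)$. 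Integrating against an arbitrary $\psi \in L_\infty([0,1]^2)$, applying Fubini, and invoking the continuity of $t \mapsto y(u,t)$ from Proposition~\ref{propriete B4} identifies $C(u_1,u_2,\cdot)$ as the quadratic variation of $y(u_1,\cdot)-y(u_2,\cdot)$ relative to the filtration generated by $Y$ and $C$.

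\emph{Main obstacle.} The hard step is the spatial equi-continuity estimate for $C_\sigma$: the integrand degenerates wherever the mass vanishes, so controlling its spatial increments forces a delicate interplay between Cauchy--Schwarz, the moment bound of Lemma~\ref{lemme inverse masse approchee sigma} for some $\beta > 1$, and the $L_p$-regularity of $y_\sigma$ in the $u$-variable. Once tightness is secured the Skorohod coupling and the identification of the limit as a quadratic variation are standard uniform-integrability arguments.
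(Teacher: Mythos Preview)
Your overall architecture is correct and matches the paper's, but the spatial equi-continuity step contains two genuine gaps.

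First, the claim that ``the sup-norm collapses to the value at $T$'' for the \emph{difference} $C_\sigma(u_1+h,u_2,\cdot)-C_\sigma(u_1,u_2,\cdot)$ is false: each summand is non-decreasing in $t$, but their difference is not, so $\sup_{t\leq T}|C_\sigma(u_1+h,u_2,t)-C_\sigma(u_1,u_2,t)|$ is not controlled by the value at $T$. Second, even if one could reduce to a fixed time, decomposing $m_\sigma(u_1+h,u_2,s)-m_\sigma(u_1,u_2,s)$ via the Lipschitz regularity of $\varphi_\sigma$ or $\varphi_\sigma^2$ is hopeless uniformly in $\sigma$: the Lipschitz constant is of order $\sigma^{-1}$, so the resulting bound blows up as $\sigma\to 0$.

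The paper avoids both problems by exploiting the quadratic-variation structure rather than the explicit formula for $C_\sigma$. Writing
\[
C_\sigma(u_1+h,u_2,t)-C_\sigma(u_1,u_2,t)
=\langle y_\sigma(u_1+h)-y_\sigma(u_1),\,y_\sigma(u_1+h)-y_\sigma(u_2)\rangle_t
+\langle y_\sigma(u_1)-y_\sigma(u_2),\,y_\sigma(u_1+h)-y_\sigma(u_1)\rangle_t,
\]
the Kunita--Watanabe inequality bounds each cross-variation by a product of square roots of genuine (hence non-decreasing) quadratic variations, so the supremum over $t$ is legitimately dominated by the values at $T$. After Cauchy--Schwarz over $(u_1,u_2)$ this reduces to controlling $\E{\int_0^{1-h}\langle y_\sigma(u_1+h)-y_\sigma(u_1)\rangle_T\,\mathrm du_1}=\E{\int_0^{1-h}(y_\sigma(u_1+h,T)-y_\sigma(u_1,T)-g(u_1+h)+g(u_1))^2\,\mathrm du_1}$, which is handled directly by~\eqref{ineg g carré} and (the $\eps\to 0$ limit of)~\eqref{h polynomial}, giving a power of $h$ uniform in $\sigma$. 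A Borel--Cantelli argument then passes from dyadic $h$ to arbitrary small $h$. You should also not dismiss the temporal equi-continuity (criterion $(H1)$) as trivial: the paper handles it via Aldous' criterion and Lemma~\ref{lemme inverse masse approchee sigma} with $\beta>1$, along the lines of Proposition~\ref{propo critere K2}.

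Your identification step is essentially the paper's argument; just note that the martingale identity should be written in integrated form against $\psi_1(u_1)\psi_2(u_2)$ from the outset (as the paper does), since pointwise in $(u_1,u_2)$ the processes $y_\sigma(u_i,\cdot)$ are only known to be martingales for almost every $u_i$.
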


We start by giving the proof of Lemma~\ref{lemme conv masse} and then we give the proof of Lemma~\ref{lemme conv des crochets}. 
\begin{proof}[Proof (Lemma~\ref{lemme conv masse})]
By Skorohod's representation Theorem, we deduce from Lemma~\ref{lemme conv des crochets} that there exists a sequence $(\widehat{y}_{\sigma_n},\widehat{C}_{\sigma_n})_n$ and a random variable $(\widehat{y},\widehat{C})$ defined on the same probability space such that 
\begin{itemize}
\item for all $n \in \N$,  $(\widehat{y}_{\sigma_n},\widehat{C}_{\sigma_n})$ and $(y_{\sigma_n},C_{\sigma_n})$ (resp. $(\widehat{y},\widehat{C})$ and $(y,C)$) have same law,
\item the sequence $(\widehat{y}_{\sigma_n},\widehat{C}_{\sigma_n})_n$ converges almost surely to $(\widehat{y},\widehat{C})$ in the space $L_2([0,1],\mathcal C[0,T])\times L_1([0,1]\times [0,1],\mathcal C[0,T])$.
\end{itemize}

We apply to $(\widehat{y}_{\sigma_n})_n$ the argument in the proof of Lemma~\ref{lemme limsup masse} and we prove that, up to extracting another subsequence (independent of $\omega$), for almost every $u \in [0,1]$ and almost surely, $\limsup_{n \to \infty} \widehat{m}_{\sigma_n}(u,t) \leq \widehat{m}(u,t)$ for every $t \in [0,T]$.

For each $t \in [0,T]$, we may suppose that for each $n \in \N$, $\widehat{y}_{\sigma_n}(\cdot,t)$ is a càdlàg function, so that for every $u \in (0,1)$,
\[\widehat{m}_{\sigma_n}(u,t)=\int_0^1 \varphi_{\sigma_n}^2 (\widehat{y}_{\sigma_n}(u,t)-\widehat{y}_{\sigma_n}(v,t))\mathrm dv=\lim_{p \to \infty} p\int_u^{(u+\frac{1}{p})\wedge 1} \!\!\int_0^1 \varphi_{\sigma_n}^2 (\widehat{y}_{\sigma_n}(u',t)-\widehat{y}_{\sigma_n}(v,t))\mathrm dv \mathrm du'\] is a measurable function with respect to $\widehat{y}_{\sigma_n}(\cdot,t)$. We deduce that $(\widehat{m}_{\sigma_n}(u,t),\widehat{y}_{\sigma_n})$ has the same law as $(m_{\sigma_n}(u,t),y_{\sigma_n})$ for every $u \in (0,1)$.

From now on, we forget the hats in our notation. We may suppose that~$y$ is the version in $\mathcal D((0,1),\mathcal C[0,T])$ given by Proposition~\ref{propriete B4}.  Let $\Omega'$  be such that $\P{\Omega'}=1$ and for all $\omega \in \Omega'$, we have the following convergences in $\R$:
\begin{align}
\label{conv y sigma}
\int_0^1 \sup_{t \leq T} |y_{\sigma_n}(u,t)-y(u,t)|^2(\omega)\mathrm du &\underset{n \to \infty}{\longrightarrow} 0,\\
\label{conv C sigma}
\int_0^1 \!\!\int_0^1 \sup_{t \leq T} |C_{\sigma_n}(u_1,u_2,t)-C(u_1,u_2,t)|(\omega)\mathrm du_1\mathrm du_2 &\underset{n \to \infty}{\longrightarrow} 0.
\end{align}

Fix $\omega \in \Omega'$. 
Thanks to~\eqref{conv y sigma}, we already have the convergence of $(y_{\sigma_n}(\omega))_{n}$ to $y(\omega)$ in $L_2([0,1],\mathcal C[0,T])$. 
It remains to show that for almost every $(u,t)\in [0,1]\times [0,T]$, $(m_{\sigma_n}(\omega,u,t))_{n}$ converges to $m(\omega,u,t)=\int_0^1 \mathds 1_{\{ y(u,t)=y(v,t) \}} (\omega)\mathrm dv$. 
We already know that for every $\omega \in \Omega'$, every $t \in [0,T]$ and almost every $u \in (0,1)$, $\limsup_{n \to \infty} m_{\sigma_n}(\omega,u,t) \leq m(\omega, u,t)$.

\paragraph{Proof of inequality: }$\liminf_{n \to \infty} m_{\sigma_n}(\omega,u,t) \geq m(\omega,u,t)$.

By the coalescence property given by Proposition~\ref{propriete B4}, for every $u_1,u_2$ and for all $t> \tau_{u_1,u_2}$, $y(u_1,t)=y(u_2,t)$. Therefore, since $C(u_1,u_2,\cdot)$ is the  quadratic variation of $y(u_1,\cdot)-y(u_2,\cdot)$, $t \mapsto C(u_1,u_2,t)$ remains constant on $(\tau_{u_1,u_2},T)$. Thus we obtain:
\begin{multline*}
\left|\int_0^1 \!\!\int_0^1\!\! \int_{\tau_{u_1,u_2}}^T \left( \frac{1}{m_{\sigma_n}(u_1,t)}+\frac{1}{m_{\sigma_n}(u_2,t)}-\frac{2m_{\sigma_n}(u_1,u_2,t)}{m_{\sigma_n}(u_1,t)m_{\sigma_n}(u_2,t)}  \right)  \mathrm dt \mathrm du_1\mathrm du_2 \right| \\
\begin{aligned}
&=\left|\int_0^1\!\!\int_0^1(C_{\sigma_n}(u_1,u_2,T)-C_{\sigma_n}(u_1,u_2,\tau_{u_1,u_2})) \mathrm du_1\mathrm du_2 \right|\\
&=\left |\int_0^1\!\!\int_0^1(C_{\sigma_n}(u_1,u_2,T)-C(u_1,u_2,T)+C(u_1,u_2,\tau_{u_1,u_2})-C_{\sigma_n}(u_1,u_2,\tau_{u_1,u_2})) \mathrm du_1\mathrm du_2\right| \\
&\leq 2 \int_0^1 \!\!\int_0^1 \sup_{t \leq T} |C_{\sigma_n}(u_1,u_2,t)-C(u_1,u_2,t)|\mathrm du_1\mathrm du_2.
\end{aligned}
\end{multline*}
By~\eqref{conv C sigma}, the latter term tends to $0$. 
We also recall that 
\begin{multline*}
\frac{1}{m_{\sigma_n}(u_1,t)}+\frac{1}{m_{\sigma_n}(u_2,t)}-\frac{2m_{\sigma_n}(u_1,u_2,t)}{m_{\sigma_n}(u_1,t)m_{\sigma_n}(u_2,t)}\\
=\frac{\int_0^1 \left| \varphi_{\sigma_n}(y_{\sigma_n}(u_1,t_0)-y_{\sigma_n}(v,t_0))-\varphi_{\sigma_n}(y_{\sigma_n}(u_2,t_0)-y_{\sigma_n}(v,t_0)) \right|^2 \mathrm dv}{m_{\sigma_n}(u_1,t_0)m_{\sigma_n}(u_2,t_0)}
\end{multline*}
is non-negative. 
We define $f_{\sigma_n}(t,u_1,u_2):=\!\left( \frac{1}{m_{\sigma_n}(u_1,t)}+\frac{1}{m_{\sigma_n}(u_2,t)}-\frac{2m_{\sigma_n}(u_1,u_2,t)}{m_{\sigma_n}(u_1,t)m_{\sigma_n}(u_2,t)}  \right)\! \mathds 1_{ \{t\geq \tau_{u_1,u_2}\}}$. For every $\omega \in \Omega'$, 
$\int_0^T \int_0^1 \int_0^1 f_{\sigma_n}(t,u_1,u_2)(\omega) \mathrm du_1 \mathrm du_2 \mathrm dt\underset{n \to \infty}{\longrightarrow} 0 $. 
Therefore, for every $\eps >0$, using Markov's inequality as in~\eqref{Markov}, and since $f_{\sigma_n} \geq 0$: 
\begin{multline*}
\mathbb P \otimes \frac{1}{T} \Leb|_{[0,T]} \otimes \Leb|_{[0,1]}\otimes \Leb|_{[0,1]}\left\{ (\omega,t,u_1,u_2): f_{\sigma_n}(t,u_1,u_2)(\omega) \geq \eps \right\}
\\ \leq \E{1 \wedge \frac{1}{\eps T} \int_0^T\!\! \int_0^1 \!\!\int_0^1  f_{\sigma_n}(t,u_1,u_2) \mathrm du_1 \mathrm du_2 \mathrm dt},
\end{multline*}
which tends to $0$ when $n \to \infty$, whence we obtain a convergence in probability with respect to the probability space $\Omega \times [0,T] \times [0,1] \times [0,1]$. 
Up to extracting another subsequence (independent of the choice of $\omega$), we deduce the existence of  an almost sure event on which $(f_{\sigma_n})$ converges to~$0$. 

Let $\Omega''$, $\P{\Omega''}=1$, be such that for every $\omega \in \Omega''$, we have $f_{\sigma_n}(t,u_1,u_2)(\omega) \to 0$ for almost every $(t,u_1,u_2) \in [0,T] \times [0,1] \times [0,1]$. Fix $\omega \in \Omega''$. Let us consider a Borel set $\B=\B(\omega)$ in $[0,T]$, $\Leb(\B)=T$, such that for every $t\in\B$, $f_{\sigma_n}(t,u_1,u_2) \to 0$ for almost every $(u_1,u_2) \in [0,1] \times [0,1]$. 

Let $t_0 \in \B$. 
Let us consider a Borel set $\A$ (depending on $\omega$ and $t_0$) of measure $1$ such that for all $u_1,u_2 \in \A$,
\begin{align}
\label{f sigma}
f_{\sigma_n}(t_0,u_1,u_2) \underset{n \to \infty}{\longrightarrow} 0.
\end{align}
Let $u \in \A$. 
We want to prove that $\liminf_{n \to \infty} m_{\sigma_n}(u,t_0)\geq m(u,t_0)$. 
Define $u_{\sup} = \sup \{v \in [0,1]: y(v,t_0)=y(u,t_0)\}$ and $u_{\inf}$ the infimum of that set. Since $v \mapsto y(v,t_0)$ is non-decreasing, $m(u,t_0)=u_{\sup}-u_{\inf}$.
If $m(u,t_0)=0$, then we clearly have: 
$\liminf_{n \to \infty} m_{\sigma_n}(u,t_0) \geq m(u,t_0)$.
Suppose now that $m(u,t_0)>0$. Choose $\delta >0$ such that $\delta <\frac{u_{\sup}-u_{\inf}}{6}$. 
Let $\umax \in \A  \cap (u_{\sup}-\delta,u_{\sup})$, $\umin \in \A\cap (u_{\inf},u_{\inf}+\delta)$ and $\umed \in \A \cap \left( \frac{\umin+\umax}{2}-\delta,\frac{\umin+\umax}{2}+\delta \right)$. 
We have: $\umax-\umin \geq u_{\sup}-u_{\inf}-2\delta=m(u,t_0)-2\delta$ and by definition of $u_{\sup}$ and $u_{\inf}$ and since $\umax$, $\umin$ and $\umed$ belongs to $(u_{\inf},u_{\sup})$, we have $t_0 \geq \tau_{u_1,u_2}$ for $(u_1,u_2)=(u,\umax)$, $(u,\umin)$, $(\umax,\umin)$ and $(u,\umed)$.

We deduce from~\eqref{f sigma} and the fact that $u, \umax, \umin, \umed$ belongs to $\A$ that there exists $N$ such that for each $n \geq N$, $f_{\sigma_n}(t_0,u_1,u_2)\leq \delta$ for $(u_1,u_2)=(u,\umax)$, $(u,\umin)$, $(\umax,\umin)$ and $(u,\umed)$. It implies that for each $n \geq N$, 
\begin{equation}
\frac{\int_0^1 \left| \varphi_{\sigma_n}(y_{\sigma_n}(u_1,t_0)-y_{\sigma_n}(v,t_0))-\varphi_{\sigma_n}(y_{\sigma_n}(u_2,t_0)-y_{\sigma_n}(v,t_0)) \right|^2 \mathrm dv}{m_{\sigma_n}(u_1,t_0)m_{\sigma_n}(u_2,t_0)}= f_{\sigma_n}(t_0,u_1,u_2) \leq \delta.
\label{eq crochet inf delta}
\end{equation}
Since the mass $m_{\sigma_n}$ is bounded by $1$, we deduce in particular that for all $n \geq N$, 
\begin{equation}
\int_0^1 \left| \varphi_{\sigma_n}(y_{\sigma_n}(u_1,t_0)-y_{\sigma_n}(v,t_0))-\varphi_{\sigma_n}(y_{\sigma_n}(u_2,t_0)-y_{\sigma_n}(v,t_0)) \right|^2 \mathrm dv\leq \delta.
\label{eq intég inf delta}
\end{equation}
Inequalities~\eqref{eq crochet inf delta} and~\eqref{eq intég inf delta} are satisfied for $(u_1,u_2)=(u,\umax)$, $(u,\umin)$, $(\umax,\umin)$ and $(u,\umed)$.

Let $n \geq N$ and $d:=y_{\sigma_n}(\umax,t_0)-y_{\sigma_n}(\umin,t_0) \geq 0$. We distinguish three cases:
\begin{description}
\item[\textbullet \,  $d \geq {\sigma_n}$:] Recall that $\varphi_{\sigma_n}$ is equal to $0$ on $[\frac{{\sigma_n}}{2},+\infty)$. Thus for all $v\in [0,1]$, $\varphi_{\sigma_n}(y_{\sigma_n}(\umax,t_0)-y_{\sigma_n}(v,t_0))$ and $\varphi_{\sigma_n}(y_{\sigma_n}(\umin,t_0)-y_{\sigma_n}(v,t_0))$ can not be simultaneously different from $0$ because $d \geq {\sigma_n}$. Therefore, selecting $(u_1,u_2)=(\umax,\umin)$, inequality~(\ref{eq crochet inf delta}) implies:
 \begin{equation*}
\frac{\int_0^1  \varphi_{\sigma_n}^2(y_{\sigma_n}(\umax,t_0)-y_{\sigma_n}(v,t_0)) \mathrm dv+\int_0^1  \varphi_{\sigma_n}^2(y_{\sigma_n}(\umin,t_0)-y_{\sigma_n}(v,t_0)) \mathrm dv}{m_{\sigma_n}(\umax,t_0)m_{\sigma_n}(\umin,t_0)}\leq \delta,
\end{equation*}
that is: 
 \begin{equation*}
\frac{1}{m_{\sigma_n}(\umin,t_0)}+\frac{1}{m_{\sigma_n}(\umax,t_0)}\leq \delta.
\end{equation*}
Thus, we obtain $\delta \geq 2$, which is excluded by definition of $\delta$. 
\item[\textbullet \,  $d \leq {\sigma_n}-\eta$:]
Recall that $\eta$ is chosen so that $\eta < \frac{{\sigma_n}}{3}$. 
Define the two following sets
\begin{align*}
\vmax &=\{ v \in [\umin,\umax]: y_{\sigma_n}(\umax,t_0)-y_{\sigma_n}(v,t_0) \leq \textstyle\frac{{\sigma_n}-\eta}{2} \},\\
\vmin &=\{ v \in [\umin,\umax]: y_{\sigma_n}(\umax,t_0)-y_{\sigma_n}(v,t_0) > \textstyle \frac{{\sigma_n}-\eta}{2} \}.
\end{align*}
Clearly, we have: $\Leb(\vmax)+\Leb(\vmin)=\umax-\umin\geq m(u,t_0)-2\delta$. 
Recall that $\varphi_{\sigma_n}$ is equal to $1$ on $[0,\frac{{\sigma_n}-\eta}{2}]$. Thus, for each $v \in \vmax$, $\varphi_{\sigma_n}(y_{\sigma_n}(\umax,t_0)-y_{\sigma_n}(v,t_0))=1$, and for each $v \in \vmin$, using $d \leq {\sigma_n}-\eta$, $\varphi_{\sigma_n}(y_{\sigma_n}(\umin,t_0)-y_{\sigma_n}(v,t_0))=1$. 
We have
\begin{equation}
\textstyle m_{\sigma_n}(u,t_0) \geq \int_{\vmax} \varphi_{\sigma_n}^2 (y_{\sigma_n}(u,t_0)-y_{\sigma_n}(v,t_0))\mathrm dv
+\int_{\vmin} \varphi_{\sigma_n}^2 (y_{\sigma_n}(u,t_0)-y_{\sigma_n}(v,t_0))\mathrm dv.
\label{eq vmax vmin}
\end{equation}

We can deduce from inequality~(\ref{eq intég inf delta}) applied to $(u_1,u_2)=(u,\umax)$ that:
\[
\textstyle \int_{\vmax} \left| \varphi_{\sigma_n}(y_{\sigma_n}(u,t_0)-y_{\sigma_n}(v,t_0))-\varphi_{\sigma_n}(y_{\sigma_n}(\umax,t_0)-y_{\sigma_n}(v,t_0)) \right|^2 \mathrm dv\leq \delta.
\]
By Minkowski's inequality $\left|  \|f_1\|_{L_2}-\|f_2\|_{L_2} \right| \leq \|f_1-f_2\|_{L_2}$, we obtain: 
\[
\left|\left(\int_{\vmax}  \varphi_{\sigma_n}^2(y_{\sigma_n}(u,t_0)-y_{\sigma_n}(v,t_0)) \mathrm dv\right)^{1/2}-\Leb(\vmax)^{1/2} \right|\leq \sqrt{\delta},
\]
whence
\[
 \left|\int_{\vmax}  \!\!\!\!\varphi_{\sigma_n}^2(y_{\sigma_n}(u,t_0)-y_{\sigma_n}(v,t_0)) \mathrm dv -\Leb(\vmax)\right|
\leq (m^{1/2}_{\sigma_n}(u,t_0)+\Leb(\vmax)^{1/2})\sqrt{\delta}\leq 2 \sqrt{\delta}.
\]
Similarly, applying inequality~(\ref{eq intég inf delta}) to $(u,\umin)$, we obtain:
\[
\textstyle \left|\int_{\vmin}  \varphi_{\sigma_n}^2(y_{\sigma_n}(u,t_0)-y_{\sigma_n}(v,t_0)) \mathrm dv -\Leb(\vmin)\right|\leq 2 \sqrt{\delta}.
\]
Thus, by inequality~(\ref{eq vmax vmin}), we conclude:
\begin{align*}
m_{\sigma_n}(u,t_0) &\geq \Leb(\vmax)+\Leb(\vmin) -4\sqrt{\delta} \\
& \geq m(u,t_0)-2\delta-4\sqrt{\delta}.
\end{align*}

\item[\textbullet \,  $d \in ({\sigma_n}-\eta,{\sigma_n})$:]
We now define three distinct sets
\begin{align*}
 \vmax &=\{ v \in [\umin,\umax]: y_{\sigma_n}(\umax,t_0)-y_{\sigma_n}(v,t_0) < \textstyle\frac{{\sigma_n}-\eta}{2} \},\\
\vmed &=\{ v \in [\umin,\umax]: y_{\sigma_n}(\umax,t_0)-y_{\sigma_n}(v,t_0) \in \textstyle[\frac{{\sigma_n}-\eta}{2} ,\frac{{\sigma_n}+\eta}{2} ] \},\\
\vmin &=\{ v \in [\umin,\umax]: y_{\sigma_n}(\umax,t_0)-y_{\sigma_n}(v,t_0) > \textstyle\frac{{\sigma_n}+\eta}{2} \}.
\end{align*}
By definition of those sets, and since $d \in ({\sigma_n}-\eta,{\sigma_n})$, we have 
\begin{align*}
\forall v \in \vmax,\,\,\,&\varphi_{\sigma_n}(y_{\sigma_n}(\umax,t_0)-y_{\sigma_n}(v,t_0))=1, \\
\forall v \in \vmin, \,\,\,&\varphi_{\sigma_n}(y_{\sigma_n}(\umin,t_0)-y_{\sigma_n}(v,t_0))=1.
\end{align*}

Moreover, we have $y_{\sigma_n}(\umax,t_0)-y_{\sigma_n}(\umed,t_0) \in [\frac{{\sigma_n}-\eta}{2} ,\frac{{\sigma_n}+\eta}{2} ]$. 

Indeed, if  $y_{\sigma_n}(\umax,t_0)-y_{\sigma_n}(\umed,t_0) $ was greater than $\frac{{\sigma_n}+\eta}{2}$, 
we would have, for all $v \in [\umin,\umed]$, $\varphi_{\sigma_n}(y_{\sigma_n}(\umax,t_0)-y_{\sigma_n}(v,t_0))=0$ and $\varphi_{\sigma_n}(y_{\sigma_n}(\umin,t_0)-y_{\sigma_n}(v,t_0))=1$. By inequality~(\ref{eq intég inf delta}) applied to $(u_1,u_2)=(\umax,\umin)$, we would deduce that: 
\begin{align*}
\delta &\geq \int_0^1 \left| \varphi_{\sigma_n}(y_{\sigma_n}(\umax,t_0)-y_{\sigma_n}(v,t_0))-\varphi_{\sigma_n}(y_{\sigma_n}(\umin,t_0)-y_{\sigma_n}(v,t_0)) \right|^2 \mathrm dv \\
&\geq \int_{\umin}^{\umed} \mathrm dv =\umed-\umin \geq \frac{\umax-\umin}{2}-\delta. 
\end{align*}

However, since $\delta < \frac{u_{\sup}-u_{\inf}}{6}$ and $\umax-\umin \geq u_{\sup}-u_{\inf} -2\delta$, we have $\umax-\umin > 4 \delta$, which is in contradiction with the above inequality. 
Similarly, $y_{\sigma_n}(\umax,t_0)-y_{\sigma_n}(\umed,t_0) $ can not be smaller than $\frac{{\sigma_n}-\eta}{2}$, otherwise  $y_{\sigma_n}(\umed,t_0)-y_{\sigma_n}(\umin,t_0)$ would be greater than $\frac{{\sigma_n}+\eta}{2}$ and we would obtain the same contradiction. 
Therefore, $y_{\sigma_n}(\umax,t_0)-y_{\sigma_n}(\umed,t_0) \in [\frac{{\sigma_n}-\eta}{2} ,\frac{{\sigma_n}+\eta}{2} ]$, which implies that $\umed \in \vmed$ and in particular that
\begin{align*}
\forall v \in \vmed,\,\,\,&\varphi_{\sigma_n}(y_{\sigma_n}(\umed,t_0)-y_{\sigma_n}(v,t_0))=1.
\end{align*}
As in the previous case, we deduce that
\begin{align*}
m_{\sigma_n}(u,t_0) &\geq \Leb(\vmax)+\Leb(\vmed)+\Leb(\vmin) -6\sqrt{\delta} \\
&= \umax-\umin -6\sqrt{\delta} \\
& \geq m(u,t_0)-2\delta-6\sqrt{\delta}.
\end{align*}
\end{description}

Actually, putting all the cases together, we have proved that for each $n \geq N$, $m_{\sigma_n}(u,t_0)  \geq m(u,t_0)-2\delta-6\sqrt{\delta}$. 
Hence, for all $\delta< \frac{u_{\sup}-u_{\inf}}{6}$, we have:
\[
\liminf_{n \to \infty} m_{\sigma_n}(u,t_0)\geq m(u,t_0) -2\delta -6\sqrt{\delta}.
\]
By letting $\delta$ converge to $0$, we have for every $t_0 \in \B$,  $\liminf_{n \to \infty} m_{\sigma_n}(u,t_0)\geq m(u,t_0)$ for every $u \in \A$.  
Therefore, there exists a subsequence $(\sigma_n)$ such that for almost every $\omega$, for almost every $t \in [0,T]$ and almost every $u\in [0,1]$, 
$m_{\sigma_n}(\omega,u,t) \to_{ n \to \infty} m(\omega,u,t)$.
\end{proof}

It remains to give the proof of Lemma~\ref{lemme conv des crochets}.
\begin{proof}[Proof (Lemma~\ref{lemme conv des crochets})]

The first step will be to prove that the sequence $(y_\sigma,C_\sigma)_{\sigma\in \Q_+}$ is tight in $L_2([0,1],\mathcal C[0,T])\times L_1([0,1]\times [0,1],\mathcal C[0,T])$. We have already proved that $(y_\sigma)_{\sigma\in \Q_+}$ is tight in $L_2([0,1],\mathcal C[0,T])$. We will use a tightness criterion to prove that the sequence $(C_\sigma)_{\sigma\in \Q_+}$ is tight in $L_1([0,1]\times [0,1],\mathcal C[0,T])$.
The space changed in comparison with $L_2([0,1], \mathcal C[0,T])$, but the criterion remains very semilar to the one of Proposition~\ref{prop crit tight}. 
\medskip

We have, similarly to Proposition~\ref{prop crit tight}, three criteria to prove. We want to show the following criterion:

\textbf{ First criterion: }
Let $\delta >0$. There is $M>0$ such that for all $\sigma$ in $\Q_+$, $\P{\|C_\sigma\| \geq M}\leq \delta$, where $\|C_\sigma\|:= \int_0^1\int_0^1 \sup_{t\leq T} |C_\sigma(u_1,u_2,t)| \mathrm du_1\mathrm du_2$. 

That statement follows from Markov's inequality and the existence of a constant $C$ independent of $\sigma$ such that:
\begin{multline*}
\E{\int_0^1\!\!\int_0^1 \!\!\sup_{t\leq T} |C_\sigma(u_1,u_2,t)| \mathrm du_1\mathrm du_2}
\leq 2 \E{\int_0^1 \!\! \int_0^T \frac{\mathrm dt  \mathrm du_1}{m_\sigma(u_1,t)}  }
+2 \E{\int_0^1 \!\! \int_0^T \frac{\mathrm dt  \mathrm du_2}{m_\sigma(u_2,t)} }
\leq C. 
\end{multline*}
The existence of $C$ is a consequence of Lemma~\ref{lemme inverse masse approchee sigma}. 
\medskip

Then, we prove the following criterion:

\textbf{Second criterion: }
Let $\delta >0$. For each $k \geq 1$, there exists $\eta_k>0 $ such that for all $\sigma$  in~$\Q_+$, 
\begin{align*}
\P{\int_0^1 \!\! \int_0^1 \sup_{|t_2-t_1|<\eta_k} |C_\sigma(u_1,u_2,t_2)-C_\sigma(u_1,u_2,t_1)| \mathrm du_1\mathrm du_2 \geq \frac{1}{k}}\leq \frac{\delta}{2^k}. 
\end{align*}

The proof is very close to Proposition~\ref{propo critere K2}. We start by defining for every $u_1$, $u_2 \in (0,1)$: $K_1(u_1,u_2):=\E{\|C_\sigma(u_1,u_2,\cdot)\|_{\mathcal C[0,T]}}$ and $K_2(u_1):=\E{\int_0^T \frac{1}{m^\beta_\sigma(u_1,s)}\mathrm ds}$. Fix $\delta>0$. There exists $C>0$ such that $\int_0^1\int_0^1 \mathds 1_{\{K_1(u_1,u_2) \geq C\}} \mathrm du_1\mathrm du_2 \leq \delta$ and $\int_0^1 \mathds 1_{\{K_2(u) \geq C\}} \mathrm du \leq \delta$. 
Define the following set $K:=\{ (u_1,u_2): K_1(u_1,u_2) \leq C, K_2(u_1) \leq C, K_2(u_2) \leq C\}$. 

By Aldous' tightness criterion, the collection $(C_\sigma(u_1,u_2,\cdot))_{\sigma \in \Q_+, (u_1,u_2)\in K}$ is tight in $\mathcal C[0,T]$. This fact relies on the following inequality, where $ \eta >0$ and $\tau$ is a stopping time for $C_\sigma(u_1,u_2,\cdot)$:
\begin{multline*}
\E{|C_\sigma(u_1,u_2,\tau+\eta)-C_\sigma(u_1,u_2,\tau)|}\\
\begin{aligned}
&=\E{\left|\int_\tau^{\tau+\eta} \left( \frac{1}{m_\sigma(u_1,s)}+\frac{1}{m_\sigma(u_2,s)}-\frac{2m_\sigma(u_1,u_2,s)}{m_\sigma(u_1,s)m_\sigma(u_2,s)} \right) \mathrm ds\right|}\\
&\leq 2\E{\int_\tau^{\tau+\eta} \left( \frac{1}{m_\sigma(u_1,s)}+\frac{1}{m_\sigma(u_2,s)} \right) \mathrm ds},
\end{aligned}
\end{multline*}
and the rest of the proof is an adaptation of the proof of Proposition~\ref{propo critere K2}. 

\medskip

Finally we show the third criterion: 

\textbf{Third criterion: }
Let $\delta>0$. For each $k\geq 1$, there is $H>0$ such that for all $\sigma$ in $\Q_+$, 
\begin{multline}
\mathbb P \Bigg[\forall h=(h_1,h_2), 0<h_1<H, 0<h_2<H,
\\ \int_0^{1-h_1}\!\!\int_0^{1-h_2}\!\! \sup_{t \leq T} |C_\sigma(u_1+h_1,u_2+h_2,t)-C_\sigma(u_1,u_2,t)|\mathrm du_1\mathrm du_2 \leq \frac{1}{k}\Bigg ]\geq 1-\frac{\delta}{2^k}.
\label{second criterion}
\end{multline}
Let $h_1>0$ and begin by estimating
\begin{align*}
E_\sigma:=\E{\int_0^{1-h_1} \!\! \int_0 ^1 \sup_{t \leq T} |C_\sigma(u_1+h_1,u_2,t)-C_\sigma(u_1,u_2,t)|\mathrm du_1\mathrm du_2 }.
\end{align*}
We compute (for the sake of simplicity, we will write from now on $y_\sigma(u)$ instead of $y_\sigma(u,\cdot)$ if there is no possibility of confusion):
\begin{align*}
C_\sigma(u_1+h_1,u_2,t)-C_\sigma(u_1,u_2,t)
&=\langle y_\sigma(u_1+h_1)-y_\sigma(u_2),y_\sigma(u_1+h_1)-y_\sigma(u_2)\rangle_t\\
&\quad -\langle y_\sigma(u_1)-y_\sigma(u_2),y_\sigma(u_1)-y_\sigma(u_2)\rangle_t\\
&=\langle y_\sigma(u_1+h_1)-y_\sigma(u_1),y_\sigma(u_1+h_1)-y_\sigma(u_2)\rangle_t\\
&\quad +\langle y_\sigma(u_1)-y_\sigma(u_2),y_\sigma(u_1+h_1)-y_\sigma(u_1)\rangle_t.
\end{align*}
Therefore, 
\begin{multline}
\label{ineg 2 sup}
\sup_{t\leq T} |C_\sigma(u_1+h_1,u_2,t)-C_\sigma(u_1,u_2,t)|\\
\begin{aligned}
&\leq \sup_{t\leq T} |\langle y_\sigma(u_1+h_1)-y_\sigma(u_1),y_\sigma(u_1+h_1)-y_\sigma(u_2)\rangle_t|\\
&\quad +\sup_{t\leq T} |\langle y_\sigma(u_1)-y_\sigma(u_2),y_\sigma(u_1+h_1)-y_\sigma(u_1)\rangle_t|.
\end{aligned}
\end{multline}
Then, we use Kunita-Watanabe's inequality on the first term of the right hand side:
\begin{multline*}
|\langle y_\sigma(u_1+h_1)-y_\sigma(u_1),y_\sigma(u_1+h_1)-y_\sigma(u_2)\rangle_t|
\\ \leq |\langle y_\sigma(u_1+h_1)-y_\sigma(u_1),y_\sigma(u_1+h_1)-y_\sigma(u_1)\rangle_t|^{\frac{1}{2}}\\
\shoveright{
 |\langle y_\sigma(u_1+h_1)-y_\sigma(u_2),y_\sigma(u_1+h_1)-y_\sigma(u_2)\rangle_t|^{\frac{1}{2}}}\\
 \leq |\langle y_\sigma(u_1+h_1)-y_\sigma(u_1),y_\sigma(u_1+h_1)-y_\sigma(u_1)\rangle_T|^{\frac{1}{2}}\\
 |\langle y_\sigma(u_1+h_1)-y_\sigma(u_2),y_\sigma(u_1+h_1)-y_\sigma(u_2)\rangle_T|^{\frac{1}{2}}.
\end{multline*}
By doing the same computation on the second term of the right hand side of~\eqref{ineg 2 sup}, by Cauchy-Schwarz inequality and by the substitution of $u_1+h_1$ by $u_1$, we obtain:
\begin{align*}
E_\sigma &\leq 2 \E{\int_0^{1-h_1}\!\!\int_0^1 \langle y_\sigma(u_1+h_1)-y_\sigma(u_1),y_\sigma(u_1+h_1)-y_\sigma(u_1)\rangle_T  \mathrm du_1\mathrm du_2}^{1/2}\\
&\quad \times \E{\int_0^1 \!\! \int_0^1 \langle y_\sigma(u_1)-y_\sigma(u_2),y_\sigma(u_1)-y_\sigma(u_2)\rangle_T  \mathrm du_1\mathrm du_2}^{1/2}\\
&\leq 2 \E{\int_0^{1-h_1} \langle y_\sigma(u_1+h_1)-y_\sigma(u_1),y_\sigma(u_1+h_1)-y_\sigma(u_1)\rangle_T  \mathrm du_1}^{1/2}C^{1/2},
\end{align*}
where $C$ is the same constant as the one in the first criterion. 
By Fubini's Theorem:
\begin{align*}
E_\sigma 
&\leq 2 C^{1/2}  \E{\int_0^{1-h_1}( y_\sigma(u_1+h_1,T)-y_\sigma(u_1,T) +g(u_1)-g(u_1+h_1))^2  \mathrm du_1}^{1/2}\\
&\leq 2 C^{1/2} \E{\int_0^{1-h_1}( y_\sigma(u_1+h_1,T)-y_\sigma(u_1,T))^2  \mathrm du_1}^{1/2}\\
&\quad + 2 C^{1/2}\E{\int_0^{1-h_1}(g(u_1+h_1)-g(u_1))^2  \mathrm du_1}^{1/2}.
\end{align*}
We recall inequalities~\eqref{ineg g carré} and~\eqref{h polynomial}.  Therefore, there are $\alpha>0$ and $C>0$  such that for each $\sigma \in \Q_+$ and each $h_1>0$,
\begin{align*}
E_\sigma \leq C h_1^\alpha. 
\end{align*}
We deduce that for each $n \in \mathds N$, by Markov's inequality, 
\begin{align*}
p_n:=\P{\int_0^{1-\frac{1}{2^n}} \!\! \int_0 ^1 \sup_{t \leq T} |C_\sigma(u_1+\textstyle \frac{1}{2^n},u_2,t)-C_\sigma(u_1,u_2,t)|\mathrm du_1\mathrm du_2\geq \frac{1}{2^{\frac{n\alpha}{2}}}}
&\leq 2^{\frac{n\alpha}{2}}C \left(\frac{1}{2^n}\right)^\alpha\\
&=\frac{C}{2^{\frac{n\alpha}{2}}}. 
\end{align*}
Since $\alpha >0$, $\sum_{n \geq 0} p_n$ converges. By Borel-Cantelli's Lemma, for each $k \geq 1$, there is $n_0 \geq 0$ such that, with probability greater than $1-\frac{\delta}{2^k}$, for all $n \geq n_0$, 
\begin{align*}
\int_0^{1-\frac{1}{2^n}} \!\! \int_0 ^1 \sup_{t \leq T} |C_\sigma(u_1+\textstyle \frac{1}{2^n},u_2,t)-C_\sigma(u_1,u_2,t)|\mathrm du_1\mathrm du_2\leq \displaystyle\frac{1}{2^{\frac{n\alpha}{2}}}.
\end{align*}
Furthermore, up to choosing a greater $n_0$, we can suppose that for all $n \geq n_0$, we also have:
\begin{align*}
 \int_0 ^1  \int_0^{1-\frac{1}{2^n}} \!\!\sup_{t \leq T} |C_\sigma(u_1,u_2+\textstyle \frac{1}{2^n},t)-C_\sigma(u_1,u_2,t)|\mathrm du_1\mathrm du_2
&\leq \displaystyle\frac{1}{2^{\frac{n\alpha}{2}}}. 
\end{align*}
We will now extend these estimations to more general perturbations. Let $h=(h_1,h_2)$ be such that $0<h_1 <\frac{1}{2^{n_0}}$, $0<h_2 <\frac{1}{2^{n_0}}$. We decompose:
\begin{multline}
\int_0^{1-h_1}\!\!\int_0^{1-h_2}\!\! \sup_{t \leq T} |C_\sigma(u_1+h_1,u_2+h_2,t)-C_\sigma(u_1,u_2,t)|\mathrm du_1\mathrm du_2 \\
\leq 
\int_0^{1-h_1}\!\!\int_0^1 \sup_{t \leq T} |C_\sigma(u_1+h_1,u_2,t)-C_\sigma(u_1,u_2,t)|\mathrm du_1\mathrm du_2 
\\+\int_0^1  \!\! \int_0^{1-h_2}\!\! \sup_{t \leq T} |C_\sigma(u_1,u_2+h_2,t)-C_\sigma(u_1,u_2,t)|\mathrm du_1\mathrm du_2.
\label{decomposition crochet}
\end{multline}
Suppose $h_1\geq 0$. Since $h_1< \frac{1}{2^{n_0}}$, there exists a sequence $(\eps_n)_{n > n_0}$ with values in $\{0,1\}$ such that $h_1=\sum_{n \geq n_0+1} \frac{\eps_n}{2^n}$. 
Moreover, we have for every $q\geq 1$:
\begin{multline}
\label{ineg avant conv}
\int_0^{1-h_1} \!\!\int_0^1 \sup_{t \leq T} |C_\sigma(u_1+h_1,u_2,t)-C_\sigma(u_1+\textstyle \sum_{n \geq n_0+q} \frac{\eps_n}{2^n},u_2,t)|\mathrm du_1\mathrm du_2 \\
\begin{aligned}
&\leq \sum_{k=1}^{q-1} \int_0^{1-h_1} \!\!\int_0^1 \sup_{t \leq T} |C_\sigma(u_1+\textstyle\sum_{n \geq n_0+k} \frac{\eps_n}{2^n},u_2,t)-C_\sigma(u_1+\textstyle\sum_{n \geq n_0+k+1} \frac{\eps_n}{2^n},u_2,t)|\mathrm du_1\mathrm du_2 \\
&\leq \sum_{k=1}^{q-1} \int_0^{1-\frac{1}{2^{n_0+k}}} \!\!\int_0^1 \sup_{t \leq T} |C_\sigma(u_1+\textstyle\frac{1}{2^{n_0+k}},u_2,t)-C_\sigma(u_1,u_2,t)|\mathrm du_1\mathrm du_2 \leq \displaystyle \sum_{k=1}^{q-1} \frac{1}{2^{(n_0+k)\frac{\alpha}{2}}}.
\end{aligned}
\end{multline}
We want to let $q$ tend to $+\infty$ in~\eqref{ineg avant conv}. To do that, we prove that:
\begin{align}
\label{conv en p}
\int_0^{1-h_1} \!\!\int_0^1 \sup_{t \leq T} |C_\sigma(u_1+\textstyle \sum_{n \geq n_0+q} \frac{\eps_n}{2^n},u_2,t)-C_\sigma(u_1,u_2,t)|\mathrm du_1\mathrm du_2 \underset{q \to +\infty}{\longrightarrow}0. 
\end{align}
By definition of $C_\sigma$, 
\begin{multline}
\label{masse continue à droite}
\int_0^{1-h_1} \!\!\int_0^1 \sup_{t \leq T} |C_\sigma(u_1+\textstyle \sum_{n \geq n_0+q} \frac{\eps_n}{2^n},u_2,t)-C_\sigma(u_1,u_2,t)|\mathrm du_1\mathrm du_2
 \\
 \begin{aligned}
 &\leq \int_0^{1-h_1} \!\!\int_0^T  \left|\frac{1}{m_\sigma(u_1+\textstyle \sum_{n \geq n_0+q} \frac{\eps_n}{2^n},s)}-\frac{1}{m_\sigma(u_1,s)} \right| \mathrm ds\mathrm du_1
 \\
 &+\int_0^{1-h_1} \!\!\!\!\int_0^1\!\!\int_0^T \frac{2}{m_\sigma(u_2,s)}\left|\frac{m_\sigma(u_1+ \sum_{n \geq n_0+q} \frac{\eps_n}{2^n},u_2,s)}{m_\sigma(u_1+ \sum_{n \geq n_0+q} \frac{\eps_n}{2^n},s)}-\frac{m_\sigma(u_1,u_2,s)}{m_\sigma(u_1,s)} \right| \mathrm ds\mathrm du_1\mathrm du_2.
 \end{aligned}
\end{multline}
For each $s \in [0,T]$, $m_\sigma(\cdot,s)$ is right-continuous. Therefore, $m_\sigma(u_1+\textstyle \sum_{n \geq n_0+q} \frac{\eps_n}{2^n},s)$ converges to $m_\sigma(u_1,s)$ when $q \to +\infty$. 
Furthermore, there is $\beta >1$ such that almost surely, 
\begin{align*}
\sup_{u \in \left[0,\frac{1}{2^{n_0-1}}\right]} \int_0^{1-u} \!\!\int_0^T  \left|\frac{1}{m_\sigma(u_1+u,s)}-\frac{1}{m_\sigma(u_1,s)} \right|^\beta \mathrm ds\mathrm du_1<+\infty.
\end{align*}
Indeed, 
\begin{multline*}
\E{\sup_{u \in \left[0,\frac{1}{2^{n_0-1}}\right]}\int_0^{1-u} \!\! \int_0^T  \left|\frac{1}{m_\sigma(u_1+u,s)}-\frac{1}{m_\sigma(u_1,s)} \right|^\beta \mathrm ds\mathrm du_1}\\
\leq C_\beta \E{\int_0^1 \!\!\int_0^T \frac{1}{m_\sigma(u_1,s)^\beta}\mathrm ds \mathrm du_1}<+\infty,
\end{multline*}
by Lemma~\ref{lemme inverse masse approchee sig eps}. 
Therefore, since $\sum_{n \geq n_0+q} \frac{\eps_n}{2^n}\leq h_1<\frac{1}{2^{n_0-1}}$ for every $q \geq 1$, 
\begin{multline*}
\int_0^{1-h_1} \!\! \int_0^T  \left|\frac{1}{m_\sigma(u_1+\textstyle \sum_{n \geq n_0+q} \frac{\eps_n}{2^n},s)}-\frac{1}{m_\sigma(u_1,s)} \right|^\beta \mathrm ds\mathrm du_1
\\ \leq \int_0^{1-\sum_{n \geq n_0+q} \frac{\eps_n}{2^n}} \!\! \int_0^T  \left|\frac{1}{m_\sigma(u_1+\textstyle \sum_{n \geq n_0+q} \frac{\eps_n}{2^n},s)}-\frac{1}{m_\sigma(u_1,s)} \right|^\beta \mathrm ds\mathrm du_1 \\
\leq \sup_{u \in \left[0,\frac{1}{2^{n_0-1}}\right]}\int_0^{1-u} \!\! \int_0^T  \left|\frac{1}{m_\sigma(u_1+u,s)}-\frac{1}{m_\sigma(u_1,s)} \right|^\beta \mathrm ds\mathrm du_1,
\end{multline*}
which is almost surely finite. Thus the first term of the right hand side of~\eqref{masse continue à droite} tends almost surely to $0$ for every $h_1 < \frac{1}{2^{n_0}}$. 
A similar argument shows that the second term of the right hand side of~\eqref{masse continue à droite} also converges to $0$. 
Hence we have justified convergence~\eqref{conv en p}. 

When $q \to \infty$ in inequality~\eqref{ineg avant conv}, we obtain:
\begin{align*}
\int_0^{1-h_1} \!\!\int_0^1 \sup_{t \leq T} |C_\sigma(u_1+h_1,u_2,t)-C_\sigma(u_1,u_2,t)|\mathrm du_1\mathrm du_2 
\leq \sum_{k=1}^{+\infty} \frac{1}{2^{(n_0+k)\frac{\alpha}{2}}}
\leq \frac{C_\alpha}{2^{\frac{n_0\alpha}{2}}}.
\end{align*}
Then, we proceed similarly for the second term of the right hand side of~\eqref{decomposition crochet} and we finally obtain, for each $h=(h_1,h_2)$ such that $0<h_1< \frac{1}{2^{n_0}}$ and $0<h_2<\frac{1}{2^{n_0}}$,
\begin{align*}
\int_0^{1-h_1}\!\!\int_0^{1-h_2}\!\! \sup_{t \leq T} |C_\sigma(u_1+h_1,u_2+h_2,t)-C_\sigma(u_1,u_2,t)|\mathrm du_1\mathrm du_2
\leq \frac{C}{2^{\frac{n_0\alpha}{2}}}. 
\end{align*}
Choosing $H=\frac{1}{2^{n_0}}$ such that $CH^{\alpha/2} \leq \frac{1}{k}$, we get~\eqref{second criterion} for each $\sigma$  in $\Q_+$.

\paragraph{Conclusion of the proof.}
By Simon's tightness criterion on $L_1([0,1]\times [0,1],\mathcal C[0,T])$, the collection of laws of $(C_\sigma)_{\sigma \in \Q_+}$ is relatively compact in $\mathcal P(L_1([0,1]\times [0,1],\mathcal C[0,T]))$.
Therefore the collection of laws of $(y_\sigma,C_\sigma)_{\sigma \in \Q_+}$ is also relatively compact in $\mathcal P(L_2([0,1],\mathcal C[0,T])\times L_1([0,1]\times [0,1],\mathcal C[0,T]))$. Thus there is a subsequence,   $(y_{\sigma_n},C_{\sigma_n})_{n \geq 1}$ converges in distribution in $L_2([0,1],\mathcal C[0,T])\times L_1([0,1]\times [0,1],\mathcal C[0,T])$. We denote by $(y,C)$ the limit. We want to prove that for almost every $u_1,u_2 \in [0,1]$, $C(u_1,u_2,\cdot)$ is the quadratic variation of $y(u_1,\cdot)-y(u_2,\cdot)$ relatively to the filtration generated by $Y$ and $C$.

Let $l \geq 1$, $0\leq s_1\leq s_2\leq \dots \leq s_l\leq s\leq t$ and $f_l:(L_2(0,1))^{l}\times L_1([0,1]\times [0,1])^l \to \R$ be a bounded and continuous function. 
For every non-negative $\psi_1,\psi_2 \in L_\infty (0,1)$, we have for every $n \geq 1$:
\begin{multline*}
\mathbb E \Bigg [\int_0^1 \!\!  \int_0^1 \psi_1(u_1)\psi_2(u_2) \Big( (y_{\sigma_n}(u_1,t)-y_{\sigma_n}(u_2,t)-g(u_1)+g(u_2))^2\\
- (y_{\sigma_n}(u_1,s)-y_{\sigma_n}(u_2,s)-g(u_1)+g(u_2))^2 -C_{\sigma_n}(u_1,u_2,t)+C_{\sigma_n}(u_1,u_2,s) \Big) \mathrm du_1\mathrm du_2\\
f_l(Y_{\sigma_n}(s_1),\dots,Y_{\sigma_n}(s_l),C_{\sigma_n}(s_1),\dots,C_{\sigma_n}(s_l))
\Bigg]=0,
\end{multline*}
since the process $(C_{\sigma_n}(t))_{t \in [0,T]}:=(C_{\sigma_n}(\cdot,\cdot,t))_{t \in [0,T]}$ is $(\mathcal F^{\sigma_n}_t)_{t \in [0,T]}$-adapted. 
By the convergence in distribution, we obtain when $n$ goes to $\infty$:
\begin{multline*}
\mathbb E \Bigg [\int_0^1 \!\!  \int_0^1 \psi_1(u_1)\psi_2(u_2) \Big( (y (u_1,t)-y (u_2,t)-g(u_1)+g(u_2))^2\\
- (y (u_1,s)-y (u_2,s)-g(u_1)+g(u_2))^2 -C (u_1,u_2,t)+C (u_1,u_2,s) \Big) \mathrm du_1\mathrm du_2\\
f_l(Y (s_1),\dots,Y (s_l),C (s_1),\dots,C (s_l))
\Bigg]=0.
\end{multline*}
By Fubini's Theorem, we obtain that for almost every $u_1,u_2 \in (0,1)$,  for all rational numbers $(s_1, \dots, s_l,s,t)$ such that $0\leq s_1\leq s_2\leq \dots \leq s_l\leq s\leq t$:
\begin{multline*}
\mathbb E \bigg [ \Big( (y(u_1,t)-y(u_2,t)-g(u_1)+g(u_2))^2
- (y(u_1,s)-y(u_2,s)-g(u_1)+g(u_2))^2 \\-C(u_1,u_2,t)+C(u_1,u_2,s) \Big) 
f_l(Y(s_1),\dots,Y(s_l),C (s_1),\dots,C (s_l))
\bigg]=0.
\end{multline*}
By continuity in time, the latter equality remains true for every $0\leq s_1\leq s_2\leq \dots \leq s_l\leq s\leq t$. 
Furthermore, for almost every $u_1,u_2$, $(C_{\sigma_n}(u_1,u_2,t))_{t \in [0,T]}$ is a non-decreasing bounded variation process. This remains true for the limit $(C(u_1,u_2,t))_{t \in [0,T]}$. Therefore, we deduce that 
\begin{align*}
C(u_1,u_2,t)=\langle y(u_1)-y(u_2),y(u_1)-y(u_2)\rangle_t,
\end{align*}
for almost every $u_1,u_2 \in (0,1)$, with respect to the filtration generated by $(Y,C)$. 
\end{proof}

We conclude this Paragraph by using Fatou's Lemma to extend the statement of Lemma~\ref{lemme inverse masse approchee sigma} to the limit process: 
\begin{prop}
\label{prop inverse masse}
Let $g \in L_p(0,1)$.
For all  $\beta \in (0, \frac{3}{2}-\frac{1}{p})$, there is a constant $C >0$ depending only on $\beta$ and $\|g\|_{L_p}$ such that for all $0\leq s<t\leq T$, we have the following inequality:
\begin{align*}
\E{\int_s^t  \!\! \int_0^1   \frac{1}{m(u,r)^\beta}\mathrm du\mathrm dr}\leq C\sqrt{t-s}. 
\end{align*}
\end{prop}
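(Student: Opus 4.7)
The plan is to transfer the bound of Lemma~\ref{lemme inverse masse approchee sigma}, which holds uniformly in $\sigma \in \Q_+$, to the limit process $m$ via Fatou's Lemma. The natural difficulty is that we only know convergence of $y_\sigma$ to $y$ in distribution in $L_2([0,1],\mathcal C[0,T])$, which a priori does not give any pointwise control on the (highly singular) mass functional $m_\sigma(u,r)=\int_0^1 \varphi_\sigma^2(y_\sigma(u,r)-y_\sigma(v,r))\,\mathrm dv$. However, this work has already been done in Lemma~\ref{lemme conv masse}: along some subsequence $(\sigma_n)_{n}$ and on an auxiliary probability space $(\widehat\Omega,\widehat{\mathbb P})$, there exist couplings $(\widehat m_{\sigma_n},\widehat y_{\sigma_n})$ and $(\widehat m,\widehat y)$, with the same marginal laws as $(m_{\sigma_n},y_{\sigma_n})$ and $(m,y)$ respectively, such that $\widehat m_{\sigma_n}(\omega,u,r) \to \widehat m(\omega,u,r)$ for almost every $(\omega,u,r) \in \widehat\Omega\times[0,1]\times[0,T]$.

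Since equality in distribution on $L_1([0,1],\mathcal C[0,T])$ preserves integral functionals of the form $\E{\int_s^t \int_0^1 F(m(u,r))\,\mathrm du\,\mathrm dr}$ for non-negative measurable $F$, it suffices to establish the bound for $\widehat m$. On the set of full $\widehat{\mathbb P}\otimes\Leb\otimes\Leb$-measure where the convergence holds, the map $x\mapsto 1/x^\beta$ is continuous on $(0,+\infty]$ (with value $0$ at $+\infty$) and non-negative, so
\[
\frac{1}{\widehat m(u,r)^\beta}
\;\leq\; \liminf_{n \to \infty} \frac{1}{\widehat m_{\sigma_n}(u,r)^\beta}.
\]
Applying Fatou's Lemma first on $\widehat\Omega$ and then on $[0,1]\times[s,t]$ (or jointly by Fubini--Tonelli, as both sides are non-negative) yields
\[
\E{\int_s^t \!\!\int_0^1 \frac{1}{\widehat m(u,r)^\beta}\,\mathrm du\,\mathrm dr}
\;\leq\; \liminf_{n \to \infty}\;
\E{\int_s^t \!\!\int_0^1 \frac{1}{\widehat m_{\sigma_n}(u,r)^\beta}\,\mathrm du\,\mathrm dr}.
\]

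To conclude, I would rewrite the right-hand side in terms of the original processes: by construction $\widehat m_{\sigma_n}$ and $m_{\sigma_n}$ have the same distribution as functions in $L_1([0,1],\mathcal C[0,T])$, hence the expectations above coincide with $\E{\int_s^t \int_0^1 m_{\sigma_n}(u,r)^{-\beta}\,\mathrm du\,\mathrm dr}$. By Lemma~\ref{lemme inverse masse approchee sigma}, this last quantity is bounded by $C\sqrt{t-s}$ uniformly in $n$, where $C$ depends only on $\beta$ and $\|g\|_{L_p}$. The same identity in law, applied now to the limits, gives $\E{\int_s^t\int_0^1 m(u,r)^{-\beta}\,\mathrm du\,\mathrm dr}=\E{\int_s^t\int_0^1 \widehat m(u,r)^{-\beta}\,\mathrm du\,\mathrm dr}\leq C\sqrt{t-s}$, which is the desired estimate. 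The only subtle point is the justification that $1/m^\beta$ is well-defined and finite almost everywhere in the limit; this is automatic since if the bound holds then $m(u,r)>0$ for $\mathbb P\otimes\Leb\otimes\Leb$-almost every $(\omega,u,r)$.
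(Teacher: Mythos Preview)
Your argument is correct and matches the paper's approach: pass to a Skorohod coupling, use the pointwise comparison of masses along a subsequence, and apply Fatou's Lemma together with Lemma~\ref{lemme inverse masse approchee sigma}. The only remark is that you invoke the full convergence $\widehat m_{\sigma_n}\to\widehat m$ from Lemma~\ref{lemme conv masse}, whereas the one-sided inequality $\limsup_n \widetilde m_{\sigma_n}\le \widetilde m$ from Lemma~\ref{lemme limsup masse} already suffices (this is exactly how Corollary~\ref{coro int N(t) finite} is proved); either route gives $1/\widehat m^\beta\le\liminf_n 1/\widehat m_{\sigma_n}^\beta$ and the rest is identical.
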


By Burkholder-Davis-Gundy inequality, we deduce the following estimation:
\begin{coro}
\label{coro estimation y puissance 2p}
For each $\beta \in (0, \frac{3}{2}-\frac{1}{p})$, $\displaystyle  \sup_{t \leq T}\E{\int_0^1(y(u,t)-g(u))^{2\beta}\mathrm du}<+\infty$. 
\end{coro}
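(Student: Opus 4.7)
The plan is to deduce the estimate by combining the quadratic variation formula from Proposition~\ref{propriete C5 partie 1} with Burkholder-Davis-Gundy's inequality, and then reducing to the integrability estimate of Proposition~\ref{prop inverse masse}.

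By Proposition~\ref{propriete C5 partie 1}, for every $u \in (0,1)$, $(y(u,t)-g(u))_{t \in [0,T]}$ is a continuous martingale starting at $0$ with quadratic variation
\[
\langle y(u,\cdot),y(u,\cdot)\rangle_t = \int_0^t \frac{\mathrm ds}{m(u,s)}.
\]
Therefore, for every $\beta >0$, Burkholder-Davis-Gundy's inequality yields a constant $C_\beta$ such that
\[
\E{\left|y(u,t)-g(u)\right|^{2\beta}} \leq C_\beta \E{\left(\int_0^t \frac{\mathrm ds}{m(u,s)}\right)^\beta}.
\]
Integrating in $u$ and applying Fubini's Theorem reduces the problem to bounding $\E{\int_0^1 \left(\int_0^t \frac{\mathrm ds}{m(u,s)}\right)^\beta \mathrm du}$ uniformly in $t \in [0,T]$.

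The argument then splits according to the value of $\beta$. When $\beta \in [1,\frac{3}{2}-\frac{1}{p})$, Hölder's inequality in the time variable gives
\[
\left(\int_0^t \frac{\mathrm ds}{m(u,s)}\right)^\beta \leq T^{\beta-1} \int_0^t \frac{\mathrm ds}{m(u,s)^\beta},
\]
and Proposition~\ref{prop inverse masse} immediately provides a constant independent of $t$. When $\beta \in (0,1)$, I would use Jensen's inequality twice: first, by concavity of $x\mapsto x^\beta$,
\[
\E{\left(\int_0^t \frac{\mathrm ds}{m(u,s)}\right)^\beta} \leq \E{\int_0^t \frac{\mathrm ds}{m(u,s)}}^\beta,
\]
and then, integrating over $u$ and applying Jensen once more to the concave function on $[0,1]$,
\[
\int_0^1 \E{\int_0^t \frac{\mathrm ds}{m(u,s)}}^\beta \mathrm du \leq \left(\int_0^1 \E{\int_0^t \frac{\mathrm ds}{m(u,s)}} \mathrm du\right)^\beta,
\]
which is bounded by $(C\sqrt{t})^\beta$ thanks to Proposition~\ref{prop inverse masse} applied with exponent $1$ (valid since $p>2$ ensures $1 <\frac{3}{2}-\frac{1}{p}$).

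There is no real obstacle here: this is a direct consequence of the quadratic variation identity and the integrability bound on $1/m^\beta$ already established. The only minor care needed is the case distinction $\beta \geq 1$ versus $\beta <1$, but in both regimes the estimate $C\sqrt{t}$ from Proposition~\ref{prop inverse masse} suffices to yield a bound uniform in $t \leq T$.
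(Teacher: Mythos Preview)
Your proof is correct and follows exactly the approach the paper intends: the Corollary is stated as an immediate consequence of Burkholder--Davis--Gundy and Proposition~\ref{prop inverse masse}, and you have simply spelled out the details (including the easy case distinction $\beta \geq 1$ versus $\beta<1$) that the paper leaves implicit.
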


\subsection{Covariation of $y(u,\cdot)$ and $y(u',\cdot)$}
\label{para 3}

In this Paragraph, we want to complete the proof of property~$(C5)$ of Theorem~\ref{théo 2}. It remains to prove the following Proposition:
\begin{prop} Let $y$ be the version in $\mathcal D((0,1),\mathcal C[0,T])$ of the limit process given by Proposition~\ref{propriete B4}.   For every $u,u' \in (0,1)$, 
\begin{align}
\langle y(u,\cdot),y(u',\cdot)\rangle_{t\wedge \tau_{u,u'}}=0,
\label{equ prop 5.12}
\end{align}
where $\tau_{u,u'}=\inf\{t\geq 0: y(u,t)=y(u',t)\}\wedge T$.
\label{propriete B5 partie 2}
\end{prop}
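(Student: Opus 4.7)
The plan is to combine polarization with known results. By Proposition~\ref{propriete C5 partie 1} and the identification of $C(u, u', \cdot)$ as the quadratic variation of $y(u, \cdot) - y(u', \cdot)$ given in Lemma~\ref{lemme conv des crochets}, writing
\[
2\langle y(u,\cdot), y(u',\cdot)\rangle_t = \langle y(u,\cdot), y(u,\cdot)\rangle_t + \langle y(u',\cdot), y(u',\cdot)\rangle_t - C(u, u', t),
\]
it suffices to prove that $C(u, u', t) = \int_0^t \frac{ds}{m(u, s)} + \int_0^t \frac{ds}{m(u', s)}$ for every $t \leq \tau_{u, u'}$. Equivalently, I must show that the cross term $\int_0^t \frac{m_{\sigma_n}(u, u', s)}{m_{\sigma_n}(u, s) m_{\sigma_n}(u', s)} ds$ appearing in $C_{\sigma_n}(u, u', t)$ vanishes in the limit along the Skorohod subsequence supplied by Lemma~\ref{lemme conv masse}.

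The key geometric observation is that $\varphi_\sigma$ is supported in $(-\sigma/2, \sigma/2)$, so by the triangle inequality $m_\sigma(u, u', s) = 0$ whenever $|y_\sigma(u, s) - y_\sigma(u', s)| \geq \sigma$. Assume without loss of generality $g(u) < g(u')$ (the case $g(u) = g(u')$ yields $\tau_{u, u'} = 0$ by the coalescence property in Proposition~\ref{propriete B4}, making the statement trivial). Fix $t_0 < \tau_{u, u'}$. By continuity of $y(u, \cdot) - y(u', \cdot)$ on $[0, t_0]$, there exists $\delta > 0$ such that $y(u', s) - y(u, s) \geq 2\delta$ on $[0, t_0]$. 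Working on the Skorohod probability space from Lemmas~\ref{lemme conv masse} and~\ref{lemme conv des crochets}, I approximate $u, u'$ by decreasing sequences $(u_m), (u'_m)$ of generic points where all the almost sure convergences hold pointwise. Using the right-continuity $u \mapsto y(u, t)$ from Proposition~\ref{propriete B4}, for $m$ large enough the separation $y(u'_m, s) - y(u_m, s) \geq \delta$ persists on $[0, t_0]$; the almost sure uniform convergence $y_{\sigma_n}(u_m, \cdot) \to y(u_m, \cdot)$ together with its counterpart for $u'_m$ then ensures that, for $n$ large, $y_{\sigma_n}(u'_m, s) - y_{\sigma_n}(u_m, s) > \sigma_n$ on $[0, t_0]$, so $m_{\sigma_n}(u_m, u'_m, s) = 0$ there. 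The cross term in $C_{\sigma_n}(u_m, u'_m, t_0)$ thus vanishes, leaving $C_{\sigma_n}(u_m, u'_m, t_0) = \int_0^{t_0} \frac{ds}{m_{\sigma_n}(u_m, s)} + \int_0^{t_0} \frac{ds}{m_{\sigma_n}(u'_m, s)}$.

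Passing to the limit $n \to \infty$ via the pointwise convergence $m_{\sigma_n}(u_m, \cdot) \to m(u_m, \cdot)$ from Lemma~\ref{lemme conv masse}, uniform integrability estimates from Proposition~\ref{prop inverse masse}, and the convergence $C_{\sigma_n}(u_m, u'_m, t_0) \to C(u_m, u'_m, t_0)$ from Lemma~\ref{lemme conv des crochets}, yields $C(u_m, u'_m, t_0) = \int_0^{t_0} \frac{ds}{m(u_m, s)} + \int_0^{t_0} \frac{ds}{m(u'_m, s)}$. Combined with Proposition~\ref{propriete C5 partie 1}, polarization then gives $\langle y(u_m, \cdot), y(u'_m, \cdot)\rangle_{t_0} = 0$. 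Finally, letting $u_m \to u$, $u'_m \to u'$ (using the càdlàg regularity from Proposition~\ref{propriete B4}) and then $t_0 \nearrow \tau_{u, u'}$ (using continuity of the covariation in time) one deduces~\eqref{equ prop 5.12}. The principal technical obstacle will be the transfer from the almost-everywhere-in-$(u, u')$ statements produced by Lemmas~\ref{lemme conv masse} and~\ref{lemme conv des crochets} to pointwise statements for the fixed pair $u, u' \in (0, 1)$; this must be handled by carefully choosing the generic approximating sequences and exploiting the càdlàg regularity of the limit process.
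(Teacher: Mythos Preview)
Your overall architecture is close to the paper's: both proofs rest on the same geometric observation that $m_\sigma(u,u',s)=0$ once $|y_\sigma(u,s)-y_\sigma(u',s)|\geq\sigma$, both first establish the result for a generic pair $(u_m,u'_m)$ lying in a full-measure set where the Skorohod convergences hold pointwise, and both then extend to an arbitrary pair $(u,u')$.

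The substantive difference is that you route through $C_\sigma$ (the quadratic variation of the \emph{difference}) and polarize, whereas the paper introduces a new convergence lemma (Lemma~\ref{K lemme conv des crochets}) for $K_\sigma(u,u',t)=\int_0^t \frac{m_\sigma(u,u',s)}{m_\sigma(u,s)m_\sigma(u',s)}\,\mathrm ds$ itself. The paper's route is shorter at the key point: once $K_{\sigma_n}(u_m,u'_m,t_0)=0$ for large~$n$, the almost-sure uniform convergence $K_{\sigma_n}(u_m,u'_m,\cdot)\to K(u_m,u'_m,\cdot)$ immediately gives $K(u_m,u'_m,t_0)=0$, and one is done. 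Your route instead requires identifying the limit of the \emph{diagonal} pieces, i.e.\ proving
\[
\int_0^{t_0}\frac{\mathrm ds}{m_{\sigma_n}(u_m,s)}\;\longrightarrow\;\int_0^{t_0}\frac{\mathrm ds}{m(u_m,s)}\qquad\text{a.s.}
\]
This is the step you justify by ``pointwise convergence $m_{\sigma_n}(u_m,\cdot)\to m(u_m,\cdot)$ from Lemma~\ref{lemme conv masse}, uniform integrability estimates from Proposition~\ref{prop inverse masse}''. That is where there is a gap. Lemma~\ref{lemme conv masse} gives pointwise convergence for a.e.~$(s,\omega)$, which via Fatou yields only the inequality $\int_0^{t_0}\frac{\mathrm ds}{m(u_m,s)}\leq\liminf_n\int_0^{t_0}\frac{\mathrm ds}{m_{\sigma_n}(u_m,s)}$. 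For the reverse inequality you need uniform integrability of $s\mapsto 1/m_{\sigma_n}(u_m,s)$ on $[0,t_0]$ \emph{for fixed}~$\omega$, and neither Proposition~\ref{prop inverse masse} (which concerns the limit~$m$) nor Lemma~\ref{lemme inverse masse approchee sigma} (which is integrated over~$u$) supplies this at a fixed point $u_m$. Without it, polarization only yields $\langle y(u_m),y(u'_m)\rangle_{t_0}\leq 0$, and you have no independent reason for the opposite sign. The paper sidesteps the whole issue because showing that a sequence identically equal to zero has limit zero requires no integrability.

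For the extension from $(u_m,u'_m)$ to $(u,u')$, note that c\`adl\`ag convergence $y(u_m,\cdot)\to y(u,\cdot)$ does not by itself transfer the covariation. The paper does not pass to a limit here either: it uses that $y(\cdot,t_0)$ is a step function (Proposition~\ref{prop step function}) and coalescence (Proposition~\ref{propriete B4}) to ensure that, with high probability, $y(v,\cdot)$ and $y(u,\cdot)$ are \emph{equal} on $[t_0,T]$ for $v$ close to~$u$, so that the martingale identity~\eqref{crochet} at $(v,v')$ is literally the same identity at $(u,u')$ on that event. You will need this mechanism rather than a bare limit in~$m$.
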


As in the previous Paragraph, we will need to prove the convergence of the joint law of $y_\sigma$ and a quadratic covariation. More precisely, define:
\begin{align*}
K_ \sigma(u,u',t):=\int_0^t \frac{m_\sigma(u,u',s)}{m_\sigma(u,s) m_\sigma(u',s)}\mathrm ds.
\end{align*}
We state the following result:
\begin{lemme}
\label{K lemme conv des crochets}
For every sequence $(\sigma_n)_n$ of rational numbers tending to $0$, we can extract a subsequence $(\widetilde{\sigma}_n)_n$ such that 
the sequence $(y_{\widetilde{\sigma}_n},K_{\widetilde{\sigma}_n})_{n \to \infty}$ converges in distribution to $(y,K)$ in $L_2([0,1],\mathcal C[0,T])$ $\times L_1([0,1]\times [0,1],\mathcal C[0,T])$, where
\begin{align*}
\label{K identification limites 2}
K(u,u',t):=\langle y(u,\cdot),y(u',\cdot) \rangle_t.
\end{align*}
\end{lemme}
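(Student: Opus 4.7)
The plan is to mimic the strategy of Lemma~\ref{lemme conv des crochets}: first establish tightness of $(K_\sigma)_{\sigma \in \Q_+}$ in $L_1([0,1]\times[0,1],\mathcal C[0,T])$ via Simon's criterion, then extract a converging subsequence jointly with $y_\sigma$, and finally identify the limit $K$ using the martingale characterization of property~$(B3)$. The key simplification compared to $C_\sigma$ is that Cauchy-Schwarz gives the pointwise bound $m_\sigma(u,u',s) \leq m_\sigma^{1/2}(u,s) m_\sigma^{1/2}(u',s)$, hence for any $0 \leq t_1 \leq t_2 \leq T$:
\begin{equation*}
|K_\sigma(u,u',t_2) - K_\sigma(u,u',t_1)| \leq \int_{t_1}^{t_2} \frac{\mathrm ds}{m_\sigma^{1/2}(u,s) m_\sigma^{1/2}(u',s)} \leq \frac{1}{2}\int_{t_1}^{t_2} \left( \frac{1}{m_\sigma(u,s)} + \frac{1}{m_\sigma(u',s)}\right)\mathrm ds.
\end{equation*}
This immediately gives criterion~$(K1)$ (boundedness of $\int\!\int \sup_t K_\sigma$) and criterion~$(K2)$ (time continuity via Aldous-type arguments) by invoking Lemma~\ref{lemme inverse masse approchee sigma}.

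The spatial regularity criterion $(K3)$ requires more care and will be the main technical step. Using $K_\sigma(u,u',T) = \langle y_\sigma(u,\cdot), y_\sigma(u',\cdot)\rangle_T$ (which follows from property~$(B3)$ applied to delta-like test functions, justified by a Parseval-type argument as in the proof of Corollary~\ref{coro marting 2}), we can expand
\begin{equation*}
K_\sigma(u_1+h_1, u_2+h_2, T) - K_\sigma(u_1, u_2, T) = \langle y_\sigma(u_1+h_1) - y_\sigma(u_1), y_\sigma(u_2+h_2)\rangle_T + \langle y_\sigma(u_1), y_\sigma(u_2+h_2) - y_\sigma(u_2)\rangle_T
\end{equation*}
and apply the Kunita-Watanabe inequality together with inequalities~\eqref{ineg g carré} and~\eqref{h polynomial} exactly as in the proof of Lemma~\ref{lemme conv des crochets}. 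This yields a polynomial estimate $\mathbb E[\int\!\int \sup_t |K_\sigma(u_1+h_1,u_2+h_2,t) - K_\sigma(u_1,u_2,t)| \mathrm du_1\mathrm du_2] \leq C(h_1^\alpha + h_2^\alpha)$ for some $\alpha > 0$, and a Borel-Cantelli/dyadic argument gives~$(K3)$.

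Having tightness, we extract along $(\sigma_n)$ a further subsequence $(\widetilde\sigma_n)$ such that $(y_{\widetilde\sigma_n}, K_{\widetilde\sigma_n})$ converges in distribution in $L_2([0,1],\mathcal C[0,T]) \times L_1([0,1]\times[0,1],\mathcal C[0,T])$ to some limit $(y, K)$. To identify $K$, I apply the martingale identity from property~$(B3)$: for bounded nonnegative $\psi_1,\psi_2 \in L_\infty(0,1)$, every $0\leq s_1\leq\dots\leq s_l\leq s\leq t$, every bounded continuous $f_l$, and every $\sigma$,
\begin{multline*}
\mathbb E \bigg[\int_0^1\!\!\int_0^1 \psi_1(u)\psi_2(u')\Big((y_\sigma(u,t){-}g(u))(y_\sigma(u',t){-}g(u')) - (y_\sigma(u,s){-}g(u))(y_\sigma(u',s){-}g(u'))\\
- K_\sigma(u,u',t) + K_\sigma(u,u',s)\Big) \mathrm du\mathrm du'\, f_l(Y_\sigma(s_1),\dots,Y_\sigma(s_l),K_\sigma(s_1),\dots,K_\sigma(s_l))\bigg] = 0.
\end{multline*}
The uniform integrability needed to pass to the limit follows from Corollary~\ref{corollaire borne L_2 sig} for the quadratic term and from the pointwise bound $K_\sigma \leq \frac{1}{2}\int_0^T(M_\sigma^{-1}(u,s) + M_\sigma^{-1}(u',s))\mathrm ds$ combined with Lemma~\ref{lemme inverse masse approchee sig eps} (with $\beta > 1$) for the $K_\sigma$ term. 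Passing to the limit and using Fubini's theorem yields, for almost every $u,u' \in (0,1)$, that $(y(u,t)-g(u))(y(u',t)-g(u')) - K(u,u',t)$ is an $(\mathcal F_t^{Y,K})$-martingale, where $\mathcal F_t^{Y,K}$ is the natural filtration of $(Y,K)$. Since $t \mapsto K(u,u',t)$ is nondecreasing and of bounded variation (inherited from the same property of $K_\sigma$), uniqueness of the Doob-Meyer decomposition identifies $K(u,u',\cdot)$ as $\langle y(u,\cdot), y(u',\cdot)\rangle$. The main obstacle is the third tightness criterion, where the polynomial decay in $(h_1,h_2)$ must be extracted uniformly in $\sigma$ before applying the dyadic approximation argument.
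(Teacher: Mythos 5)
Your proposal follows essentially the same route as the paper: tightness of $(K_\sigma)$ in $L_1([0,1]\times[0,1],\mathcal C[0,T])$ via the three Simon-type criteria (the first two from the Cauchy--Schwarz bound $m_\sigma(u,u',s)\leq m_\sigma^{1/2}(u,s)m_\sigma^{1/2}(u',s)$ and Lemma~\ref{lemme inverse masse approchee sigma}, the third from Kunita--Watanabe and the increments estimates~\eqref{ineg g carré}--\eqref{h polynomial}), followed by joint extraction with $y_\sigma$ and identification of the limit by passing to the limit in the martingale identity with test functions depending on $(Y_\sigma,K_\sigma)$ and invoking uniqueness of the bracket among continuous finite-variation processes. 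The only cosmetic difference is that the paper runs the first moment bound at the $(\sigma,\eps)$ level, where Corollary~\ref{coro crochet} supplies the bracket identity directly, whereas you work with $m_\sigma$; both are covered by the same estimates.
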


\begin{proof}[Proof (Lemma~\ref{K lemme conv des crochets})]
We follow the same structure as in the proof of Lemma~\ref{lemme conv des crochets}. 
First, we define $K_{\sigma,\eps}= \langle y_{\sigma,\eps}(u,\cdot),y_{\sigma,\eps}(u',\cdot) \rangle_t= \int_0^t \frac{m_{\sigma,\eps}(u,u',s)}{(\eps+m_{\sigma,\eps}(u,s))(\eps+ m_{\sigma,\eps}(u',s))}\mathrm ds$.
We show that $K_{\sigma,\eps}$ satisfies the three criteria of tightness in $L_1([0,1]\times [0,1],\mathcal C[0,T])$. For the first criterion, we want to bound
\begin{align*}
\E{\int_0^1\!\!\int_0^1 \sup_{t\leq T} |K_{\sigma,\eps}(u,u',t)| \mathrm du\mathrm du'}
\end{align*}
uniformly for $\sigma,\eps  \in \Q_+$. This follows from Kunita-Watanabe's inequality:
\begin{align*}
|K_{\sigma,\eps}(u,u',t)|
=|\langle y_{\sigma,\eps}(u),y_{\sigma,\eps}(u')\rangle_t|
&\leq \langle y_{\sigma,\eps}(u),y_{\sigma,\eps}(u)\rangle_t^{1/2}
\langle y_{\sigma,\eps}(u'),y_{\sigma,\eps}(u')\rangle_t^{1/2}\\
&\leq \langle y_{\sigma,\eps}(u),y_{\sigma,\eps}(u)\rangle_T^{1/2}
\langle y_{\sigma,\eps}(u'),y_{\sigma,\eps}(u')\rangle_T^{1/2}
\end{align*}
and from Cauchy-Schwarz inequality:
\begin{align*}
\E{\int_0^1\!\!\int_0^1 \sup_{t\leq T} |K_{\sigma,\eps}(u,u',t)| \mathrm du\mathrm du'}
&\leq \E{\int_0^1  \langle y_{\sigma,\eps}(u),y_{\sigma,\eps}(u)\rangle_T\mathrm du}\\
&= \E{\int_0^1 (y_{\sigma,\eps}(u,T)-g(u))^2\mathrm du},
\end{align*}
which is bounded uniformly for $\sigma, \eps \in \Q_+$ by Corollary~\ref{corollaire borne L_2 sig eps}.

We refer to the proof of Lemma~\ref{lemme conv des crochets} for the second and the third criteria of tightness, and for the rest of the proof, which follows in the same way. 
It remains to explain why $(K(u,u',t))_{t \in [0,T]}$ is a bounded variation process for almost every $u, u' \in (0,1)$. It follows from Kunita-Watanabe's inequality that:
\begin{multline*}
\sum_{k=0}^{p-1}
|K_{\sigma,\eps}(u,u',t_{k+1})-K_{\sigma,\eps}(u,u',t_k)|
=\sum_{k=0}^{p-1}
|\langle y_{\sigma,\eps}(u),y_{\sigma,\eps}(u') \rangle_{t_{k+1}}-\langle y_{\sigma,\eps}(u),y_{\sigma,\eps}(u') \rangle_{t_k} |\\
\begin{aligned}
&\leq \sum_{k=0}^{p-1}
\left(\int_{t_k}^{t_{k+1}} \!\!\mathrm d\langle y_{\sigma,\eps}(u),y_{\sigma,\eps}(u) \rangle_s \right)^{\frac{1}{2}}
\left(\int_{t_k}^{t_{k+1}} \!\!\mathrm d\langle y_{\sigma,\eps}(u'),y_{\sigma,\eps}(u') \rangle_s \right)^{\frac{1}{2}}\\
&\leq \frac{1}{2} \int_{t_0}^{t_p} \mathrm d\langle y_{\sigma,\eps}(u),y_{\sigma,\eps}(u) \rangle_s +\frac{1}{2} \int_{t_0}^{t_p} \mathrm d\langle y_{\sigma,\eps}(u'),y_{\sigma,\eps}(u') \rangle_s\\
&=\frac{1}{2} \int_{t_0}^{t_p} \frac{\mathrm ds}{M_{\sigma,\eps}(u,s)} +\frac{1}{2} \int_{t_0}^{t_p} \frac{\mathrm ds}{M_{\sigma,\eps}(u',s)},
\end{aligned}
\end{multline*}
Therefore, for every $p \geq 1$ and $0\leq t_0 \leq t_1\leq \dots \leq t_p$, $\sum_{k=0}^{p-1}|K(u,u',t_{k+1})-K(u,u',t_k)| \leq \frac{1}{2} \int_{t_0}^{t_p} \frac{\mathrm ds}{m(u,s)} +\frac{1}{2} \int_{t_0}^{t_p} \frac{\mathrm ds}{m(u',s)}$. 
By Proposition~\ref{prop inverse masse}, we know that almost surely and for almost every $u\in (0,1)$, $\int_0^T \frac{\mathrm ds}{m(u,s)}$ is finite. Thus for almost every $u$ and $u'$ in $(0,1)$, $K(u,u',\cdot)$ is a bounded variation process. 
This concludes the proof of the Lemma. 
\end{proof}

We use the latter Lemma to prove Proposition~\ref{propriete B5 partie 2}. 
\begin{proof}[Proof (Proposition~\ref{propriete B5 partie 2})]
By Lemma~\ref{K lemme conv des crochets} and Skorohod's representation Theorem, we may suppose that $(y_\sigma,K_\sigma)_{\sigma\in \Q_+}$ converges almost surely in $L_2([0,1],\mathcal C[0,T])\times L_1([0,1]\times [0,1],\mathcal C[0,T])$ to $(y,K)$. 
As previously, up to extracting a subsequence, we deduce that for almost every $(\omega,u,u') \in \Omega \times[0,1] \times[0,1]$, 
\begin{equation}
\label{cvg 1}
\sup_{t \leq T} |y_\sigma(u,t)-y(u,t)|(\omega) \underset{\sigma \to 0}{\longrightarrow} 0,
\end{equation}
and 
\begin{equation}
\label{cvg 2}
\sup_{t \leq T} |K_\sigma(u,u',t)-K(u,u',t)|(\omega) \underset{\sigma \to 0}{\longrightarrow} 0.
\end{equation}
Therefore,  there exists a (non-random) subset $\A$ of $[0,1]$, such that for every $u,u' \in \A$, \eqref{cvg 1} and~\eqref{cvg 2} holds almost surely.

Let $u,u' \in \A$. 
If $g(u)=g(u')$ then $\tau_{u,u'}=0$ almost surely, thus~\eqref{equ prop 5.12} is clear. Up to exchanging $u$ and $u'$, assume that $g(u)<g(u')$.
Let $ \delta <2 (g(u')-g(u))$. 
Almost surely, by~\eqref{cvg 1}, there exists $\sigma_0$ such that for all $\sigma \in (0,\sigma_0)\cap \Q_+$, 
\begin{align*}
\sup_{t \leq T} |y_\sigma(u,t)-y(u,t)| &\leq \frac{\delta}{4},\\
\sup_{t \leq T} |y_\sigma(u',t)-y(u',t)| &\leq \frac{\delta}{4}.
\end{align*}
Define $\tau_{u,u'}^\delta:= \inf\{ t \geq 0: |y(u,t)-y(u',t)|\leq \delta \}\wedge T$. Therefore, for all $t <\tau_{u,u'}^\delta$ and for all $\sigma < \sigma_0$, $|y_\sigma(u,t)-y_\sigma(u',t)| \geq \frac{\delta}{2}$. 
Let $\sigma< \min (\sigma_0,\frac{\delta}{2})$. For all $t < \tau_{u,u'}^\delta$, we have $|y_\sigma(u,t)-y_\sigma(u',t)| \geq \sigma$ and thus $m_\sigma(u,u',t)=0$, hence $K_\sigma(u,u',t)=\int_0^t \frac{m_\sigma(u,u',s)}{m_\sigma(u,s) m_\sigma(u',s)}\mathrm ds=0$ for $t \leq \tau_{u,u'}^\delta$. By~\eqref{cvg 2}, we obtain 
\[\sup_{t \leq \tau_{u,u'}^\delta} |K(u,u',t)|=0.\]
Thus for every $\delta>0$, for every $u, u' \in \A$ and $t \leq \tau_{u,u'}^\delta$, $\langle y(u),y(u')\rangle_t=0$. 
Since $\tau_{u,u'}^\delta \to \tau_{u,u'}$ when $\delta \to 0$, we have for each $u,u' \in \A$:
\begin{align}
\label{crochet}
\langle y(u),y(u') \rangle_{t \wedge \tau_{u,u'}}=0.
\end{align}

It remains to show that~\eqref{crochet} holds for every $(u,u') \in (0,1)^2$. 
Let $(u,u') \in (0,1)^2$. As previously, we may assume that $g(u)<g(u')$. 
 By continuity of the processes $(y(u,t))_{t \in [0,T]}$ and $(y(u',t))_{t \in [0,T]}$, the first time of coalescence $\tau_{u,u'}$ is almost surely positive. 
Fix $l \geq 1$, $0\leq s_1\leq s_2\leq \dots \leq s_l\leq s\leq t$ and a bounded and continuous function $f_l:(L_2(0,1))^l\to \R$. Suppose that $s>0$. 
We want to prove that:
\begin{align}
\E{(y(u,t \wedge \tau_{u,u'})y(u',t \wedge \tau_{u,u'})
-y(u,s \wedge \tau_{u,u'})y(u',s \wedge \tau_{u,u'}))
f_l(Y(s_1),\dots,Y(s_l))}=0. 
\label{martingale temps d'arret}
\end{align}

Let $\eps>0$. For each $v \in (u,u+\eps)\cap \mathcal A$ and $v' \in (u',u'+\eps) \cap\mathcal A$ (since $\mathcal A$ is of plain measure in $(0,1)$, both sets are non-empty), since we have equality~\eqref{crochet}, 
\begin{multline}
\label{eq mart 1}
0= \mathbb E \big[(y(v,t \wedge \tau_{v,v'})y(v',t \wedge \tau_{v,v'})
-y(v,s \wedge \tau_{v,v'})y(v',s \wedge \tau_{v,v'})
)
f_l(Y(s_1),\dots,Y(s_l))\big].
\end{multline}

Let $t_0 \in (0,s)$.
We define 
\begin{align*}
\eta:=\sup \{ h \geq 0: y(u+h,t_0)=y(u,t_0) \text{ and } y(u'+h,t_0)=y(u',t_0) \}.
\end{align*}
By the coalescence property given by Proposition~\ref{propriete B4}, under the event $\{ \tau_{u,u'} > t_0\}$, we know that for every $r \geq t_0$, for each $v \in (u,u+\eta)$ and $v' \in (u',u'+\eta)$, $y(v,r)=y(u,r)$ and $y(v',r)=y(u',r)$, whence $\tau_{v,v'}=\tau_{u,u'}$. 
Thus, by equality~\eqref{eq mart 1}, we deduce that for each $v \in (u,u+\eps)\cap \mathcal A$ and $v' \in (u',u'+\eps) \cap\mathcal A$, 
\begin{multline}
\label{eq mart 2}
0= \mathbb E \big[\mathds 1_{\{\eta > \eps\}} \mathds 1_{\{\tau_{u,u'} > t_0\}}
(y(u,t \wedge \tau_{u,u'})y(u',t \wedge \tau_{u,u'})
\\-y(u,s \wedge \tau_{u,u'})y(u',s \wedge \tau_{u,u'}))
f_l(Y(s_1),\dots,Y(s_l))\big]
\\+\mathbb E \big[\mathds 1_{\{\eta \leq \eps\}\cup \{\tau_{u,u'} \leq t_0\}}(y(v,t \wedge \tau_{v,v'})y(v',t \wedge \tau_{v,v'})
-y(v,s \wedge \tau_{v,v'})y(v',s \wedge \tau_{v,v'}))
\\
f_l(Y(s_1),\dots,Y(s_l))\big].
\end{multline}
Let $h>0$ be such that $(u,u+\eps)$ and $(u',u'+\eps)$ are contained in $(h,1-h)$. Thus for every $v \in (u,u+\eps) \cap \mathcal A$, for every $r \in [0,T]$, 
by inequality~\eqref{inegalite delta 1-delta} and by Doob's inequality, we deduce that:
\begin{align*}
\E{\sup_{r \leq T} y(v,r)^{2\beta}}\leq \frac{2}{h}\E{\int_0^1 \sup_{r \leq T} y(x,r)^{2\beta} \mathrm dx}\leq \frac{C_\beta}{h}\E{\int_0^1  y(x,T)^{2\beta} \mathrm dx}\leq \frac{\widetilde{C}_\beta}{h},
\end{align*}
for a  $\beta$ arbitrarily chosen in $(1,\frac{3}{2}-\frac{1}{p})$ (by Corollary~\ref{coro estimation y puissance 2p}). Thus, there exists $\beta>1$ such that
$\E{(y(v,t \wedge \tau_{v,v'})y(v',t \wedge \tau_{v,v'})^{\beta}}$
is uniformly bounded for $v \in (u,u+\eps)$ and $v' \in (u',u'+\eps)$. 
Let $\alpha=1-\frac{1}{\beta}$. 
Therefore, we deduce from~\eqref{eq mart 2} that there is a constant $C$ depending only on $u$, $u'$ and $\alpha$ such that:
\begin{multline}
\label{67}
\mathbb E \big[\mathds 1_{\{\eta > \eps\}} \mathds 1_{\{\tau_{u,u'} > t_0\}}
(y(u,t \wedge \tau_{u,u'})y(u',t \wedge \tau_{u,u'})
-y(u,s \wedge \tau_{u,u'})y(u',s \wedge \tau_{u,u'}))
\\f_l(Y(s_1),\dots,Y(s_l))\big]
\leq C\left(\P{\eta \leq \eps}^\alpha+\P{\tau_{u,u'} \leq t_0}^\alpha\right).
\end{multline}
We divide the left hand side of inequality~\eqref{67} into two parts by writing $\mathds 1_{\{\eta > \eps\}} \mathds 1_{\{\tau_{u,u'} > t_0\}}=1-\mathds 1_{\{\eta \leq \eps\}\cup \{\tau_{u,u'} \leq t_0\}}$ and we estimate the second term in the same way as above. We deduce that there is a constant $C'$ such that:
\begin{multline*}
\E{(y(u,t \wedge \tau_{u,u'})y(u',t \wedge \tau_{u,u'})
-y(u,s \wedge \tau_{u,u'})y(u',s \wedge \tau_{u,u'}))
f_l(Y(s_1),\dots,Y(s_l))}
\\ \leq C'\left(\P{\eta \leq \eps}^\alpha+\P{\tau_{u,u'} \leq t_0}^\alpha\right).
\end{multline*}
Let $\delta>0$. Since $\tau_{u,u'} >0$ almost surely, we choose $t_0\in (0,s)$ such that $\P{\tau_{u,u'} \leq t_0}^\alpha \leq \delta$. 
Since $t_0>0$, we know by Proposition~\ref{prop step function} that $y(\cdot,t_0)$ is almost surely a step function, so $\eta>0$ almost surely. Therefore, we can choose $\eps >0$ so that $\P{\eta \leq \eps}^\alpha \leq \delta$. 
This concludes the proof of equality~\eqref{martingale temps d'arret}. 

Recall that we suppose that $t\geq s>0$. By continuity of time of $y(u,\cdot)$ and $y(u',\cdot)$, equality~\eqref{martingale temps d'arret} also holds for $s=0$. Therefore, $y(u,t \wedge \tau_{u,u'})y(u',t \wedge \tau_{u,u'})$ is a $(\mathcal F_t)_{t \in [0,T]}$-martingale and $\langle y(u),y(u') \rangle_{t \wedge \tau_{u,u'}}=0$. This 
concludes the proof of Proposition~\ref{propriete B5 partie 2}.  
\end{proof}

\appendix
\section{Appendix: It\^o's formula for the Wasserstein diffusion}
\label{appendix}

Let $g \in \mathcal L^\uparrow_{2+}[0,1]$. We assume, to simplify the notations, that $g(1)$ is finite, but the proof can be easily adapted to functions $g$ with $g(u) \underset{u \to 1}{\longrightarrow} +\infty$. Let $y$ be a process in $\mathcal D([0,1], \mathcal C[0,T])$ satisfying $(i)-(iv)$ (see Introduction).

Recall that the process $y(\cdot,t)_{t \in [0,T]}$ can be considered as the quantile function of $(\mu_t)_{t \in [0,T]}$, by setting $\mu_t=\Leb|_{[0,1]}\circ y(\cdot,t)^{-1}$. 
The latter process has every feature of a Wasserstein diffusion. We describe in this Paragraph the dynamics of the process $(\mu_t)_{t \in [0,T]}$, after having introduced a differential calculus on $\mathcal P_2(\R)$ due to Lions (\cite{lions_college}, \cite{cardaliaguet13}).
We prove that, for a smooth function $U: \mathcal P_2(\R) \rightarrow \R$, the process $(U(\mu_t))_{t \in [0,T]}$ is a semi-martingale with quadratic variation proportional to the square of the gradient of $U$ (see Theorem~\ref{theo Ito}). This result is a generalization of the formula given by Konarovskyi and von Renesse in~\cite{konarovskyivonrenesse18}. We compare it to a similar result obtained by von Renesse and Sturm~\cite{vrenessesturm09} for the Wasserstein diffusion on $[0,1]$ (see Remark~\ref{remark vR S}). 

In order to describe the dynamics of $(\mu_t)_{t \in [0,T]}$, we begin by a discretization in space and by writing the classical Itô formula for that discretized process. 
Let introduce $\widetilde{\mu}_t^n:=\frac{1}{n}\sum_{k \in [n]} \delta_{y(\frac{k}{n},t)}$, where $[n]$ denotes the set $\{1,\dots,n\}$. Fix $U: \mathcal P_2(\R) \rightarrow \R$ a continuous function, with respect to the Wasserstein distance $W_2$ on $\mathcal P_2(\R)$. Let define $U^n(x_1,\dots,x_n):=U(\frac{1}{n}\sum_{j \in [n]} \delta_{x_j} )$. Remark that $U(\widetilde{\mu}_t^n)=U^n\left(y(\frac{1}{n},t),y(\frac{2}{n},t),\dots,y(1,t)\right)$. 
Assuming that $U^n$ belongs to $C^2(\R^n)$, and using that $y(\frac{k}{n},\cdot)$ is a square integrable continuous martingale on $[0,T]$, we have (recall that $g(1)$ is finite):
\begin{align}
U(\widetilde{\mu}_t^n)
=&U^n(\textstyle{g(\frac{1}{n}),\dots,g(1)}) + \displaystyle\sum_{k\in [n]} \int_0^t \partial_k U^n(\textstyle{y(\frac{1}{n},s),\dots,y(1,s)}) \, \mathrm dy(\frac{k}{n},s) \notag\\
&+ \frac{1}{2} \sum_{k,l \in [n]} \int_0^t \partial_{k,l}^2 U^n(\textstyle{y(\frac{1}{n},s),\dots,y(1,s)}) \,\mathrm d \langle y(\frac{k}{n},\cdot),y(\frac{l}{n},\cdot) \rangle_s.
\label{eq Ito}
\end{align}

In order to write the derivatives of $U^n$ in terms of derivatives of $U$, we should introduce a differential calculus on $\mathcal P_2(\R)$, well-adapted to the differentiation of empirical measures. P.L. Lions introduces in his lectures at Collège de France (see Section 6.1 of Cardaliaguet's notes~\cite{cardaliaguet13}) a differential calculus on $\mathcal P_2(\R)$ by using the Hilbertian structure of $L_2(\Omega)$. 
We set $\widetilde{U}(X):=U(\operatorname{Law}(X))$ for all $X \in L_2(\Omega)$. 

A function $U:\mathcal P_2(\R) \rightarrow \R$ is said to be \emph{$L$-differentiable} (or differentiable in the sense of Lions) at a point $\mu_0 \in \mathcal P_2(\R)$ if there is a random variable $X_0$ with law $\mu_0$ such that $\widetilde{U}$ is Fréchet-differentiable at $X_0$. The definition does not depend on the choice of the representative $X_0$ of the law $\mu_0$, and if $X_0$ and $X_1$ have the same law, then the laws of $D\widetilde{U}(X_0)$ and $D\widetilde{U}(X_1)$ are equal (see e.g.~\cite{cardaliaguet13}). 
Furthermore, if  $D\widetilde{U}:L_2(\Omega)\rightarrow L_2(\Omega)$ is a continuous function, then for all $\mu_0 \in \mathcal P_2(\R)$, there exists a measurable function $\R \rightarrow \R$, denoted by $\partial_\mu U(\mu_0)$, such that for each $X \in L_2(\Omega)$ with law $\mu_0$, we have $D\widetilde{U}(X)=\partial_\mu U(\mu_0)(X)$ almost surely (see~\cite{cardaliaguet13}). 

In~\cite{carmonadelarue18}, Carmona and Delarue prove that the L-differentiability of $U:\mathcal P_2(\R) \rightarrow \R$ implies the differentiability of $U^n$ on $\R^n$, and that we have for each $k \in [n]$:
\[
\partial_k U^n (x_1,\dots,x_n)= \frac{1}{n} \partial_\mu U(\textstyle{\frac{1}{n}\sum_{j\in[n]}\delta_{x_j}})(x_k).\]

Furthermore, assume that $U$ is L-differentiable and that $(\mu,v) \in \mathcal P_2(\R) \times \R \mapsto\partial_\mu U(\mu)(v) \in \R$ is continuous. Moreover, we assume that for every $\mu \in \mathcal P_2(\R)$, the map $v \in \R \mapsto \partial_\mu U(\mu)(v) \in \R$ is differentiable on $\R$ in the classical sense and that its derivative is given by a jointly continuous function $(\mu,v) \mapsto \partial_v\partial_\mu U(\mu)(v)$. We also assume that for every $v \in \R$, the map $\mu \mapsto \partial_\mu U(\mu)(v)$ is L-differentiable and its derivative is denoted by $(\mu,v,v') \mapsto \partial_\mu^2 U(\mu)(v,v')$. 
Then, $U^n$ is $\mathcal C^2$ on $\R^n$ and for all $k,l \in [n]$:
\[
\partial_{k,l}^2 U^n (x_1,\dots,x_n)= \frac{1}{n}\partial_v \partial_\mu U(\textstyle{\frac{1}{n}\sum_{j\in[n]}\delta_{x_j}})(x_k) \mathds 1_{\{k=l\}} 
+\displaystyle \frac{1}{n^2} \partial_\mu^2 U(\textstyle{\frac{1}{n}\sum_{j\in[n]}\delta_{x_j}}) (x_k,x_l).\]

Therefore, we obtain from equation~(\ref{eq Ito}):
\begin{align}
U(\widetilde{\mu}_t^n)
=& U(\widetilde{\mu}_0^n)+ \frac{1}{n} \sum_{k\in [n]} \int_0^t \partial_\mu U(\widetilde{\mu}_s^n) (\textstyle y(\frac{k}{n},s)) \mathrm dy(\frac{k}{n},s) \notag
+\displaystyle\frac{1}{2n} \sum_{k \in [n]} \int_0^t \partial_v\partial_\mu U(\widetilde{\mu}_s^n)(\textstyle y(\frac{k}{n},s)) \frac{\mathrm ds}{m(\frac{k}{n},s)} \notag\\
&+ \frac{1}{2n^2} \sum_{k,l \in [n]} \int_0^t \partial_\mu^2 U(\widetilde{\mu}_s^n)(\textstyle y(\frac{k}{n},s),y(\frac{l}{n},s)) \frac{\mathds 1_{\{\tau_{\frac{k}{n},\frac{l}{n}}\leq s\}}}{m(\frac{k}{n},s)} \mathrm ds.
\label{eq nouvelle}
\end{align}
By property of coalescence, if $\tau_{\frac{k}{n},\frac{l}{n}}\leq s$, we have $y(\frac{k}{n},s)=y(\frac{l}{n},s)$, so that the last term in the latter equation is equal to:
\begin{align*}
\frac{1}{2n} \sum_{k \in [n]} \int_0^t \partial_\mu^2 U(\widetilde{\mu}_s^n)(\textstyle y(\frac{k}{n},s),y(\frac{k}{n},s)) \frac{\frac{1}{n}\sum_{l \in [n]}\mathds 1_{\{\tau_{\frac{k}{n},\frac{l}{n}}\leq s\}}}{m(\frac{k}{n},s)} \mathrm ds.
\end{align*}
Observe that the difference between $\frac{1}{n}\sum_{l \in [n]}\mathds 1_{\{\tau_{\frac{k}{n},\frac{l}{n}}\leq s\}}$ and $m(\frac{k}{n},s)=\int_0^1 \mathds 1_{\{\tau_{\frac{k}{n},u} \leq s  \} } \mathrm du$ is bounded by $\frac{2}{n}$, since the set $\{ u: \tau_{\frac{k}{n},u} \leq s\}$ is an interval. 

We want to let $n$ tend to $+ \infty$ in order to obtain an Itô formula for the limit process. We start by proving the convergence of a subsequence of $((\widetilde{\mu}_t^n)_{t \in [0,T]})_{n\geq 1}$ to $(\mu_t)_{t \in [0,T]}$ with respect to the $L_2$-Wasserstein distance.

\begin{prop}
\label{prop conv mesures}
There exists a subsequence $((\widetilde{\mu}_t^{\varphi(n)})_{t \in [0,T]})_{n\geq 1}$ of $((\widetilde{\mu}_t^{n})_{t \in [0,T]})_{n\geq 1}$ such that, 
for almost every $t \in [0,T]$, the sequence $(\widetilde{\mu}_t^{\varphi(n)})_{n\geq 1}$ converges almost surely to $\mu_t$ with respect to the Wasserstein distance $W_2$. 
\end{prop}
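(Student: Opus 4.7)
The plan is to bound $\E{W_2^2(\widetilde{\mu}_t^n, \mu_t)}$ uniformly in $t \in [0,T]$ by a power of $1/n$, then extract a subsequence via Borel-Cantelli-type summation and conclude by Fubini. First, since $y(\cdot,t)$ is non-decreasing and càdlàg and is the quantile function of $\mu_t$, the quantile function of $\widetilde{\mu}_t^n$ is the step function $Q_n(u,t) := y(\lceil nu\rceil/n, t)$ on $(0,1]$, so
\[
W_2^2(\widetilde{\mu}_t^n, \mu_t) = \int_0^1 |y(u,t) - Q_n(u,t)|^2 \mathrm du.
\]
Extending $y(\cdot,t) \equiv y(1,t)$ on $[1, 1+\tfrac{1}{n}]$, monotonicity gives, for $u \in ((k-1)/n, k/n]$, $y(u,t) \leq y(k/n,t) \leq y(u+1/n,t)$ and hence $|y(u,t) - Q_n(u,t)| \leq y(u+1/n,t) - y(u,t)$, so that
\[
W_2^2(\widetilde{\mu}_t^n, \mu_t) \leq \int_0^1 (y(u+1/n,t) - y(u,t))^2 \mathrm du.
\]

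Next I would bound the right-hand side in expectation uniformly in $t$. By property~$(iii)$, for each fixed $u$ and $h$ the process $M^{u,h}_t := y(u+h,t) - y(u,t)$ is a continuous square integrable martingale, so $(M^{u,h}_t)^2$ is a submartingale and $\E{(M^{u,h}_t)^2} \leq \E{(M^{u,h}_T)^2}$. Applying Fatou's Lemma to inequality~\eqref{h polynomial} along a subsequence given by Skorohod's representation (as used throughout Section~\ref{sec convergence}), one obtains
\[
\int_0^{1-h} \E{(y(u+h,T) - y(u,T))^2} \mathrm du \leq C \|g\|_{L_p} h^q
\]
for some $q > 0$ depending only on $p$. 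The boundary contribution $\int_{1-1/n}^1 \E{(y(1,T) - y(u,T))^2} \mathrm du$ is of order $1/n$ thanks to the monotonicity of $y(\cdot,T)$ and the $L_2$-integrability of $y(\cdot,T)$ obtained by passing Corollary~\ref{coro estimation y puissance 2p} to the limit. Combining both pieces yields a constant $C > 0$ and an exponent $\alpha > 0$ such that
\[
\sup_{t \in [0,T]} \E{W_2^2(\widetilde{\mu}_t^n, \mu_t)} \leq C n^{-\alpha}.
\]

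Finally, choose a subsequence $\varphi : \N \to \N$ such that $\sum_n \varphi(n)^{-\alpha} < +\infty$. By Tonelli's theorem,
\[
\E{\int_0^T \sum_n W_2^2(\widetilde{\mu}_t^{\varphi(n)}, \mu_t) \mathrm dt} = \int_0^T \sum_n \E{W_2^2(\widetilde{\mu}_t^{\varphi(n)}, \mu_t)} \mathrm dt \leq TC \sum_n \varphi(n)^{-\alpha} < +\infty.
\]
Hence almost surely $\int_0^T \sum_n W_2^2(\widetilde{\mu}_t^{\varphi(n)}, \mu_t) \mathrm dt < +\infty$, which implies that for almost every $(\omega,t) \in \Omega \times [0,T]$ the general term tends to zero. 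By Fubini applied to the null set $\{(\omega,t) : W_2^2(\widetilde{\mu}_t^{\varphi(n)}(\omega), \mu_t(\omega)) \not\to 0\}$, for almost every $t \in [0,T]$ the convergence $\widetilde{\mu}_t^{\varphi(n)} \to \mu_t$ in $W_2$ holds almost surely. The main obstacle is the uniform-in-$t$ moment estimate: it requires the careful combination of the submartingale property along time with the spatial regularity~\eqref{h polynomial}, which is only known at the level of the mollified processes $y_{\sigma,\eps}$ and must be transferred to the limit process~$y$ through Fatou's Lemma and Skorohod's representation.
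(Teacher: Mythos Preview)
Your argument is correct and takes a genuinely different route from the paper's. The paper proves only the qualitative convergence
\[
\E{\int_0^T W_2^2(\widetilde{\mu}_t^n,\mu_t)\,\mathrm dt}\to 0,
\]
by coupling through the quantile functions, using right-continuity of $y(\cdot,t)$ for pointwise convergence of the integrand, and establishing uniform integrability via the Burkholder--Davis--Gundy inequality together with the mass estimate~\eqref{estimation inv mass}. A subsequence is then extracted from this $L^1(\Omega\times[0,T])$ convergence. By contrast, you obtain a quantitative rate $\sup_{t\leq T}\E{W_2^2(\widetilde{\mu}_t^n,\mu_t)}\leq Cn^{-\alpha}$: the monotonicity bound $|y(u,t)-Q_n(u,t)|\leq y(u+1/n,t)-y(u,t)$ followed by the submartingale inequality $\E{(M^{u,h}_t)^2}\leq \E{(M^{u,h}_T)^2}$ reduces everything to the spatial increment estimate~\eqref{h polynomial} at time $T$. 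Your approach buys more: a rate, and in fact almost-sure convergence along the subsequence for \emph{every} fixed $t$, not only for almost every $t$. The paper's approach is lighter in that it invokes only the mass estimate directly, without passing through the increment bound~\eqref{h polynomial}, whose proof (Proposition~\ref{prop tension 2}) is itself non-trivial.

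One small caveat: your transfer of~\eqref{h polynomial} to the limit process via Fatou and Skorohod's representation is specific to the process constructed in Sections~\ref{sec construction}--\ref{sec properties}. Since the appendix works with an arbitrary $y$ satisfying $(i)$--$(iv)$, the cleaner justification is to observe that the entire proof of Proposition~\ref{prop tension 2} applies verbatim to $y$ once the mass estimate~\eqref{estimation inv mass} (Proposition~\ref{prop inverse masse}, or \cite[Prop.~4.3]{konarovskyi2017asymptotic}) is available --- which is exactly the estimate the paper itself invokes. Your handling of the boundary contribution on $[1-1/n,1]$ is fine: since $g(1)<\infty$, property~$(iii)$ gives $\E{y(1,T)^2}<\infty$, and monotonicity bounds $\E{y(1-1/n,T)^2}$ uniformly in $n$ by $2\E{y(1/2,T)^2}+2\E{y(1,T)^2}$, so that term is indeed $O(1/n)$.
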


\begin{rem}
We point out that the extraction function $\varphi$ does not depend on $t \in [0,T]$. 
\end{rem}

\begin{proof}
To obtain the statement of the Proposition, it is sufficient to prove that:
\[
\E{\int_0^T W_2(\widetilde{\mu}_t^n,\mu_t)^2\mathrm dt}\rightarrow 0.
\]

Let $V$ be a uniform random variable on $[0,1]$, defined on a probability space $(\widetilde{\Omega},\widetilde{\mathcal F},\widetilde{\mathbb P})$. 
Therefore, $\mu_t$ is the law of $y(V,t)$ and $\widetilde{\mu}_t^n$ the law of $\sum_{k \in [n]} \mathds 1_{\{\frac{k-1}{n}<V\leq \frac{k}{n}\}} y(\frac{k}{n},t)$. 
Hence we have:
\begin{align*}
W_2(\widetilde{\mu}_t^n,\mu_t)^2 &\leq \widetilde{\mathbb E} \left[  \left( \sum_{k \in [n]}  \mathds 1_{\{\frac{k-1}{n}<V\leq \frac{k}{n}\}} y(\textstyle\frac{k}{n},t)-y(V,t)   \right)^2\right]\\
&= \int_0^1 \sum_{k \in [n]}  \mathds 1_{\{\frac{k-1}{n}<u\leq \frac{k}{n}\}} | y(\textstyle \frac{k}{n},t)-y(u,t) |^2 \mathrm du.
\end{align*}

Therefore, it is sufficient to show that:
\begin{equation}
\label{eq page 5}
\E{\int_0^T\!\!\int_0^1 \sum_{k \in [n]}\mathds 1_{\{\frac{k-1}{n}<u\leq \frac{k}{n}\}} |y(\textstyle \frac{k}{n},t)-y(u,t)|^2\mathrm du \mathrm dt} \underset{n\to+\infty}{\longrightarrow} 0.
\end{equation}
Fixing $u \in (0,1), t \in (0,T)$, $ \sum_{k \in [n]} \mathds 1_{\{\frac{k-1}{n}<u\leq \frac{k}{n}\}} |y(\frac{k}{n},t)-y(u,t)|^2$ converges almost surely to 0 by the right-continuity of $y(\cdot,t)$ at point $u$. To prove~\eqref{eq page 5}, we have to show a uniform integrability property, \textit{i.e.} that $\sup_{n \geq 1} \E{\left(\int_0^T\int_0^1 \sum_{k \in [n]}\mathds 1_{\{\frac{k-1}{n}<u\leq \frac{k}{n}\}} |y(\textstyle \frac{k}{n},t)-y(u,t)|^2\mathrm du \mathrm dt\right)^\beta}<+\infty$ for some $\beta >1$.

We compute:
\begin{multline*}
\E{\left(\int_0^T\!\!\int_0^1 \sum_{k \in [n]}\mathds 1_{\{\frac{k-1}{n}<u\leq \frac{k}{n}\}} |y(\textstyle \frac{k}{n},t)-y(u,t)|^2\mathrm du \mathrm dt\right)^\beta}^{1/(2\beta)}
\\
\leq T^{\frac{\beta-1}{2 \beta}}
\E{\int_0^T\!\!\int_0^1 \sum_{k \in [n]}\mathds 1_{\{\frac{k-1}{n}<u\leq \frac{k}{n}\}} |y(\textstyle \frac{k}{n},t)-y(u,t)|^{2 \beta}\mathrm du \mathrm dt}^{1/(2\beta)}
\\
\begin{aligned}
&\leq T^{\frac{\beta-1}{2 \beta}}
\E{\int_0^T\!\!\int_0^1 \sum_{k \in [n]}\mathds 1_{\{\frac{k-1}{n}<u\leq \frac{k}{n}\}} |M_0|^{2 \beta}\mathrm du \mathrm dt}^{1/(2\beta)}
\\
&\quad +T^{\frac{\beta-1}{2 \beta}}
\E{\int_0^T\!\!\int_0^1 \sum_{k \in [n]}\mathds 1_{\{\frac{k-1}{n}<u\leq \frac{k}{n}\}} |M_t-M_0|^{2 \beta}\mathrm du \mathrm dt}^{1/(2\beta)},
\end{aligned}
\end{multline*}
where $M_t=y(\frac{k}{n},t)-y(u,t)$. Recall that by property $(i)$ of the process $y$, $M_0=g(\frac{k}{n})-g(u)$. We deduce that:
\begin{align*}
\E{\int_0^T\!\!\int_0^1 \sum_{k \in [n]}\mathds 1_{\{\frac{k-1}{n}<u\leq \frac{k}{n}\}} \textstyle |g(\frac{k}{n})-g(u)|^{2 \beta}\mathrm du \mathrm dt}\leq T C_\beta \E{\int_0^1 g(u)^{2\beta}\mathrm du}.
\end{align*}
Since $g$ belongs to $\mathcal L^\uparrow_{2+}[0,1]$, there exists $p>2$ such that $g \in L_p(0,1)$. Therefore, we can choose $\beta >1$ such that $2 \beta \leq p$. 
By Burkholder-Davis-Gundy inequality and the martingale property of $M$, we have:
\begin{align*}
\E{(M_t-M_0)^{2\beta}}\leq C_\beta \E{\langle M,M \rangle_t^\beta}.
\end{align*}
By property $(iv)$, 
\begin{align*}
\langle M,M \rangle_t
&=\int_0^t \frac{\mathrm ds}{m(\textstyle \frac{k}{n},s)}
+\int_0^t \frac{\mathrm ds}{m(u,s)}
-2\int_0^t \frac{\mathds 1_{\{\tau_{\frac{k}{n},u}\leq s\}}}{m(\textstyle \frac{k}{n},s)^{1/2}m(u,s)^{1/2}}\mathrm ds\\
&\leq\int_0^t \frac{\mathrm ds}{m(\textstyle \frac{k}{n},s)}
+\int_0^t \frac{\mathrm ds}{m(u,s)},
\end{align*}
so that there is a constant $C_\beta$ satisfying:
\begin{align*}
\E{\langle M,M \rangle_t^\beta}
\leq C_\beta t^{\beta -1} \E{\int_0^t \frac{\mathrm ds}{m(\textstyle \frac{k}{n},s)^\beta}
+\int_0^t \frac{\mathrm ds}{m(u,s)^\beta}}.
\end{align*}
To conclude, we use the following statement: provided $\beta < \frac{3}{2}-\frac{1}{p}$, there is a constant $C_\beta$ such that for each $t$ and $u$:
\begin{align}
\label{estimation inv mass}
\E{\int_0^1\!\!\int_0^t \frac{\mathrm ds}{m(u,s)^\beta}\mathrm du}\leq C_\beta \sqrt{t}.
\end{align}
This statement is Proposition~\ref{prop inverse masse} for the limit process that we constructed in this paper, or in~\cite[Prop. 4.3]{konarovskyi17behavior} for the process constructed by Konarovskyi. 
This completes the proof.
\end{proof}

By similar arguments of convergence, equation~(\ref{eq nouvelle}) leads to the following Itô formula for $(\mu_t)_{t \in [0,T]}$, by letting $n$ tend to $\infty$.
The estimation~\eqref{estimation inv mass} is the key of the proof of those convergences.

\begin{théo}
Let $U:\mathcal P_2(\R) \rightarrow \R$ be smooth enough so that $U$ and its derivatives $\partial_\mu U$, $\partial_v \partial_\mu U$ and $\partial_\mu^2 U$ exist, are uniformly continuous and bounded. 
Almost surely, for each $t \in [0,T]$, we have:
\begin{align*}
\label{formule Ito}
U(\mu_t)=&U(\mu_0)+\int_0^1 \!\!\int_0^t \partial_\mu U(\mu_s)(y(u,s))\mathrm dy(u,s)\mathrm du
+\frac{1}{2} \int_0^1 \!\!\int_0^t \partial_v \partial_\mu U(\mu_s)(y(u,s))\frac{\mathrm ds}{m(u,s)}\mathrm du\\
&+ \frac{1}{2}\int_0^1\!\! \int_0^t \partial_\mu^2 U(\mu_s)(y(u,s),y(u,s))\mathrm ds\mathrm du,
\end{align*}
where $\displaystyle \int_0^1 \int_0^t \partial_\mu U(\mu_s)(y(u,s))\mathrm dy(u,s)\mathrm du$ is a square integrable continuous martingale with a quadratic variation process equal to $t\mapsto \displaystyle
\int_0^1 \!\!\int_0^t \left( \partial_\mu U(\mu_s)\right)^2 (y(u,s)) \mathrm ds \mathrm du$. 
\label{theo Ito}
\end{théo}

\begin{rem}
\label{remark vR S}
Choose in particular $ U: \mu \mapsto V\left(\int_\R \alpha_1 \mathrm d\mu,\dots,\int_\R \alpha_m \mathrm d\mu  \right)=:V(\int \overrightarrow{\alpha}\mathrm d\mu)$, where $V \in \mathcal C^2(\R^m)$ and $\alpha_1,\dots,\alpha_m$ are bounded $\mathcal C^2(\R)$-functions, with bounded first and second-order derivatives. In this case, $\partial_\mu U(\mu)(v)=\sum_{i=1}^m \partial_i V \left(\int \overrightarrow{\alpha}  \mathrm d\mu \right) \alpha_i'(v)$ for all $\mu \in \mathcal P_2(\R)$ and $v\in \R$. 
Computing the second-order derivatives, we show that 
\[
U(\mu_t)-U(\mu_0) -\frac{1}{2}\int_0^t \mathcal L_1 U(\mu_s) \mathrm ds -\frac{1}{2}\int_0^t \mathcal L_2 U(\mu_s) \mathrm ds
\]
is a martingale with quadratic variation process $\int_0^t \int_0^1 \left(\sum_{i=1}^m \partial_i V \left(\int \overrightarrow{\alpha}\mathrm d\mu_s \right) \alpha_i'(y(u,s)) \right)^2 \mathrm du \mathrm ds$ and an operator $\mathcal L=\mathcal L_1+\mathcal L_2$ of the form $\mathcal L_1 U(\mu_s) := \sum_{i=1}^m \partial_i V\left( \int \overrightarrow{\alpha}\mathrm d\mu_s\right)\int_0^1 \frac{\alpha_i''(y(u,s))}{m(u,s)}\mathrm du $ and $\mathcal L_2 U(\mu_s) := \sum_{i,j=1}^m \partial_{i,j}^2 V\left( \int \overrightarrow{\alpha}\mathrm d\mu_s\right)\int_0^1 \alpha_i'(y(u,s))\alpha_j'(y(u,s))\mathrm du $. 

Remark that we have some restrictions on the domain of the generator $\mathcal L_1$. We know that for measures with finite support, $\int_0^1 \frac{\mathrm du}{m(u,s)}$ is finite and is equal to the cardinality of the support (see the Paragraph preceding Corollary~\ref{coro int N(t) finite}). The fact that the generator of the martingale problem is not defined on the whole Wasserstein space is related to the fact that the process $(\mu_t)_{t\in [0,T]}$ takes values, for every positive time $t$, on the space of measures with finite support.

We compare this result to Theorem~7.17 in~\cite{vrenessesturm09}. The generator of the martingale in the case of von Renesse and Sturm's Wasserstein diffusion is $\mathds L = \mathds L_1+\mathds L_2+\beta \mathds L_3$, with $\mathds L_1=\mathcal L_2$ and $\mathds L_3$ similar to $\mathcal L_1$ up to the lack of the mass function, whereas $\mathds L_2$, which is the part of the generator considering the gaps of the measure $\mu$, does not appear in our model.

\end{rem}

\bibliographystyle{amsplain}
\bibliography{refmathscinet}

\end{document}